\newfont{\Fr}{eufm10}
\newfont{\Sc}{eusm10}
\newfont{\Bb}{msbm10}
\newfont{\Am}{msam10}
\newfont{\am}{msam7}
\numberwithin{equation}{section}
\newtheorem{theorem}{Theorem}[section]
\newtheorem{proposition}[theorem]{Proposition}
\newtheorem{lemma}[theorem]{Lemma}
\newtheorem{corollary}[theorem]{Corollary}
\newtheorem{claim}{Claim}{\bf}{\it}
\newtheorem{ftheorem}{Theorem}{\bf}{\it}
{\bf}{\it}
\theoremstyle{definition}
\newtheorem{definition}[theorem]{Definition}
\newtheorem{convention}[theorem]{Convention}
\newtheorem{construction}[theorem]{Construction}
{\bf}{\rm}
\theoremstyle{remark}
\newtheorem{example}[theorem]{Example}
\newtheorem{remark}[theorem]{Remark}
\newtheorem{fremark}[ftheorem]{Remark}
\newtheorem{algorithm}[theorem]{Algorithm}
\newtheorem{definition and corollary}[theorem]{Definition and Corollary}
\newtheorem{fexample}[ftheorem]{Example}{\it}{\rm}
\newcommand{\Ind}{\mbox{\rm Ind}}
\newcommand{\MID}{\! \! \mid}
\newcommand{\h}{\mathfrac{\h}}
\newlength{\tabwidth}
\newlength{\tabheight}
\newlength{\tabrule}
\newlength{\tabwidthx}
\newlength{\tabheightx}
\def\gentabbox#1#2#3#4{\vbox to \tabheight{\setlength{\tabrule}{#3}%
  \setlength{\tabwidthx}{#1\tabwidth}\addtolength{\tabwidthx}{\tabrule}%

\setlength{\tabheightx}{#2\tabheight}\addtolength{\tabheightx}{-\tabheight}%
  \hbox to #1\tabwidth{%
    \hspace{-0.5\tabrule}\rule{\tabrule}{#2\tabheight}\hspace{-\tabrule}%
    \vbox to #2\tabheight{\hsize=\tabwidthx%
      \vspace{-0.5\tabrule}\hrule width\tabwidthx height\tabrule%
      \vspace{-0.5\tabrule}\vfil%
      \hbox to \tabwidthx{\hss#4\hss}%
        \vfil\vspace{-0.5\tabrule}%
      \hrule width\tabwidthx height\tabrule\vspace{-0.5\tabrule}}%
    \hspace{-\tabrule}\rule{\tabrule}{#2\tabheight}\hspace{-0.5\tabrule}}%
  \vspace{-\tabheightx}}}
\def\genblankbox#1#2{\vbox to \tabheight{\vfil\hbox to
#1\tabwidth{\hfil}}}
\def\tabbox#1#2#3{\gentabbox{#1}{#2}{0.4pt}{\strut #3}}
\newenvironment{tableau}{\bgroup\catcode`\:=13 \catcode`\.=13
  \catcode`\;=13 \catcode`\>=13 \catcode`\^=13
  \setlength{\tabheight}{3ex}\setlength{\tabwidth}{3ex}%
  \def\b##1##2##3{\gentabbox{##1}{##2}{1.2pt}{\vbox{##3}}}%
  \def\n##1##2##3{\gentabbox{##1}{##2}{0.4pt}{\vbox{##3}}}%
  \vbox\bgroup\offinterlineskip}{\egroup\egroup}
\begin{document}

\title{Tempered modules in exotic Deligne-Langlands correspondence}
\author{Dan \textsc{Ciubotaru}\footnote{Department of Mathematics, University of Utah, Salt Lake City, UT 84112, USA, \texttt{ciubo@math.utah.edu.}}
\ and\  Syu \textsc{Kato}\footnote{Research Institute for Mathematical Sciences, Kyoto
  University, Oiwake Kita-Shirakawa Sakyo Kyoto 606-8502, Japan, \texttt{kato@kurims.kyoto-u.ac.jp}.}}

\maketitle

\begin{abstract}
The main purpose of this paper is to produce a geometric realization for tempered modules of
the affine Hecke algebra of type $C_n$ with arbitrary, non-root
of unity, unequal parameters, using the exotic Deligne-Langlands correspondence (\cite{K}). Our classification has several applications to the structure of the tempered Hecke algebra
modules. In particular, we provide a geometric and a combinatorial
classification of discrete series which contain the $\mathsf{sgn}$
representation of the Weyl group, equivalently, via the Iwahori-Matsumoto involution, of spherical cuspidal modules. This last combinatorial classification was expected from \cite{HO}, and determines the $L^2$-solutions for the Lieb-McGuire system.
\end{abstract}

%\tableofcontents

\section*{Introduction} The main purpose of this paper is to study tempered modules 
of the affine Hecke algebra $\mathbb H$ of type $C_n$ (Definition
\ref{Checke}) using the framework in \cite{K}. Here $\mathbb H$ is an algebra over $\mathcal A=\mathbb C[\mathbf{q}_0^{\pm
  1},\mathbf{q}_1^{\pm 1}, \mathbf{q}_2^{\pm 1}],$ where
$\mathbf{q}_0,\mathbf{q}_1,\mathbf{q}_2$ are three indeterminate
parameters.
\medskip

The main case we deal with in this paper is the affine Hecke algebra $\mathbb H_{n,m}$ of type {$C_n$}
with unequal parameters (see \S \ref{remHecke}). The algebra $\mathbb H$ specializes to $\mathbb H _{n,m}$ by the specialization $(\mathbf{q}_0,\mathbf{q}_1,\mathbf{q}_2 ) \mapsto (-1,q^{m},q)$ with $q \in \mathbb C ^{\times}$ and $m \in \mathbb R$. These algebras, for special values of $m$, appear as convolution algebras in the theory of
$p$-adic groups. For example, if $m=1$ or $m=1/2$ (and an appropriate
value of $q$), they correspond to
Iwahori-Hecke algebras for split $p$-adic $SO(2n+1)$ or $PSp(2n)$,
respectively. More generally, when $m \in\mathbb Z_{\ge 0} +\epsilon$,
where $\epsilon\in\{0,1/2,1/4\}$, (graded versions of) these algebras appear from
representations of $p$-adic groups with unipotent cuspidal support, in the sense of \cite{L5}.

Set $G=Sp(2n,\mathbb C)$, let $T$ denote its diagonal
torus, and let $W = N _G ( T ) / T$. The algebra $\mathbb H$ has a large abelian subalgebra
$\mathcal A\otimes R(T)$, and the
tempered and discrete series $\mathbb H$-modules are defined by the
Casselman criterion for the generalized $R(T)$-weights (Definition
\ref{d:temp}). By a result of Bernstein and Lusztig, the center of
$\mathbb H$ is $\mathcal A\otimes R(T)^W.$ Therefore, the central
characters of irreducible $\mathbb H$-modules are in one-to-one
correspondence with $W$-conjugacy classes of semisimple elements of the form
$a=(s,q_0,q_1,q_2)$, where $s\in T.$ The $\mathbb H$-action on a module with central character $a$ factors through a finite-dimensional algebra $\mathbb H _a$. We say that the central character
$a$ is positive real if $-q_0,q_1 \in \mathbb R _{>0} ,q_2\in\mathbb R_{>1},$ and $s$ is
hyperbolic (see Definition \ref{d:real}). The algebra $\mathbb H_a$ contains a copy of $\mathbb C[W]$, for positive real $a$ (see \cite{L2}).

We are interested in the identification of tempered $\mathbb H$-modules for non-root of unity parameters. 
Via Lusztig's reduction (\cite{L2}), this can essentially be reduced
to the determination of 
the tempered modules and discrete series with positive real central
character for the case of $\mathbb
H_{n,m}$. 

For the algebra $\mathbb H_{n,m}$, we say that the parameter $m \in \mathbb R$ is {\it generic} if
$m\notin\frac 12\mathbb Z.$ By \cite{O,OS} (also
\cite{L6} for $m\in\mathbb Z+1/4$), we know that the 
central characters which allow discrete series in the positive real generic range
are in one-to-one correspondence with partitions $\sigma$ of $n$ (see
Definition \ref{d:dist}). 

\begin{fexample}[$n=2$ case]
There are two central characters which afford discrete series corresponding to $\sigma_1=(2)$ and $\sigma_2=(1^2)$. The first discrete series is always the Steinberg module (corresponding to the $\mathsf{sgn}$ representation
of the Weyl group), but the dimension and the $W$-module structure of
the second discrete series depend on $m.$
\end{fexample}

We call a $s_\sigma \in T$ which affords discrete series distinguished.

Our main result is the description of parameters corresponding to discrete series $\mathbb H _{n,m}$-modules within the framework of the exotic Deligne-Langlands correspondence (eDL for short). To be more precise, let us recall it briefly (c.f. \S \ref{sec:irrH} or \cite{K}). Let $V_1=\mathbb C^{2n}$ be the vector representation of $G$, and $V_2=\wedge^2 V_1.$
Set $\mathbb V=V_1^{\oplus 2}\oplus V_2.$ This is a representation of
$\mathcal G=G\times (\mathbb C^\times)^3$, where $G$ acts diagonally
on $(V_1,V_1,V_2)$ and $(c_0,c_1,c_2)\in (\mathbb C^\times)^3$ acts by
multiplication by $(c_0^{-1},c_1^{-1},c_2^{-1}).$ The eDL correspondence is stated as a one-to-one correspondence
\begin{equation}
G(a)\backslash\mathbb V^a\longleftrightarrow \mathsf{Irrep} \mathbb
H_a, \quad X\mapsto L_{(a,X)},
\end{equation}
where $G(a)$ is the centralizer of $s$ in $G$ and $\mathbb V ^a$ is the set of $a$-fixed points in $\mathbb V$.

There is a combinatorial parameterization of the left hand side in terms of marked
partitions (see \S \ref{sec:1.3}). The right
hand side is the set of irreducible $\mathbb H$-modules with central
character $a.$ The irreducible module $L_{(a,X)}$ is obtained as a quotient of a standard geometric module $M_{(a,X)}.$ We call $(a, X)$ the eDL parameter of $L _{(a,X)}$. There are some
remarks to be made at this point:

\begin{enumerate}
\item There are no local systems in this picture: the isotropy
group of every $X$ is connected;
\item There exists an exotic Springer correspondence
  (\cite{K}). Moreover, the homology groups of  (classical)
  Springer fibers of $\mathop{Sp} ( 2n )$ and $\mathop{SO} ( 2n + 1 )$
  can be realized via the homology of a suitable exotic Springer fiber
  (c.f. Corollary \ref{c:springer});

\item For the Hecke algebras which are known to appear in the representation theory
  of $p$-adic groups, the Deligne-Langlands-Lusztig correspondence (DLL for
  short) was established in \cite{KL2,L2,L3,L6}. The connection
between the eDL and DLL correspondences is non-trivial. In particular, the ``lowest $W$-types" of a fixed irreducible module differ between the eDL/DLL correspondences;
\item In the DLL correspondence, we can specify a discrete series by its ``lowest $W$-type". However, our ``eDL lowest $W$-type" does not single out discrete series. Hence, we need the full eDL parameter in order to specify discrete series.
\end{enumerate}

The third phenomenon gives a restriction on the $W$-character of discrete series which seems far from trivial (c.f. \S \ref{clrelation}).

\smallskip

In \S \ref{sec:alg}, we give an effective, simple algorithm which
produces from a distinguished central character $s_\sigma$, an element
$X_{\mathsf{out}(\sigma)}$ in $\mathbb V^a$.

\begin{ftheorem}[=Theorem \ref{t:main}]\label{fmain} Let $\sigma$ be a partition of $n$. Let $a=(s_\sigma, \vec
  q)$ be the positive real generic central character, where
  {$\vec q=(-1,q^{m},q)$}, and 
   $s_\sigma$ is the distinguished semisimple element
  corresponding to $\sigma$. Then $( a, X_{\mathsf{out}(\sigma)} )$ is the eDL parameter of a unique discrete series $\mathbb H$-module with central character $a$.
\end{ftheorem}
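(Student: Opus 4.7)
The plan is to identify, within the exotic Deligne--Langlands bijection $G(a)\backslash \mathbb V^a \leftrightarrow \mathsf{Irrep}\,\mathbb H_a$, the specific orbit whose associated irreducible module is the discrete series with central character $a=(s_\sigma,\vec q)$. I would split the argument into two parts: an \emph{existence} step showing that $L_{(a,X_{\mathsf{out}(\sigma)})}$ is a discrete series, and a \emph{uniqueness} step showing that this is the only discrete series with this central character, so that the eDL parameter $(a,X_{\mathsf{out}(\sigma)})$ is unambiguously determined.

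For the existence step, I would verify the strict Casselman criterion of Definition~\ref{d:temp} for the standard geometric module $M_{(a,X_{\mathsf{out}(\sigma)})}$; the strict inequalities then descend automatically to its irreducible quotient $L_{(a,X_{\mathsf{out}(\sigma)})}$. This requires unwinding the algorithm of \S\ref{sec:alg} through the marked-partition parameterization of $G(a)$-orbits from \S\ref{sec:1.3}, yielding an explicit decomposition of $X_{\mathsf{out}(\sigma)}\in V_1^{\oplus 2}\oplus V_2$ into coordinate summands indexed by the boxes of the Young diagram of $\sigma$. The contribution of each box to the generalized $R(T)$-weight spectrum of $M_{(a,X_{\mathsf{out}(\sigma)})}$ can then be read off directly, so the Casselman condition reduces to a finite list of numerical inequalities on the coordinates of $s_\sigma$ relative to $q^m$ and $q$.

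The main obstacle I anticipate is the bookkeeping of weights mixing the three parameters, namely the two copies of $V_1$ (for $\mathbf q_0$ and $\mathbf q_1$) and the copy of $V_2$ (for $\mathbf q_2$). The specialization $\vec q=(-1,q^m,q)$ with $m\notin\tfrac12\mathbb Z$ is specifically designed to avoid accidental cancellations, but one must check \emph{strict} negativity, not merely non-positivity, at every simple root of type $C_n$. The interior (type-$A$) simple roots will force inequalities internal to the rows of $\sigma$, while the endpoint root of distinct length couples the $V_1$- and $V_2$-contributions and requires a separate rim-by-rim analysis. The design of the algorithm in \S\ref{sec:alg} should ensure that the ``outermost'' placement it prescribes pushes every weight strictly into the negative chamber, and I would verify this row-by-row together with a separate check along the rim of $\sigma$.

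For the uniqueness step, I would invoke the classification of discrete series at positive real central character from \cite{O,OS} (together with \cite{L6} in the $m\in\mathbb Z+\tfrac14$ case), which asserts that for generic $m$ the distinguished central characters are parameterized by partitions of $n$ and each carries exactly one discrete series. Combined with the existence step and the bijectivity of the eDL correspondence, this forces $(a,X_{\mathsf{out}(\sigma)})$ to be the unique eDL parameter of a discrete series at $s_\sigma$, completing the proof.
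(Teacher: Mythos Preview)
Your plan has a genuine gap at the existence step. You propose to check the strict Casselman inequalities on the \emph{standard} module $M_{(a,X_{\mathsf{out}(\sigma)})}$ and then pass to its irreducible quotient. But the standard module is almost never a discrete series: by Theorem~\ref{eDL} it contains every $L_{(a,X')}$ with $G(a)X_{\mathsf{out}(\sigma)}\subset\overline{G(a)X'}$ as a composition factor, and those larger-orbit constituents are typically not even tempered. Concretely, whenever $\mathsf{out}(\sigma)$ is \emph{not} the open orbit (which by Theorem~\ref{sgnchar} is exactly when the discrete series does not contain $\mathsf{sgn}$, a frequent occurrence---see Theorem~\ref{c:sgn}), the set $\Psi(M_{(a,X_{\mathsf{out}(\sigma)})})$ is strictly larger than $\Psi(L_{(a,X_{\mathsf{out}(\sigma)})})$ and contains weights violating the Casselman bounds. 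So the ``descends automatically'' step is fine, but its hypothesis is false.

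The paper's proof works directly with the irreducible module and exploits precisely the gap between $\Psi(M)$ and $\Psi(L)$. The weight $w\cdot s_\sigma^{-1}$ lies in $\Psi(L_\tau)$ only when $(\mu_w^a)_*\underline{\mathbb C}$ contains $\mathsf{IC}(\mathcal O_\tau)$ as a summand (Proposition~\ref{p:2.1}), which is a density condition on $\mathcal O_\tau\cap{}^w\mathbb V(a)$ rather than mere nonemptiness. The argument then proceeds by induction on the structure of $\sigma$: first the $(\pm)$-ladder base cases (Proposition~\ref{p:ladder}, Corollary~\ref{c:ladder}), then a sequence of parabolic inductions (Theorem~\ref{indt}, Corollary~\ref{indmult}) that embed $L_{\mathsf{out}(\sigma)}$ as a constituent of induced modules built from $L_{\mathsf{out}(\sigma')}$ for smaller $\sigma'$ (Propositions~\ref{p:rt1}, \ref{p:rt2} and Theorem~\ref{main:p}). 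At each stage one rules out bad weights not by a box-by-box numerical check but by showing the density condition fails---see the last paragraph of the proof of Proposition~\ref{p:ladder} for the prototype. Your row-by-row/rim-by-rim outline does not supply this mechanism. The uniqueness step via \cite{O,OS} is correct and matches the paper.
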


We also give a geometric characterization of $\mathsf{out} ( \sigma )$ in \S \ref{sec:3.6}, as the minimal orbit with respect to certain conditions.

More generally, one can describe the tempered spectrum of $\mathbb
H_{n,m}$ with real positive generic parameter as follows.

\begin{ftheorem}[=Theorem \ref{indADt} and Corollary \ref{c:exh}]\label{findADt} If $n_1+n_2=n$, we set $\mathbb
  H^S_m :=\mathbb H_{n_1,m}^{\mathsf A} \otimes \mathbb
  H_{n_{2},m} \subset \mathbb H _{n,m}$, where $\mathbb
  H_{n_1,m}^{\mathsf A}$ is an affine Hecke algebra of $\mathop{GL} (
  n _1 )$. 
\begin{enumerate} 
\item[(i)] {Let $L _1 ^{\mathsf A}$ and $L _2$ be a tempered $\mathbb
    H_{n_1,m}^{\mathsf A}$-module with real central character and a discrete series $\mathbb H
    _{n_2, m}$-module with positive real generic parameter, respectively. Then,
 $$L := \mathsf{Ind} ^{\mathbb H _{n,m}} _{\mathbb H ^S _m} (L _1 ^{\mathsf A} \boxtimes L _2)$$
 is an irreducible tempered $\mathbb H _{n, m}$-module.}
\item[(ii)] Every irreducible tempered $\mathbb H_{n,m}$-module with 
positive real generic central character can be realized in a
  unique way as in (i).  
\end{enumerate}
\end{ftheorem}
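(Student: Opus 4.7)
The plan is to establish (i) and (ii) in turn, using Theorem \ref{fmain} as the input for the discrete-series factor and a Langlands-style classification for the exhaustion. For the temperedness part of (i), the Casselman criterion reduces matters to a weight computation: the $R(T)$-weights of $L$ are $W$-translates (over cosets of the Levi Weyl group) of the weights of $L_1^{\mathsf A} \boxtimes L_2$. Temperedness of $L_1^{\mathsf A}$ places the type-$A$ components of these weights in the corresponding negative chamber, while the discrete-series hypothesis on $L_2$ gives strict negativity along the $C_{n_2}$ coroot directions; these two pieces assemble, via the decomposition of the $C_n$-coroot lattice along the parabolic $GL(n_1) \times Sp(2n_2) \subset Sp(2n)$, into the full Casselman inequalities for $C_n$.

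For irreducibility, I would identify the eDL parameter of $L$ explicitly. Let $(a_1, X_1)$ be the Deligne-Langlands-Lusztig parameter of $L_1^{\mathsf A}$ (via Kazhdan-Lusztig/Lusztig theory for $GL$), and let $(a_2, X_{\mathsf{out}(\sigma)})$ be the parameter of $L_2$ furnished by Theorem \ref{fmain}. Then $X := X_1 \oplus X_{\mathsf{out}(\sigma)} \in \mathbb V^a$ is the natural candidate. Since every isotropy group $G(a)_X$ is connected (remark (1) in the introduction), irreducibility of $L$ reduces to showing that the standard module $M_{(a,X)}$ coincides with its irreducible quotient $L_{(a,X)}$. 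For $m \notin \tfrac 12 \mathbb Z$ this can be established either via Opdam's R-group formalism (the R-group being trivial at such generic parameters) or through an open-orbit/dimension argument in $\mathbb V^a$ showing that no proper $\mathbb H_{n,m}$-submodule can accommodate the $X$-geometry.

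For (ii), I would invoke the tempered Plancherel classification (in the spirit of Delorme-Opdam): every tempered $\mathbb H_{n,m}$-module with real central character is parabolically induced from a discrete series of a Levi subalgebra. Every Levi of type $C_n$ has the form $\mathbb H^{\mathsf A}_{m_1,m} \otimes \cdots \otimes \mathbb H^{\mathsf A}_{m_k,m} \otimes \mathbb H_{n_2,m}$, so a discrete series of the Levi is a tensor product of discrete series of the factors. Inducing the $A$-factors in stages yields a tempered $\mathbb H^{\mathsf A}_{n_1,m}$-module $L_1^{\mathsf A}$; conversely, the analogous $GL$-result ensures every real-central-character tempered $GL$-module arises this way. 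The decomposition $n = n_1 + n_2$ is pinned down by the central character $a$: the integer $n_2$ is the rank of the distinguished subsystem of $a$ in the sense of Theorem \ref{fmain}, and the remaining type-$A$ piece of $a$ fixes the inducing data uniquely up to isomorphism.

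The hard part is the irreducibility step in (i). Temperedness and exhaustion are essentially standard once the Langlands framework is in place, whereas ruling out nontrivial submodules of $\mathsf{Ind}(L_1^{\mathsf A} \boxtimes L_2)$ requires genuine input. The genericity hypothesis $m \notin \tfrac 12 \mathbb Z$ enters precisely here, since at half-integral $m$ the R-group for this induction can become nontrivial and additional reducibility appears; verifying its triviality in our regime, or equivalently matching $L$ with the irreducible $L_{(a,X)}$ coming from the eDL correspondence, is the crux of the argument.
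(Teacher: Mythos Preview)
Your temperedness sketch is in the right spirit and roughly parallels the paper's Theorem~\ref{indADt}: both verify the Casselman inequalities by decomposing each weight of the induced module along the Levi and using temperedness of $L_1^{\mathsf A}$ together with the (strict) inequalities for $L_2$. The paper's version is somewhat more delicate because it works through the geometric splitting of the exotic Springer fiber rather than purely through the Bernstein--Lusztig presentation, but the underlying idea is the same.

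The irreducibility argument, however, contains a genuine error. Your reduction ``irreducibility of $L$ reduces to showing $M_{(a,X)} = L_{(a,X)}$'' is not valid: the induction theorem identifies $M_{(a,X)}$ with $\mathsf{Ind}(M^{S_0}_{(a,X)})$, not with $\mathsf{Ind}(L_1^{\mathsf A} \boxtimes L_2)$, and the discrete series $L_2$ is in general a \emph{proper} quotient of the corresponding standard module $M_2$. So $L \neq M_{(a,X)}$, and showing $M_{(a,X)}$ irreducible would be both false in general and irrelevant. The paper's mechanism (Theorem~\ref{indAD}) is quite different and is the point you are missing: genericity $m \notin \tfrac12\mathbb Z$ forces $q^{\frac12\mathbb Z} \cap q_1 q^{\mathbb Z} = \emptyset$, hence $(\mathbb V^{S_0})^a = 0$. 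This makes $G(s) = L_{S_0}(s)$ and $\mathfrak N^a \subset \mathfrak{gl}(n_1) \oplus \mathbb V_{(2)}$, so the eDL parameter spaces for $\mathbb H_a$ and $\mathbb H_a^{S_0}$ are \emph{literally the same set of orbits}. One then proves $\mathsf{Ind}(L^{S_0}_{(a,X)}) \cong L_{(a,X)}$ for every $X$ by downward induction on orbit closure, using the multiplicity formula (Theorem~\ref{mult}) to peel off irreducible constituents. No R-group computation is needed; contrary to your assessment, irreducibility is in fact the \emph{easier} step in the paper's framework.

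For (ii) you invoke Delorme--Opdam as a black box. That would work, but it outsources exactly what the paper wants to supply internally. The paper's argument (Corollary~\ref{c:exh}) is a counting argument: the construction in (i) yields $\sum_{n_1+n_2=n}\mathcal P(n_1)\mathcal P(n_2) = \mathcal P_2(n) = \#\,\mathsf{Irrep}\,W_n$ modules, which are pairwise distinct as $W_n$-modules by the $W$-independence of tempered modules (Corollary~\ref{c:lin}, proved via Lusztig's generalized Springer correspondence at the special values $4m \equiv 1,3 \pmod 4$ and then deformed across the generic interval). Since the same $W$-independence bounds the total number of tempered modules by $\#\,\mathsf{Irrep}\,W_n$, exhaustion and uniqueness follow simultaneously.
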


The classification of tempered modules for the Hecke algebras of type
$\mathsf A$ is well-known from \cite{Z} and \cite{KL2}. Theorem
  \ref{findADt} was established before, by different methods, in the
  work of Opdam \cite{O}, Delorme-Opdam \cite{DO}, and Slooten \cite{Sl2}.

\begin{fexample}[Table of discrete series for $n=2$]
Let $\{\epsilon_1-\epsilon_2,\epsilon_1,\epsilon_2\}$ denote the positive roots {(of $\mathop{SO} (5, \mathbb C )$)}. Let $v _{\beta}$ denote a $T$-eigenvector of weight $\beta$ in $\mathbb V$. Assume the notation for $W$-representations in Remark \ref{r:bipartitions}. We have:

\begin{center}
\begin{tabular}{|c|c|c|c|}
\hline
$m$ &$(0,1/2)$ &$(1/2,1)$ &$(1,\infty)$\\
\hline
$X_{\mathsf{out}(\sigma_1)}$ &$v_{\epsilon_1-\epsilon_2}+v_{\epsilon_2}$ &$v_{\epsilon_1-\epsilon_2}+v_{\epsilon_2}$ &$v_{\epsilon_1-\epsilon_2}+v_{\epsilon_2}$\\
\hline
$X_{\mathsf{out}(\sigma_2)}$ &$v_{\epsilon_1-\epsilon_2}$
&$v_{\epsilon_1-\epsilon_2}+v_{\epsilon_1}$ &$v_{\epsilon_2}$\\
\hline
$\mathsf{ds}(\mathsf{out}(\sigma_2))|_W$ &$\{(1^2)(0)\}$
&$\{(1)(1)\}+\{(0)(1^2)\}$ &$\{(0)(2)\}$\\
\hline
\end{tabular}
\end{center}
\end{fexample}

To transfer our description from generic parameters to special parameters, we prove a continuity result of tempered modules, which is an algebraic analogue of a result of \cite{OS}.

\begin{ftheorem}[=Corollary \ref{spec}]\label{fspec}
Let $a ^t = a \exp ( \gamma t )$ be a one-parameter family of positive
real central characters depending on $t \in \mathbb R$ by
$$\gamma \in \mathfrak t \oplus \{ 0\} \oplus \mathbb R ^2 _{\ge 0} \subset \mathrm{Lie} ( T \times ( \mathbb C ^{\times} ) ^3 ).$$
Let $X_t\in\mathbb V^{a^t}$ be a family of exotic nilpotent elements
corresponding to the same marked partition $\tau.$ We assume that $a^t$ is generic for all $t\in
(-\epsilon,\epsilon)\setminus\{0\}$ for some small $\epsilon>0.$

\begin{enumerate}
\item[(i)] The module $L _{(a^0,X_0)}$ is an irreducible
  quotient of both of the two limit modules $\lim_{t\to 0^\pm} L_{(a^t, X_t)}$.
\item[(ii)] The module $L _{(a^0, X_0)}$ is tempered if $L _{(a^t,
    X_t)}$ is a tempered module in at least one of the regions
$0<t<\epsilon$ or $-\epsilon<t<0.$
\item[(iii)] The module $L_{(a^0, X_0)}$ is a discrete series if $L _{(a^t,
    X_t)}$ are discrete series for $t\in(-\epsilon,\epsilon)\setminus\{0\}.$
\end{enumerate}
\end{ftheorem}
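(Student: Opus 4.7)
The plan is to argue geometrically in the eDL framework, using the flat-family structure of the standard modules $M_{(a^t, X_t)}$ combined with Casselman's criterion.

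For (i), I would use that the standard modules $M_{(a^t, X_t)}$ arise as equivariant Borel-Moore homologies of exotic Springer fibers $\mathcal{F}_{X_t}$. Since the $X_t$ share a common marked partition $\tau$, the fibers $\mathcal{F}_{X_t}$ are homeomorphic for all $t$, and only the torus action on them deforms with $t$. Hence the family $\{M_{(a^t, X_t)}\}_t$ is flat in the sense that the generalized $R(T)$-weights vary analytically in $t$ while the weight multiplicities stay constant. The maximal submodules $N_t \subset M_{(a^t, X_t)}$ defining $L_{(a^t, X_t)} = M_{(a^t, X_t)}/N_t$ specialize at $t = 0$ to (not necessarily maximal) submodules $N_0^{\pm} \subseteq M_{(a^0, X_0)}$, so each limit $\lim_{t\to 0^{\pm}} L_{(a^t, X_t)} = M_{(a^0, X_0)}/N_0^{\pm}$ is a nonzero quotient of $M_{(a^0, X_0)}$. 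Since $L_{(a^0, X_0)}$ is by construction the unique irreducible quotient of $M_{(a^0, X_0)}$, it surjects from each limit, giving (i).

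For (ii), Casselman's criterion (Definition \ref{d:temp}) characterizes temperedness as a closed condition $\langle \log|\mu|, \omega_i\rangle \leq 0$ on the generalized $R(T)$-weights $\mu$. Since weights vary continuously in the flat family and the cone is closed, the limit weights satisfy the same inequalities. The weights of $L_{(a^0, X_0)}$ form a subset of these limit weights via the surjection from (i) and therefore inherit Casselman's criterion, so $L_{(a^0, X_0)}$ is tempered.

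Part (iii) is the main obstacle, since the discrete series condition requires strict inequalities on weights, which are not automatically preserved under limits. My strategy is to argue by contradiction using Theorem \ref{findADt}: if $L_{(a^0, X_0)}$ were tempered but not discrete series, it would be parabolically induced from a proper Levi $\mathbb{H}^S_m$ with $n_1 > 0$. Parabolic induction from a tempered type-$\mathsf{A}$ factor imposes a Weyl-group symmetry on the weights of $L_{(a^0, X_0)}$ that pairs weights whose Casselman functionals along the simple roots of the Levi are negatives of each other and both vanish. Tracking these pairs through the flat family yields analytic weight curves $\mu(t), \mu'(t)$ whose Casselman functionals $f(t), f'(t)$ satisfy $f(0) = f'(0) = 0$ and $f(t) + f'(t) = O(t^2)$. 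A first-order analysis of these analytic functions then shows that at least one of $f(t), f'(t)$ must fail to be strictly negative on one of the half-intervals $(-\epsilon, 0)$ or $(0, \epsilon)$, contradicting the assumption that $L_{(a^t, X_t)}$ is a discrete series on both sides. This contradiction, combined with (ii), forces $L_{(a^0, X_0)}$ to be discrete series.
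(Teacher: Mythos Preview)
Your arguments for (i) and (ii) are essentially the paper's, though the paper is more careful on one point in (i): rather than asserting that $L_{(a^0,X_0)}$ is \emph{the} unique irreducible quotient of $M_{(a^0,X_0)}$, the paper uses that the $W$-structure of the flat family $M_{\mathbf t}/N$ is rigid, so the limit module at $t=0$ still contains the lowest $W$-type $L_{v_\tau}$, and the irreducible constituent with that $W$-type is $L_{(a^0,X_0)}$. This avoids having to justify uniqueness of the simple head. The flatness itself comes from Proposition~\ref{inclW} (equivariant homology gives $\mathbb C[\mathfrak a]\otimes H_\bullet(\mathcal E_{v_\tau})$), not from arguing that the fibers are homeomorphic.

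Your argument for (iii) has a genuine gap, and misses the much simpler argument the paper uses. You invoke Theorem~\ref{findADt} at $t=0$ to conclude that a tempered-but-not-discrete $L_{(a^0,X_0)}$ must be parabolically induced. But Theorem~\ref{findADt} (Theorem~\ref{indADt} plus Corollary~\ref{c:exh}) classifies tempered modules only at \emph{generic} positive real central character, and the whole point of the specialization $t\to 0$ is that $a^0$ is \emph{not} generic. No such induction statement is available there (see Remark~\ref{ex:deform}: limit tempered modules need not be induced, they can split into pieces of an $L$-packet). Even granting an induction statement, your assertion $f(t)+f'(t)=O(t^2)$ is unjustified: the Casselman functionals are affine in $t$, so a sum of two of them is again affine, not $O(t^2)$, unless you exhibit a specific cancellation of the linear terms --- which you do not.

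The correct argument for (iii), which the paper gives in Corollary~\ref{specds}, is a direct consequence of exactly this affine dependence. Every $R(T)$-weight of $L_{(a^t,v_\tau)}$ is of the form $w\cdot s_t^{-1}$ for some fixed $w\in W$, and since $s_t=s\exp(t\gamma_T)$, the quantity $\log\langle\varpi_j,\,w\cdot s_t^{-1}\rangle$ is an affine function of $t$. An affine function that is strictly negative on both $(-\epsilon,0)$ and $(0,\epsilon)$ cannot vanish at $0$ (if it did, it would be of the form $t\mapsto ct$, which changes sign). Hence the strict Casselman inequalities persist at $t=0$. This is the key idea you are missing: you do not need any structure theory of tempered modules at $a^0$, only the two-sided hypothesis together with linearity of the weights in $t$.
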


\begin{fremark}\label{ex:deform} In general, the limit tempered module
  $\lim_{t\to 0} L_{(a^t,X_t)}$ is reducible. For
    example, let us consider $\mathbb H_{n,m}$ with $n=2,$ and $1/2<m<1$. There is one tempered $\mathbb H_{2,m}$-module $L _m$ with its $W_2$-structure $\{(0)(1^2)\}+\{(0)(2)\}+\{(1)(1)\}.$ We have
    $$\lim_{m\to 1^-} L _m \cong U _1 \oplus U _2, \hskip 3mm U _1 |_{W} \cong \{(0)(1^2)\}+\{(1)(1)\}, \text{ and }U _2 |_{W} \cong \{(0)(2)\},$$
    where $U_1,U_2$ are tempered modules of $\mathbb H_{2,1}$ (the
    affine Hecke algebra with equal parameters of type $B_2$). In the usual
    DLL correspondence, the tempered modules $U_1$
    and $U_2$ are parameterized by the same nilpotent adjoint orbit in
    $\mathfrak{so}(5),$ i.e., they are in the same L-packet.
\end{fremark}

The result of \cite{OS} guarantees that every tempered module arises
as $L _{(a^0,X_0)}$ via Theorem \ref{fspec}. This completes the
description of tempered modules in the eDL correspondence. It may be
worth mentioning that the classification of \cite{OS} in the case of
$\mathbb H_{n,m}$ is basically the same as a conjecture of Slooten
\cite{Sl2} 4.2.1 (i). It also shares certain aspects with the
  classification of the tempered and discrete series $\mathbb H
  _{n,m}$-modules by \cite{KL2,L2,L3,L6}. We provide a combinatorial identification
  of lowest $W$-types algorithms between \cite{Sl} and (the $\mathop{Spin} ( \ell
  )$-case of) \cite{L2} in \S \ref{sec:cuspidal} for the sake of
  completeness.

In addition, we include certain applications of this ''exotic''
classification for the $W$-structure of tempered modules in \S
4. Among these applications, we give a combinatorial
  classification of discrete series containing the $W$-type
  $\mathsf{sgn}$ (Theorem \ref{c:sgn}). This description follows from
  Theorem \ref{fmain} in conjunction with: 

\begin{ftheorem}[=Theorem \ref{sgnchar}]\label{fsgn}
Let $\sigma$ be a partition of $n$. The discrete series of $\mathbb H$
with central character $a = ( s _{\sigma}, \vec{q} )$ contains the
$\mathsf{sgn}$ $W$-type if and only if $G ( s _{\sigma} ) X _{\mathsf{out} ( \sigma )} \subset \mathbb V ^a$ is open dense.
\end{ftheorem}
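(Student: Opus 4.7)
The plan is to establish a geometric criterion for the multiplicity of $\mathsf{sgn}$ in $L_{(a, X)}$ for a general eDL parameter $X$, and then specialize to $X = X_{\mathsf{out}(\sigma)}$, invoking Theorem \ref{fmain} to identify the corresponding irreducible module as the discrete series.

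The first step is to locate $\mathsf{sgn}$ in the exotic Springer correspondence of \cite{K}. Because isotropy groups on $\mathbb V$ are connected (remark (1) of the introduction), every irreducible $W$-representation is attached, without local system data, to a single $\mathcal G$-orbit in $\mathbb V$. In analogy with the classical Springer correspondence for $\mathrm{Sp}(2n)$, where $\mathsf{sgn}$ is attached to the regular nilpotent orbit, I would identify $\mathsf{sgn}$ with the unique open dense $\mathcal G$-orbit $\mathcal O_{\mathsf{reg}} \subset \mathbb V$.

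The second step is to translate this into a formula for the $\mathsf{sgn}$-multiplicity in the standard module $M_{(a, X)}$, via the Borel-Moore homology realization $M_{(a, X)}|_W \cong H_*(\mathcal B^a_X)$ of its $W$-structure (where $\mathcal B^a_X$ is the $a$-fixed exotic Springer fiber at $X$). Combining this realization with Step~1, a restriction argument yields
$$[M_{(a, X)} : \mathsf{sgn}] \;=\; \#\bigl\{\,\text{top-dim } G(s_\sigma)\text{-orbits } \mathcal O' \subset \mathbb V^a \cap \mathcal O_{\mathsf{reg}} \;:\; G(s_\sigma)\cdot X \subset \overline{\mathcal O'}\,\bigr\}.$$
To pass from $M_{(a, X)}$ to its irreducible quotient $L_{(a, X)}$, I would use the triangular Kazhdan-Lusztig-type decomposition
$$M_{(a, Y)} \;=\; L_{(a, Y)} \;+\; \sum_{Z < Y} m_{Z, Y}\, L_{(a, Z)}, \qquad m_{Z, Y} \in \mathbb Z_{\geq 0},$$
summed over $Z$ strictly below $Y$ in the orbit-closure order on $\mathbb V^a$, and induct on this order. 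The conclusion is the geometric criterion
$$[L_{(a, X)} : \mathsf{sgn}] \;=\; \begin{cases} 1, & \text{if } G(s_\sigma)\cdot X \text{ is open dense in } \mathbb V^a, \\ 0, & \text{otherwise.} \end{cases}$$

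Applying this criterion at $X = X_{\mathsf{out}(\sigma)}$ and invoking Theorem \ref{fmain} to identify $L_{(a, X_{\mathsf{out}(\sigma)})}$ with the discrete series at $a$, the theorem follows. The main obstacle lies in the inductive step above: it demands explicit control of the exotic Kazhdan-Lusztig multiplicities $m_{Z, Y}$ and a verification that all $\mathsf{sgn}$-contributions from lower irreducibles cancel, leaving the sgn-mass concentrated at the open orbit. This will rely crucially on the geometric minimality characterization of $\mathsf{out}(\sigma)$ in \S \ref{sec:3.6}, the connectedness of isotropy groups, and the combinatorics of marked partitions.
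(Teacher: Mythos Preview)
Your overall strategy---prove a general geometric criterion for when $\mathsf{sgn}\subset L_{(a,X)}$, then combine with Theorem~\ref{fmain}---matches the paper exactly. But your proof of the criterion misses the key simplification and, as written, has a genuine gap.

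The observation you are missing is that $\mathfrak N^a=\mathbb V^a$ is a \emph{vector space}, hence smooth. Consequently $\mathsf{IC}(\mathcal O_0^a)$ is the constant sheaf on all of $\mathbb V^a$, and Theorem~\ref{mult} gives $[M_{(a,X)}:L_{(a,X_0)}]_{\mathbb H_a}=1$ for every $X$, with no combinatorics whatsoever. Combined with the remark after Theorem~\ref{eDL} that $[M_{(a,X)}:\mathsf{sgn}]_W=1$ for every $X$, and with $M_{(a,X_0)}=L_{(a,X_0)}$ for $X_0$ in the open orbit, the criterion follows immediately: the single copy of $\mathsf{sgn}$ in each standard module is entirely accounted for by the composition factor $L_{(a,X_0)}$. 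This is the Evens--Mirkovi\'c argument the paper invokes; the only structural inputs are smoothness of $\mathbb V^a$ and connectedness of stabilizers.

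Against this, your Step~2 formula is not correct---the multiplicity is identically $1$, not the orbit count you wrote---and your Step~3 diagnosis of the ``main obstacle'' is a wrong turn. You do \emph{not} need explicit control of Kazhdan--Lusztig multiplicities beyond the single value forced by smoothness, and you certainly do not need \S\ref{sec:3.6}: that section concerns the specific combinatorics of $\mathsf{out}(\sigma)$, whereas Theorem~\ref{sgnchar} is a statement about arbitrary $a\in\mathcal G_0$ and arbitrary $X$, so \S\ref{sec:3.6} is structurally irrelevant here. The elaborate inductive cancellation you anticipate collapses entirely once smoothness of $\mathbb V^a$ is used.
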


\begin{fremark}Taking into account Theorem \ref{c:deform} and Heckman-Opdam
(see \cite{HO} Theorem 1.7 and the discussion around it), this
completes the classification of the $L^2$-solutions of the Lieb-McGuire system with attractive coupling parameters. Theorem \ref{c:sgn} itself
{fills out a missing piece in \cite{HO} (the unequal parameter case of Theorem 1.7 when the root system is of type $B _n$) and confirms a special part of a conjecture of Slooten \cite{Sl2} 4.2.1 (iii).}
 Recall that the affine Hecke algebra has the Iwahori-Matsumoto
  involution which interchanges modules containing the $\mathsf{sgn}$
  $W$-type and spherical modules, i.e., modules containing the $\mathsf{triv}$
  $W$-type. The images of discrete series containing the $\mathsf{sgn}$
  $W$-type under the Iwahori-Matsumoto involution are called spherical cuspidal in \cite{HO}. Therefore one
  may view Theorem \ref{c:sgn} as a classification of spherical
  cuspidal modules for the affine Hecke algebra $\mathbb H_{n,m}$ of type {$B_n/C_n$} with
  arbitrary parameter $m.$ When the Hecke algebra comes from a
  $p$-adic group, these should be examples of Arthur representations
  (in the sense of \cite{Ar}).
\end{fremark}

The organization of this paper is as follows.
In \S 1, we fix notation and recall the basic results. Some of the
material (like Corollary \ref{c:springer}) is new in the sense that it was
not included in \cite{K}. In \S 2, we present various technical lemmas
needed in the sequel. In \S 3, we formulate and prove our main
result, Theorem \ref{fmain}. Namely, after recalling some preliminary
facts in \S 3.1, we present our main algorithm $\sigma \to
\mathsf{out} ( \sigma )$ and state Theorem \ref{fmain} in \S 3.2. We
analyze the weight distribution of certain special discrete series in
\S 3.3. Then, we use the induction theorem repeatedly to prove that
the module $L _{(a _{\sigma}, X _{\mathsf{out} ( \sigma)})}$ must be a
discrete series for all $\sigma$. We also give an
alternate characterization of $\mathsf{out} ( \sigma )$ in \S
3.5. The last section \S 4 has various applications: we
characterize those discrete series $L _{(a _{\sigma}, X _{\mathsf{out}
    ( \sigma)})}$ which contain $\mathsf{sgn}$, and analyze their
deformations in \S 4.1. We prove Theorem \ref{findADt} in \S
4.2. In \S 4.3, we prove that for generic parameter $m$, the
$R(T)$-characters of irreducible $\mathbb H_{n,m}$-modules are linearly independent. We
explain a relation between the view-points of Lusztig and
Slooten-Opdam-Solleveld in \S 4.4. We deduce the $W_n$-independence of
tempered modules in \S 4.5, and characterize the tempered $\mathbb
H_{n,m}$-modules which are irreducible as $W$-modules. We finish the
paper by presenting several constraints on the $W$-structure of
tempered $\mathbb H_{n,m}$-module coming from the comparison of the
eDL correspondence with the DLL correspondence (c.f. \cite{L6}).  

\smallskip

\noindent{\bf Acknowledgments.} This work grew out of discussions with
Peter Trapa during the second author's visit at Utah (February
2008). We thank Peter Trapa for his kind invitation and support. We
are indebted to Eric Opdam, who drew our attention to the paper
\cite{HO}. We thank George Lusztig, Klaas Slooten, Maarten Solleveld,
Toshiaki Shoji for their interest and comments. D.C. was supported by
the NSF grant DMS-0554278. S.K was partly supported by the
postdoctoral fellowship at MSRI special semester program
``Combinatorial Representation Theory", and by the Grant-in-Aid for
Young Scientists (B) 20-74011. 

\section{Preliminaries}

\subsection{Affine Hecke algebra $\mathbb H$}\label{sec:1.1}

In this paper, $G$ will denote the group $Sp(2n,\mathbb C).$ Fix a
Borel subgroup $B$, and a maximal torus $T$ in $B$, and let
$W=N_G(T)/T$ be the Weyl group. We denote the character lattice of $T$
by $X^*(T).$ Fix a root system $R$ of $(G,T)$ with positive roots
$R^+$ given by $B,$ and simple roots $\Pi.$ In coordinates, the roots are
\begin{equation}
R^+=\{\epsilon_i\pm\epsilon_j\}_{i<j}\cup\{2\epsilon_i\}\subset \{\pm\epsilon_i\pm\epsilon_j\}\cup\{\pm2\epsilon_i\}=R,
\end{equation}
and $\Pi = \{ \alpha _i \} _{i=1} ^n$ with $\alpha _i = \epsilon _i -
\epsilon _{i+1}$ $(i\neq n)$, $2 \epsilon _n$ $(i=n)$. For every
$S\subset \Pi,$ we let $P_S=L_SU_S$ denote the minimal parabolic
subgroup containing $B$ and the one-parameter unipotent subgroups
corresponding to $(-S)$. {We sometimes add a subscript $n'$ in order
  to indicate that the corresponding objects are obtained by replacing
  $n$ with $n'$ (e.g., $W _{n'}$ represents the Weyl group of type $B_{n'}$).}

\smallskip

We set $\mathcal A _{\mathbb Z} := \mathbb Z [ \mathbf q _0 ^{\pm 1}, \mathbf q _1 ^{\pm 1}, \mathbf q _2 ^{\pm 1} ]$ and $\mathcal A := \mathbb C \otimes _{\mathbb Z} \mathcal A _{\mathbb Z} = \mathbb C [ \mathbf q _0 ^{\pm 1}, \mathbf q _1 ^{\pm 1}, \mathbf q _2 ^{\pm 1} ]$.

\begin{definition}[Affine Hecke algebras of type $C _n$]\label{Checke}
An affine Hecke algebra of type $C _n$ with three parameters is an associative algebra $\mathbb H _n = \mathbb H$ over $\mathcal A$ generated by $\{ T _i \} _{i = 1} ^n$ and $\{ e ^{\lambda} \} _{\lambda \in X ^* ( T ) }$ subject to the following relations:
\item {\bf (Toric relations)} For each $\lambda, \mu \in X ^* ( T )$, we have $e ^{\lambda} \cdot e ^{\mu} = e ^{\lambda + \mu}$ (and $e ^0 = 1$);
\item {\bf (The Hecke relations)} We have
\begin{equation}( T _i + 1 ) ( T _i - \mathbf q _2 ) = 0 \text{ ($1
    \le i < n$)  and  } ( T _n + 1 ) ( T _n + \mathbf q _0 \mathbf q
  _1 ) = 0;
\end{equation}
\item {\bf (The braid relations)} We have
\begin{align}\notag
&T _i T _j = T _j T _i \text{ (if $\left| i - j \right| > 1$)},\\
&( T _n T _{n - 1} ) ^2 = ( T _{n - 1} T _n ) ^2,\\\notag
&T _i T _{i + 1} T _i = T _{i + 1} T _i T _{i + 1}\text{ (if $1 \le i < n - 1$)};
\end{align}
\item {\bf (The Bernstein-Lusztig relations)} For each $\lambda \in X ^{*} ( T )$, we have
\begin{equation}
T _i e ^{\lambda} - e ^{s _i \lambda} T _i = \begin{cases} ( 1 -
  \mathbf q _2 ) \frac{e ^{\lambda} - e ^{s _i \lambda}}{e ^{\alpha
      _i} - 1} & \text{ ($i \neq n$)}\\ \frac{( 1 + \mathbf q _0
    \mathbf q _1 ) - ( \mathbf q _0 + \mathbf q _1 ) e ^{\epsilon
      _n}}{e ^{\alpha _n} - 1} ( e ^{\lambda} - e ^{s _n \lambda} ) &
  \text{ ($i = n$)} \end{cases}.
\end{equation}
\end{definition}

\begin{definition}[Parabolic subalgebras of $\mathbb H$]
Let $S \subset \Pi$ be a subset. We define $\mathbb H ^S$ to be the $\mathcal A$-subalgebra of $\mathbb H$ generated by $\{ T _i ; \alpha _i \in S \}$ and $\{ e ^{\lambda} ; \lambda \in X ^* ( T ) \}$. If $S = \Pi - \{ n \}$, then we may denote $\mathbb H ^S = \mathbb H _n ^S$ by $\mathbb H ^{\mathsf A} = \mathbb H _n ^{\mathsf{A}}$.
\end{definition}

\begin{remark}\label{remHecke}
{\bf 1)} The standard choice of parameters $( t _0, t _1, t _n )$ is: $t _1 ^2 = \mathbf q _2$, $t _n ^2 = - \mathbf q _0 \mathbf q _1$, and $t _n ( t _0 - t _0 ^{- 1} ) = ( \mathbf q _0 + \mathbf q _1 )$. This yields
$$T _n e ^{\lambda} - e ^{s _n \lambda} T _n =  \frac{1 - t _n ^2 - t _n ( t _0 - t _0 ^{- 1} ) e ^{\epsilon _n}}{e ^{2 \epsilon _n} - 1} ( e ^{\lambda} - e ^{s _n \lambda} );$$
{\bf 2)} An equal parameter extended {affine} Hecke algebra of type $B
_n$ is obtained by requiring $\mathbf q _0 + \mathbf q _1 = 0$ and
$\mathbf q _1 ^2 = \mathbf q _2$. An equal parameter
affine Hecke algebra of type $C _n$ is obtained by requiring $\mathbf q _2 = - \mathbf q _0 \mathbf q _1$ and $( 1 + \mathbf q _0 ) ( 1 + \mathbf q _1 ) = 0$;\\
{\bf 3)} The extended {affine} Hecke algebra with unequal parameters of type $B _n$ with the parameters
$$
\xymatrix@R=4pt{
& &  & *{\circ} \ar@{-}[d] & *{q} &  & *{q} & *{q} & *{q ^m}\\
& & *{\circ} \ar@{-}[r] & *{\circ} \ar@{-}[r] & *{\circ} \ar@{-}[rr]|{\cdots\cdots} &  & *{\circ} \ar@{-}[r] & *{\circ} \ar@{=}[r]|{>} & *{\circ}
}\\
$$
is obtained as
$$\mathbb H' _{n,m} := \mathbb H / ( \mathbf q _0 + q^{m/2}, \mathbf q
_1 - q ^{m/2}, \mathbf q _2 - q ).$$
 The {affine} Hecke algebra with unequal parameters of type $C _n$ with the parameters
$$
\xymatrix@R=4pt{
  & *{q^m}  & *{q} &  & *{q} & *{q} & *{q ^m}\\
 & *{\circ} \ar@{=}[r]|{>} & *{\circ} \ar@{-}[rr]|{\cdots\cdots} &  & *{\circ} \ar@{-}[r] & *{\circ} \ar@{=}[r]|{<} & *{\circ}
}\\
$$
is obtained as
$$\mathbb H _{n,m} := \mathbb H / ( \mathbf q _0 + 1, \mathbf q
_1 - q ^{m}, \mathbf q _2 - q ).$$
\end{remark}

Set $\mathcal G=G\times (\mathbb C^\times)^3.$
We recall the following well-known description of the center of $\mathbb H.$

\begin{theorem}[Bernstein-Lusztig \cite{L2}]\label{BL}
The center of the Hecke algebra $\mathbb H$ is
\begin{equation}Z ( \mathbb H ) \cong \mathcal A [ e ^{\lambda} ;
  \lambda \in X ^* ( T ) ] ^W \cong \mathcal A \otimes _{R ( \mathbb C
    ^{\times} ) ^3} R ( \mathcal G ).
\end{equation}

\end{theorem}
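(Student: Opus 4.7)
The plan is to establish the first isomorphism by two inclusions, then to derive the second by a direct identification of rings of $W$-invariants. The unstated input throughout is the Bernstein basis theorem, which presents $\mathbb H$ as a free left $\mathcal A[e^\lambda;\lambda\in X^*(T)]$-module on $\{T_w\}_{w\in W}$; this, combined with an $\mathcal A$-linear extension of the Bernstein-Lusztig relations, yields for any $f\in\mathcal A[e^\lambda]$ the commutation identity
\[
T_i f - (s_i f)\,T_i \;=\; c_i\cdot\frac{f-s_i f}{e^{\alpha_i}-1},
\]
where $c_i$ is the $\alpha_i$-coefficient prescribed in the Bernstein-Lusztig relations and $(f-s_i f)/(e^{\alpha_i}-1)\in\mathcal A[e^\lambda]$ by the standard divided-difference computation.

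For $\mathcal A[e^\lambda]^W\subseteq Z(\mathbb H)$: if $f$ is $W$-invariant, then $f-s_i f=0$ for every $i$, so the right-hand side above vanishes; hence $f$ commutes with every $T_i$, and $f$ commutes with every $e^\mu$ by definition, so $f\in Z(\mathbb H)$. For the reverse inclusion, write a central element as $z=\sum_{w\in W}T_w f_w$ with $f_w\in\mathcal A[e^\lambda]$. An inductive application of the identity above, combined with a reduced expression of $w$, gives
\[
T_w e^\mu \;=\; e^{w\mu}\,T_w \;+\; \sum_{w'<w}T_{w'}\,g_{w',w,\mu}
\]
(Bruhat order) with $g_{w',w,\mu}\in\mathcal A[e^\lambda]$. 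Extracting from $z e^\mu=e^\mu z$ the Bernstein-basis coefficient of $T_w$ at a $w$ of maximal length with $f_w\neq 0$ forces $(e^{w\mu}-e^\mu)f_w=0$ for every $\mu$; since $\mathcal A[e^\lambda]$ is a domain and every $w\neq 1$ moves some $\mu$, this yields $f_w=0$ for $w\neq 1$, i.e.\ $z=f_1\in\mathcal A[e^\lambda]$. Then $T_i z=z T_i$ combined with the identity reduces to $(f_1-s_i f_1)T_i=c_i(f_1-s_i f_1)/(e^{\alpha_i}-1)$, whose two sides lie in different Bernstein components ($T_i$ versus $T_1$) unless both vanish, so $s_i f_1=f_1$ for every $i$ and $z\in\mathcal A[e^\lambda]^W$.

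For the second isomorphism, since $G=Sp(2n,\mathbb C)$ is simply connected, Steinberg's theorem gives $R(G)\cong R(T)^W$ with $R(T)=\mathbb Z[e^\lambda;\lambda\in X^*(T)]$. Since $\mathcal G=G\times(\mathbb C^\times)^3$ and $R((\mathbb C^\times)^3)=\mathbb Z[\mathbf q_0^{\pm1},\mathbf q_1^{\pm1},\mathbf q_2^{\pm1}]$, we have $R(\mathcal G)\cong R(T)^W\otimes_{\mathbb Z}R((\mathbb C^\times)^3)$. Base-changing to $\mathcal A$ and using that $W$-invariants commute with the flat extension $\mathbb Z\hookrightarrow\mathcal A$ yields $\mathcal A\otimes_{R((\mathbb C^\times)^3)}R(\mathcal G)\cong\mathcal A\otimes_{\mathbb Z}R(T)^W\cong\mathcal A[e^\lambda]^W$. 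The main obstacle I foresee is the Bernstein basis theorem underlying both inclusions; I would invoke it from Lusztig rather than reprove it, as its verification is the PBW-type check that the defining relations of $\mathbb H$ admit no non-trivial reductions.
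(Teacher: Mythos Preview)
Your argument is correct and is the standard Bernstein--Lusztig proof. The paper itself does not give a proof of this statement: it simply quotes the result from \cite{L2} as a well-known fact, so there is nothing to compare at the level of strategy. What you have written is essentially the argument one finds in Lusztig's paper (and in later expositions): the PBW/Bernstein basis, the top-term extraction from $ze^\mu=e^\mu z$ to kill the $T_w$ components with $w\neq 1$, and then the divided-difference identity to force $W$-invariance of the remaining polynomial part. The identification with $\mathcal A\otimes_{R((\mathbb C^\times)^3)}R(\mathcal G)$ via Steinberg's theorem for the simply connected $G$ is also the intended reading.

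One small notational slip: you expand $z=\sum_w T_w f_w$ (right coefficients) but then invoke the formula $T_w e^\mu=e^{w\mu}T_w+\text{lower}$ and conclude $(e^{w\mu}-e^\mu)f_w=0$, which is the computation for the \emph{left}-coefficient expansion $z=\sum_w f_w T_w$. Either convention works and the conclusion is the same, but you should pick one and stick to it.
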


A semisimple element
$a\in \mathcal G$ is a pair
$a=(s,\vec q)=(s,q_0,q_1,q_2)$, for some $s\in G$ semisimple, and
$\vec q\in(\mathbb C^\times)^3.$
In view of the previous theorem, each semisimple $a \in \mathcal G$
defines a character $\mathbb C_a$ of $Z ( \mathbb H )$ by taking traces of elements of $R ( \mathcal G )$ at $a$.
We can define therefore the specialized Hecke algebras
\begin{equation}\mathbb H _a := \mathbb C _a \otimes _{Z ( \mathbb H
    )} \mathbb H \text{ and } \mathbb H _a ^S := \mathbb C _a \otimes _{Z ( \mathbb H
    )} \mathbb H ^S, \label{sH}
\end{equation}
where $S \subset \Pi$.

A $\mathbb H _a$-module $M$ is said to be a $\mathbb H$-module with central character $a$ or $s$. By Theorem \ref{BL}, having central character $s$ is equivalent to having central character $v \cdot s$ for each $v \in W$. (Here $v \cdot s$ is the action induced by the adjoint action of $N_G ( T )$ on $T$ inside $G$.)

We recall next the geometric construction of irreducible $\mathbb H_a$-modules
from \cite{K}, under the assumption in the following definition.

\begin{definition}\label{d:real}
A semisimple element $a=(s,\vec q)=(s,q_0,q_1,q)$ in $\mathcal G$ is
called positive real if $-q_0,q_1 \in \mathbb R _{>0},q \in \mathbb R _{>1}$ and $s$ has
only positive real eigenvalues on $V_1$. An element $\vec
  q=(q_0,q_1,q) \in (\mathbb C^\times)^3$ is called generic if it satisfies:
\begin{eqnarray}
q _1 ^2 \neq q ^{m'}, \hskip 2mm \text{for all }m'\in\mathbb Z.\label{generic q}
\end{eqnarray}
A positive real element $a = (s,\vec q)$ is called generic if $\vec{q}$ is generic. The set of semisimple positive real generic elements of $\mathcal G$ will be denoted
by $\mathcal G_0.$\\
Whenever $\vec q$ is generic, we fix a maximal subset $\mathbb S \subset \mathbb R _{>0}$ with the following properties:
\begin{itemize}
\item We have $q _0 ^{\pm 1} \not\in \mathbb S$ and $q _1 \in \mathbb S$;
\item $\mathbb S$ is stable by $q _2 ^{\mathbb Z}$-action;
\item We have $\{ \xi, \xi ^{-1} \} \not\subset \mathbb S$ for all $\xi \in \mathbb R _{>0}$.
\end{itemize}
We call $a=(s,\vec q) \in \mathcal G_0$ to be $\mathbb S$-positive if we have $s \in T$ and $\left<\epsilon _i,s \right> \in \mathbb S$ for each $i = 1, \ldots, n$. The set of $\mathbb S$-positive elements of $\mathcal G _0$ is denoted by {$\mathcal T _0$}.

\end{definition}

\begin{example} In the case of the Hecke algebra $\mathbb H_{n,m}$ of
type $C_n$ (as is defined in Example \ref{remHecke} {\bf 3)}), if we assume $q \in \mathbb R_{>1}$ and $m \in \mathbb R$, then the condition (\ref{generic q}) turns into
$m \not\in \frac 12\mathbb Z.$
\end{example}

Here we remark that the $G$-conjugates of $\mathcal T _0$ exhaust $\mathcal G _0$.

We have $R(T)\subset \mathbb H$. Thus, we can consider the set of $R(T)$-weights of $V$ for a finite dimensional
$\mathbb H_a$-module $V$. We denote it by $\Psi(V)\subset T.$ It is well known $\Psi(V)\subset W\cdot s$ whenever $a = ( s, \vec{q})$. 

\begin{definition}\label{d:temp} A $\mathbb H_a$-module $V$ is called tempered, if for
  all $\chi\in \Psi(V),$ one has{
\begin{equation}\label{eq:temp}
\langle \varpi_j, \chi \rangle\le 1,\text{ for all }
1\le j\le n,
\end{equation}
where $\varpi _j = \epsilon _1 + \cdots + \epsilon _j$.} The module $V$ is called a discrete series, if the inequalities in
(\ref{eq:temp}) are strict. 
\end{definition}

\subsection{Reduction to positive real character}\label{realred} 

We briefly recall Lusztig's reduction to
positive real central character. The original reference is \cite{L2}
(see also \cite{BM2} \S 6, \cite{L6} \S 4, and \cite{L7} \S 3), and there is a
complete recent exposition relative to
the tempered spectrum in \cite{OS}. In this subsection alone, we denote
the Hecke algebra defined in \S 1.1 by $\mathbb H_{R,X}^{\lambda_0,\lambda_1,\lambda_n}$, where, as before, $X=X^*(T),$ $T$ is
the torus for $Sp(2n,\mathbb C)$, $R$ denotes the roots of type $C_n$
(with coroots $\check R$ of type $B_n$), and 
$t_i=q^{\lambda_i}$ (as in Remark \ref{remHecke} {\bf 1)}), where $q$ acts by
some element in $\mathbb R_{>1}.$ Let us also denote by $\mathsf{Irrep}_{W\cdot s,
  \nu_0}\mathbb H_{R,X}^{\lambda_0,\lambda_1,\lambda_n}$, the set of isomorphism classes of
irreducible modules on which $q$ acts by $\nu_0,$ and the central
character is $s\in T.$ (The emphasis on $W$ in the notation will be
justified by the reduction procedure.)

Every $s\in T$ has a unique decomposition $s=s_e\cdot s_h,$ into an
elliptic part, and a hyperbolic part: $s_e\in S^1\otimes_{\mathbb
  Z}X_*(T),$ $s_h\in \mathbb R_{>0}\otimes_{\mathbb Z} X_*(T).$ Note
that {$(s_h,\vec{q}) \in \mathcal G _0$}.

Fix a central character $s=s_e\cdot s_h.$ Define
\begin{equation}
R_{s_e}=\left\{\alpha\in R: \begin{matrix}\check\alpha(s_e)=1, &\text{ if
    }\check\alpha\notin\{\pm\epsilon_n\}\\\check\alpha(s_e)=\pm 1, &\text{ if }\check\alpha\in\{\pm\epsilon_n\} \end{matrix}\right\}.
\end{equation}
Then $R_{s_e}\subset R$ is a root subsystem. Let $W_{s_e}\subset W$
denote the subgroup generated by the reflections in the roots of
$R_{s_e}.$ 

\begin{definition}\label{d:graded} Let $R'\subset 
  E',$ $\check R' \subset \check E'$ be root (and coroot)
  systems in the usual sense, and denote by $E'_\mathbb C$ and $
 \check E'_\mathbb C$ the complexifications. Let $\Pi'\subset R'$ be a
  set of simple roots, and $W'$ be the Coxeter group. Let $\mu$ be a
  $W'$-invariant function on $\Pi'.$ Define the graded (or degenerate) affine Hecke
  algebra $\overline{\mathbb H}_{R', E'}^\mu$ (\cite{L2}) to be the associative
  $\mathbb C[\mathbf r]$-algebra with unity generated by $\{t_w:~w\in W'\}$,
  {$\omega\in  \check E'_\mathbb C$} subject to the relations:
\begin{align*}
&t_wt_w'=t_{ww'}, \text{ for all }w,w'\in W';\\
&\omega\omega'=\omega'\omega, \text{ for all }\omega,\omega'\in
 \check E'_{\mathbb C};\\
&{\omega t_{s_\alpha}-t_{s_\alpha} s_\alpha(\omega)= \mathbf r
\mu(\alpha)\langle\alpha,\omega\rangle,\text{ for all }\alpha\in
\Pi',\omega\in  \check E'_{\mathbb C}}.
\end{align*}
\end{definition}

\begin{remark}The center of $\overline{\mathbb H}_{R', E'}^\mu$
is $\mathbb C[\mathbf r]\otimes S( \check E'_\mathbb C)^{W'},$ where $S(~)$
denotes the symmetric algebra (\cite{L2}). Therefore, the central characters of
irreducible modules, on which $\mathbf r$ acts by a certain scalar, are given by $W'$-conjugacy classes of elements in
$E'_\mathbb C$. Denote by $\mathsf{Irrep}_{W'\cdot
  \underline s,r_0}\overline{\mathbb H}_{R',
  E'}^\mu$ the class of irreducible modules with central character
$\underline s\in E'_\mathbb C$, and on which $\mathbf r$
acts by $r_0.$ 
\end{remark}

Fix $r_0$ such that $e^{r_0}=\nu_0.$ Recall that every hyperbolic
element $s_h\in T$ has a unique logarithm $\log s_h\in \mathfrak t.$ 

\begin{theorem}[\cite{L2}]\label{t:Lred} There are natural one-to-one correspondences:
$$\mathsf{Irrep}_{W\cdot s,\nu_0}\mathbb
H_{R,X}^{\lambda_0,\lambda_1,\lambda_n}\cong
\mathsf{Irrep}_{W_{s_e}\cdot s,\nu_0}\mathbb
H_{R_{s_e},X}^{\lambda_0,\lambda_1,\lambda_n}\cong
\mathsf{Irrep}_{W_{s_e}\cdot\log s_h,r_0}\overline{\mathbb H}_{R_{s_e},{\mathfrak t}^{*}}^{\mu_1,\mu_n},
$$
where $\mu_1=\lambda_1$, $\mu_n=\lambda_n+\langle\epsilon_n,s_e\rangle\lambda_0.$
\end{theorem}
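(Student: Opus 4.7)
Theorem \ref{t:Lred} is a reformulation of Lusztig's standard reduction from \cite{L2}, and my plan is to reproduce the two reductions in turn: the first (elliptic isotropy) reduces the root system to $R_{s_e}$, while the second (infinitesimalization) passes from the affine to the graded Hecke algebra. Throughout, the strategy is to complete each algebra at the maximal ideal of its center corresponding to the prescribed central character and then match those completions.

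For the first bijection, decompose any finite-dimensional $\mathbb H_{R,X}^{\lambda_0,\lambda_1,\lambda_n}$-module $V$ with central character $W \cdot s$ into $R(T)$-generalized weight spaces $V = \bigoplus_{\chi \in W \cdot s} V_\chi$. The subspace $V_{s_e} := \bigoplus_{\chi: \chi_e = s_e} V_\chi$ is preserved by the subalgebra $\mathbb H_{R_{s_e},X}^{\lambda_0,\lambda_1,\lambda_n}$, and using the Bernstein-Lusztig presentation one checks directly that this subspace carries a module structure with central character $W_{s_e} \cdot s$. Conversely, given an $\mathbb H_{R_{s_e},X}$-module $V'$ with this central character, one recovers an $\mathbb H$-module by forming $\bigoplus_{w \in W/W_{s_e}} w \otimes V'$, where the induced action of elements $T_i$ for $\alpha_i \notin R_{s_e}$ is given by the usual Bernstein-Lusztig intertwiners; these are invertible on the relevant weight spaces because $\check\alpha(s_e) \neq 1$ whenever $\alpha \notin R_{s_e}$. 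Verifying that the two functors are mutually inverse yields the bijection.

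For the second bijection, fix $r_0$ with $e^{r_0} = \nu_0$, and complete both algebras at the maximal ideals of their centers corresponding to $s$ and $\log s_h$ respectively. The substitution $e^\lambda = e^{\langle\lambda,s_e\rangle} \exp(\lambda)$, with the second factor regarded as an element of the completion $\widehat{S(\mathfrak t^*)}$ centered at $\log s_h$, identifies the completion of $R(T)$ at $s$ with the completion of $S(\mathfrak t^*)$ at $\log s_h$. Inserting this into the Bernstein-Lusztig relation and extracting the leading terms in the expansion of $e^{\alpha_i} - 1$ around $\alpha_i = \alpha_i(s_e)$ produces precisely the defining relation of the graded Hecke algebra. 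The shift $\mu_n = \lambda_n + \langle \epsilon_n, s_e\rangle \lambda_0$ arises because the $i = n$ numerator $(1 + \mathbf q_0\mathbf q_1) - (\mathbf q_0 + \mathbf q_1) e^{\epsilon_n}$, once evaluated at $e^{\epsilon_n(s_e)} = \pm 1$, combines the $\lambda_0$ and $\lambda_n$ contributions with the sign $\langle \epsilon_n, s_e\rangle$.

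The main obstacle I anticipate is justifying that this algebra-level isomorphism of completions descends to an honest equivalence between the categories of finite-dimensional modules with the prescribed central characters, rather than to an equivalence of topological module categories. This is handled by the standard observation that any finite-dimensional module with a fixed central character is annihilated by some power of the relevant maximal ideal of the center, so is equivalently a module over the finite quotient of the algebra by that ideal; the two completions induce the same isomorphism on each such finite quotient, and irreducibility is preserved because the functors are exact and faithful. Combining the two bijections then yields the theorem.
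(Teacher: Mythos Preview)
The paper does not give its own proof of this theorem: it is stated as a result of Lusztig \cite{L2}, and immediately after it the paper only remarks that ``These correspondences follow from an isomorphism of certain completions of these Hecke algebras.'' Your proposal is precisely an outline of Lusztig's original argument---reduction to the elliptic isotropy root system via weight-space decomposition and intertwiners, followed by the completion isomorphism between the affine and graded Hecke algebras---and is entirely consistent with that one-line summary. So there is nothing to compare against here beyond noting that your sketch matches the cited source and the paper's brief description of it.
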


These correspondences follow from an isomorphism of certain
completions of these Hecke algebras. Applying Theorem \ref{t:Lred} for
$\mathsf{Irrep}_{W_{s_e}\cdot s_h,\nu_0}{\mathbb
H_{R_{s_e},X}^{\mu_n/2,\mu_1,\mu_n/2}},$ we see
that:

\begin{corollary}\label{c:Lred}There is a natural one-to-one correspondence
$$\mathsf{Irrep}_{W\cdot s,\nu_0}\mathbb
H_{R,X}^{\lambda_0,\lambda_1,\lambda_n}\cong\mathsf{Irrep}_{W_{s_e}\cdot
  s_h,\nu_0}{\mathbb
H_{R_{s_e},X}^{\mu_n/2,\mu_1,\mu_n/2}},$$
where $\mu_1=\lambda_1$, $\mu_n=\lambda_n+\langle\epsilon_n,s_e\rangle\lambda_0.$
\end{corollary}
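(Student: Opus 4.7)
The plan is to apply Theorem \ref{t:Lred} a second time, this time to the affine Hecke algebra $\mathbb H_{R_{s_e},X}^{\mu_n/2,\mu_1,\mu_n/2}$ at the central character $s_h$, and show that both reductions land in the \emph{same} graded affine Hecke algebra $\overline{\mathbb H}_{R_{s_e},\mathfrak t^*}^{\mu_1,\mu_n}$, so the two bijections can be composed.

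First, I would unpack what Theorem \ref{t:Lred} says for this new algebra. Because $s_h$ is purely hyperbolic, its elliptic part $(s_h)_e$ equals the identity of $T$, so the second-stage root subsystem $(R_{s_e})_{(s_h)_e}$ equals $R_{s_e}$ itself, and the little Weyl group stays $W_{s_e}$. Thus Theorem \ref{t:Lred} produces, with no further root-subsystem reduction, a bijection
$$
\mathsf{Irrep}_{W_{s_e}\cdot s_h,\nu_0}\mathbb H_{R_{s_e},X}^{\mu_n/2,\mu_1,\mu_n/2}\;\cong\;\mathsf{Irrep}_{W_{s_e}\cdot\log s_h,r_0}\overline{\mathbb H}_{R_{s_e},\mathfrak t^*}^{\mu_1',\mu_n'},
$$
where the new graded parameters are determined by the formula at the end of Theorem \ref{t:Lred}.

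Next comes the parameter check, which is the crux of the argument. Applying $\mu_1'=\lambda_1'$ and $\mu_n'=\lambda_n'+\langle\epsilon_n,(s_h)_e\rangle\lambda_0'$ to the triple $(\lambda_0',\lambda_1',\lambda_n')=(\mu_n/2,\mu_1,\mu_n/2)$, and using $(s_h)_e=1$ so that $\langle\epsilon_n,(s_h)_e\rangle=1$, I would compute
$$
\mu_1'=\mu_1,\qquad \mu_n'=\tfrac{\mu_n}{2}+1\cdot\tfrac{\mu_n}{2}=\mu_n.
$$
Hence the target graded Hecke algebra of the second reduction is exactly $\overline{\mathbb H}_{R_{s_e},\mathfrak t^*}^{\mu_1,\mu_n}$, matching the one appearing in the right-most term of Theorem \ref{t:Lred} for the original algebra $\mathbb H_{R,X}^{\lambda_0,\lambda_1,\lambda_n}$. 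Composing the bijection of Theorem \ref{t:Lred} (at $s$) with the inverse of the bijection above (at $s_h$) then yields the asserted one-to-one correspondence.

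The main obstacle is justifying that Theorem \ref{t:Lred} can legitimately be applied to $\mathbb H_{R_{s_e},X}^{\mu_n/2,\mu_1,\mu_n/2}$, since $R_{s_e}$ is a proper root subsystem and generally not of pure type $C_n$; one must interpret this Hecke algebra as a tensor product over the irreducible components of $R_{s_e}$ and apply Lusztig's reduction componentwise, which is exactly the setting in which \cite{L2} formulates the completion isomorphism underlying Theorem \ref{t:Lred}. A second care point is the precise normalization of the pairing $\langle\epsilon_n,s_e\rangle$ on the short-root side so that the computation $\mu_n/2+\mu_n/2=\mu_n$ is carried out consistently with the conventions of Remark \ref{remHecke}.
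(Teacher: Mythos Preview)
Your proposal is correct and follows essentially the same approach as the paper: apply Theorem \ref{t:Lred} a second time to $\mathbb H_{R_{s_e},X}^{\mu_n/2,\mu_1,\mu_n/2}$ at the hyperbolic element $s_h$, observe that the trivial elliptic part leaves $R_{s_e}$ unchanged and makes the graded parameters work out to $(\mu_1,\mu_n)$, and then compose the two bijections through the common graded Hecke algebra. The paper's proof is in fact a single sentence to this effect; your write-up spells out the parameter computation and flags the caveat about $R_{s_e}$ not being of pure type $C_n$, which the paper addresses only in the remark immediately following the corollary.
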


We should mention that, for a general affine Hecke algebra (not
necessarily of type $C_n$), a similar correspondence as in Theorem \ref{t:Lred}
holds, but the affine graded algebra $\overline {\mathbb H}$ needs to be replaced with an
extension of it by a group of diagram automorphisms.

\begin{corollary}In the correspondence of Corollary \ref{c:Lred},
  tempered modules and discrete series modules correspond, respectively.
\end{corollary}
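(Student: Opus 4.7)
The plan is to trace the $R(T)$-weights through the algebra isomorphisms underlying Theorem \ref{t:Lred} and Corollary \ref{c:Lred}, and verify that the Casselman condition of Definition \ref{d:temp} is preserved. Since the weights $\varpi_j = \epsilon_1+\cdots+\epsilon_j$ belong to $X^{*}(T)$ for the torus $T$ that is common to all the Hecke algebras in play, the notion of temperedness makes sense uniformly on both sides of the correspondence.

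First, I would describe the weight correspondence. Each isomorphism in Theorem \ref{t:Lred} and Corollary \ref{c:Lred} arises from a Morita equivalence of suitably completed Hecke algebras at the given central characters, and these equivalences intertwine the $R(T)$-subalgebras up to the shift by $s_e$ that is encoded in the parameter adjustment $\mu_n = \lambda_n + \langle \epsilon_n, s_e\rangle\lambda_0$. Consequently, if an irreducible $\mathbb H$-module $V$ with central character $W\cdot s$ corresponds to an irreducible module $V_0$ on the reduced side, then $s_e\cdot\Psi(V_0) = \Psi(V)\cap W_{s_e}\cdot s$, while the remaining weights of $V$ arise as $W/W_{s_e}$-translates of this set via the relevant intertwining operators. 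Since $|s_e^{\varpi_j}| = 1$ for all $j$, the $s_e$-shift leaves the Casselman absolute values $|\chi^{\varpi_j}|$ unchanged.

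Second, I would check that the two temperedness conditions match. Using the above description of $\Psi(V)$, the inequalities $|\chi^{\varpi_j}|\le 1$ for $\chi \in \Psi(V)$ rewrite as a system $|\chi^{w^{-1}\varpi_j}|\le 1$ for $\chi \in \Psi(V_0)$, $1 \le j \le n$, and coset representatives $[w]\in W/W_{s_e}$. A root-theoretic argument, exploiting that $R_{s_e}$ is precisely the stabilizer sub-root-system of $s_e$ and that $\Psi(V_0)$ lies in the $W_{s_e}$-orbit of $s_h$, then identifies this system of inequalities with the Casselman condition for $V_0$ expressed in terms of the fundamental weights of $R_{s_e}$. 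The discrete series case follows by replacing every $\le$ with a strict inequality throughout.

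The main obstacle is the final cone identification, because the tempered cones of $R$ and $R_{s_e}$ are cut out by distinct sets of fundamental weights, so their equivalence on the $W_{s_e}$-orbit of $\log s_h$ requires a careful case analysis of how the sub-root-system $R_{s_e}$ embeds in the type-$C_n$ system $R$. A cleaner alternative, which I would probably adopt in practice, is to invoke the Plancherel-measure characterization of temperedness developed in \cite{OS}: this characterization is intrinsic to the affine Hecke algebra structure and hence automatically preserved by any Morita equivalence, thereby sidestepping the direct cone comparison altogether.
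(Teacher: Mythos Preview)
Your proposal and the paper's proof ultimately land in the same place: both defer to the literature, specifically \cite{BM2} \S 6, \cite{L7} \S 3, and \cite{OS} \S 2. The paper's proof is a single sentence citing these references, saying the result is ``easily seen from the isomorphism of algebra completions.'' Your fallback to the Plancherel-theoretic characterization from \cite{OS} is therefore exactly what the paper does in spirit.

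Your more detailed first approach---tracing $R(T)$-weights through the completion isomorphisms---is in principle the right idea, and you correctly identify the genuine obstacle: the tempered cone for $R$ is cut out by the $\varpi_j$ of type $C_n$, while temperedness for the reduced algebra $\mathbb H_{R_{s_e},X}$ is governed by the fundamental weights of $R_{s_e}$, and these cones do not coincide a priori. One minor imprecision: your description ``$s_e\cdot\Psi(V_0)=\Psi(V)\cap W_{s_e}\cdot s$, while the remaining weights arise as $W/W_{s_e}$-translates'' is slightly off as stated---the first reduction step is essentially an induction equivalence after completion, so $\Psi(V)$ is the union of $W/W_{s_e}$-translates of $\Psi(V_0)$ rather than a disjoint decomposition with a distinguished intersection piece. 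This does not affect your conclusion, since the $s_e$-shift still has modulus one on every coordinate, but it would need cleaning up in a detailed write-up. The cone-matching itself is carried out in the cited references (most transparently in \cite{BM2} \S 6 and \cite{L7} \S 3), so your instinct to invoke them rather than redo the case analysis is sound.
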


This is easily seen from the isomorphism of algebra completions we
alluded to above (see for example \cite{BM2} \S 6, \cite{L7}
   \S 3, or \cite{OS}\S 2).

\begin{definition}\label{gradedC} We define $\overline{\mathbb H}_{n,m}$ to be the affine graded
  Hecke algebra corresponding to $R'=C_n$,
  $\mu(\epsilon_i-\epsilon_{i+1})=1$, $\mu(2\epsilon_n)=m$ (notation
  as in Definition \ref{d:graded}).
\end{definition}

Finally, recall that, again with the notation as in Definition \ref{d:graded}, there is an isomorphism between
  the affine graded Hecke algebra of type $R'=B_n$ with
  parameters $\mu(\epsilon_i-\epsilon_{i+1})=\mu_1$,
  $\mu(\epsilon_n)=2\mu_2$ and the affine graded Hecke algebra of type
  $R'=C_n$ with parameters $\mu(\epsilon_i-\epsilon_{i+1})=\mu_1$,
  $\mu(2\epsilon_n)=\mu_2$.  Therefore, when we use the affine graded
  Hecke algebra later in the paper, in particular in \S 4, we will
  consider (as we may) only the affine graded Hecke algebra of type
  $C_n$ with unequal parameters as in Definition \ref{gradedC}.

\subsection{Irreducible $\mathbb H$-modules}\label{sec:irrH}

We use the notation of \S \ref{sec:1.1}. In addition, we
introduce the following notation. For {an algebraic} group $H$, an element $h \in
H$, and {an algebraic} $H$-variety $\mathcal X$ we denote by $\mathcal X ^h$ and
$\mathcal X^H$, the
subvariety of $h$-fixed and $H$-fixed points in $\mathcal X$, respectively. For $x \in \mathcal X$,
we define $\mathsf{Stab} _H x := \{ h \in H ; h x = x \}$. We use German letters to denote Lie algebras (e.g. $\mathfrak h = \mathrm{Lie} H$).

\smallskip

Let $V_1=\mathbb C^{2n}$ denote the vector representation of $G$. Set
$V_2=\wedge^2 V_1$ and $\mathbb V=V_1^{\oplus 2}\oplus V_2.$ Then
  $\mathbb V$ is a representation of $\mathcal G$ as follows: $G$ acts
  diagonally, and an element $(c_0,c_1,c_2)\in (\mathbb C^\times)^3$ acts on
  $(V_1,V_1,V_2)$ via multiplication by $(c_0^{-1},c_1^{-1},c_2^{-1}).$

For every nonzero weight $\beta \in X ^* ( T )$ of the
  $G$-module $V _1 \oplus V _2$, we fix a non-zero $T$-eigenvector
  $v_\beta$. In coordinates, these nonzero weights are $\{\pm \epsilon_i:1\le
  i\le n\}\cup\{\pm(\epsilon_i\pm\epsilon_j): 1\le i<j\le n\}.$ The
  corresponding weight spaces are one-dimensional, so $v_\beta$ is
  unique up to scalar.

We denote by $\mathbb V^+$ the sum of $B$-positive $T$-weight spaces in
$\mathbb V.$ For each $S\subset \Pi,$ we will denote by $\mathbb V_S$
the sum of $T$-weight spaces for the weights in the $\mathbb
Q$-span of $S.$ 
We define the collapsing map (an analogue of the moment map)
\begin{equation}\mu : F := G \times ^B \mathbb V ^+ \longrightarrow
  \mathbb V, \quad \mu(g,v^+)=g\cdot v^+,\ g\in G,~ v^+\in \mathbb V^+,
\end{equation}
and denote the image of $\mu$ by $\mathfrak N$. For each {positive real} $a=(s,\vec q),$ we denote by $\mu^a$ the restriction
of $\mu$ to the $a$-fixed points of $F.$ We denote by $G(s)$ or 
$G(a)$ the centralizer $Z_G(s).$ This is a connected (reductive)
subgroup, since $G$ is simply connected. Its action on $\mathfrak N^a$
has finitely many orbits. We will describe this in more detail in
\S \ref{sec:1.3}.

Let $\mathsf{pr}_B:F\to G/B$ be the projection
$\mathsf{pr}_B(g,v^+)=gB.$ We define
\begin{equation}
\mathcal E_X^a:=\mathsf{pr}_B(\mu ^{-1} ( X ) ^a) \subset G / B,
\end{equation}
and call it an exotic Springer fiber. 

\begin{definition}[Standard module]
Let $a \in \mathcal G$ be a positive real element and let $X \in \mathfrak N ^a$. The total
Borel-Moore homology space
\begin{equation}M _{(a, X)} := \bigoplus _{m \ge 0} H _m ( \mathcal E _X ^a, \mathbb C )
\end{equation}
admits a structure of finite dimensional $\mathbb H _a$-module. We call this module a standard module. Fix $S \subset \Pi$. If $a \in L _S \times ( \mathbb C ^{\times} ) ^3$ and $X \in \mathbb V _S$, then
\begin{equation}M _{(a, X)} ^S:= \bigoplus _{m \ge 0} H _m ( \mathcal E _X ^a \cap P _S / B, \mathbb C )
\end{equation}
admits a $\mathbb H ^S$-module structure.\\
If $S = \Pi - \{ n \}$, then we may denote $M _{(a, X)} ^S$ by $M _{(a, X)} ^{\mathsf A}$.
\end{definition}

Let $\mathbb V ^S$ be the unique $T$-equivariant splitting of the map
$\mathbb V ^+ \longrightarrow \mathbb V ^+ / ( \mathbb V ^+ \cap
\mathbb V _S )$. If $X \in \mathbb V _S$, then we have necessarily
$\mathfrak u _S X \subset \mathbb V ^S$. The induction theorem is the following:

\begin{theorem}[\cite{K} Theorem 7.4]\label{indt}
Let $S \subset \Pi$ be given. Let {$a = (s,\vec{q})\in \mathcal G$ be a positive real element} and let $X \in \mathfrak N ^{a}$. Assume $s \in L_S$ and $X \in ( \mathfrak N \cap \mathbb V _S )$. If we have
\begin{eqnarray}
( \mathbb V ^S ) ^a \subset \mathfrak u _S X,\label{denom}
\end{eqnarray}
then we have an isomorphism
$$\mathsf{Ind} ^{\mathbb H} _{\mathbb H ^{S}} M _{(a,X)} ^{S} \cong M _{(a,X)}$$
as $\mathbb H$-modules. \hfill $\Box$
\end{theorem}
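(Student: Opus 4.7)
My plan is to realize $\mathcal E_X^a$ as the total space of a fibration over $(G/P_S)^a$ with fiber $\mathcal E_X^a \cap P_S/B$, and then to transfer this to a $\mathbb H$-module isomorphism via Borel--Moore homology together with the convolution description of the Hecke action. The hypothesis $(\mathbb V^S)^a \subset \mathfrak u_S X$ enters precisely to ensure that this decomposition is exhaustive.

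First, I would set up the parabolic projection $\pi \colon G/B \to G/P_S$ and restrict it to the $a$-fixed loci. Since $G$ is simply connected, $G(s) = Z_G(s)$ is connected; the hypothesis $s \in L_S$ places $eP_S$ in $(G/P_S)^a$, and a standard torus-fixed-point / cellular argument identifies the Euler characteristic $\chi((G/P_S)^a)$ with $|W/W_S|$, matching the rank of $\mathsf{Ind}^{\mathbb H}_{\mathbb H^S}$ as a free $\mathbb H^S$-module. Composing with $\mathsf{pr}_B$ yields a $G(s)$-equivariant map $\rho \colon \mathcal E_X^a \to (G/P_S)^a$ whose fiber over the basepoint is, by construction, $\mathcal E_X^a \cap P_S/B$, i.e.\ the support of $M^S_{(a,X)}$.

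The main technical step, which I expect to be the principal obstacle, is to prove that $\rho$ is a locally trivial fibration with this fiber. Given $(g,v^+) \in \mu^{-1}(X)^a$ whose image under $\rho$ lies in the $G(s)$-orbit through $wP_S/P_S$, write $g = hwp$ with $h \in G(s)$ and $p \in P_S$, and decompose $v^+ = v_S + v^S$ along $\mathbb V^+ = (\mathbb V^+ \cap \mathbb V_S) \oplus \mathbb V^S$. The $a$-fixedness of $(g,v^+)$ forces $v^S \in (\mathbb V^S)^a$; by hypothesis $v^S = \xi \cdot X$ for some $\xi \in \mathfrak u_S$, and exponentiating $\xi$ to an element of $U_S$ and conjugating absorbs $v^S$, bringing $(g,v^+)$ (up to $G(s)$-action) into $F_S^a := (P_S \times^B \mathbb V^+)^a$ over $X$. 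This provides the required trivialization; without the hypothesis, stray $a$-fixed points would destroy this fiber structure.

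Finally, the Leray--Hirsch / Künneth isomorphism yields a $\mathbb C$-linear identification $H_*(\mathcal E_X^a) \cong H_*((G/P_S)^a) \otimes H_*(\mathcal E_X^a \cap P_S/B)$ of the expected total dimension $|W/W_S| \cdot \dim M^S_{(a,X)}$. To upgrade this to a $\mathbb H$-module isomorphism with $\mathsf{Ind}^{\mathbb H}_{\mathbb H^S} M^S_{(a,X)}$, I would invoke the convolution description of the Hecke action through the Steinberg variety: the $\mathbb H^S$-subalgebra acts on each fiber as on $M^S_{(a,X)}$, while elements $T_w$ for $w$ running over minimal length representatives of $W/W_S$ shift between the cells of the Bruhat-type decomposition of $(G/P_S)^a$, reproducing the induced module structure. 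This last compatibility is essentially built into the convolution construction of the $\mathbb H$-action on $M_{(a,X)}$ and should be routine once the geometric fibration is secured.
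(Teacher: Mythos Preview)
The paper does not prove this theorem; it is quoted from \cite{K} Theorem 7.4, as the terminal $\Box$ indicates. The argument there is an adaptation of Kazhdan--Lusztig's induction theorem \cite{KL2} \S 7 to the exotic setting, and indeed the present paper invokes exactly this in the proof of Theorem \ref{indADt}, recording its content as the decomposition $H_\bullet(\mu^{-1}(X)^a) \cong \bigoplus_w H_\bullet(\mu^{-1}(X)^a \cap P_{S}\dot w^{-1}B/B)$ over minimal-length representatives of $W/W_S$.

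Your sketch is in the right spirit and correctly identifies where the hypothesis $(\mathbb V^S)^a \subset \mathfrak u_S X$ enters: it lets one absorb the $\mathbb V^S$-component of $v^+$ by a $U_S$-translation. But the geometric framework you propose is not quite right. The map $\rho \colon \mathcal E_X^a \to (G/P_S)^a$ is \emph{not} a locally trivial fibration with constant fiber; $(G/P_S)^a$ is typically disconnected, and even over a single component the fibers need not be isomorphic. What one actually has is a \emph{filtration} of $\mathcal E_X^a$ by closed pieces, indexed by minimal coset representatives $w \in W/W_S$ via the Bruhat-type decomposition $G/B = \bigsqcup_w P_S \dot w^{-1} B/B$, such that each successive difference is an affine-space bundle over $\mathcal E_X^a \cap P_S/B$. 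The homology isomorphism then comes from iterating the long exact sequence in Borel--Moore homology together with odd-vanishing, not from Leray--Hirsch. This is the shape of the argument in both \cite{KL2} \S 7 and \cite{K} \S 7. Your final step, matching the convolution action with the induced-module structure, is also less routine than you suggest: one has to check that the filtration is compatible with the $\mathbb H^S$-action and that the action of the remaining generators moves between strata in the expected way.
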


Fix the
semisimple element $a_0=(1,-1,1,1)\in \mathcal G.$ The following
result is an exotic version of the well-known Springer correspondence.

\begin{theorem}[\cite{K} Theorem 8.3]\label{t:1.15}
Let $X \in \mathfrak N ^{a _0}$ be given. Then, the space
\begin{equation}L _X := H _{2 d _X} ( \mathcal E _X ^{a_0},
  \mathbb C ), \text{ where }d _X := \dim \mathcal E _X ^{a_0}
\end{equation}
admits a structure of irreducible $W$-module.

Moreover, the map $X\mapsto L_X$ defines a one-to-one correspondence
between the set of orbits $G\backslash\mathfrak N^{a_0}$ and
$\mathsf{Irrep}~W.$ 
\end{theorem}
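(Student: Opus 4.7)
The plan is to specialize the geometric machinery of the paper at $a_0 = (1,-1,1,1)$ and recognize the resulting structure as a version of the Springer correspondence. I first analyze the fixed-point set: the element $(-1,1,1) \in (\mathbb{C}^{\times})^3$ acts on $\mathbb{V} = V_1^{\oplus 2} \oplus V_2$ by $(-1,1,1)$, so $\mathbb{V}^{a_0} = 0 \oplus V_1 \oplus V_2$, and $F^{a_0} = G \times^B (\mathbb{V}^+)^{a_0}$ is smooth. The restricted collapsing map $\mu^{a_0}: F^{a_0} \to \mathfrak{N}^{a_0}$ is then a proper $G$-equivariant morphism, mirroring the classical Springer resolution. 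A direct inspection of the Hecke relations at $a_0$ (where $\mathbf{q}_0 = -1$, $\mathbf{q}_1 = \mathbf{q}_2 = 1$, and $s = 1$) shows that $(T_i-1)(T_i+1) = 0$ for all $i$, the Bernstein-Lusztig correction terms vanish, and $e^\lambda = 1$ in the center, so $\mathbb{H}_{a_0} \cong \mathbb{C}[W]$. Consequently $M_{(a_0,X)} = H_*(\mathcal{E}_X^{a_0}, \mathbb{C})$ is naturally a $W$-module.

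Next, I would apply the BBD decomposition theorem to $R\mu^{a_0}_* \mathbb{C}_{F^{a_0}}$ after verifying that $\mu^{a_0}$ is semismall via a fiber-dimension estimate analogous to Spaltenstein's inequality, adapted to the extra $V_1$-factor in $\mathbb{V}^{a_0}$. Together with the connectedness of $G$-stabilizers on $\mathfrak{N}^{a_0}$ (noted in the remark following the eDL correspondence), which kills all potential nontrivial local systems, this yields a decomposition
$$R\mu^{a_0}_* \mathbb{C}_{F^{a_0}}[\dim F^{a_0}] \;\cong\; \bigoplus_{\mathcal{O} \in G\backslash \mathfrak{N}^{a_0}} \mathrm{IC}(\overline{\mathcal{O}}) \otimes L_{\mathcal{O}},$$
where the multiplicity space $L_\mathcal{O}$ at the open orbit of $\overline{\mathcal{O}}$ coincides with the top-degree Borel-Moore homology $L_X = H_{2d_X}(\mathcal{E}_X^{a_0})$ for $X \in \mathcal{O}$. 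To promote the $W$-action on $L_X$ to an irreducible one, I would invoke the convolution-algebra construction: the Steinberg-type variety $Z^{a_0} = F^{a_0} \times_{\mathfrak{N}^{a_0}} F^{a_0}$ has Borel-Moore homology $H_*(Z^{a_0})$ that specializes the convolution algebra underlying $\mathbb{H}$ at $a_0$, and acts faithfully on $\bigoplus_X M_{(a_0,X)}$. A Schur-lemma-type argument then identifies each $L_\mathcal{O}$ with an irreducible $W$-representation, and distinct orbits produce non-isomorphic summands because distinct IC sheaves have disjoint support patterns.

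Finally, surjectivity of $\mathcal{O} \mapsto L_\mathcal{O}$ onto $\mathsf{Irrep}\, W$ follows from a dimension count: the image of $\mathbb{C}[W] \cong \mathbb{H}_{a_0}$ in $\bigoplus_X \mathrm{End}(L_X)$ has total dimension $\sum_X (\dim L_X)^2$, and this must equal $|W| = \dim \mathbb{C}[W]$ since the action on the total homology is faithful; together with the injectivity established above, this forces every irreducible $W$-module to appear exactly once. The main obstacle I anticipate is the verification of semismallness of $\mu^{a_0}$ and the connectedness of $G$-stabilizers, both of which fail to be immediate consequences of the classical Springer theory precisely because of the extra $V_1$-summand in $\mathbb{V}^{a_0}$; making these estimates tractable requires the combinatorial parameterization of $G\backslash \mathfrak{N}^{a_0}$ by marked partitions recalled in \S 1.3, which controls orbit dimensions on the nose.
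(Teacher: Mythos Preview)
The paper does not prove this theorem: it is quoted verbatim from \cite{K} Theorem~8.3 and used as a black box.  Your outline is essentially the strategy carried out in \cite{K}, namely to apply the Ginzburg/Chriss--Ginzburg convolution formalism to the exotic Steinberg variety and invoke BBD after establishing semismallness of $\mu^{a_0}$ and connectedness of stabilizers; so there is nothing to compare against in the present paper and your sketch is in the right spirit.

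One correction worth flagging: your claim that $\mathbb H_{a_0}\cong\mathbb C[W]$ is not literally true.  At $(\mathbf q_0,\mathbf q_1,\mathbf q_2)=(-1,1,1)$ the Bernstein--Lusztig relations do degenerate to $T_i e^\lambda = e^{s_i\lambda}T_i$, so the parameter-specialized $\mathbb H$ becomes the smash product $\mathbb C[W]\ltimes R(T)$; but after further specializing the central character at $s=1$ one obtains $\mathbb C[W]\ltimes(\text{coinvariant algebra})$, which has dimension $|W|^2$, not $|W|$.  What is true (and what you actually need) is that simple $\mathbb H_{a_0}$-modules are exactly simple $W$-modules with $e^\lambda$ acting by $1$, and that there is a subalgebra inclusion $\mathbb C[W]\hookrightarrow \mathbb H_{a_0}$; this is the content of Proposition~\ref{inclW} at $a=a_0$.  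Your dimension-count argument for surjectivity should therefore be rephrased in terms of the image of $\mathbb C[W]$ inside $\bigoplus_X\End(L_X)$, or replaced by the counting argument $|G\backslash\mathfrak N^{a_0}|=|\mathsf{Irrep}\,W|$ available from the marked-partition parameterization.
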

In this correspondence, if $X$ is in the open dense $G ( a _0 )$-orbit of $\mathfrak N ^{a _0}$ then $L_{X}$ is the $\mathsf{sgn}$ $W$-representation. If $X=0,$
then $L_{0}$ is the $\mathsf{triv}$ $W$-representation. 

In the following proposition,  $Z=F\times_\mathfrak N F$ denotes the
exotic Steinberg variety, and $H_\bullet^A( \bullet )$ is equivariant (Borel-Moore)
homology with respect to the group $A.$

\begin{proposition}[\cite{K} Theorem 9.2]\label{inclW}
Let $a = ( s, q _0, q _1, q)\in \mathcal G$ be a positive real
element. We set $\underline{a} := ( \log s, r _1, r ) \in \mathfrak
t \oplus \mathbb R^2$, where $r_1=\log q_1, r=\log q$. Let $A$ be a connected subtorus of $\mathcal G$ which contains
$(s,1,q_1,q)$. If $X\in
\mathfrak N^A$, then $H _{\bullet} ^A ( Z ^{a_0} )$ acquires a structure of a $\mathbb C [\mathfrak a]$-algebra which we denote by $\mathbb H_a^{+}$. We have:
\begin{enumerate}
\item The quotient of $\mathbb H_a^{+}$ by the ideal generated by functions of $\mathbb C [ \mathfrak a ]$ which are zero along $\underline{a}$ is isomorphic to $\mathbb H _a$;
\item We have a natural inclusion $\mathbb C [ W ] \hookrightarrow \mathbb H_a^{+}$.
\end{enumerate}
Moreover, we have
$$\mathbb C [ \mathfrak a ] \otimes H _{\bullet} ( \mathcal E _X ) \cong H _{\bullet} ^A ( \mathcal E _X ) \text{ for each }X \in \mathfrak N ^A$$
as a compatible $( \mathbb C [ W ], \mathbb C [ \mathfrak a ] )$-module, where $W$ acts on $\mathbb C [\mathfrak a]$ trivially. \hfill $\Box$
\end{proposition}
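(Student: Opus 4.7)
The plan is to realize $\mathbb H_a^+$ as a Chriss--Ginzburg convolution algebra on the $a_0$-fixed exotic Steinberg variety, and then to extract all three statements from the comparison with the full $\mathcal G$-equivariant algebra $H_\bullet^{\mathcal G}(Z) \cong \mathbb H$ of \cite{K}, combined with equivariant localization and the exotic Springer correspondence (Theorem \ref{t:1.15}).

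First I would equip $H_\bullet^A(Z^{a_0})$ with the convolution product, which is well-defined because $Z^{a_0}$ remains a fibered product $F^{a_0} \times_{\mathbb V^{a_0}} F^{a_0}$ of smooth varieties (the fixed-point loci of the smooth ambient spaces) and the $A$-action commutes with taking $a_0$-fixed points. This makes $\mathbb H_a^+ := H_\bullet^A(Z^{a_0})$ an associative $\mathbb C[\mathfrak a]$-algebra, and there is a natural algebra homomorphism
\[
\mathbb H \cong H_\bullet^{\mathcal G}(Z) \longrightarrow H_\bullet^A(Z^{a_0}) = \mathbb H_a^+
\]
obtained by restricting equivariance from $\mathcal G$ to $A$ and then Gysin-pulling back to $Z^{a_0}$ (legitimate since $a_0$ centralizes $A$).

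For assertion (1), I would check that further quotienting by the maximal ideal $\mathfrak m_{\underline{a}} \subset \mathbb C[\mathfrak a]$ at $\underline{a} = (\log s, \log q_1, \log q)$ yields exactly $\mathbb H_a$. The two specializations jointly cut out the central character $a$: restriction to $Z^{a_0}$ fixes $\mathbf q_0 = -1$ (since $a_0 = (1,-1,1,1)$), while $\mathfrak m_{\underline{a}}$ fixes the remaining parameters at $(s, q_1, q)$. Surjectivity of the resulting map $\mathbb H_a \to \mathbb H_a^+ / \mathfrak m_{\underline{a}}\,\mathbb H_a^+$ follows from the fact that the standard generators of $\mathbb H$ survive through convolution; injectivity follows from a dimension count using the decomposition theorem applied to the exotic Springer resolution, comparing the simple $\mathbb H_a$-modules with those produced by the standard modules $M_{(a, X)}$ from \cite{K}.

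For assertion (2), I would use the exotic Springer correspondence: at the base point $a_0$, the non-equivariant algebra $H_\bullet(Z^{a_0})$ contains $\mathbb C[W]$ because the top-degree classes assemble into the regular representation $\bigoplus_X L_X \otimes L_X^*$ by Theorem \ref{t:1.15}. Equivariant formality then lifts this inclusion to $\mathbb H_a^+$, with the $W$-action $\mathbb C[\mathfrak a]$-linear because $W$ acts on $\mathfrak a$ trivially (the geometric $W$-action operates on Springer fibers while fixing the base torus $A$). The final ``moreover'' clause $\mathbb C[\mathfrak a] \otimes H_\bullet(\mathcal E_X) \cong H_\bullet^A(\mathcal E_X)$ is the equivariant formality of $\mathcal E_X$, which follows from a Bia\l ynicki--Birula cell decomposition with respect to a generic cocharacter of $A$: the cells give a $\mathbb C[\mathfrak a]$-free basis of $H_\bullet^A(\mathcal E_X)$ that specializes to the cell basis of $H_\bullet(\mathcal E_X)$, and the $W$-action respects the cell filtration. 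The hardest step is the dimension count in assertion (1): one must verify that the $a_0$-restricted, $A$-equivariant convolution algebra does not acquire spurious elements beyond those coming from $\mathbb H$ and its parameter deformations, which requires combining the decomposition theorem with the geometric parametrization of irreducibles from \cite{K} in order to conclude that $\mathbb H_a \twoheadrightarrow \mathbb H_a^+ / \mathfrak m_{\underline{a}}\,\mathbb H_a^+$ is in fact an isomorphism.
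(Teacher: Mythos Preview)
The paper does not prove this proposition at all: the trailing $\Box$ signals that the result is quoted verbatim from \cite{K}, Theorem 9.2, and no argument is supplied here. So there is no ``paper's own proof'' to compare against; your sketch is being measured against the literature rather than this text.

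That said, your outline is broadly the correct architecture and matches how such statements are established in \cite{K} and in the Chriss--Ginzburg/Lusztig framework more generally: convolution on the fixed-point Steinberg variety, a specialization/localization comparison with the full equivariant algebra, the Springer-type identification of $\mathbb C[W]$ inside the top homology of $Z^{a_0}$, and equivariant formality of the fibers via an affine paving. Two places where your sketch is loose: the passage from $H_\bullet^{\mathcal G}(Z)$ to $H_\bullet^A(Z^{a_0})$ is not literally a Gysin pullback but rather a change-of-group map followed by the localization theorem (one restricts equivariance to $A$, then uses that $Z^A \subset Z^{a_0}$ and that the complement contributes torsion over $\mathbb C[\mathfrak a]$); and the injectivity in (1) is more naturally obtained by freeness of $\mathbb H_a^+$ over $\mathbb C[\mathfrak a]$ (a consequence of the paving/formality) together with the known identification at the generic fiber, rather than a dimension count over the special fiber. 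The formality of $\mathcal E_X$ is indeed the heart of the ``moreover'' clause, and your Bia\l ynicki--Birula argument is the standard route; the compatibility of the $W$-action with the $\mathbb C[\mathfrak a]$-structure follows because the $W$-action is defined via convolution by cycles that are themselves $A$-equivariant.
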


\begin{corollary}\label{Green}
Keep the setting of Proposition \ref{inclW}. We have
\begin{equation}\label{stdWstr}
M _{(a, X)} \cong \bigoplus _{m \ge 0} H _m ( \mathcal E _X ^{a _0}, \mathbb C ).
\end{equation}
as $\mathbb C [ W ]$-modules.\hfill $\Box$
\end{corollary}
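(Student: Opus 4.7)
The plan is to deduce the corollary from Proposition \ref{inclW} by viewing $H^A_{\bullet}(\mathcal E_X)$ as a family of $W$-modules over $\mathfrak a$ and comparing two specializations.

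First I would record the structural content of Proposition \ref{inclW}: the isomorphism
$$ H^A_{\bullet}(\mathcal E_X) \cong \mathbb C[\mathfrak a] \otimes H_{\bullet}(\mathcal E_X^{a_0}) $$
of compatible $(\mathbb C[W], \mathbb C[\mathfrak a])$-modules, with $W$ acting trivially on $\mathbb C[\mathfrak a]$. In particular, $H^A_{\bullet}(\mathcal E_X)$ is free over $\mathbb C[\mathfrak a]$, and the fiber at every closed point of $\mathfrak a$ is isomorphic, as a $\mathbb C[W]$-module, to $H_{\bullet}(\mathcal E_X^{a_0})$. This already pins down the right-hand side of \eqref{stdWstr}.

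Second I would identify the fiber of the left-hand side at $\underline a \in \mathfrak a$ with the standard module $M_{(a,X)}$. The argument uses the equivariant-localization theorem in Borel--Moore homology: since $A$ is a connected torus, for $a' \in A$ generic enough that $\mathcal E_X^{a'} = \mathcal E_X^A$ one has
$$ H^A_{\bullet}(\mathcal E_X) \otimes_{\mathbb C[\mathfrak a]} \mathbb C_{\underline{a}'} \cong H_{\bullet}(\mathcal E_X^{a'}) $$
as $\mathbb C[W]$-modules, the $W$-action on the right being the specialization of the $\mathbb C[W] \hookrightarrow \mathbb H^{+}_a$-action. Applied to $a' = (s,1,q_1,q) \in A$, and combined with the $a_0$-equivariance already built into the definition $\mathbb H_a^+ = H^A_{\bullet}(Z^{a_0})$ (which forces the second coordinate of the fixed-point condition), the resulting fixed locus is precisely $\mathcal E_X^a$. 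Together with the quotient statement in Proposition \ref{inclW}(1), the $W$-action on this specialization agrees with the one $M_{(a,X)}$ receives from $\mathbb C[W] \hookrightarrow \mathbb H_a$.

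Matching the two computations of the fiber at $\underline a$ gives $M_{(a,X)} \cong \bigoplus_m H_m(\mathcal E_X^{a_0}, \mathbb C)$ as $W$-modules, as desired. The main technical point is the equivariant-localization step: one must verify that specialization at $\underline a$ really yields $H_{\bullet}(\mathcal E_X^a)$ and not $H_{\bullet}$ of a larger fixed locus. This reduces to checking that $\underline a$ is generic enough in $\mathfrak a$ that $\mathcal E_X^{(s,1,q_1,q)} = \mathcal E_X^A$, after which the $a_0$-direction is absorbed by the construction of $\mathbb H^+_a$; both assertions follow from the explicit form of $\underline a$ and the definition of $A$.
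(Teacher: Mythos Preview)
Your approach is correct and is precisely what the paper intends by writing only $\Box$: specialize the free family $H^A_\bullet(\mathcal E_X)\cong \mathbb C[\mathfrak a]\otimes H_\bullet(\mathcal E_X^{a_0})$ from Proposition~\ref{inclW} at $\underline a$, using part (1) to identify the resulting $\mathbb H_a$-module with $M_{(a,X)}$ and the triviality of the $W$-action on $\mathbb C[\mathfrak a]$ to read off the $W$-structure. Your explicit discussion of the localization step (choosing $A$ so that $(s,1,q_1,q)$ is Zariski-generic in it, hence $\mathcal E_X^{(s,1,q_1,q)}=\mathcal E_X^A$, with the $a_0$-direction already absorbed into $Z^{a_0}$) is exactly the detail the paper suppresses; it is needed to justify that the fibre at $\underline a$ really is $H_\bullet(\mathcal E_X^a)=M_{(a,X)}$ and not merely some abstract specialization.
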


In (\ref{stdWstr}), the irreducible $W$-module $L_X$ appears exactly
once in the decomposition of $M_{(a,X)}.$ There is a unique
irreducible quotient of $M_{(a,X)}$, denoted $L_{(a,X)}$, 
  with the property that $L_{(a,X)}$ contains $L_X.$

\begin{theorem}[\cite{K} Theorem 10.2]\label{eDL}
Let $a = ( s, \vec{q} )\in \mathcal G$ be a positive real element. We have $\mathfrak N ^a
\subset \mathfrak N ^{a _0}$. Then, we have a one-to-one correspondence
\begin{equation} G ( a ) \backslash \mathbb V ^a\leftrightarrow\mathsf{Irrep}~ \mathbb H _a ,\qquad X\mapsto L_{(a,X)}.
\end{equation}
The module $L _{(a,X)}$ is a $\mathbb H _a$-module quotient of $M_{(a,X)}$. Moreover, if $L_{(a,Y)}$ appears in $M_{(a,X)}$, then we have $X \in \overline{G(a)Y}$.
\end{theorem}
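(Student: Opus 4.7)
The plan has three stages: first construct the irreducible quotient $L_{(a,X)}$ attached to each $X\in\mathbb V^a$, then establish the closure relation for composition factors (which automatically yields injectivity), and finally conclude surjectivity by a Grothendieck-group count. As a preliminary, the asserted inclusion $\mathfrak N^a\subset\mathfrak N^{a_0}$ reduces to $\mathbb V^a\subset\mathbb V^{a_0}$: the element $a_0=(1,-1,1,1)$ annihilates the first copy of $V_1$ while acting trivially on the second copy of $V_1$ and on $V_2$, whereas positive reality of $a$ precludes any non-zero $a$-fixed vector in the first copy of $V_1$ (one would need $\beta(s)=q_0<0$ with $\beta(s)\in\mathbb R_{>0}$) and imposes no condition on the other summands.

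For the construction of $L_{(a,X)}$, I would combine Corollary \ref{Green} with the exotic Springer correspondence of Theorem \ref{t:1.15}. Corollary \ref{Green} provides a $W$-module isomorphism $M_{(a,X)}\cong\bigoplus_m H_m(\mathcal E_X^{a_0},\mathbb C)$, while Theorem \ref{t:1.15} identifies $L_X=H_{2d_X}(\mathcal E_X^{a_0},\mathbb C)$ as an irreducible $W$-module parameterized bijectively by $G\backslash\mathfrak N^{a_0}$. Since the exotic Springer setting has only trivial local systems (remark (1) in the introduction), a Borho-MacPherson style analysis forces $L_X$ to appear in $M_{(a,X)}$ with multiplicity exactly one, lodged in top degree. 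Consequently $M_{(a,X)}$ admits a unique simple $\mathbb H_a$-quotient whose $W$-restriction contains $L_X$; this is the desired $L_{(a,X)}$, and it depends only on the $G(a)$-orbit of $X$ by equivariance of the entire construction.

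The principal step is the closure relation: if $L_{(a,Y)}$ is a composition factor of $M_{(a,X)}$, then $X\in\overline{G(a)Y}$. I would argue this in the Kazhdan-Lusztig-Ginzburg style. Using Proposition \ref{inclW} and base change, realize $M_{(a,X)}$ as the stalk at $X$ of the proper pushforward $R\mu^a_*\underline{\mathbb C}$ on $\mathbb V^a$, obtained as the specialization at $\underline a$ of the $A$-equivariant pushforward furnished by Proposition \ref{inclW}. Applying the decomposition theorem, write $R\mu^a_*\underline{\mathbb C}$ as a direct sum of shifted equivariant intersection cohomology sheaves on the orbit closures $\overline{G(a)Y}$, and match the resulting summands with the irreducible $\mathbb H_a$-constituents of $M_{(a,X)}$. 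Since $\mathrm{IC}(\overline{G(a)Y})$ has vanishing stalk at $X$ unless $X\in\overline{G(a)Y}$, the closure relation follows. The main obstacle is verifying the clean orbit-to-IC matching: one must check that no non-trivial local systems intervene in the decomposition, a phenomenon parallel to the connectedness of isotropy groups in the exotic Springer picture, and that the $G(a)$-equivariant strata have the required good behavior.

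With the closure relation in hand, injectivity of $X\mapsto L_{(a,X)}$ is immediate: an isomorphism $L_{(a,X)}\cong L_{(a,X')}$ forces each of $X,X'$ to lie in the closure of the other's $G(a)$-orbit, hence $G(a)X=G(a)X'$. For surjectivity I would invoke the Kazhdan-Lusztig-Ginzburg spanning principle: the classes $[M_{(a,X)}]$ generate the Grothendieck group $K_0(\mathbb H_a\text{-mod})$, since $\mathbb H_a$ is realized as the specialization at $\underline a$ of the equivariant Borel-Moore homology of the exotic Steinberg variety $Z$, and the standard modules arise as specialized stalks of the universal family over this deformation. The closure-ordered triangularity of the transition matrix from $[M_{(a,X)}]$ to $[L_{(a,Y)}]$ then forces $\{[L_{(a,X)}]\}_{X\in G(a)\backslash\mathbb V^a}$ to be a basis of $K_0(\mathbb H_a\text{-mod})$, completing the bijection.
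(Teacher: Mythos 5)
The paper gives no proof of this statement; it is quoted from \cite{K}, Theorem 10.2, so there is nothing internal to compare against line by line. Your sketch does follow the route the cited proof actually takes: the Chriss--Ginzburg convolution formalism, the decomposition theorem applied to $R\mu^a_*\underline{\mathbb C}$, matching of simple summands with simple $\mathbb H_a$-modules, and the closure relation read off from supports of $\mathsf{IC}$ sheaves. The deduction of injectivity from the closure relation is correct, and the computation $\mathbb V^a\subset\mathbb V^{a_0}$ is right (modulo the slip that $a_0$ acts by $-1$ on the first copy of $V_1$ rather than ``annihilating'' it).

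That said, three points you treat as routine are genuine content, and one of them you explicitly leave open. First, the bijection is indexed by $G(a)\backslash\mathbb V^a$, whereas $M_{(a,X)}$ and hence $L_{(a,X)}$ are only defined for $X\in\mathfrak N^a=\mathfrak N\cap\mathbb V^a$; you never address the inclusion $\mathbb V^a\subset\mathfrak N$, i.e.\ that positivity of $a$ forces every $a$-fixed vector to be $G$-conjugate into $\mathbb V^+$. This is a separate lemma in \cite{K} and cannot be absorbed into ``equivariance of the construction.'' Second, the step you call ``the main obstacle'' --- absence of non-trivial local systems --- is exactly the non-formal input: it amounts to connectedness of $\mathsf{Stab}_{G(a)}X$, which in the present paper is only obtained later (Lemma \ref{aut}, Theorem \ref{idstab}, Corollary \ref{stab}) by identifying the stabilizer with the automorphism group of a type $\mathsf A$ quiver representation; without it the orbit count does not match $\#\,\mathsf{Irrep}~\mathbb H_a$ and both injectivity and surjectivity collapse. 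Third, your surjectivity argument is close to circular: that the classes $[M_{(a,X)}]$ span the Grothendieck group is normally \emph{deduced} from the convolution-algebra classification of simples (every simple corresponds to an $\mathsf{IC}$ summand of $R\mu^a_*\underline{\mathbb C}$), and one must separately check that every orbit actually contributes such a summand --- the fiber-nonvanishing and dimension estimate invoked in Corollary \ref{c:ftemp} via \cite{K} Theorem 4.10. Relatedly, multiplicity one of $L_X$ in $M_{(a,X)}$ gives uniqueness of a simple quotient containing $L_X$ but not its existence; existence again comes from the presence of the summand $\mathsf{IC}(\mathcal O_X)$.
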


We need to emphasize that, unlike the case of \cite{KL2}, there are no nontrivial local systems appearing in the
parameterization of $\mathsf{Irrep}~ \mathbb H _a$ in Theorem \ref{eDL} for {positive real} $a\in
\mathcal G$ (see also Corollary \ref{stab}).

\begin{corollary}
If $a \in \mathcal G$ is a positive real element, then the set of $G ( a )$-orbits of $\mathfrak N
^a$ is finite. In particular, there exists a unique dense open $G ( a )$-orbit $\mathcal O _0 ^a$ of $\mathfrak N ^a$.
\end{corollary}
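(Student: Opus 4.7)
The plan is to bootstrap the finiteness statement directly from the exotic Deligne-Langlands correspondence (Theorem \ref{eDL}) together with the Bernstein-Lusztig description of the center (Theorem \ref{BL}), and then deduce the existence of the unique open orbit by an irreducibility argument for $\mathfrak N^a$.

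For the finiteness, I would first observe that $\mathbb H_a$ is finite-dimensional over $\mathbb C$: by Theorem \ref{BL}, $\mathbb H$ is finitely generated as a module over its center $Z ( \mathbb H ) \cong \mathcal A \otimes _{R ( \mathbb C ^{\times} ) ^3} R ( \mathcal G )$, and the definition (\ref{sH}) of $\mathbb H_a$ as the specialization $\mathbb C_a \otimes_{Z(\mathbb H)} \mathbb H$ therefore yields a finite-dimensional $\mathbb C$-algebra. Any finite-dimensional associative algebra has only finitely many isomorphism classes of irreducible modules, so $\mathsf{Irrep} \, \mathbb H_a$ is a finite set. Applying Theorem \ref{eDL}, which gives a one-to-one correspondence between $G(a) \backslash \mathbb V^a$ and $\mathsf{Irrep} \, \mathbb H_a$, I conclude that $G(a) \backslash \mathbb V^a$ is finite. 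Since $\mathfrak N = \mu(F) \subset \mathbb V$ is the image of a $G$-equivariant map, it is $G$-stable, and hence $\mathfrak N^a \subset \mathbb V^a$ is a $G(a)$-stable subset; consequently $G(a) \backslash \mathfrak N^a$ is also finite.

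For the uniqueness of the dense open orbit, the key ingredient is that $\mathfrak N^a$ is irreducible. This is where the positive-real hypothesis on $a$ enters: since $s$ is hyperbolic with positive real eigenvalues on $V_1$ and $\vec q \in \mathbb R_{>0}^3$ up to signs, every $a$-fixed weight vector $v_\beta \in \mathbb V^a$ has real logarithmic weight data, and one can find a Borel $B' \supset T$ with respect to which all such $\beta$ are positive. Thus $\mathbb V^a \subset G \cdot \mathbb V^+ = \mathfrak N$, giving $\mathfrak N^a = \mathbb V^a$, which is a linear subspace of $\mathbb V$ and hence irreducible. Combined with $G(a)$ being connected (since $G$ is simply connected and $s$ is semisimple, so $G(a) = Z_G(s)$ is connected by Steinberg's theorem), each orbit is irreducible and locally closed. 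A decomposition of an irreducible variety into finitely many such orbits forces exactly one of them to be open and dense, which is our $\mathcal O_0^a$.

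The main obstacle here is the identification $\mathfrak N^a = \mathbb V^a$ for positive real $a$; once this is established, everything else is a formal consequence of Theorems \ref{BL} and \ref{eDL}. If one prefers to avoid this identification, an alternative route is to pick any orbit $G(a) \cdot X \subset \mathfrak N^a$ of maximal dimension, note that its complement is a finite union of lower-dimensional orbits and hence closed of strictly smaller dimension, and then argue via the weight-preservation in Theorem \ref{eDL} (which asserts $X \in \overline{G(a)Y}$ whenever $L_{(a,Y)}$ appears in $M_{(a,X)}$) that this maximal orbit is unique.
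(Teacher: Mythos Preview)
Your overall strategy is exactly what the paper intends: the corollary is stated without proof because it follows at once from Theorem \ref{eDL} together with the finite-dimensionality of $\mathbb H_a$ (hence finiteness of $\mathsf{Irrep}\,\mathbb H_a$) and the fact that $\mathfrak N^a=\mathbb V^a$ is a vector space, hence irreducible. Your deduction of finiteness is clean, and your identification of $\mathfrak N^a=\mathbb V^a$ as the crux is correct; the paper itself invokes ``$\mathfrak N^a$ is a vector space'' in the proof of Theorem \ref{sgnchar}, so this is an established fact from \cite{K}.

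There is, however, a genuine gap in your proposed justification of $\mathbb V^a\subset\mathfrak N$ via a single Borel. Your claim that ``one can find a Borel $B'\supset T$ with respect to which all such $\beta$ are positive'' fails for some non-generic positive real parameters. Take $n=2$, $q_0=-1$, $q_1=q^{-1/2}$, and $s$ with $\langle\epsilon_1,s\rangle=q^{-1/2}$, $\langle\epsilon_2,s\rangle=q^{1/2}$. Then $\mathbb V^a$ is spanned by $v_{\epsilon_1}$, $v_{-\epsilon_2}$ (in the second copy of $V_1$) and $v_{\epsilon_2-\epsilon_1}$ (in $V_2$). These three weights sum to zero, so they cannot all lie in ${}^w\mathbb V^+$ for any single $w\in W$. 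Thus the single-Borel argument does not go through in general, even though the conclusion $\mathfrak N^a=\mathbb V^a$ remains valid. The alternative route you sketch at the end, using the closure relation in Theorem \ref{eDL} to argue uniqueness of a maximal-dimensional orbit, is also incomplete as stated: the closure statement there only tells you that constituents of $M_{(a,X)}$ come from orbits containing $G(a)X$ in their closure, which does not by itself force two distinct maximal-dimensional orbits to be comparable.

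In short: your architecture is right and coincides with the paper's, but you should cite the equality $\mathfrak N^a=\mathbb V^a$ from \cite{K} rather than attempt the single-Borel argument, which is false in the stated generality.
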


It is useful to remark that $L_{X_0}\cong \mathsf{sgn}$ ($X _0 \in \mathcal O_0^{a_0}$)
appears with multiplicity one in every standard module $M_{(a,X)}.$

From Theorem \ref{eDL}, together with the DLL correspondence of type $\mathsf{A}$, we deduce:
\begin{theorem}
Keep the same setting as Theorem \ref{eDL}. Let $S \subset \Pi$ and assume $s \in L _S$. Then, we have a one-to-one correspondence
\begin{equation} L_S ( s ) \backslash \mathbb V _S ^a \leftrightarrow\mathsf{Irrep}~ \mathbb H _a ^S ,\qquad X \mapsto L_{(a,X)} ^S.
\end{equation}
The module $L _{(a,X)} ^S$ is a $\mathbb H _a ^S$-module quotient of $M_{(a,X)} ^S$. Moreover, if $L_{(a,Y)} ^S$ appears in $M_{(a,X)} ^S$, then we have $X \in \overline{L_S (a)Y}$.
\end{theorem}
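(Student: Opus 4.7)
The plan is to reduce the statement to a product of smaller Deligne--Langlands-type correspondences by decomposing $S$ into its connected components. Under the hypothesis $s\in L_S$, write $S = S_C \sqcup S_{\mathsf A}^{(1)}\sqcup\cdots\sqcup S_{\mathsf A}^{(k)}$, where $S_C$ is the connected component of $S$ containing $\alpha_n$ (possibly empty) and each $S_{\mathsf A}^{(j)}$ is of type $\mathsf A$. Then $L_S$, and hence the parabolic subalgebra $\mathbb H^S$, factor as a tensor product of an affine Hecke algebra of type $C_{|S_C|}$ (in the sense of Definition \ref{Checke}, inheriting the three parameters of $\mathbb H$) and a collection of affine Hecke algebras of type $\mathsf A$. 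The specialization $\mathbb H_a^S$ decomposes compatibly as $\mathbb H_{a_C}^{S_C}\otimes \bigotimes_j \mathbb H_{a_j}^{S_{\mathsf A}^{(j)}}$.

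On the geometric side, $\mathbb V_S$ splits as an $L_S$-equivariant direct sum of $T$-weight subspaces indexed by the components of $S$, so $\mathbb V_S^a$ and the orbit set $L_S(s)\backslash \mathbb V_S^a$ decompose as products. On the Springer side, $P_S/B$ is a product of flag varieties for the Levi factors, hence for $X = X_C + X_1 + \cdots + X_k \in \mathbb V_S^a$ the fiber $\mathcal E_X^a \cap P_S/B$ is the corresponding product of exotic (resp.\ classical) Springer fibers. The K\"unneth theorem then identifies $M_{(a,X)}^S$ with the outer tensor product of standard modules, and the convolution $\mathbb H_a^S$-action on this homology factors through the corresponding tensor product of algebras.

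With these reductions in place, the bijection follows by applying Theorem \ref{eDL} to the type-$C_{|S_C|}$ factor and the Deligne--Langlands--Lusztig correspondence of \cite{KL2} to each type-$\mathsf A$ factor. For positive real $a$, no non-trivial local systems intervene in the type-$\mathsf A$ DLL, since centralizers in $GL$ of commuting pairs of semisimple and nilpotent elements are connected; thus the right-hand side matches $L_S(s)\backslash \mathbb V_S^a$ one factor at a time. The module $L_{(a,X)}^S$ is then the external tensor product of the factorwise irreducible quotients and, as such, is the unique irreducible quotient of $M_{(a,X)}^S$ containing the analogue of the Springer representation $L_X$ for the Weyl group $W_S$.

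The principal subtlety I anticipate is the verification that the convolution action of $\mathbb H_a^S$ on $H_\bullet(\mathcal E_X^a \cap P_S/B)$ genuinely factors through the tensor product of the algebras attached to the components of $S$; this requires tracing the equivariant Borel--Moore homology of the Steinberg variety for $L_S$ through the K\"unneth identification, along the lines of Proposition \ref{inclW}. Once this compatibility is established, the closure assertion $X\in\overline{L_S(a)Y}$ whenever $L_{(a,Y)}^S$ appears in $M_{(a,X)}^S$ follows componentwise from the corresponding closure statements in Theorem \ref{eDL} and in the type-$\mathsf A$ DLL correspondence.
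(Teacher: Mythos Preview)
Your approach matches the paper's, which offers no proof beyond the single clause ``From Theorem \ref{eDL}, together with the DLL correspondence of type $\mathsf{A}$, we deduce''; your decomposition into a type-$C$ factor handled by Theorem \ref{eDL} and type-$\mathsf A$ factors handled by \cite{KL2} is exactly what is intended. One small imprecision worth cleaning up: $\mathbb H^S$ itself is not a tensor product of smaller affine Hecke algebras (it contains the full lattice $\{e^\lambda:\lambda\in X^*(T)\}$, not just the sublattice for $L_S$), but the factorization you need holds at the level of $\mathbb H_a^S$-modules once the extra toral directions act by scalars determined by $s$, and that is all the K\"unneth argument requires.
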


\begin{convention}\label{conv:1.21}
Assume that $a \in T \times ( \mathbb C ^{\times} ) ^3$. If $S = \Pi - \{ n \}$, then we set $L_{(a,X)} ^{\mathsf A} = L_{(a,X)} ^{S}$. Let ${} ^{\mathsf t} a = ( w _0 ^{\mathsf A} \cdot s ^{-1}, \vec{q} )$, where $w _0 ^{\mathsf{A}} \in \mathfrak S _n \subset W$ is the longest element. Since {$X \in \mathfrak{gl} ( n ) \subset \mathbb V$ (with the inclusion as $L _S$-modules)}, we have ${}^{\mathsf t} a X = X$. As a consequence, the $\mathbb H ^{\mathsf A}$-modules
$${} ^{\mathsf{t}} M _{(a,X)} ^{\mathsf{A}} := M _{({} ^{\mathsf{t}}a,X)} ^{\mathsf{A}} \text{ and }{} ^{\mathsf{t}} L _{(a,X)} ^{\mathsf{A}} := L _{({} ^{\mathsf{t}}a,X)} ^{\mathsf{A}}$$
are well-defined.
\end{convention}

The following results are not explicitly stated in \cite{K}, but are
immediate consequences.

\begin{theorem}\label{sgnchar}
Let $a \in \mathcal G _0$ be given. Let $L$ be an irreducible $\mathbb H _a$-module which contains $\mathsf{sgn}$ as its $W$-module constituent. Then, we have necessarily $L \cong L _{(a,X)} = M _{(a,X)}$ for $X \in \mathcal O _0^a$.
\end{theorem}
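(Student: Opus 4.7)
The plan is to combine Theorem \ref{eDL} with the remark (immediately preceding the theorem to be proved) that $\mathsf{sgn}$ appears with multiplicity one in every standard module $M_{(a,X)}$, together with a composition-factor count inside the standard modules. By Theorem \ref{eDL} the given irreducible $L$ equals $L_{(a,X)}$ for a unique $G(a)$-orbit $X \subset \mathbb V^a$, and the goal is to show $X \in \mathcal O_0^a$ and $M_{(a,X)} = L_{(a,X)}$.

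I would first treat the easy direction. For $X_0 \in \mathcal O_0^a$, any composition factor $L_{(a,Y)}$ of $M_{(a,X_0)}$ satisfies $X_0 \in \overline{G(a)Y}$ by Theorem \ref{eDL}, and since $X_0$ lies in the open dense orbit this forces $\overline{G(a)Y} = \mathbb V^a$, hence $G(a)Y = \mathcal O_0^a$. Thus $M_{(a,X_0)} = L_{(a,X_0)}$, and by the multiplicity-one statement this irreducible already contains $\mathsf{sgn}$, giving existence.

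For uniqueness, suppose towards a contradiction that $X \notin \mathcal O_0^a$ while $L_{(a,X)}$ still contains $\mathsf{sgn}$; then $[L_{(a,X)} : \mathsf{sgn}] = 1$ since this multiplicity is bounded above by $[M_{(a,X)} : \mathsf{sgn}] = 1$. If I can show that for any $X_0 \in \mathcal O_0^a$ the irreducible $L_{(a,X_0)}$ also appears as a composition factor of $M_{(a,X)}$, then
\[
1 = [M_{(a,X)} : \mathsf{sgn}] \;\geq\; [L_{(a,X)} : \mathsf{sgn}] + [M_{(a,X)} : L_{(a,X_0)}] \cdot [L_{(a,X_0)} : \mathsf{sgn}] \;\geq\; 2,
\]
which is the desired contradiction, forcing $X \in \mathcal O_0^a$ and so, by the first step, $L \cong L_{(a,X)} = M_{(a,X)}$.

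The main obstacle is thus the non-vanishing $[M_{(a,X)} : L_{(a,X_0)}] \geq 1$ for every $X \in \mathbb V^a$ and every $X_0 \in \mathcal O_0^a$. I would derive this from the interpretation of composition multiplicities as dimensions of stalks of IC-sheaves on orbit closures that is standard in this class of geometric constructions: since $\overline{G(a)X_0} = \mathbb V^a$ is a smooth linear subspace of $\mathbb V$, the associated IC-sheaf is, up to a shift, the constant sheaf, with one-dimensional stalk at every $X \in \mathbb V^a$. This should yield $[M_{(a,X)} : L_{(a,X_0)}] = 1$ for all $X$, closing the argument.
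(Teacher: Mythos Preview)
Your argument is correct and is essentially a spelled-out version of what the paper means by ``the same argument as in \cite{EM}'': the key inputs are that $\mathfrak N^a$ is a vector space (hence smooth), so $\mathsf{IC}(\mathcal O_0^a)$ is the shifted constant sheaf and $[M_{(a,X)}:L_{(a,X_0)}]=1$ for every $X$, together with the multiplicity-one statement for $\mathsf{sgn}$ in every standard module. Note that the multiplicity formula you invoke at the end is exactly Theorem~\ref{mult}, which appears later in the paper; citing it (or the underlying \cite{CG}/\cite{K} result) would make the last step precise.
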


\begin{proof}
Taking into account the fact that the stabilizer of $X$ in $G$ is connected and $\mathfrak N ^a$ is a vector space (i.e. a smooth algebraic variety), the assertion follows by exactly the same argument as in \cite{EM}.
\end{proof}

\begin{corollary}\label{c:springer}
Let $G' = \mathop{Sp} (2n, \mathbb C)$ or $\mathop{SO} (2n+1, \mathbb C)$ and let $\mathfrak g'$ denote its Lie algebra. Let $Y \in \mathfrak g'$ be a nilpotent element. Let $A_Y$ be the component group of the stabilizer of $G'$-action on $Y$. Let $\mathcal B_Y$ be the Springer fiber of $Y$. Then, there exists $X \in \mathfrak N ^{a_0}$ such that
$$H _{\bullet} ( \mathcal B _Y ) ^{A_Y} \cong H _{\bullet} ( \mathcal E _X )$$
as $W$-modules $($without grading$)$.
\end{corollary}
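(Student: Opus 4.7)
The plan is to construct the required $X$ by matching the exotic Springer correspondence of Theorem~\ref{t:1.15} with the classical Springer correspondence for $G'$, and then to promote the resulting identification of top-degree irreducible $W$-modules to an ungraded isomorphism of the full Borel--Moore homologies.

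First I would associate to the nilpotent $Y\in\mathfrak g'$ the irreducible $W$-representation $\rho_Y := H_{\mathrm{top}}(\mathcal B_Y)^{A_Y}$ arising from the classical Springer correspondence for $G'\in\{\mathrm{Sp}(2n,\mathbb C),\mathrm{SO}(2n+1,\mathbb C)\}$; the trivial local system always lies in the ordinary (non-cuspidal) Springer picture for these types, so $\rho_Y$ is a well-defined irreducible $W$-module. By Theorem~\ref{t:1.15}, the map $X\mapsto L_X = H_{2 d_X}(\mathcal E_X)$ is a bijection $G\backslash\mathfrak N^{a_0}\to\mathsf{Irrep}\,W$; I then take $X = X(Y) \in \mathfrak N^{a_0}$ to be the unique $G$-orbit representative with $L_X\cong\rho_Y$. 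This is the candidate $X$ in the statement.

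The remaining task is to extend the top-degree identification $L_X\cong\rho_Y$ to an ungraded $W$-module isomorphism $H_\bullet(\mathcal E_X)\cong H_\bullet(\mathcal B_Y)^{A_Y}$. My main approach is a Green-polynomial comparison. On the exotic side, Corollary~\ref{Green} applied at $a=a_0$ identifies $H_\bullet(\mathcal E_X)$ with the $W$-module underlying the standard module $M_{(a_0,X)}$, and Proposition~\ref{inclW} expresses its $W$-character through the equivariant homology of the exotic Steinberg variety, yielding an exotic Green polynomial in the sense of Shoji. On the classical side, the Lusztig--Shoji Green polynomial computes the $W$-character of $H_\bullet(\mathcal B_Y)^{A_Y}$. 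The coincidence of these two polynomials under the matching $X\leftrightarrow(Y,\mathsf{triv})$ then furnishes the desired ungraded isomorphism. An alternative, more geometric route is an induction on the closure order of nilpotent orbits, pairing Lusztig--Spaltenstein induction on the classical side with the induction theorem (Theorem~\ref{indt}) on the exotic side: the base cases (principal and zero orbits) are direct, and the inductive step reduces to checking compatibility of the two induction procedures.

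The main obstacle is precisely the ungraded matching of full Borel--Moore homologies. The top-degree identification is essentially formal from combining the two Springer correspondences, but the lower-degree pieces require either the Green-polynomial identity above or the parallel-induction argument; in either case the technical heart is the compatibility of the exotic and classical Springer pictures at the level of (equivariant) homology, restricted to the trivial-local-system component.
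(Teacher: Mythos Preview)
Your plan is coherent in outline, but it leaves the decisive step unproved: you assert that the exotic and classical Green polynomials coincide under the matching $X\leftrightarrow(Y,\mathsf{triv})$, and then say this ``furnishes the desired ungraded isomorphism,'' yet you give no argument for that coincidence. The alternative inductive route you sketch is likewise not carried out. So as written this is a proof strategy with its technical heart missing, not a proof.

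The paper's argument is genuinely different and sidesteps this obstacle entirely. Instead of matching at top degree via the two Springer correspondences, the paper matches through the \emph{affine Hecke algebra} and uses irreducibility. By Bala--Carter one chooses a semisimple $a_Y\in G'_{>0}\times\mathbb R_{>0}$ so that $G'(a_Y)Y$ is open dense in $(\mathfrak g')^{a_Y}$; then by Lusztig's argument $H_\bullet(\mathcal B_Y)^{A_Y}$ is an \emph{irreducible} $\mathbb H^{G'}_{a_Y}$-module, and $H_0(\mathcal B_Y)^{A_Y}=\mathsf{sgn}$ as a $W$-module. One then produces a positive real $a\in\mathcal G$ with $\mathbb H^{G'}_{a_Y}\cong\mathbb H_a$ and invokes Theorem~\ref{sgnchar}: the unique irreducible $\mathbb H_a$-module containing $\mathsf{sgn}$ is $M_{(a,X)}=L_{(a,X)}$ for $X$ in the \emph{open} orbit $\mathcal O_0^a$. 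Two irreducible modules over the same algebra, both containing $\mathsf{sgn}$, must coincide; Corollary~\ref{Green} then gives $M_{(a,X)}\cong H_\bullet(\mathcal E_X)$ as $W$-modules.

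Note in particular that the paper's $X$ is \emph{not} chosen so that $L_X\cong\rho_Y$ (your top-degree matching); it is chosen as a point of the open orbit of $\mathfrak N^a$ for the appropriate $a$. The identification is forced by Hecke-algebra irreducibility plus the $\mathsf{sgn}$-characterization, which replaces the Green-polynomial identity you would otherwise need.
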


\begin{proof}
Let $\mathbb H ^{G'}$ denote the one-parameter affine Hecke algebra
coming from $T^* (G/B)$ in the sense of the DLL correspondence. Let $G'
_{>0}$ denote the set of points of $G'$ which act on the natural
representation with only positive real eigenvalues. The Bala-Carter
theory implies that there exists a semi-simple element $a _Y \in G' _{>0}
\times \mathbb R _{>0}$ so that $G' ( a _Y ) Y$ defines a open dense
subset of ${\mathfrak g'} ^{a_Y}$. By an argument of Lusztig \cite{L3}, we
deduce that $H _{\bullet} ( \mathcal B _Y ^{s_Y} ) ^{A_Y} \cong H
_{\bullet} ( \mathcal B _Y ) ^{A_Y}$ acquires a structure of
irreducible $\mathbb H ^{G'}$-module. Here we know that $H _{0} (
\mathcal B _Y ) ^{A_Y} = H _{0} ( \mathcal B _Y ) = \mathsf{sgn}$ as
$\mathbb C [W]$-modules. It is easy to verify that we have some
positive real element $a \in
\mathcal G$ such that $\mathbb H ^{G'} _{a _Y} \cong \mathbb H _a$,
where $\mathbb H ^{G'} _{a _Y}$ is the specialized Hecke algebra of
$\mathbb H ^{G'}$ defined in a parallel fashion to (\ref{sH}). If $X \in
\mathcal O ^a _0$, the module $M _{(a,X)} = L _{(a,X)}$ is the
unique $\mathbb H_a$-module which contains $\mathsf{sgn}$ as
$W$-modules. This forces 
$$M _{(a,X)} \cong H _{\bullet} ( \mathcal B _Y ) ^{A_Y}$$
as $W$-modules (without grading). Therefore, Corollary \ref{Green} implies the result.
\end{proof}

\begin{remark}\label{r:bipartitions}
Before presenting an example of the correspondence in Corollary
\ref{c:springer}, let us recall the parameterization of
$\mathsf{Irrep}~W_n$ in terms of bipartitions. Recall that $W = W_n\cong
\mathfrak S_n\ltimes (\mathbb Z/2\mathbb Z)^n.$ Let $\xi=(\underbrace{\mathsf{triv},\dotsc,\mathsf{triv}}_k,\underbrace{\mathsf{sgn},\dotsc,\mathsf{sgn}}_{n-k})$ be a character of $(\mathbb
Z/2\mathbb Z)^n$, and let $\mathfrak S_k\times \mathfrak S_{n-k}=\mathsf{Stab}_{\mathfrak S_n}(\xi)$. The representations of  symmetric
groups are parameterized by partitions. Let $\alpha$ be a partition
of $k$ and $\beta$ be a partition of $n-k$, and $(\alpha)$, $(\beta)$
the corresponding representations of $\mathfrak S_k$ and $\mathfrak S_{n-k}$,
respectively. We denote by $\{(\alpha)(\beta)\}$ (and call it a
bipartition) the irreducible representation of $W_n$ obtained by
induction from $(\alpha)\boxtimes(\beta)\boxtimes \xi$ on $\mathfrak S_k\times
\mathfrak S_{n-k}\times (\mathbb Z/ 2 \mathbb Z)^n.$ All elements of
$\mathsf{Irrep}~W_n$ are obtained by this procedure, hence the
one-to-one correspondence with bipartitions of $n$.
\end{remark}

\begin{example} We explain \ref{c:springer} in the case $n=3.$
  There are $7$ nilpotent adjoint orbits for $SO(7,\mathbb C),$ and
  $8$ for $Sp(6,\mathbb C).$ There are $10$ exotic orbits (the same as
  the number of irreducible representations for $W_3$). Let us denote
  the representatives of these $10$ orbits as follows:

\begin{center}
\begin{tabular}{|c|c|c|}
\hline
$X_1$
&$v_{\epsilon_1-\epsilon_2}+v_{\epsilon_2-\epsilon_3}+v_{\epsilon_3}$
&$\{(0)(1^3)\}$\\\hline
$X_2$
&$v_{\epsilon_1-\epsilon_2}+v_{\epsilon_2-\epsilon_3}+v_{\epsilon_2}$
&$\{(1)(1^2)\}+\{(0)(1^3)\}$\\\hline
$X_3$
&$v_{\epsilon_1-\epsilon_2}+v_{\epsilon_2-\epsilon_3}+v_{\epsilon_1}$
&$\{(1^2)(1)\}+\{(1)(1^2)\}+\{(0)(1^3)\}$\\\hline
$X_4$ &$v_{\epsilon_1-\epsilon_2}+v_{\epsilon_2}$
&$\mathsf{Ind}_{W_2}^{W_3}(\{(0)(1^2)\})$\\\hline
$X_5$ &$v_{\epsilon_1-\epsilon_2}+v_{\epsilon_1}$
&$\mathsf{Ind}_{W_2}^{W_3}(\{(1)(1)\}+\{(0)(1^2)\})$\\\hline
$X_6$ &$v_{\epsilon_1-\epsilon_2}+v_{\epsilon_2-\epsilon_3}$ &$\mathsf{Ind}_{W(A_2)}^{W_3}(\mathsf{sgn})$\\\hline
$X_7$ &$v_{\epsilon_1-\epsilon_2}+v_{\epsilon_3}$
&$\mathsf{Ind}_{W_1\times W(A_1)}^{W_3}(\{(0)(1)\}\boxtimes \mathsf{sgn})$\\\hline
$X_8$  &$v_{\epsilon_1}$ &$\mathsf{Ind}_{W_1}^{W_3}(\{(0)(1)\})$\\\hline
$X_9$ &$v_{\epsilon_1-\epsilon_2}$ &$\mathsf{Ind}_{W(A_1)}^{W_3}(\mathsf{sgn})$\\\hline
$X_{10}$ &$0$ &$\mathsf{Ind}_{\{1\}}^{W_3}(\mathsf{triv})$\\
\hline
\end{tabular}
\end{center}

\noindent The last column gives the $W_3$-structure of $H_\bullet(\mathcal E_X)$
in every case.
With this notation, the correspondences from Corollary \ref{c:springer} are as
follows:
{
\begin{center}{$SO(7)$}
\begin{tabular}{|c|c|c|c|c|c|c|}
\hline
$(7)$ &$(51^2)$ &$(3^21)$ &$(32^2)$ &$(31^4)$ &$(2^21^3)$ &$(1^7)$\\
\hline
$X_1$ &$X_2$ &$X_3$ &$X_7$ &$X_5$ &$X_9$ &$X_{10}$\\
\hline
\end{tabular}
\end{center}

\begin{center}{$Sp(6)$}
\begin{tabular}{|c|c|c|c|c|c|c|c|}
\hline
$(6)$ &$(42)$ &$(41^2)$ &$(3^2)$ &$(2^3)$ &$(2^21^2)$ &$(21^4)$ &$(1^6)$\\
\hline
$X_1$ &$X_2$ &$X_4$ &$X_6$ &$X_7$ &$X_5$ &$X_8$ &$X_{10}$\\
\hline
\end{tabular}
\end{center}}

\noindent The notation for the parameterization for classical nilpotent orbits  is as \cite{Ca}.

\end{example}

\subsection{A parameterization of exotic orbits $G(a)\backslash \mathfrak N^a$}\label{sec:1.3}

We recall the combinatorial parameterization of $G(a)$-orbits in
$\mathfrak N^a$ following \cite{K}. Fix $q>1$, $m \in \mathbb R$ such that $\vec q=(-1,q_1,q)$ is generic, and $a=(s,\vec q)\in \mathcal T_0.$

\begin{definition}[Marked partition] A segment adapted to $a$ is a subset $I \subset [1, n]$ such that for every $i \in I$, we have either
  \begin{itemize}
  \item there exists no $j \in I$ such that $\langle \epsilon_i, s \rangle > \langle \epsilon_{j}, s \rangle$;
  \item there exists a unique $j \in I$ such that $\langle \epsilon_i, s \rangle=q\langle \epsilon_{j}, s\rangle$.
  \end{itemize}
 A marked partition adapted to $a$
  is a pair $(\{I_l\}_{l=1}^k,\delta)$ where
\begin{enumerate}
\item $I_1\sqcup
  I_2\sqcup\dots\sqcup I_k=[1,n]$ is a division of the set of integers $[1,n]$ into a union of segments {(adapted to $a$)};
  \item $\delta:[1,n]\to\{0,1\}$ such that $\delta(i)=0$ whenever
  $\langle \epsilon_i,s\rangle\neq q_1.$
\end{enumerate}
We refer $\{I_l\}_{l=1}^k$ as the support of $(\{I_l\}_{l=1}^k,\delta)$.
\end{definition}

Let us denote by $\mathsf{MP}(a)$ the set of marked partitions adapted
to $a.$

\begin{proposition}[\cite{K}]\label{p:MP1} The map $\Upsilon:~\mathsf{MP}(a)\to
  G(a)\backslash \mathfrak N^a$, given by
\begin{equation}
(\{I_l\}_{l=1}^k,\delta)\mapsto \sum_{l=1}^k
v_{I_l}+\sum_{i=1}^n\delta(i)v_{\epsilon_i},\quad\text{where }
v_{I_l}=\sum_{i,j\in I_l, \left< \epsilon _i, s \right> = q \left< \epsilon _j, s \right>} v_{\epsilon_i-\epsilon_{j}},
\end{equation}
is a surjection.
\end{proposition}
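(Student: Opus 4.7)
The plan is to reduce the classification of $G(a)$-orbits in $\mathfrak N^a$ to a problem of rank normal form on the fixed space $\mathbb V^a$, then read off the marked partitions as the discrete invariants.

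First, I would make $\mathbb V^a$ explicit. Because $s$ has only positive real eigenvalues while $q_0 = -1$, the first copy of $V_1$ contributes zero; the second copy contributes $V_1^a = \bigoplus_{\langle\epsilon_i,s\rangle = q_1} \mathbb C v_{\epsilon_i}$. On $V_2 = \wedge^2 V_1$, an $a$-fixed weight vector must have weight $\beta$ with $\langle \beta, s\rangle = q$; the $\mathbb S$-positivity condition $\{\xi,\xi^{-1}\} \not\subset \mathbb S$, together with the genericity assumption $q_1^2 \ne q^{m'}$, eliminates the weights $\pm(\epsilon_i + \epsilon_j)$ and the zero weight, leaving only $V_2^a = \bigoplus \mathbb C v_{\epsilon_i - \epsilon_j}$ summed over ordered pairs $(i,j)$ with $\langle\epsilon_i,s\rangle = q\langle\epsilon_j,s\rangle$.

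Next, the same $\{\xi,\xi^{-1}\}$ condition forces that no eigenvalue of $s$ on $V_1$ equals its inverse, so $G(a) = Z_G(s) \cong \prod_{\pi} \mathrm{GL}(V_1[\pi])$, where the product runs over distinct eigenvalues $\pi$ and $V_1[\pi]$ is the corresponding eigenspace (no symplectic factor appears). Hence $X_2 \in V_2^a$ decomposes as $\bigoplus_{\pi} X_2^{\pi}$ with $X_2^{\pi} \in \mathrm{Hom}(V_1[\pi], V_1[q\pi])$, on which $Z_G(s)$ acts by left--right multiplication. Applying rank normal form arrow by arrow brings each $X_2^{\pi}$ into a partial permutation matrix in the basis $\{v_{\epsilon_i}\}$; chaining these matchings along the arrows $\pi \to q\pi \to q^2\pi \to \cdots$ yields a decomposition of $[1,n]$ into chains satisfying the predecessor axiom of the segment definition, and the resulting $X_2$ equals $\sum_l v_{I_l}$. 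The residual stabilizer in $Z_G(s)$ still contains a diagonal scaling at every index, so one can rescale each nonzero coefficient of $X_1 \in V_1^a$ to be $1$, producing the marking $\delta$ (which vanishes outside $\{i : \langle\epsilon_i,s\rangle = q_1\}$ because $v_{\epsilon_i}$ is not in $V_1^a$ otherwise).

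Finally, to see that $\Upsilon(\{I_l\},\delta)$ actually lies in $\mathfrak N = G\cdot\mathbb V^+$ and not merely in $\mathbb V^a$, I would choose an element of $N_G(T)$ that relabels $[1,n]$ so that each segment becomes an increasing sequence; then every weight $\epsilon_i - \epsilon_j$ and $\epsilon_i$ appearing in $\Upsilon(\{I_l\},\delta)$ becomes positive, placing the conjugated vector in $\mathbb V^+$. The main obstacle is the rank-normal-form step: one must verify that the matchings glue into chains obeying the predecessor uniqueness axiom of a segment rather than branching. This uses that each index $i \in [1,n]$ carries a single $s$-eigenvalue $\langle\epsilon_i,s\rangle \in \mathbb S$, so the row and column labels of consecutive arrow matrices are compatible and can be chained uniquely.
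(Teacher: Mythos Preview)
Your approach is correct in spirit and matches the argument in the cited reference \cite{K} (the present paper gives no proof of its own). The identifications of $\mathbb V^a$ and of $G(a)\cong\prod_\pi \mathrm{GL}(V_1[\pi])$ are accurate, the normalization of the $V_1^a$-part using the residual torus in the stabilizer is right, and the final paragraph showing $\Upsilon(\tau)\in\mathfrak N$ via a Weyl-group relabeling is fine.

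The one genuine soft spot is the phrase ``rank normal form arrow by arrow.'' Normalizing each map $X_2^{\pi}:V_1[\pi]\to V_1[q\pi]$ separately to a partial permutation matrix uses an independent choice of basis at every vertex, and these choices are not a priori compatible: the basis of $V_1[q\pi]$ chosen to normalize $X_2^{\pi}$ may destroy the normal form of $X_2^{q\pi}$. What you actually need is the classification of representations of the linear type $\mathsf A$ quiver (Gabriel's theorem, or equivalently the Zelevinsky multisegment picture alluded to in the paper around Theorem \ref{AD} and Theorem \ref{idstab}): every such representation decomposes as a direct sum of interval modules, and choosing bases adapted to that decomposition makes all arrows partial permutation matrices \emph{simultaneously}. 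Once you invoke this, the segments $I_l$ and the formula $X_2=\sum_l v_{I_l}$ fall out immediately. Your closing sentence (``each index $i$ carries a single $s$-eigenvalue \dots\ so the row and column labels are compatible'') only establishes that the quiver is linear rather than branching; it does not supply the simultaneous normalization, which is the actual content of the step.
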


For each $\tau \in \mathsf{MP} (a)$, we put $v _{\tau} := \Upsilon ( \tau )$ and $\mathcal O _{\tau} := G ( a ) v _{\tau}$.

In order to describe the fibers of the map $\Upsilon$, we define first a partial
order on the set of subsets of $[1,n].$ Assume
$I,I'\subset[1,n]$. Set $\underline I=\{\langle \epsilon_i, s \rangle:
i\in I\}$, and similarly for $\underline I'.$ Then we define
\begin{equation} 
I\lhd I' \Longleftrightarrow \min \underline I\le\min\underline
I'\le\max\underline I\le {\max \underline I'}.
\end{equation}
If $I\lhd I',$ we say that $I$ is dominated by $I'$. We also introduce another partial ordering on $\mathbf I$, weaker than $\lhd$:
\begin{equation}
I\prec I'\iff \min\underline I\le\min\underline I'.
\end{equation}
If $(\{I_l\}_{l=1}^k,\delta)\in \mathsf{MP}(a)$ is given, we define
$(\{I_l\}_{l=1}^k,\widetilde\delta)$ by modifying $\delta$ as follows. If
some $i$ such that $\langle \epsilon_i, s \rangle=q_1$ belongs to an
$I_l$ which is dominated by a marked $I_{l'}$ (i.e.,
$\delta(I_{l'})=\{0,1\}$), then we set $\widetilde\delta(i)=1$ (i.e., we
mark $I_l$ as well). 

A permutation $w \in \mathfrak S _n$ is said to be adapted to $a$ if we have $w \cdot s = s$. Let $\mathfrak S _n ^a$ denote the subgroup of $\mathfrak S _n$ formed by permutations adapted to $a$.

It is straight-forward that $\mathfrak S _n ^a$ acts on $\mathsf{MP}
(a)$ by applying $w$ to $(\{I_l\}_{l=1}^k,\delta) \in \mathsf{MP} (a)$
as $(\{I_l\}_{l=1}^k,\delta) \mapsto (\{w ( I_l )\}_{l=1}^k,w
^*\delta)$, where $(w^*\delta)(i)=\delta(w^{-1}(i)).$

\begin{proposition}[\cite{K}]\label{p:MP2} Let $\Upsilon$ be the map
  defined in Proposition \ref{p:MP1}. Then
$\Upsilon((\{I_l\}_{l=1}^k,\delta))=\Upsilon((\{I_l'\}_{l=1}^{k'},\delta'))$ if
and only if $\{I_l\}_{l=1}^k=\{I_l'\}_{l=1}^{k'}$
and $\widetilde\delta=\widetilde\delta'$ up to $\mathfrak S _n ^a$-action.
\end{proposition}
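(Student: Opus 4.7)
The strategy is to translate the orbit problem into linear algebra on graded vector spaces. Fix $a = (s, \vec q) \in \mathcal T_0$. Because $s$ has only eigenvalues in $\mathbb S \subset \mathbb R_{>0}$ with $\mathbb S \cap \mathbb S^{-1} = \emptyset$, the centralizer is the Levi subgroup
\[
G(a) = Z_G(s) \cong \prod_{\xi \in \mathbb S} GL(V_\xi),
\]
where $V_\xi$ is the $\xi$-eigenspace of $s$ in $V_1$, and $\mathfrak S_n^a = \prod_\xi \mathfrak S(J_\xi)$ is its Weyl-group part. The same positivity rules out $a$-fixed vectors of the form $v_{-\epsilon_i}$, $v_{\pm(\epsilon_i+\epsilon_j)}$, or anything in the first copy of $V_1$, so $\mathbb V^a$ can be identified with pairs $(A,w)$, where $A \in \mathrm{Hom}(V_1,V_1)$ is a nilpotent, $q$-graded endomorphism sending $V_\xi$ to $V_{\xi/q}$, and $w \in V_{q_1}$. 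Under $G(a)$, the action is $(A,w) \mapsto (gAg^{-1}, gw)$. For $\tau = (\{I_l\}, \delta)$, the vector $v_\tau$ corresponds to the pair $(A_\tau, w_\tau)$ where $A_\tau$ is a sum of elementary matrices running along each segment, and $w_\tau = \sum_i \delta(i) e_i$.

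The first step is to show that the unordered set of segments $\{I_l\}$ is the complete $G(a)$-invariant of $A_\tau$. This is classical: the action of $\prod_\xi GL(V_\xi)$ on graded nilpotent operators $A$ is conjugation in each summand, and two such operators are conjugate if and only if they have the same graded Jordan type. Since the segments record precisely these graded Jordan strings (each $I_l$ corresponds to a chain along which $A_\tau$ drops the $s$-eigenvalue by the factor $q$), the partition $\{I_l\}$, up to permutation within each $J_\xi$ (i.e., the $\mathfrak S_n^a$-action), is recovered from the $G(a)$-orbit of $A_\tau$. This handles the ``partition'' part of the statement.

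The second and main step is to analyze the residual action of the centralizer $Z_{G(a)}(A_\tau)$ on the auxiliary vector $w$. After fixing the Jordan form of $A_\tau$, standard structure theory of centralizers of graded nilpotents shows that $Z_{G(a)}(A_\tau)$ contains, for any ordered pair of segments $I_l, I_{l'}$ with $I_l \lhd I_{l'}$, a unipotent one-parameter subgroup realized by a homomorphism between the corresponding Jordan chains that is compatible with the grading. Restricting such a subgroup to the $q_1$-layer moves the $w$-component freely between the entries indexed by $I_l \cap J_{q_1}$ and $I_{l'} \cap J_{q_1}$. Combining this with the obvious action of the diagonal torus and of $\mathfrak S_n^a$, one shows that the $Z_{G(a)}(A_\tau)$-orbit of $w_\tau$ is determined exactly by the set of segments that are reached by a chain of $\lhd$-dominations from a marked one, i.e. by $\widetilde\delta$ up to $\mathfrak S_n^a$. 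The hardest point is verifying that the dominance order $\lhd$ is the \emph{sharp} condition: one needs to show not only that $I_l \lhd I_{l'}$ gives rise to an intertwiner in $Z_{G(a)}(A_\tau)$ effecting the transfer of marking, but conversely that if $I_l \not\lhd I_{l'}$ and $I_{l'} \not\lhd I_l$ then no element of $Z_{G(a)}(A_\tau)$ can mix the $q_1$-components of $V^{(l)}$ and $V^{(l')}$, so that the mark on $I_l$ persists as a genuine orbit invariant. Once this dominance/intertwiner dictionary is established, the proposition follows by combining the two invariants.
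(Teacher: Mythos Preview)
The paper does not give a proof here; the proposition is quoted from \cite{K}. Your two-step strategy---classify the graded nilpotent $A_\tau$ by its multisegment, then classify $w\in V_{q_1}$ under the residual action of $\mathsf{Stab}_{G(a)}(A_\tau)$---is the natural one and is presumably what \cite{K} does.

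Your treatment of the second step, however, is too sketchy to stand as a proof. The phrase ``moves the $w$-component freely between'' suppresses a crucial asymmetry: when $I_l \lhd I_{l'}$ (both containing $q_1$), the relevant intertwiner in the stabilizer carries the $q_1$-component of $R_{I_{l'}}$ into that of $R_{I_l}$, so acting by $1+cf$ adds a multiple of the $I_{l'}$-entry of $w$ to the $I_l$-entry but \emph{not conversely}. This one-sidedness is precisely why $\widetilde\delta$ propagates a mark on $I_{l'}$ down to every $I_l$ it dominates; you need to pin this direction down and also check that the intertwiner restricts nontrivially to the $q_1$-layer (it does, since $q_1$ lies in the overlap $\underline{I_l}\cap\underline{I_{l'}}$, but this needs to be said). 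More seriously, your converse is not the right statement: showing that $\lhd$-incomparable segments have no direct mixing only tells you that the image of $\mathsf{Stab}_{G(a)}(A_\tau)$ in $GL(V_{q_1})$ lies in the ``incidence group'' of the poset $\lhd$. You still have to classify the orbits of that group on $V_{q_1}$ and show they are indexed exactly by the $\lhd$-downward closures of $\mathrm{supp}(w)$---equivalently, by $\widetilde\delta$. That last orbit classification is elementary (a triangular reduction, treating one maximal segment at a time), but it is the actual content of the ``if'' direction, and it is absent from your outline.
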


The marked partition corresponding to the open $G(a)$-orbit in
$\mathfrak N^a$ is obtained by extracting the longest possible $I_1$
subject to $a v_{I_1} = v _{I_1}$, then the longest possible $I_2$ from
$[1,n]\setminus I_1$ subject to the same condition etc. Then we mark
all $I_j$ such that $q_1\in\underline{I_j}.$ 

Let $\mathsf{MP} _0 ( a )$ be the set of marked partitions with trivial markings. (I.e. $\tau = ( \mathbf I, \delta ) \in \mathsf{MP} (a)$ with $\delta \equiv 0$.)
The following result is a re-interpretation of the closure relations of type $\mathsf{A}$ quiver orbits with uniform orientation.

\begin{theorem}[\cite{AD},\cite{Z2}]\label{AD}
Let $\tau = ( \mathbf I, 0 ) \in \mathsf{MP} _0 ( a )$ be given. Let $\tau' = ( \mathbf I', 0 ) \in \mathsf{MP} _0 ( a )$ be obtained from $\tau$ by the following procedure:
\begin{itemize}
\item[$(\spadesuit)$] For some two segments $I _k, I _l \in \mathbf I$ such that
$$\min \underline{I _{k}} < \min \underline{I _l} \le q \max \underline{I _{k}} < q \max \underline{I _l},$$
we define $\mathbf I'$ to be the set of segments obtained from
$\mathbf I$ by replacing $\{ I _k, I _l \}$ with $\{ I ^+, I ^- \}$,
where $I^{\pm}$ are segments such that $I ^+ \sqcup I ^- = I _k \sqcup
I _l$, $\underline{I ^+} = \underline{I _k} \cup \underline{I _l}$,
and $\underline{I ^-} = \underline{I _k} \cap \underline{I _l}$. $(I
  ^-$ may be an emptyset.$)$
\end{itemize}
Then, we have $\mathcal O _{\tau} \subset \overline{\mathcal O _{\tau'}}$. Moreover, every $G ( a )$-orbit which is larger than $\mathcal O _{\tau}$ and parameterized by $\mathsf{MP} _0 ( a )$ is obtained by a successive application of the procedure $(\spadesuit)$.
\end{theorem}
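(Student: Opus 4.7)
The plan is to reduce Theorem \ref{AD} to the classical closure order on representations of type $\mathsf A$ quivers with uniform orientation, which is the content of the cited Abeasis--Del Fra/Zwara result. Since $\tau=(\mathbf I,0)$ and $\tau'=(\mathbf I',0)$ are trivially marked, both orbit representatives $v_\tau$ and $v_{\tau'}$ lie in the sum of $v_{\epsilon_i-\epsilon_j}$-weight spaces with $\langle\epsilon_i,s\rangle=q\langle\epsilon_j,s\rangle$, a linear subspace $\mathcal V\subset V_2^a$. Because $G(a)$ acts linearly on $\mathbb V^a$ and the projection to the $V_1$-component is $G(a)$-equivariant, orbits in $\overline{\mathcal O_{\tau'}}$ have vanishing $V_1$-component; moreover, under genericity of $\vec q$, $\mathcal V$ is $G(a)$-stable (the remaining weight spaces in $V_2^a$, involving $v_{\pm(\epsilon_i+\epsilon_j)}$, cannot be reached by root subgroups in $G(a)$). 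Hence it suffices to analyze closure relations inside $\mathcal V$.

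Partition $[1,n]$ into equivalence classes under $i\sim j\iff\langle\epsilon_i,s\rangle\in q^{\mathbb Z}\langle\epsilon_j,s\rangle$, and for each class $C$ with distinct weights $\xi_0,q\xi_0,\dots,q^{\ell-1}\xi_0$ of multiplicities $d_1,\dots,d_\ell$, form the equi-oriented $\mathsf A_{\ell-1}$-quiver $Q_C$ with dimension vector $\mathbf d_C=(d_1,\dots,d_\ell)$ and arrows $r\to r+1$. A direct weight count on $\wedge^2 V_1$ yields a $G(a)$-equivariant identification
$$\mathcal V=\bigoplus_C\mathrm{Rep}(Q_C,\mathbf d_C),$$
with $G(a)$ acting through its $\prod_r\mathrm{GL}(d_r)$-factors as the quiver base-change group. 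By Gabriel's theorem, a marked segment $I_l\subset C$ corresponds to the interval indecomposable $M[p_l,q_l]$ with $q^{p_l-1}\xi_0=\min\underline{I_l}$ and $q^{q_l-1}\xi_0=\max\underline{I_l}$. Proposition \ref{p:MP1} then shows that $v_\tau$ is a generic point in the $G(a)$-orbit of the corresponding direct sum of intervals, while Proposition \ref{p:MP2} records the ambiguity of such decompositions up to the $\mathfrak S_n^a$-action. Thus $\mathsf{MP}_0(a)$ matches isomorphism classes of representations of $\bigsqcup_C Q_C$ with prescribed total dimension vector, and $G(a)$-orbit closures correspond to quiver degenerations.

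The Abeasis--Del Fra/Zwara theorem asserts that this degeneration order is generated by the elementary overlap move: for indecomposables $M[p_k,q_k]$ and $M[p_l,q_l]$ with $p_k<p_l\le q_k+1$ and $q_k<q_l$, replace the pair by $M[p_k,q_l]$ and $M[p_l,q_k]$ (the latter being zero when $p_l=q_k+1$). Translating back through $p\mapsto q^{p-1}\xi_0$, these inequalities read
$$\min\underline{I_k}<\min\underline{I_l}\le q\max\underline{I_k}<q\max\underline{I_l},$$
and the new endpoints yield $\underline{I^+}=\underline{I_k}\cup\underline{I_l}$, $\underline{I^-}=\underline{I_k}\cap\underline{I_l}$, and $I^+\sqcup I^-=I_k\sqcup I_l$ as multisets of indices. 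This is precisely $(\spadesuit)$, so both assertions of Theorem \ref{AD} follow from AD/Zwara.

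The principal obstacle is the careful dictionary between the quiver-theoretic interval endpoints $[p,q]$ and the marked-partition weight supports $\underline I$, particularly the shift by a factor of $q$ appearing on the right-hand side of the $(\spadesuit)$-inequality and the adjacency degeneracy $p_l=q_k+1$ where $I^-$ collapses. A secondary subtlety is checking that $\mathcal V\subset V_2^a$ is genuinely $G(a)$-stable in the generic case, which rules out mixing with $v_{\pm(\epsilon_i+\epsilon_j)}$-type vectors; once these points are in place, the statement is a transcription of the classical AD/Zwara result via Gabriel's theorem.
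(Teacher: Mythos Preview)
Your proposal is correct and matches the paper's approach: the paper does not supply an independent proof but presents Theorem \ref{AD} as a direct ``re-interpretation of the closure relations of type $\mathsf{A}$ quiver orbits with uniform orientation,'' citing \cite{AD} and \cite{Z2}; your write-up simply makes that dictionary explicit. One small correction: the paper's reference \cite{Z2} is Zelevinsky (multisegment degeneration order), not Zwara, so you should adjust the attribution accordingly.
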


\begin{convention}
For each $\tau \in \mathsf{MP} (a)$, we sometimes denote $L _{(a, v _{\tau})}$ by $L _{(a, \tau)}$ or just $L _{\tau}$ when the central character is clear. We may use similar notation like $M _{(a, \tau)}$ or $M _{\tau}$.
\end{convention}

Each of $\tau = ( \{ I _l \}, 0 ) \in \mathsf{MP} _0 ( a )$
defines a representation $R _{\tau}$ of type $\mathsf{A}$-quiver
corresponding to the multisegment $\{ \underline{I _l} \} _l$
in the sense of Zelevinsky. (Here we identify $G ( a )
\circlearrowright V _2 ^{(s,q)}$ with the representation space of
type $\mathsf{A}$-quiver of an appropriate dimension vector.) In
particular, $R _{\tau}$ is a direct sum of indecomposable modules $R
_{I _l}$ corresponding to a segment $I _l$ (or rather $\underline{I _l}$). 

\begin{lemma}\label{aut}
Let $\tau = ( \{ I _l \}, \delta ) \in \mathsf{MP} _0 ( a )$ be given. We have a non-zero map $R _{I _l} \to R _{I _{l'}}$ (as modules of type $\mathsf{A}$-quivers) if and only if $I _l \triangleleft I _{l'}$. Moreover, such a non-zero map is unique up to scalar.
\end{lemma}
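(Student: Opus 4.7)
The plan is to reduce the claim to a direct computation of $\mathrm{Hom}$ spaces between indecomposable representations of a linearly oriented type-$\mathsf A$ quiver. The indecomposable summand $R _{I _l}$ of $R _{\tau}$ is, by definition, the ``segment module'' attached to the sub-interval $\underline{I _l}$ of the vertex set $\{ \langle \epsilon _i, s \rangle : i \in [1,n]\}$: it is one-dimensional at every vertex of $\underline{I _l}$, zero elsewhere, with identity maps along the internal arrows. Under the identification $V _2 ^{(s,q)} \cong \prod _\xi \mathrm{Hom} ( V _{1,\xi}, V _{1, q\xi} )$ given by the symplectic form, the quiver orientation is the one inherited from the action of $v _{\epsilon _i - \epsilon _j}$ as a linear operator on $V _1$.

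First I would verify $\dim \mathrm{Hom} ( R _{I _l}, R _{I _{l'}} ) \le 1$. A morphism $\phi : R _{I _l} \to R _{I _{l'}}$ must vanish at every vertex outside the overlap $\underline{I _l} \cap \underline{I _{l'}}$, since in each such vertex one of the source/target spaces is zero. At the remaining vertices $\phi$ is a scalar, and compatibility with the identity arrows internal to the overlap forces all these scalars to be equal to a single constant $c \in \mathbb C$. This already gives the upper bound on the Hom dimension, hence the uniqueness-up-to-scalar claim.

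The second step is a boundary analysis. At an arrow connecting a vertex of $\underline{I _l} \cap \underline{I _{l'}}$ to a vertex lying in only one of the two segments, the commutativity square becomes non-trivial: one distinguishes the case ``the extra vertex lies in $\underline{I _l}$ only'' from ``in $\underline{I _{l'}}$ only'', and in exactly one of them the constraint forces $c = 0$. Carrying out the check at both endpoints of the overlap pins down the non-vanishing locus: $c$ can be non-zero precisely when the overlap is non-empty and the endpoints of $\underline{I _l}$ and $\underline{I _{l'}}$ are interleaved so that
$$\min \underline{I _l} \le \min \underline{I _{l'}} \le \max \underline{I _l} \le \max \underline{I _{l'}},$$
which is exactly the relation $I _l \triangleleft I _{l'}$.

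The main obstacle is not the computation itself, which is elementary and well known, but the bookkeeping of the quiver orientation: the same boundary analysis would produce $I _{l'} \triangleleft I _l$ under the opposite convention, so one must be careful to read off the correct direction from the $G ( a )$-module structure on $V _2 ^{(s,q)}$. Once this is settled, both directions of the equivalence follow directly, and as a sanity check the result can also be extracted from the standard Hom formulas for indecomposable representations of linear type-$\mathsf A$ quivers in the sense of Zelevinsky's multisegments.
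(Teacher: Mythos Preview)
Your proposal is correct and is precisely the routine verification the paper has in mind; the paper's own proof reads in its entirety ``Straight-forward.'' Your care about the quiver orientation is the only point requiring attention, and you have flagged it appropriately.
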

\begin{proof}
Straight-forward.
\end{proof}

\begin{theorem}[c.f. Brion \cite{B} Proposition 2.29]\label{idstab}
Let $a \in \mathcal T _0$ and $\tau = ( \mathbf I, 0 ) \in
\mathsf{MP} _0 ( a )$ be given. The group of automorphisms of $R _{\tau}$ as type $\mathsf{A}$-quiver representation is isomorphic to $\mathsf{Stab} _{G ( a )} v _{\tau}$. \hfill $\Box$
\end{theorem}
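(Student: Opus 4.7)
The plan is to reduce the statement to the standard principle that, for the change-of-basis action of a product of general linear groups on the representation space of a type $\mathsf A$-quiver, the stabilizer of a point coincides with the automorphism group of the corresponding representation; this is precisely the content of Brion's Proposition 2.29, so the task is only to set up the correct dictionary.

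First, because the marking of $\tau$ is trivial, $v_\tau$ lies entirely in the weight-$q$ subspace $V_2^{(s,q)}$, so the two $V_1$-summands of $\mathbb V$ contribute nothing to the stabilizer computation. Second, the $\mathbb S$-positivity of $a\in\mathcal T_0$ guarantees that at most one of $\chi$ and $\chi^{-1}$ lies in $\mathbb S$ for every eigenvalue $\chi$ of $s$ on $V_1$; since the symplectic form pairs the $\chi$- and $\chi^{-1}$-weight spaces, one obtains
\[
G(a) \cong \prod_{\chi\in\mathbb S,\, V_{1,\chi}\neq 0} GL(V_{1,\chi}),
\]
with no residual symplectic factor.

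Next I would invoke the identification, already recalled in the discussion preceding Lemma \ref{aut}, of $G(a)\circlearrowright V_2^{(s,q)}$ with the representation space of a type $\mathsf A$-quiver whose vertex set is $\{\chi\in\mathbb S : V_{1,\chi}\neq 0\}$ and whose arrows go $\chi\to q\chi$ whenever $V_{1,q\chi}\neq 0$; no loops can appear since $q\neq 1$. Under this identification the $G(a)$-action becomes the standard gauge action of $\prod_\chi GL(V_{1,\chi})$, and the formula of Proposition \ref{p:MP1} (with $\delta\equiv 0$) shows by direct inspection that $v_\tau$ corresponds to the quiver representation $R_\tau=\bigoplus_l R_{I_l}$ built from the indecomposables associated with the segments $I_l$.

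With these identifications in place, $\mathsf{Stab}_{G(a)}v_\tau$ is tautologically $\Aut(R_\tau)$, which is the desired isomorphism. The main technical point to check will be the equivariance of the quiver identification in the previous paragraph; this is a routine consequence of antisymmetrisation in $\wedge^2 V_1$, using only $\mathbb S$-positivity (to fix the orientation $\chi\mapsto q\chi$ consistently) and the absence of loops (which would require $q=1$). Once this dictionary is in place, the conclusion is formal via the quoted result of Brion.
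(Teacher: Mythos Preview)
Your proposal is correct and is precisely the intended argument: the paper gives no proof beyond the citation to Brion and the $\Box$, so the implicit content is exactly the dictionary you set up (trivial marking forces $v_\tau\in V_2^{(s,q)}$, $\mathbb S$-positivity yields $G(a)\cong\prod_\chi GL(V_{1,\chi})$ with no symplectic factor, and the identification with the type $\mathsf A$-quiver representation space makes the stabilizer tautologically $\Aut(R_\tau)$). There is nothing further to compare.
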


\begin{corollary}\label{stab}
Keep the setting of Theorem \ref{idstab}. Let $r _{\tau}$ be the
number of segments of $\mathbf I$, and let $u _{\tau}$ be the set of
distinct pairs of segments $I, I'$ in $\mathbf I$ such that
$I\triangleleft I$. Then, $\mathsf{Stab} _{G ( a )} v _{\tau}$ is a
connected algebraic group of rank $r_{\tau}$ and dimension $( r_{\tau}
+ u_{\tau} )$. 
\end{corollary}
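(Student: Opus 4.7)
My plan is to invoke Theorem \ref{idstab} to replace $\mathsf{Stab} _{G ( a )} v _{\tau}$ by the automorphism group $\Aut ( R _{\tau} )$ of the type $\mathsf{A}$-quiver representation $R _{\tau} = \bigoplus _{l=1} ^{r _{\tau}} R _{I _l}$, and then read off the group structure from the endomorphism algebra $E := \End ( R _{\tau} )$. Since $\Aut ( R _{\tau} )$ is the group of units of the finite-dimensional associative $\mathbb C$-algebra $E$, the claims on dimension, rank, and connectedness will all follow from the Wedderburn--Malcev decomposition of $E$ once I determine its dimension and semisimple part.

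For the dimension, Lemma \ref{aut} describes $E = \bigoplus _{l, l'} \Hom ( R _{I _l}, R _{I _{l'}} )$ summand by summand: each $\Hom ( R _{I _l}, R _{I _{l'}} )$ is one-dimensional when $I _l \triangleleft I _{l'}$ and zero otherwise. The diagonal $l = l'$ contributes $r _{\tau}$ (since $I _l \triangleleft I _l$ tautologically), while the off-diagonal contribution is, by definition, $u _{\tau}$. Hence $\dim \Aut ( R _{\tau} ) = \dim E = r _{\tau} + u _{\tau}$.

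For rank and connectedness I will use that each $R _{I _l}$ is a brick, since Lemma \ref{aut} gives $\End ( R _{I _l} ) = \mathbb C$. Grouping the indecomposable summands by isomorphism class with multiplicities $m _i$, the Wedderburn--Malcev splitting yields $E = S \oplus J$, where $J = \mathrm{rad} ( E )$ is nilpotent and $S \cong \prod _i \mathrm{Mat} _{m _i} ( \mathbb C )$ with $\sum _i m _i = r _{\tau}$. Therefore $\Aut ( R _{\tau} ) = S ^{\times} \ltimes ( 1 + J )$, with $S ^{\times} \cong \prod _i \mathrm{GL} _{m _i}$ connected and carrying a maximal torus of dimension $\sum _i m _i = r _{\tau}$, and $1 + J$ connected unipotent of dimension $u _{\tau} = \dim J$. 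Combining these, $\Aut ( R _{\tau} )$ is a connected algebraic group of rank $r _{\tau}$ and dimension $r _{\tau} + u _{\tau}$.

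The only book-keeping point, which I expect to be the mildest obstacle, is the possible presence of isomorphic indecomposable summands (which happens exactly when two segments share the same $\underline{I}$, so that both $I \triangleleft I'$ and $I' \triangleleft I$ contribute to $u_{\tau}$); this enlarges the diagonal blocks of $S$ from copies of $\mathbb C$ to full matrix algebras, but it leaves the numerical invariants $\sum _i m _i = r _{\tau}$ and $\dim E = r _{\tau} + u _{\tau}$ unchanged, so the final statement is unaffected.
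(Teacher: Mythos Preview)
Your proof is correct and follows the same approach as the paper: reduce via Theorem \ref{idstab} to $\Aut(R_\tau) = \End(R_\tau)^\times$ and read off the invariants from Lemma \ref{aut}. Your Wedderburn--Malcev decomposition simply makes explicit the paper's one-line appeal to the general fact that the unit group of a finite-dimensional $\mathbb C$-algebra is connected; the only slip is the equation $\dim J = u_\tau$, which fails when some $m_i > 1$, but as you note this does not affect $\dim E = r_\tau + u_\tau$ or the rank $\sum_i m_i = r_\tau$.
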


\begin{proof}
Taking into account Theorem \ref{idstab} {and the fact that an automorphism group of a module over an algebra is connected}, the assertion is a straight-forward corollary of Lemma \ref{aut}.
\end{proof}

\section{Some weight calculations}\label{sec:2}

\subsection{Varieties corresponding to weight spaces}\label{sec:2.1}

In this section, we use the language of perverse sheaves
(corresponding to middle perversity) on complex algebraic
varieties. Some of the standard references for this theory are
Beilinson-Bernstein-Deligne \cite{BBD}, Kashiwara-Schapiro \cite{KS},
Gelfand-Manin \cite{GM}, and Hotta-Tanisaki \cite{HT}. 

For a variety $\mathcal X$, we denote by $\underline{\mathbb C}$ the
constant sheaf (shifted by $\dim \mathcal X$). For a locally closed
subset $\mathcal O \subset \mathcal X$, we have an embedding $j
_{\mathcal O} : \mathcal O \to \mathcal X$. We have a {locally} constant sheaf
$( j _{\mathcal O} ) _{!} \underline{\mathbb C}$ obtained by extending
the constant sheaf on $\mathcal O$ by zero to $\mathcal X$. We have an
intermediate extension object $\mathsf{IC} ( \mathcal O )$ {(simple object in the category of perverse sheaves)} obtained by appropriately truncating $( j _{\mathcal O} ) _{!} \underline{\mathbb C}$.

Fix $a \in T \times ( \mathbb C ^{\times} ) ^3$. For every $w\in W$, let $\dot{w}$ denote a representative in $N_G(T).$
We put $^w\!\mathbb V^+:=\dot{w} ^{-1} \mathbb V^+$ and ${}^w \!
\mathbb V (a) := \mathbb V^a\cap ~^{w}\!\mathbb V^+$. We denote $(
\mathrm{Ad} ( \dot{w} ^{-1} ) B ) (s)$ by ${}^w B (s)$. It is clear
that these definitions do not depend on the choice of
$\dot{w}$. Recall the restriction of the collapsing map $\mu^a: F^a\to
\mathfrak N^a.$ Set
\begin{equation}\label{eq:Fw}
F_w^a=G(s)\times^{^{w}\! B(s)} {}^{w}\!\mathbb V (a).
\end{equation}
Let $W_s$ be the reflection subgroup of $W$ corresponding to the subroot system of $R$ defined by the roots $\alpha$ such that $\alpha ( s )=1.$ Following Lemma 3.6 in \cite{K}, we have a decomposition
\begin{equation}
F^a=\sqcup_{w\in W/W_s} F_w^a.
\end{equation} 
Denote by $\mu_w^a$ the restriction of $\mu^a$ to a piece $F_w^a,$ where
$w$ is a representative in $W/W_s.$

For each $u =w \cdot s ^{-1}\in W \cdot s^{-1}$, let $\mathcal E_{X}^a [u]$ denote the preimage of $X \in \mathfrak N ^a$ under $\mu_w^a$, projected to $G/B$:
\begin{equation}\label{eq:2.3}
\mathcal E_{X}^a [u] = \{ g\dot{w}^{-1}B ;\ gs=sg, X\in g {}^{w}\!\mathbb V (a) \}.
\end{equation}
 Notice that replacing $w$ by $ww'$ ($w'\in W_s$) in this construction
 gives the same variety, hence $\mathcal E_X^a[u]$ only depends on
 $w\in W/W_s.$

\begin{proposition}\label{p:2.1} Let $\tau \in \mathsf{MP} (a)$ be given. For $w\in W/W_s$, the $u = ( w\cdot s^{-1} )$-weight space of the standard module $M_{(a,v_\tau)}$ is $H_\bullet( \mathcal E _{v_\tau} ^a [u] ).$ 
Moreover, $u$ is a $R ( T )$-weight of $L_{(a,v_\tau)}$ if and
  only if $(\mu_w^a)_*\underline{\mathbb C}$ contains
  $\mathsf{IC}(\mathcal O_\tau).$
\end{proposition}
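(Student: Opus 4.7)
The argument has two stages, one for each assertion.

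\emph{Stage 1 (weight decomposition of the standard module).} The stratification $F^a = \bigsqcup_{w \in W/W_s} F_w^a$ from \cite{K}, Lemma 3.6, intersected with $(\mu^a)^{-1}(v_\tau)$ and projected to $G/B$, produces exactly the decomposition $\mathcal{E}_{v_\tau}^a = \bigsqcup_u \mathcal{E}_{v_\tau}^a[u]$ defined in (\ref{eq:2.3}), and Borel-Moore homology splits accordingly:
\begin{equation*}
M_{(a, v_\tau)} \;=\; H_\bullet(\mathcal{E}_{v_\tau}^a) \;=\; \bigoplus_{u \in W \cdot s^{-1}} H_\bullet(\mathcal{E}_{v_\tau}^a[u]).
\end{equation*}
I identify each summand with the $u$-weight space of $R(T)$ by examining the geometric $\mathbb{H}$-action recalled in \S \ref{sec:irrH}: the element $e^\lambda \in R(T) \subset \mathbb{H}$ acts on $M_{(a, v_\tau)}$ via cap product with the first Chern class of the line bundle $\mathcal{L}_\lambda$ pulled back from $G/B$. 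At a point $g\dot{w}^{-1}B$ of $\mathcal{E}_{v_\tau}^a[u]$ with $g \in G(s)$ and $u = w\cdot s^{-1}$, the element $s$ commutes with $g$, so its eigenvalue on the fiber of $\mathcal{L}_\lambda$ is $\langle \lambda, u\rangle$. Combined with the known fact that the generalized $R(T)$-weights of $M_{(a,v_\tau)}$ lie in $W \cdot s^{-1}$ (a consequence of the central character, cf.\ Theorem \ref{BL}), this forces $e^\lambda$ to act on $H_\bullet(\mathcal{E}_{v_\tau}^a[u])$ as the scalar $\langle \lambda, u\rangle$.

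\emph{Stage 2 (weights of the simple quotient).} I apply the Beilinson-Bernstein-Deligne decomposition theorem to the proper map $\mu_w^a : F_w^a \to \mathfrak{N}^a$ (well defined since $F_w^a$ is smooth), obtaining
\begin{equation*}
(\mu_w^a)_* \underline{\mathbb{C}} \;\cong\; \bigoplus_{\sigma, d} m_{\sigma, d}\cdot \mathsf{IC}(\mathcal{O}_\sigma)[d].
\end{equation*}
Taking the stalk at $v_\tau$ and invoking Stage 1 yields a canonical direct sum decomposition of the $u$-weight space of $M_{(a, v_\tau)}$, indexed by orbits $\mathcal{O}_\sigma$ with $v_\tau \in \overline{\mathcal{O}_\sigma}$. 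The Chriss-Ginzburg-type formalism of \cite{K} (see Theorem \ref{eDL} and its proof) identifies, compatibly with $R(T)$-weight spaces, the simple subquotient $L_{(a, v_\sigma)}$ of the standard modules with the $\mathsf{IC}(\mathcal{O}_\sigma)$-isotypic part of the respective pushforward. In particular, the $u$-weight space of $L_{(a, v_\tau)}$ equals the stalk at $v_\tau$ of the $\mathsf{IC}(\mathcal{O}_\tau)$-isotypic summand of $(\mu_w^a)_*\underline{\mathbb{C}}$. Since $v_\tau \in \mathcal{O}_\tau$ and $\mathsf{IC}(\mathcal{O}_\tau)$ restricts on $\mathcal{O}_\tau$ to a shifted constant sheaf, this stalk is non-zero precisely when $\mathsf{IC}(\mathcal{O}_\tau)$ appears as a summand of $(\mu_w^a)_*\underline{\mathbb{C}}$.

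\emph{Main obstacle.} The delicate point lies in Stage 2: one must argue that summands $\mathsf{IC}(\mathcal{O}_\sigma)$ with $\mathcal{O}_\sigma \supsetneq \mathcal{O}_\tau$, whose stalks at $v_\tau$ are generically non-zero, contribute to the other composition factors $L_{(a,v_\sigma)}$ and not to $L_{(a,v_\tau)}$. This is the usual Chriss-Ginzburg argument, substantially simplified in the exotic setting by the triviality of local systems (the stabilizers in $G(a)$ are connected by Corollary \ref{stab}); I will use this to single out $L_{(a, v_\tau)}$ as the $\mathsf{IC}(\mathcal{O}_\tau)$-part and thereby obtain the stated equivalence.
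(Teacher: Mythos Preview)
Your proposal is correct and follows essentially the same route as the paper: the paper's entire proof is the citation ``See Chriss--Ginzburg \cite{CG} \S 8.6,'' and what you have written is a faithful unpacking of that argument, adapted to the exotic setting (decompose $F^a$ into its components $F_w^a$, identify each homology summand with a generalized $R(T)$-weight space, then apply the BBD decomposition of $(\mu_w^a)_*\underline{\mathbb C}$ together with the Chriss--Ginzburg identification of simple modules with $\mathsf{IC}$-isotypic pieces).

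One small imprecision worth flagging: in Stage~1 you assert that $e^\lambda$ acts on $H_\bullet(\mathcal E_{v_\tau}^a[u])$ \emph{as the scalar} $\langle\lambda,u\rangle$. In general the $R(T)$-action on a standard module is not semisimple; what the Chriss--Ginzburg computation actually gives is that $e^\lambda$ acts with \emph{generalized} eigenvalue $\langle\lambda,u\rangle$ on that summand (scalar plus nilpotent coming from cap product with non-equivariant Chern classes). This does not harm your conclusion, since the pieces $\mathcal E_{v_\tau}^a[u]$ are open and closed in $\mathcal E_{v_\tau}^a$ and hence $R(T)$-stable, and ``weight space'' in the statement is to be read as ``generalized weight space.'' Your reduction via ``the generalized weights lie in $W\cdot s^{-1}$, hence the action is scalar'' is therefore not quite the right justification; the correct one is the direct computation of the action on each component.
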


\begin{proof}
See Chriss-Ginzburg \cite{CG} \S 8.6.
\end{proof}

\noindent An important consequence is that we can characterize certain $R(T)$-weights
of $L_{(a,v_\tau)}.$

\begin{corollary}\label{c:ftemp} If $\mathcal O_\tau$ meets ${}^{w}\!\mathbb V (a)$ in a dense open subset, then $w\cdot s ^{-1}$ is
  a $R(T)$-weight of $L_{(a,v_\tau)}.$
\end{corollary}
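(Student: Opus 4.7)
The plan is to invoke Proposition \ref{p:2.1} and reduce the claim to a sheaf-theoretic statement about the proper map $\mu_w^a$. By that proposition, it suffices to exhibit $\mathsf{IC}(\mathcal O_\tau)$ as a direct summand of $(\mu_w^a)_*\underline{\mathbb C}$ on $\mathfrak N^a$.

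First I would observe that the source $F_w^a = G(s) \times^{{}^w\!B(s)} {}^w\!\mathbb V(a)$ is smooth, fibering as a vector bundle over a flag variety of $G(s)$, and that $\mu_w^a$ is proper since it is a restriction of the proper collapsing map $\mu$ to $a$-fixed points. The Beilinson--Bernstein--Deligne decomposition theorem therefore yields a splitting
$$
(\mu_w^a)_*\underline{\mathbb C}_{F_w^a}\;\cong\;\bigoplus_{\mathcal O,\,\mathcal L,\,k}\mathsf{IC}(\mathcal O,\mathcal L)[k]\otimes V_{\mathcal O,\mathcal L,k}
$$
into shifted simple $G(a)$-equivariant perverse sheaves. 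Corollary \ref{stab} guarantees that every $G(a)$-stabilizer on $\mathfrak N^a$ is connected, so only trivial local systems $\mathcal L$ occur, and the summands reduce to the form $\mathsf{IC}(\mathcal O)[k]$.

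Next I would extract the desired summand $\mathsf{IC}(\mathcal O_\tau)$. The hypothesis furnishes a dense open subset $\mathcal U := \mathcal O_\tau \cap {}^w\!\mathbb V(a) \subset \mathcal O_\tau$, together with an algebraic section $\mathcal U \to F_w^a$, $v \mapsto [e,v]$, of $\mu_w^a$. In particular, $\mathcal O_\tau$ lies in the image of $\mu_w^a$, and this section embeds $\mathcal U$ as a locally closed subvariety of $F_w^a$ of dimension $\dim \mathcal O_\tau$ mapping isomorphically onto $\mathcal U$. By proper base change, the stalk of $(\mu_w^a)_*\underline{\mathbb C}$ at $v\in\mathcal U$ is $H^\bullet\bigl((\mu_w^a)^{-1}(v)\bigr)$ and contains the class of the point $[e,v]$ in the cohomological degree corresponding to a constant-sheaf shift of $\mathsf{IC}(\mathcal O_\tau)$ along $\mathcal O_\tau$. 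Any summand $\mathsf{IC}(\mathcal O)[k]$ with $\mathcal O\supsetneq \mathcal O_\tau$ restricts on $\mathcal O_\tau$ to a complex concentrated in strictly smaller cohomological degrees, by the defining support/truncation conditions of intermediate extensions, so this top-degree contribution can only come from $\mathsf{IC}(\mathcal O_\tau)$ itself. Hence $\mathsf{IC}(\mathcal O_\tau)$ appears as a summand, and Proposition \ref{p:2.1} gives the conclusion.

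The main obstacle is precisely this last support/degree comparison: one must rule out that the non-vanishing generic contribution over $\mathcal O_\tau$ is entirely absorbed by IC-sheaves supported on strictly larger orbits. This is handled either by a dimension count on the generic fiber of $\mu_w^a$ over $\mathcal O_\tau$ using the section $v\mapsto[e,v]$, or by unwinding the defining truncation properties of intermediate extensions; once settled, the corollary is immediate from Proposition \ref{p:2.1}.
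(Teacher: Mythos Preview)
Your overall strategy matches the paper's: apply the decomposition theorem to $(\mu_w^a)_*\underline{\mathbb C}$, show that $\mathsf{IC}(\mathcal O_\tau)$ occurs as a summand, and invoke Proposition~\ref{p:2.1}. The use of connected stabilizers to kill nontrivial local systems is also correct.

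However, you miss the observation that makes the ``main obstacle'' disappear entirely, and the degree argument you offer in its place is not complete. The hypothesis says $\mathcal O_\tau \cap {}^w\mathbb V(a)$ is dense in ${}^w\mathbb V(a)$, so ${}^w\mathbb V(a)\subset\overline{\mathcal O_\tau}$. Since the image of $\mu_w^a$ is exactly $G(s)\cdot{}^w\mathbb V(a)$ and $\overline{\mathcal O_\tau}$ is $G(s)$-stable, the image of $\mu_w^a$ lies in $\overline{\mathcal O_\tau}$. Hence every summand $\mathsf{IC}(\mathcal O)[k]$ in the BBD decomposition has $\overline{\mathcal O}\subset\overline{\mathcal O_\tau}$; there are \emph{no} summands supported on orbits strictly larger than $\mathcal O_\tau$. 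Now the stalk at $v\in\mathcal O_\tau$ is nonzero (the fiber contains $[e,v]$), and among the remaining summands only $\mathsf{IC}(\mathcal O_\tau)$ has $\mathcal O_\tau$ in its support, so it must appear. This is the paper's argument.

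Your support/degree comparison, by contrast, does not go through as stated. You assert that $\mathsf{IC}(\mathcal O)[k]$ for larger $\mathcal O$ restricts to $\mathcal O_\tau$ in ``strictly smaller cohomological degrees'', but smaller than what? The support condition bounds the stalk degrees of $\mathsf{IC}(\mathcal O)$ on $\mathcal O_\tau$ by $-\dim\mathcal O_\tau-1$, while $\mathsf{IC}(\mathcal O_\tau)$ sits in degree $-\dim\mathcal O_\tau$; but the shifts $[k]$ in the decomposition theorem vary independently from summand to summand and are not a~priori controlled. So a given degree in the stalk at $v$ could in principle receive contributions from $\mathsf{IC}(\mathcal O)[k]$ with $\mathcal O$ large and $k$ suitably chosen. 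Without the observation that such $\mathcal O$ simply do not occur in the support, the argument is incomplete.
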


\begin{proof}
We have $(\mu_w^a)^{-1}(X) \neq \emptyset$ only if $X \in
\overline{\mathcal O _{\tau}}$. We have $\dim H_{\bullet} (
(\mu_w^a)^{-1}(X) ) \neq 0$ when $X \in \mathcal O _{\tau}$. It
follows that there exist a simple constituent of $(\mu_w^a)_*
\underline{\mathbb C}$ in $D^b ( \mathfrak N ^a )$ which has support
contained in $\overline{\mathcal O _{\tau}}$. By the
BBD-Gabber theorem and \cite{K} Theorem 4.10, we have
$\mathsf{IC} ( \mathcal O _{\tau} )$ as a direct summand of
$(\mu_w^a)_* \underline{\mathbb C}$ (up to degree shift). Now Proposition \ref{p:2.1} implies the result. 
\end{proof}

\subsection{Conditions on weights of $\mathbb H_{n,m}$-modules}

Later in this section, we work in the same setting as in \S \ref{sec:1.3}.

\begin{proposition}\label{re}
Let $\tau = ( \mathbf I, \delta ) \in \mathsf{MP} ( a )$. Let $w \in W$. If ${} ^{w}\mathbb V (a)$ meets $\mathcal O _{\tau}$, then there exists $v \in \mathfrak S _n ^a$ which satisfies:
\begin{itemize}
\item[$(\clubsuit) _w$] For every $i, j \in I_k \in \mathbf I$ such that $\left< \epsilon _i, s \right> \in q ^{\mathbb Z _{>0}} \left< \epsilon _j, s \right>$, we have either
$$w v ( i ) < w v ( j ) \text{ or } w v ( i ) > 0 > w v ( j ).$$
\end{itemize}
\end{proposition}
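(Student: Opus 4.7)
The plan is to reduce an arbitrary element of $\mathcal O_\tau \cap {}^w\mathbb V(a)$ to a combinatorial normal form $v_{v\cdot\tau}$ for some $v\in\mathfrak S_n^a$ by a Bruhat-type argument, read off the consecutive-level conditions from its $T$-weights, and then propagate them along each segment chain to recover the full condition $(\clubsuit)_w$.

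First I would pick any $X\in\mathcal O_\tau\cap{}^w\mathbb V^+$. The Borel $^w B(s):=\mathrm{Ad}(\dot w^{-1})B\cap G(s)$ preserves ${}^w\mathbb V^+$, so $\mathcal O_\tau\cap{}^w\mathbb V^+$ decomposes into $^w B(s)$-orbits. The Weyl group of the reductive centralizer $G(s)$ is precisely $\mathfrak S_n^a$ (reflections in roots $\alpha$ with $\alpha(s)=1$), and by Proposition~\ref{p:MP2} the preimage under $\Upsilon$ of the $G(a)$-orbit $\mathcal O_\tau$ is the $\mathfrak S_n^a$-orbit of $\tau$ in $\mathsf{MP}(a)$. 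Applying the Bruhat decomposition of $G(s)$ one obtains that every $^w B(s)$-orbit inside $\mathcal O_\tau$ contains a representative of the form $v_{v\cdot\tau}$ for some $v\in\mathfrak S_n^a$. Replacing $X$ by such a representative, I may assume $X=v_{v\cdot\tau}$.

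Next, the $T$-weight components of $v_{v\cdot\tau}$ are exactly $\epsilon_{v(i)}-\epsilon_{v(j)}$ for consecutive-level pairs $(i,j)\in I_k$ (those with $\langle\epsilon_i,s\rangle=q\langle\epsilon_j,s\rangle$), together with $\epsilon_{v(i)}$ for the marked indices. The containment $v_{v\cdot\tau}\in{}^w\mathbb V^+$ requires each of these weights to be $B$-positive after applying $w$. Writing $w$ as a signed permutation, a straightforward case analysis shows that $w(\epsilon_{v(i)}-\epsilon_{v(j)})$ is $B$-positive precisely in one of three situations: (a)~$wv(i),wv(j)>0$ with $wv(i)<wv(j)$; (b)~$wv(i),wv(j)<0$ with $wv(i)<wv(j)$; (c)~$wv(i)>0>wv(j)$. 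Each of these implies $(\clubsuit)_w$ for the consecutive pair $(i,j)$. Crucially, the fourth numerically-compatible case (d)~$wv(i)<0<wv(j)$ yields a $B$-negative weight and is therefore excluded.

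Finally, to extend $(\clubsuit)_w$ to non-consecutive pairs within a segment, I enumerate $I_k$ as $\{j^{(1)},\ldots,j^{(p)}\}$ in order of increasing level and set $x_\ell:=wv(j^{(\ell)})$. The $B$-positivity of consecutive pairs derived above, together with the exclusion of case (d), forces the sequence $(x_\ell)_\ell$ to have a rigid shape: there exists $\ell^*\in\{0,\ldots,p\}$ with $x_1>x_2>\cdots>x_{\ell^*}$ all negative (via case (b)) followed by $x_{\ell^*+1}>\cdots>x_p$ all positive (via case (a)), the single transition at $\ell^*$ being realized by case (c). Then for any $\ell''>\ell'$, one has either $x_{\ell''}<x_{\ell'}$ (when $\ell',\ell''$ lie in the same block, by monotonicity) or $x_{\ell''}>0>x_{\ell'}$ (when $\ell'\le\ell^*<\ell''$), which is exactly $(\clubsuit)_w$ for the pair $(j^{(\ell'')},j^{(\ell')})$. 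The marked-index components are handled analogously: $w\epsilon_{v(i)}=\epsilon_{wv(i)}$ is $B$-positive iff $wv(i)>0$, which is automatic from the same normal-form reduction.

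I expect the principal obstacle to be Step~1: a careful Bruhat-theoretic justification of the normal-form reduction inside $\mathcal O_\tau\cap{}^w\mathbb V^+$. This relies on the indecomposable decomposition of the type-$\mathsf A$ quiver module $R_\tau$ together with the morphism description in Lemma~\ref{aut}, the stabilizer computation of Theorem~\ref{idstab} and Corollary~\ref{stab}, and the identification of $\mathfrak S_n^a$ with the Weyl group of $G(s)$; the remaining steps are combinatorial and follow directly from the structure established in Step~1.
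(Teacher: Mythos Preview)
Your Steps 2 and 3 are correct and in fact spell out what the paper compresses into its final sentence (``$(\clubsuit)_w$ is equivalent to the fact that $\dot v v_{I_k}\in{}^w\mathbb V(a)$ for each $I_k\in\mathbf I$''): the weight condition gives $(\clubsuit)_w$ only for \emph{consecutive} pairs, and your chain argument in Step 3 is exactly the propagation needed for the general case.

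The genuine gap is Step 1. Bruhat decomposition parametrizes ${}^wB(s)$-orbits on $G(s)/P$ when $P$ is a \emph{parabolic} subgroup; here $\mathcal O_\tau\cong G(s)/\mathsf{Stab}_{G(s)}(v_\tau)$, and by Theorem~\ref{idstab} the stabilizer is the automorphism group of a quiver module, which is typically neither a Borel nor a parabolic. Writing $g=b_1\dot u b_2$ with $b_1,b_2\in{}^wB(s)$ gives $b_1^{-1}X=\dot u\,b_2 v_\tau$, and there is no mechanism to absorb $b_2$ into the stabilizer. So the assertion that every ${}^wB(s)$-orbit in $\mathcal O_\tau\cap{}^w\mathbb V(a)$ contains some $v_{v\cdot\tau}$ is unproved (and stronger than what you need).

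The paper avoids this by a torus fixed-point argument. After reducing to $\delta\equiv 0$, it introduces $T_\tau:=\mathsf{Stab}_T v_\tau$, a connected torus of rank $\#\mathbf I$, and observes that $Z_{G(s)}(T_\tau)=T$ since the eigenvalues within each segment are distinct. The fibre $(\mu_w^a)^{-1}(v_\tau)$ is a projective $T_\tau$-variety (as a closed subvariety of $G(s)/{}^wB(s)$), hence if nonempty it has a $T_\tau$-fixed point. Any such fixed point lies over a $T$-fixed point $\dot v\,{}^wB(s)$ of the flag variety for some $v\in W_s\cong\mathfrak S_n^a$, which forces $\dot v^{-1}v_\tau\in{}^w\mathbb V(a)$. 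This is precisely the existence statement you need to feed into your Steps 2--3; the ``every ${}^wB(s)$-orbit'' claim is neither needed nor established.
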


\begin{proof}
The space ${} ^{w} \mathbb V ( a )$ is stable under the action of ${} ^{w} B (s)$. It follows that the space $G ( s ) {} ^{w} \mathbb V ( a )$ is a closed subset of $\mathfrak N ^a$. In particular, a $G (s)$-orbit $\mathcal O _{\tau} \subset \mathfrak N ^{a}$ meets ${} ^{w} \mathbb V ( a )$ if and only if we have $\mathcal O _{\tau} \subset \overline{\mathcal O}$, where $\mathcal O$ is the open dense $G(s)$-orbit of $G ( s ) {} ^{w} \mathbb V ( a )$.

Condition $(\clubsuit) _w$ is independent of the marking. Consider $\tau'
:= ( \mathbf I, 0 ) \in \mathsf{MP} _0 ( a )$. We have
$\mathcal O _{\tau'} \subset \overline{\mathcal O _\tau}$. Hence, it
suffices to verify $(\clubsuit) _w$ in the case $\tau = \tau'$. We put
$T _{\tau} :=\mathsf{Stab} _T v _{\tau}$. It is easy to verify that $T _{\tau}$ is a connected torus of rank $\# \mathbf I$ (the number of segments of the support of $\tau$). Since we have $\left< \epsilon _i,s \right> \neq \left< \epsilon _j,s \right>$ for each distinct $i,j \in I _k \in \mathbf I$, it follows that $Z _{G(s)} ( T_{\tau} ) = T$ and $T v _{\tau} = \mathcal O ^{T_{\tau}}_{\tau}$. The set of ${} ^v T _{\tau}$-fixed points of
$$F ^a _w := G ( s ) \times ^{{} ^{w}B ( s )} {} ^{w} \mathbb V (a) \stackrel{\pi _w ^a}{\longrightarrow} G ( s ) / B$$
is concentrated on the fiber of $( G (s) / B ) ^T$ for every $w\in W$
and $v \in W _s$. The image of $\mu _w ^a$ contains all of $\mathcal
O _{\tau}$ if and only if $\mathcal O _{\tau} \subset \mathfrak N
^{a}$ meets ${} ^{w} \mathbb V ( a )$. Therefore, ${} ^{w}\mathbb V
(a)$ meets $\mathcal O _{\tau}$ if and only if
$$\dim \left( {} ^{w}\mathbb V (a) \right) ^{{} ^{v'} T _{\tau}} = \dim T v _{\tau}  \text{ for some  } v' \in W
_s.$$
Since $a \in \mathcal T _0$, we have $\mathbb V ^a \subset \mathfrak{gl} ( n ) \oplus \mathbb C ^n$. Moreover, we have $W_s \cong \mathfrak S _n ^a$. Now $(\clubsuit) _w$ is equivalent to the fact that $\dot{v} v _{I _k} \in {} ^{w}\mathbb V ( a )$ for each $I _k \in \mathbf I$.
\end{proof}

\begin{corollary}\label{wcrit}
Keep the setting of Proposition \ref{re}. We have $w \cdot s ^{-1} \in \Psi ( M _{(a, v _{\tau})})$ only if there exists some $v \in \mathfrak S_n ^a$ satisfying $(\clubsuit) _w$. In particular, $w \cdot s ^{-1} \in \Psi ( L _{(a, v _{\tau})})$ only if there exists some $v \in \mathfrak S_n ^a$ satisfying $(\clubsuit) _w$.
\end{corollary}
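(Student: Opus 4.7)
The plan is to reduce the weight-occurrence statement to a non-emptiness condition on the varieties $\mathcal{E}^a_{v_\tau}[u]$ from \eqref{eq:2.3}, and then invoke Proposition \ref{re} directly. Since $L_{(a, v_\tau)}$ is a quotient of $M_{(a, v_\tau)}$, we have $\Psi(L_{(a, v_\tau)}) \subseteq \Psi(M_{(a, v_\tau)})$, so the second assertion follows formally from the first; I will therefore focus on the standard module.

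First, I would apply Proposition \ref{p:2.1}, which identifies the $u = w \cdot s^{-1}$ weight space of $M_{(a, v_\tau)}$ with the Borel--Moore homology $H_\bullet(\mathcal{E}^a_{v_\tau}[u])$. Thus $u \in \Psi(M_{(a, v_\tau)})$ forces $\mathcal{E}^a_{v_\tau}[u] \neq \emptyset$. Unraveling the description \eqref{eq:2.3}, non-emptiness means there exists $g \in G(s)$ with $v_\tau \in g \cdot {}^{w}\mathbb{V}(a)$, or equivalently that the orbit $\mathcal{O}_\tau = G(s) v_\tau$ has non-empty intersection with ${}^{w}\mathbb{V}(a)$.

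Finally, Proposition \ref{re} says exactly that if $\mathcal{O}_\tau$ meets ${}^{w}\mathbb{V}(a)$, then some $v \in \mathfrak{S}_n^a$ satisfies $(\clubsuit)_w$. Combining these two steps yields the claim for $M_{(a, v_\tau)}$, and the statement for $L_{(a, v_\tau)}$ follows by the quotient observation above. There is no real obstacle here: the corollary is essentially a packaging of Proposition \ref{p:2.1} (weight space $\leftrightarrow$ fiber) with Proposition \ref{re} (fiber $\neq \emptyset$ $\Rightarrow (\clubsuit)_w$), and the only thing to verify carefully is that the definition of $\mathcal{E}^a_X[u]$ truly encodes ``$\mathcal{O}_\tau$ meets ${}^{w}\mathbb{V}(a)$'' in the way needed to apply Proposition \ref{re}.
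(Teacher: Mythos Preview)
Your proposal is correct and follows essentially the same route as the paper: both arguments combine the identification of the weight space with $H_\bullet(\mathcal E^a_{v_\tau}[w\cdot s^{-1}])$ from Proposition~\ref{p:2.1}, the observation that $\mathcal E^a_{v_\tau}[w\cdot s^{-1}]\neq\emptyset$ is equivalent to $\mathcal O_\tau$ meeting ${}^w\mathbb V(a)$, and then Proposition~\ref{re}; the deduction for $L_{(a,v_\tau)}$ via the quotient map is likewise the same.
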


\begin{proof}
By definition, we have $\mathcal E^a _{v _{\tau}} [ w \cdot s ^{-1}] \neq \emptyset$ if and only if $G ( a ) v_{\tau} \cap {}^w \mathbb V ^a \neq \emptyset$. It follows that $\mathcal E^a _{v _{\tau}} [ w \cdot s ^{-1} ] \neq \emptyset$ only if there exists some $v \in \mathfrak S_n ^a$ satisfying $(\clubsuit) _w$. Therefore, Proposition \ref{p:2.1} implies the result.
\end{proof}

\begin{theorem}[see e.g. Barbasch-Moy \cite{BM2} \S 6.4]\label{wt-ind}
Fix $n = n _1 + n _2$ with $n_1, n_2 \in \mathbb Z _{\ge 0}$. Let $L_1$ be a $\mathbb H_{n_1} ^{\mathsf A}$-module and let $L _2$ be a $\mathbb H_{n_2}$-module. We form
$$M := \mathrm{Ind} ^{\mathbb H _n} _{\mathbb H_{n_1} ^{\mathsf A} \otimes \mathbb H _{n_2}} L _1 \boxtimes L _2.$$
If an irreducible $\mathbb H _n$-module $L$ is a subquotient of $M$, then we have
\begin{equation}
\Psi ( L ) \subset \Psi ( M ) = \bigcup _{w} w \cdot ( \Psi ( L_1 ^{\mathsf A} ) \times \Psi ( L_2 )),\label{psi1}
\end{equation}
where $w \in W$ runs over the minimal length representatives of $W / ( \mathfrak S _{n_1} \times W_{n_2} )$. \hfill $\Box$
\end{theorem}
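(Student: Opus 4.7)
My plan is to split the statement into its two assertions: the inclusion $\Psi(L) \subset \Psi(M)$ for any irreducible subquotient, and the explicit description of $\Psi(M)$ itself. The first assertion is formal. Indeed, $R(T) \subset \mathbb H _n$ is a commutative subalgebra, and the generalized weight space decomposition with respect to any such subalgebra is automatically preserved under subquotients in the category of finite-dimensional modules. Thus as soon as $L$ is realized as a subquotient of $M$, one has $\Psi(L) \subset \Psi(M)$ without any further input.

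For the second assertion, I would invoke the Bernstein presentation. As a right module over the parabolic subalgebra $\mathbb H _{n_1} ^{\mathsf A} \otimes \mathbb H _{n_2}$, the algebra $\mathbb H _n$ is free with basis $\{ T _w \}$ indexed by the minimal length coset representatives of $W / ( \mathfrak S _{n_1} \times W _{n_2} )$. Applied to the induced module, this gives a vector space decomposition $M = \bigoplus _w T _w \otimes ( L _1 \boxtimes L _2 )$. Since the torus $T$ splits compatibly with the parabolic data, the set of weights of $L _1 \boxtimes L _2$ is precisely $\Psi ( L _1 ^{\mathsf A} ) \times \Psi ( L _2 )$, so it suffices to show that the action of $R ( T )$ permutes these summands upper-triangularly with eigenvalues twisted by each $w$.

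This is achieved by iterating the Bernstein-Lusztig commutation relation $T _i e ^{\lambda} = e ^{s _i \lambda} T _i + R _i ( \lambda )$, where $R _i ( \lambda ) \in R ( T )$ is the regular correction term supplied by Definition \ref{Checke}. Moving $e ^{\lambda}$ past a reduced expression of $w$ yields
\begin{equation*}
e ^{\lambda} \cdot ( T _w \otimes v ) \; = \; T _w \otimes e ^{w ^{-1} \lambda} v \; + \; \sum _{\ell ( w' ) < \ell ( w )} T _{w'} \otimes ( \text{element of } R ( T ) \cdot v ).
\end{equation*}
Ordering the summands by $\ell ( w )$, the operator $e ^{\lambda}$ is upper-triangular with diagonal entries given precisely by the $w$-twists of weights of $L _1 \boxtimes L _2$. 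Passing to associated graded and generalized eigenspaces, this both produces all elements of $\bigcup _w w \cdot ( \Psi ( L _1 ^{\mathsf A} ) \times \Psi ( L _2 ) )$ as weights of $M$ and forbids any others.

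The main obstacle is the bookkeeping required in the third step: one must verify that every correction term generated by iterating the Bernstein-Lusztig relation does in fact sit in a strictly shorter $T _{w'}$-component, so that the filtration by $\ell ( w )$ is honestly preserved and the diagonal eigenvalues are not contaminated. This is a standard but careful induction on $\ell ( w )$, handled identically to the argument in Barbasch-Moy \cite{BM2} \S 6.4.
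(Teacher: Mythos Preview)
The paper does not give its own proof of this theorem: it is stated with a citation to Barbasch--Moy \cite{BM2} \S 6.4 and closed immediately with a $\Box$. Your argument is the standard one and is correct; the only point requiring care, which you flag, is that the Bernstein--Lusztig correction terms a priori land in $\sum_{\ell(w')<\ell(w)} T_{w'}\, R(T)$ with $w'$ ranging over all of $W$ rather than over minimal coset representatives, so one must factor each such $T_{w'}$ as $T_{w''} T_u$ with $w''$ a minimal representative and $u \in \mathfrak S_{n_1} \times W_{n_2}$ (whence $T_u$ is absorbed into $\mathbb H_P$ and $\ell(w'') \le \ell(w') < \ell(w)$) to see that the length filtration by minimal representatives is genuinely $R(T)$-stable.
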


\begin{proposition}\label{pw-ind}
Keep the setting of Theorem \ref{wt-ind}. Assume that $L_1 = L _{(a_1, v_{\tau_1})} ^{\mathsf A}$ holds for $\tau_1 = ( \mathbf I_1, 0 ) \in \mathsf{MP} _0 ( a_1 )$, where $a_1 = ( s_1, \vec{q} )$. Then, we have
\begin{equation}
\Psi ( L ) \subset \bigcup _{w} w \cdot ( \Psi ( M _{(a_1, v_{\tau_1})} ) \times \Psi ( L _2 )),\label{psi2}
\end{equation}
where $w \in \mathfrak S _n$ runs over the minimal length representatives of $\mathfrak S_{n} / ( \mathfrak S _{n_1} \times \mathfrak S _{n_2} )$.
\end{proposition}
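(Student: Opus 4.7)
The plan is to replace $L_1^{\mathsf A}$ (and $L_2$) by their standard covers, realize the resulting induced module as a single standard $\mathbb H_n$-module using the induction theorem (Theorem \ref{indt}), and then apply the $(\clubsuit)_w$ test of Corollary \ref{wcrit} to restrict its weights.

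First, set $M_1 := M_{(a_1, v_{\tau_1})}^{\mathsf A}$. Since $L_1^{\mathsf A}$ is a quotient of $M_1$, exactness of parabolic induction gives
$$\mathsf{Ind}^{\mathbb H_n}_{\mathbb H^S}(M_1 \boxtimes L_2) \twoheadrightarrow M \twoheadrightarrow L,$$
where $S = \Pi \setminus \{\alpha_{n_1}\}$ identifies $\mathbb H^S$ with $\mathbb H_{n_1}^{\mathsf A} \otimes \mathbb H_{n_2}$. Writing $L_2 = L_{(a_2, v_{\tau_2})}$ by Theorem \ref{eDL} and further replacing it by $M_2 := M_{(a_2, v_{\tau_2})}$, set $a := ((s_1, s_2), \vec q)$ and $X := v_{\tau_1} + v_{\tau_2} \in \mathbb V_S \cap \mathfrak N^a$. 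The unmarked assumption $\tau_1 = (\mathbf I_1, 0) \in \mathsf{MP}_0(a_1)$ will be used to verify the denominator condition $(\mathbb V^S)^a \subset \mathfrak u_S X$, so that Theorem \ref{indt} yields $\mathsf{Ind}^{\mathbb H_n}_{\mathbb H^S}(M_1 \boxtimes M_2) \cong M_{(a, X)}$, and consequently $\Psi(L) \subset \Psi(M_{(a, X)})$.

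The second step applies Corollary \ref{wcrit}: every weight $u = w \cdot s^{-1}$ of $M_{(a, X)}$ requires some $v \in \mathfrak S_n^a$ satisfying $(\clubsuit)_w$. Uniquely decompose $w = u' \sigma$ with $u'$ a minimal length representative in $[\mathfrak S_n / (\mathfrak S_{n_1} \times \mathfrak S_{n_2})]$ and $\sigma \in (\mathbb Z/2)^{n_1}$ a sign-flip on the first $n_1$ coordinates. Because the segments of $\mathbf I_1$ lie in $\{1, \ldots, n_1\}$ and carry no markings, the $(\clubsuit)_w$ condition restricted to $\mathbf I_1$ forces the action of $\sigma$ on each $I_k \in \mathbf I_1$ to be replicable by an element of $\mathfrak S_{n_1}$; the net effect of $\sigma$ on $s^{-1}$ is then absorbed into a weight of $M_1$, and the resulting weight lies in $u' \cdot (\Psi(M_{(a_1, v_{\tau_1})}) \times \Psi(L_2))$, as required in (\ref{psi2}).

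\textbf{Main obstacle.} The hardest step is the combinatorial reduction above: proving that the unmarkedness of $\tau_1$ prevents any nontrivial sign-flip component $\sigma$ from producing a weight outside $\bigcup_{u'} u' \cdot (\Psi(M_1) \times \Psi(L_2))$. This hinges on a case analysis of $(\clubsuit)_w$ on each segment of $\mathbf I_1$, making essential use of the type-$\mathsf A$ structure. Verifying the denominator condition (\ref{denom}) in the preceding step is also nontrivial and requires inspecting the segment combinatorics of $\tau_1$ in tandem with the orbit $\mathcal O_{\tau_2}$.
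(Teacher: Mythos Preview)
Your approach takes a genuinely different and more complicated route than the paper, and it contains a real gap.

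The paper's proof is a three-line algebraic argument that never invokes Theorem~\ref{indt} for $\mathbb H_n$ and never touches Corollary~\ref{wcrit}. The single observation it uses is that the weight set of the \emph{type-$C_{n_1}$} standard module already equals the union of sign-flip translates of the type-$A$ one:
\[
\Psi\bigl(M_{(a_1,v_{\tau_1})}\bigr)=\bigcup_{v\in W_{n_1}/\mathfrak S_{n_1}} v\cdot \Psi\bigl(M_{(a_1,v_{\tau_1})}^{\mathsf A}\bigr)=\Psi\bigl(\mathrm{Ind}_{\mathbb H_{n_1}^{\mathsf A}}^{\mathbb H_{n_1}} M_{(a_1,v_{\tau_1})}^{\mathsf A}\bigr)\supset \bigcup_{v} v\cdot \Psi(L_1^{\mathsf A}).
\]
One then writes the minimal representative $w\in W_n/(\mathfrak S_{n_1}\times W_{n_2})$ from (\ref{psi1}) uniquely as $w=w_2w_1$ with $w_1\in W_{n_1}/\mathfrak S_{n_1}$ a sign-flip and $w_2\in \mathfrak S_n/(\mathfrak S_{n_1}\times \mathfrak S_{n_2})$; the $w_1$-part is absorbed into $\Psi(M_{(a_1,v_{\tau_1})})$ by the displayed containment, and what remains is exactly the $w_2$-union in (\ref{psi2}). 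That is the whole proof.

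Your plan relies on identifying $\mathsf{Ind}_{\mathbb H^S}^{\mathbb H_n}(M_1^{\mathsf A}\boxtimes M_2)$ with a single standard module $M_{(a,X)}$ via Theorem~\ref{indt}, and this is where it breaks. The denominator condition $(\mathbb V^S)^a\subset \mathfrak u_S X$ for $S=\Pi\setminus\{\alpha_{n_1}\}$ depends on the \emph{interaction} between $s_1$ and $s_2$: any $a$-fixed vector of the form $v_{\epsilon_i-\epsilon_j}$ with $i\le n_1<j$, or $v_{\epsilon_i+\epsilon_j}$ with $i\le n_1$, or $v_{\epsilon_i}$ with $i\le n_1$ and $\langle\epsilon_i,s_1\rangle=q_1$, must lie in $\mathfrak u_S X$. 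The proposition imposes no compatibility between $a_1$ and $a_2$ (and none between $\tau_1$ and $\tau_2$), so such vectors can easily exist without being hit by $\mathfrak u_S$ acting on $v_{\tau_1}+v_{\tau_2}$. The unmarked hypothesis on $\tau_1$ does not control this; it only ensures $v_{\tau_1}\in V_2$. Once Theorem~\ref{indt} fails you lose the identification with $M_{(a,X)}$, and with it any right to invoke $(\clubsuit)_w$ on the induced module.

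The moral is that the ``absorption of the sign-flip'' you are trying to achieve combinatorially via $(\clubsuit)_w$ is already packaged for free in the definition of the type-$C$ standard module $M_{(a_1,v_{\tau_1})}$ appearing in the statement; exploit that instead.
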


\begin{proof}
Thanks to Theorem \ref{wt-ind}, we have
\begin{align*}
& \Psi ( M _{(a_1, v_{\tau_1})} ) = \bigcup _{v} v \cdot \Psi ( M _{(a_1, v_{\tau_1})} ^{\mathsf A} )\\
& = \Psi ( \mathrm{Ind} _{\mathbb H _{n_1} ^{\mathsf A}} ^{\mathbb H _{n_1}} M _{(a_1, v_{\tau_1})} ^{\mathsf A} ) \supset \Psi ( \mathrm{Ind} _{\mathbb H _{n_1} ^{\mathsf A}} ^{\mathbb H _{n_1}} L _{(a_1, v_{\tau_1})} ^{\mathsf A} ),
\end{align*}
where $v$ runs over the minimal coset representatives of $W _{n_1} / \mathfrak S _{n_1}$. A minimal length representative $w \in W / ( \mathfrak S _{n_1} \times W _{n_2} )$ decomposes uniquely into $w = w _2 w _1$, where $w_1$ is a minimal length representative of $W _{n_1} / \mathfrak S _{n_1}$ and $w _2$ is a minimal length representative in $\mathfrak S_{n} / ( \mathfrak S _{n_1} \times \mathfrak S _{n_2} )$. Therefore, the comparison between (\ref{psi1}) and (\ref{psi2}) implies the result.
\end{proof}

{
\begin{remark}
Notice that in Proposition \ref{pw-ind}, the module $M _{(a_1,
  v_{\tau_1})}$ is a standard $\mathbb H _{n_1}$-module and not a
standard $\mathbb H _{n_1} ^{\mathsf A}$-module. This is an important
point when using Corollary \ref{wcrit}.
\end{remark}}

\subsection{Nested component decomposition}\label{sec:nested}

Let $\tau=(\mathbf I,\delta)$, $\mathbf
  I=\{I_l\}_{l=1}^k$, be a marked partition adapted to $a =
  (s,\vec{q}) \in \mathcal T _0$. Assume that $\mathbf I$
  can be split into a disjoint union of two collections of segments
  $\mathbf I_1$ and $\mathbf I_2$ with the property
\begin{align}\label{ncond}
&\underline I \sqsubset \underline I',\quad\text{for every }I\in\mathbf
I_1, I'\in\mathbf I_2, \text{ or } \\\notag
&\underline I' \sqsubset \underline I, \quad\text{for every }I\in\mathbf
I_1, I'\in\mathbf I_2,
\end{align}
where $\underline{I} \sqsubset \underline{J}$ means
$$\min \underline{J} < \min \underline{I} \text{ and } \max
\underline{I} < \max \underline{J}.$$

Let $n_1$ and $n_2$ be the sums of cardinalities of segments of $\mathbf
I_1$ and $\mathbf I_2$, respectively. 
By applying an appropriate permutation, we assume that
\begin{itemize}
\item $\mathbf I_1$ and $\mathbf I_2$ are divisions of $[1,n_1]$ and $(n_1,n]$, respectively.
\end{itemize}

Then we can regard
$\tau_1=(\mathbf I_1,\delta|_{\mathbf I_1})$ and $\tau_2=(\mathbf
I_2,\delta|_{\mathbf I_2})$ as marked partitions for $G_1=Sp(2n_1)$ and
$G_2=Sp(2n_2)$ respectively, where $Sp(2n_1)\times Sp(2n_2)$ is embedded
diagonally in $Sp(2n).$

The marked partition $\tau$ parameterizes a $G(s)$-orbit $\mathcal
O_\tau$ on $\mathfrak N^a.$ We define semisimple elements $s_1\in
Sp(2n_1),$ $s_2\in Sp(2n_2)$ to be the projections of $s$ onto the
$Sp(2n_1),$ $Sp(2n_2)$ factors, respectively. We set $a _1 := ( s _1, \vec{q})$, and $a _2 := ( s _2, \vec{q})$. The marked partitions $\tau_1$ and $\tau_2$ define orbits $\mathcal O_{\tau_1}$, $\mathcal O_{\tau_2}$,
of $G_1=G_1(s_1)$, $G_2=G_2(s_2)$ respectively on the corresponding exotic
nilcones.

\begin{lemma}\label{cr-rigid}
Every $G ( s )$-orbit $\mathcal O$ of $\mathfrak N ^a$ which contains
$\mathcal O _{\tau}$ in its closure can be written as $G ( s ) ( \mathcal
O _1 \times \mathcal O _2)$, where $\mathcal O _1$ and $\mathcal O _2$
are exotic nilpotent orbits of $G_1$ and $G_2$, respectively. In
addition, the marked partitions corresponding to $\mathcal O _1$ and
$\mathcal O _2$ are nested in the sense of $(\ref{ncond})$. 
\end{lemma}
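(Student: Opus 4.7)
The plan is to reduce to the case of trivial markings via projection to $V_2^a$, apply Theorem \ref{AD}, and then verify that the operation $(\spadesuit)$ preserves the nested structure. Let $\mathcal{O} = \mathcal{O}_{\tau'}$ for some $\tau' = (\mathbf{I}', \delta') \in \mathsf{MP}(a)$; it suffices to show that $\mathbf{I}'$ admits a nested decomposition $\mathbf{I}' = \mathbf{I}'_1 \sqcup \mathbf{I}'_2$ as in (\ref{ncond}). The reduction uses the continuous, $G(s)$-equivariant projection $\pi : \mathbb{V}^a \to V_2^a$, which sends $\mathcal{O}_\tau$ and $\mathcal{O}_{\tau'}$ to the $V_2^a$-orbits $\mathcal{O}_{\tau_0}$ and $\mathcal{O}_{\tau'_0}$ respectively, where $\tau_0 = (\mathbf{I}, 0)$ and $\tau'_0 = (\mathbf{I}', 0)$. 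The hypothesis $\mathcal{O}_\tau \subset \overline{\mathcal{O}_{\tau'}}$ together with continuity of $\pi$ then yields $\mathcal{O}_{\tau_0} \subset \overline{\mathcal{O}_{\tau'_0}}$.

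By Theorem \ref{AD}, $\tau'_0$ is obtained from $\tau_0$ by successive applications of $(\spadesuit)$. The key observation is that $(\spadesuit)$ cannot combine a segment $I_k \in \mathbf{I}_1$ with $I_l \in \mathbf{I}_2$: assuming (without loss of generality) that $\mathbf{I}_1$ is the inner collection, the strict nesting $I_k \sqsubset I_l$ forces $\min \underline{I_l} < \min \underline{I_k}$ and $\max \underline{I_k} < \max \underline{I_l}$, and these inequalities directly rule out the $(\spadesuit)$ hypothesis no matter how we label the pair. Hence $(\spadesuit)$ acts only within one of the blocks; moreover, the output segments $I^{\pm}$ satisfy $\underline{I^{\pm}} \subseteq \underline{I_k} \cup \underline{I_l}$, so they remain nested with respect to the other block. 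By induction on the number of $(\spadesuit)$ moves, $\mathbf{I}' = \mathbf{I}'_1 \sqcup \mathbf{I}'_2$ still satisfies (\ref{ncond}).

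Finally, this nested decomposition of $\mathbf{I}'$ splits $v_{\tau'}$ into pieces adapted to $G_1$ and $G_2$: the $V_2$-weight vectors $v_{\epsilon_i - \epsilon_j}$ in $v_{\tau'}$ have both indices in a common segment and hence in a single block, while the $V_1$-contributions $\delta'(i) v_{\epsilon_i}$ split trivially since each index lies in a unique segment. The $\widetilde{\delta}'$-canonicalization respects the blocks too, for an $I \in \mathbf{I}'_1$ cannot be dominated in the $\lhd$ sense by any $I'' \in \mathbf{I}'_2$ (the required $\min \underline{I} \leq \min \underline{I''}$ contradicts $I \sqsubset I''$), and symmetrically for the other direction. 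Using $G_1(s_1) \times G_2(s_2) \subseteq G(s)$, we conclude $\mathcal{O}_{\tau'} = G(s)(\mathcal{O}_1 \times \mathcal{O}_2)$, where $\mathcal{O}_i$ is the exotic $G_i$-orbit determined by the restriction of $\tau'$ to the $i$-th block. I expect the main obstacle to be the combinatorial verification that $(\spadesuit)$ is incompatible with strict nesting (together with the parallel check that $\widetilde{\delta}'$ respects the blocks); once these short inequality arguments are in place, the geometric conclusion follows directly.
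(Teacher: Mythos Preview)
Your proof is correct and follows essentially the same approach as the paper's own proof: reduce to trivial markings, invoke Theorem \ref{AD}, and verify that the operation $(\spadesuit)$ cannot mix segments from the two nested blocks and preserves nestedness. The paper's argument is much terser (it simply asserts that $(\spadesuit)$ cannot choose segments from different blocks and that nestedness is preserved), whereas you spell out the inequality checks and add the verification that the $\widetilde{\delta}'$-canonicalization respects the block decomposition; these extra details are correct and welcome but not a different route.
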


\begin{proof}
The condition (\ref{ncond}) is independent of markings. Thus,
it suffices to prove the assertion for all orbits with no markings. In
the algorithm of Theorem \ref{AD}, it is straight-forward to see that we cannot choose $I _l \in \mathbf I _1$ and $I _l \in \mathbf I _2$. Therefore, the above procedure preserves $\bigsqcup _{I _l \in \mathbf I _1} I _l$ and $\bigsqcup _{I _l \in \mathbf I _2} I _l$, respectively. Moreover, it preserves the nestedness of the modified $\mathbf I _1$ and $\mathbf I _2$, which implies the result.
\end{proof}

Let $\mathbb V _{(1)}$ and $\mathbb V _{(2)}$ be the exotic
representations of $G _1$ and $G _2$, respectively. We set
$$\mathcal O ^{\uparrow} _{\tau} := \displaystyle{\bigcup _{
    \mathcal O _{\tau} \subset \overline{\mathcal O}}} \mathcal O,$$
where $\mathcal O$ runs over all $G ( s )$-orbits of $\mathfrak N
^a$. This is a $G ( s )$-stable open subset of $\mathfrak N
^a$. Similarly, for $i=1,2$, we define $\mathcal O ^{\uparrow} _{\tau _i}$ to be
the union of $G _i ( s _i )$-orbits of $\mathfrak N ^a \cap \mathbb V
_{(i)}$ which contain $\mathcal O _{\tau _i}$ in their closure.

\begin{corollary}
Keep the setting of Lemma \ref{cr-rigid}. We have
$$G ( s ) ( \mathcal O ^{\uparrow} _{\tau _1} \times \mathcal O ^{\uparrow} _{\tau _2} ) = \mathcal O ^{\uparrow} _{\tau}.$$
\end{corollary}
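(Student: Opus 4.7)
The plan is to translate the claimed set equality into a combinatorial statement about marked partitions, exploiting Lemma \ref{cr-rigid} together with the closure-order algorithm in Theorem \ref{AD}. Both sides are $G(s)$-stable subsets of $\mathfrak N ^a$, so the verification amounts to characterizing which $G(s)$-orbits appear in each side, and I will prove the two inclusions separately.

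For the inclusion $\subset$, take a $G(s)$-orbit $\mathcal O \subset \mathcal O _{\tau} ^{\uparrow}$. By Lemma \ref{cr-rigid}, we can write $\mathcal O = G(s) ( \mathcal O _{\tau _1'} \times \mathcal O _{\tau _2'} )$ where the marked partitions $\tau _1', \tau _2'$ are again nested in the sense of (\ref{ncond}). The key observation is that the procedure $(\spadesuit)$ in Theorem \ref{AD} cannot mix a segment $I \in \mathbf I _1$ with a segment $I' \in \mathbf I _2$: the strict nesting $\underline I \sqsubset \underline{I'}$ forces either $\min \underline{I'} < \min \underline I$ together with $\max \underline I < \max \underline{I'}$, which is incompatible with the chain $\min \underline{I _k} < \min \underline{I _l}$ and $\max \underline{I _k} < \max \underline{I _l}$ required by $(\spadesuit)$. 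Therefore, the successive applications of $(\spadesuit)$ that take $\tau'$ to $\tau$ split into those happening entirely within $\mathbf I _1'$ and those entirely within $\mathbf I _2'$, which shows $\mathcal O _{\tau _i} \subset \overline{\mathcal O _{\tau _i'}}$ in $\mathfrak N ^a \cap \mathbb V _{(i)}$ for each $i = 1, 2$. Hence $\mathcal O _{\tau _i'} \subset \mathcal O _{\tau _i} ^{\uparrow}$ and $\mathcal O \subset G(s) ( \mathcal O _{\tau _1} ^{\uparrow} \times \mathcal O _{\tau _2} ^{\uparrow} )$.

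For the inclusion $\supset$, take $G _i ( s _i )$-orbits $\mathcal O _i \subset \mathcal O _{\tau _i} ^{\uparrow}$, so that $\mathcal O _{\tau _i} \subset \overline{\mathcal O _i}$. Let $\tau _i'$ be the marked partition parameterizing $\mathcal O _i$. Since $(\spadesuit)$ preserves nestedness within each $\mathbf I _i$ (the replacement segments $I ^{\pm}$ have weight ranges inside the union of ranges of $I _k, I _l$), the pair $( \tau _1', \tau _2' )$ is nested, so $\tau' := \tau _1' \cup \tau _2'$ defines a single $G(s)$-orbit $\mathcal O _{\tau'} = G(s) ( \mathcal O _1 \times \mathcal O _2 )$. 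It remains to show $\mathcal O _{\tau} \subset \overline{\mathcal O _{\tau'}}$. Because $G _1 \times G _2 \subset G(s)$ acts on $v _{\tau} = v _{\tau _1} + v _{\tau _2}$ componentwise, we have $\mathcal O _{\tau} = G(s) ( \mathcal O _{\tau _1} \times \mathcal O _{\tau _2} )$. From $\mathcal O _{\tau _i} \subset \overline{\mathcal O _i}$ we deduce $\mathcal O _{\tau _1} \times \mathcal O _{\tau _2} \subset \overline{\mathcal O _1} \times \overline{\mathcal O _2} = \overline{\mathcal O _1 \times \mathcal O _2}$. Applying $G(s)$ and using that left-multiplication by any fixed $g \in G(s)$ is a homeomorphism (so $g \, \overline{A} = \overline{gA}$), we conclude
$$G(s) ( \mathcal O _{\tau _1} \times \mathcal O _{\tau _2} ) \subset \overline{G(s) ( \mathcal O _1 \times \mathcal O _2 )} = \overline{\mathcal O _{\tau'}},$$
which gives $\mathcal O _{\tau} \subset \overline{\mathcal O _{\tau'}}$ as required.

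The only subtle point is to rule out interaction between $\mathbf I _1$ and $\mathbf I _2$ under $(\spadesuit)$; once this combinatorial rigidity is in hand, both inclusions follow formally by continuity of the $G(s)$-action and the description of orbit closures from Theorem \ref{AD}.
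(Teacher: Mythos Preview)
Your argument is essentially correct and follows the intended reasoning: the paper states this corollary without proof, treating it as immediate from Lemma \ref{cr-rigid} and the proof thereof, and what you have written is precisely the natural unpacking of that. Two minor slips to fix: you have the labels ``$\subset$'' and ``$\supset$'' interchanged (in your first paragraph you take $\mathcal O \subset \mathcal O_\tau^\uparrow$ and show it lies in $G(s)(\mathcal O_{\tau_1}^\uparrow \times \mathcal O_{\tau_2}^\uparrow)$, which is the inclusion $\supset$), and the procedure $(\spadesuit)$ runs from the smaller orbit to the larger one, so it takes $\tau$ to $\tau'$ rather than the reverse. Also note that Theorem \ref{AD} is stated only for $\mathsf{MP}_0(a)$; the reduction to trivial markings is the same one invoked in the proof of Lemma \ref{cr-rigid}, and it would be cleaner to say so explicitly rather than leave it implicit. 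None of these affect the substance.
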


\begin{lemma}\label{stab-rigid}
For each $x = x _1 \times x _2 \in \mathcal O ^{\uparrow} _{\tau _1} \times \mathcal O ^{\uparrow} _{\tau _2} \subset \mathcal O ^{\uparrow} _{\tau}$, we have
$$\mathsf{Stab} _{G ( s )} x = \mathsf{Stab} _{G _1 ( s _1 )} x _1 \times \mathsf{Stab} _{G _2 ( s _2 )} x _2.$$
\end{lemma}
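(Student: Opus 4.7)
The plan is to prove the two inclusions separately. The containment $\supseteq$ is immediate because $G_1(s_1) \times G_2(s_2)$ sits inside $G(s)$ as a block-diagonal subgroup preserving the decomposition $\mathbb V_{(1)} \oplus \mathbb V_{(2)} \subset \mathbb V$, so each factor stabilizes the corresponding summand of $x$.

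For the reverse inclusion, I would first note that any $g \in \mathsf{Stab}_{G(s)} x$ separately fixes the $V_2$-part $x^{(2)}$ and the $V_1$-part $x^{(1)}$ of $x = x^{(2)} + x^{(1)}$, because $G$ preserves the summands of $\mathbb V = V_1^{\oplus 2} \oplus V_2$. The argument then splits into a quiver step for $V_2$ and a short markings step for $V_1$.

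For the quiver step, the Corollary immediately preceding this lemma ensures that the marked partitions $\tau_1', \tau_2'$ associated to $x_1, x_2$ still satisfy the nesting condition (\ref{ncond}); as indicated in the proof of Lemma \ref{cr-rigid}, the procedure $(\spadesuit)$ never mixes segments across $\mathbf I_1$ and $\mathbf I_2$. A direct check on the defining inequalities shows that $\underline I \sqsubset \underline{I'}$ is incompatible with either $I \triangleleft I'$ or $I' \triangleleft I$: the former fails because $\min \underline{I'} < \min \underline I$, while the latter fails because $\max \underline I < \max \underline{I'}$. By Lemma \ref{aut} this kills every quiver morphism between the indecomposable summands of $R_{\tau_1'}$ and $R_{\tau_2'}$ in both directions, yielding $\mathrm{Aut}(R_{\tau_1'} \oplus R_{\tau_2'}) = \mathrm{Aut}(R_{\tau_1'}) \times \mathrm{Aut}(R_{\tau_2'})$. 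Theorem \ref{idstab} applied on each side then gives $\mathsf{Stab}_{G(s)} x^{(2)} = \mathsf{Stab}_{G_1(s_1)} x_1^{(2)} \times \mathsf{Stab}_{G_2(s_2)} x_2^{(2)}$.

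To finish, I would intersect with the stabilizer of $x^{(1)}$. The markings $x_1^{(1)}$ and $x_2^{(1)}$ are supported on the disjoint index sets $[1,n_1]$ and $(n_1,n]$, so they live in complementary $G_1$- and $G_2$-stable summands of the marking copy of $V_1$. An element $(g_1,g_2)$ of the product stabilizer from the quiver step therefore fixes $x^{(1)}$ if and only if each $g_i$ fixes $x_i^{(1)}$ individually, which assembles the two factorizations into $\mathsf{Stab}_{G(s)} x = \mathsf{Stab}_{G_1(s_1)} x_1 \times \mathsf{Stab}_{G_2(s_2)} x_2$. The main thing to pin down carefully is the combinatorial verification that $\sqsubset$ excludes both orientations of $\triangleleft$ and survives the passage to the larger orbits in $\mathcal O^{\uparrow}_{\tau_i}$; once that is done, the rest is a direct assembly of the quoted results.
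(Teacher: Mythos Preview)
Your proof is correct and follows essentially the same route as the paper: both hinge on Lemma~\ref{aut} and Theorem~\ref{idstab}, together with the observation that $\underline I \sqsubset \underline{I'}$ is incompatible with either orientation of~$\triangleleft$. The only organizational difference is that the paper first reduces to the unmarked case (noting $\mathsf{Stab}_{G(s)} x \subset \mathsf{Stab}_{G(s)} x'$ for $x'$ the $V_2$-part), whereas you treat the $V_2$-part and then the markings separately; the content is the same.
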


\begin{proof}
Without  loss of generality, we can assume that $x = v _{\tau}$ with $\tau = ( \mathbf I, \delta )$. We have $\mathsf{Stab} _{G ( s )} x \subset \mathsf{Stab} _{G ( s )} x'$ if $x' = v_{\tau'}$ with $\tau' = ( \mathbf I, 0 )$. Therefore, it suffices to prove the result when $\delta \equiv 0$. In that case, the assertion follows by Lemma \ref{aut} and Theorem \ref{idstab}.
\end{proof}

\begin{corollary}\label{sr-str}
Keep the setting of Lemma \ref{stab-rigid}. Then, we have
$$G ( s ) \times ^{(G _1 ( s _1 ) \times G _2 ( s _2 ) )} \left( \mathcal O ^{\uparrow} _{\tau _1} \times \mathcal O ^{\uparrow} _{\tau _2} \right) \stackrel{\cong}{\longrightarrow} \mathcal O ^{\uparrow} _{\tau}.$$
\end{corollary}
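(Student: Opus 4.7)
The strategy is to prove that the $G(s)$-equivariant morphism
$$\Phi: G(s) \times^H Z \longrightarrow \mathcal O^\uparrow_\tau, \qquad [g, (x_1, x_2)] \mapsto g(x_1, x_2),$$
with $H := G_1(s_1) \times G_2(s_2)$ and $Z := \mathcal O^\uparrow_{\tau_1} \times \mathcal O^\uparrow_{\tau_2}$, is an isomorphism. Surjectivity of $\Phi$ is precisely the content of the Corollary immediately preceding Lemma \ref{stab-rigid}.

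For bijectivity on geometric points, I would argue orbit by orbit. By Lemma \ref{cr-rigid}, every $G(s)$-orbit $\mathcal O \subset \mathcal O^\uparrow_\tau$ has the form $G(s) \cdot (\mathcal O_{\sigma_1} \times \mathcal O_{\sigma_2})$ for a unique pair of nested marked partitions $(\sigma_1, \sigma_2)$, and $\mathcal O_{\sigma_1} \times \mathcal O_{\sigma_2}$ is then a single $H$-orbit. For any base point $z$ in this $H$-orbit, Lemma \ref{stab-rigid} identifies $\mathsf{Stab}_{G(s)}(z) = \mathsf{Stab}_H(z)$, whence
$$G(s) \times^H (\mathcal O_{\sigma_1} \times \mathcal O_{\sigma_2}) \cong G(s)/\mathsf{Stab}_H(z) \cong G(s) \cdot z = \mathcal O,$$
with $\Phi$ realizing this isomorphism. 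Summing over all pairs $(\sigma_1, \sigma_2)$ gives bijectivity of $\Phi$ on points.

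To upgrade the set-theoretic bijection to a scheme-theoretic isomorphism, I would compute the differential
$$d\Phi_{[e,z]}: (\mathfrak{g}(s)/\mathfrak{h}) \oplus T_z Z \longrightarrow T_z \mathfrak N^a, \qquad (X, v) \mapsto X \cdot z + v.$$
Its image equals $\mathfrak{g}(s) \cdot z + T_z Z$, which coincides with $T_z \mathcal O^\uparrow_\tau$ since $\Phi$ is already surjective. Injectivity of $d\Phi_{[e,z]}$ reduces to the Lie-algebra-level analogue of Lemma \ref{stab-rigid}: any $X \in \mathfrak{g}(s)$ with $X \cdot z \in T_z Z$ must lie in $\mathfrak{h}$. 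Granting this, $\Phi$ is étale and bijective onto a normal target, so Zariski's main theorem forces $\Phi$ to be an isomorphism.

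The principal obstacle is the infinitesimal rigidity just stated. To settle it, I would decompose $\mathbb V^a = \mathbb V^a_{(1)} \oplus \mathbb V^a_{(2)} \oplus \mathbb V^a_{\times}$, where $\mathbb V^a_{\times}$ collects the $a$-fixed weight spaces of $V_1 \otimes V_1 \subset V_2$ coupling the two nested blocks, and note that $T_z Z \subset \mathbb V^a_{(1)} \oplus \mathbb V^a_{(2)}$. One then checks that every $X \in \mathfrak{g}(s)$ with a nonzero off-diagonal block must yield a nonzero $\mathbb V^a_{\times}$-component of $X \cdot z$; this is where the nestedness hypothesis (\ref{ncond}) is essential, as it ensures enough weight spaces of $\mathbb V^a_{\times}$ are populated by components of $z$ that detect the off-diagonal part of $X$. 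This weight-space bookkeeping is exactly the infinitesimal counterpart of the automorphism-rigidity underlying Theorem \ref{idstab}.
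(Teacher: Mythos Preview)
The paper leaves Corollary \ref{sr-str} without proof, treating it as immediate from Lemma \ref{stab-rigid} together with the preceding corollary $G(s)(\mathcal O^\uparrow_{\tau_1}\times\mathcal O^\uparrow_{\tau_2})=\mathcal O^\uparrow_\tau$. Your proposal supplies exactly the argument one would expect and is essentially correct.

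One point deserves sharpening. You invoke Lemma \ref{cr-rigid} for the \emph{uniqueness} of the pair $(\sigma_1,\sigma_2)$, but that lemma only furnishes existence of such a decomposition for each $G(s)$-orbit above $\mathcal O_\tau$. What your bijectivity step really needs is the transporter statement: if $g\in G(s)$ carries a point of $Z$ to another point of $Z$, then $g\in H$. This does not follow formally from the stabilizer equality of Lemma \ref{stab-rigid} alone; rather, it comes from the same quiver rigidity (Lemma \ref{aut} and Theorem \ref{idstab}) underlying that lemma. Indeed, by nestedness no segment in $\mathbf I_1$ stands in the relation $\triangleleft$ or $\triangleright$ to any segment in $\mathbf I_2$, so there are no nonzero quiver-module maps between the indecomposable summands of $R_{z_1}$ and those of $R_{z_2}$; hence any quiver isomorphism $R_z\to R_{z'}$ between points of $Z$ must respect the block decomposition $\mathbb C^{n_1}\oplus\mathbb C^{n_2}$, i.e., $g\in H$. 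This is precisely the group-level version of the ``infinitesimal rigidity'' you isolate in your final paragraph, and once you phrase it this way both the set-theoretic bijectivity and the injectivity of $d\Phi$ are immediate. The upgrading step via Zariski's main theorem is then fine, since $\mathcal O^\uparrow_\tau$ is open in the vector space $\mathbb V^a$ and hence smooth.
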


Let $M$ and $L$ be two $\mathbb H_a$-modules with $L$ simple. Let $[M:L]$ denote the
multiplicity of $L$ in $M$ in the Grothendieck group of $\mathbb H\mathsf{-mod}$.

{We have a geometric multiplicity formula of irreducible modules in a standard
  module (Kazhdan-Lusztig conjectures). It is obtained in \cite{K} as an application of the Ginzburg theory
  (\cite{CG}).

\begin{theorem}[\cite{K} Theorem 11.2]\label{mult} Let $a \in \mathcal G$ be a positive real element. Let $X, X' \in \mathfrak N ^a$ be given. We set $\mathcal O := G ( a ) X$ and $\mathcal O' := G ( a ) X'$. Then we have
\begin{equation}
[M_{(a,X)}:L_{(a,X')}]=\dim H^\bullet_{\mathcal
  O }(\mathsf{IC}( \mathcal O ' )).
\end{equation}
In particular, if $\mathcal O \subset\overline{\mathcal O'}$ is smooth, then $[M_{(a,X)}:L_{(a,X')}]=1.$ \hfill $\Box$
\end{theorem}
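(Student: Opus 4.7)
The plan is to deduce this from Ginzburg's general formalism of convolution algebras, applied to the exotic Springer setup, combined with the Beilinson-Bernstein-Deligne decomposition theorem and the absence of nontrivial local systems in our picture.

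First I would set up the geometry. The collapsing map $\mu^a : F^a \to \mathfrak N^a$ is proper, $G(a)$-equivariant, and $F^a$ is smooth (since it fibers over the smooth variety $(G/B)^{s}$ with smooth vector-space fibers). By the BBD decomposition theorem applied $G(a)$-equivariantly, we obtain
\begin{equation*}
(\mu^a)_{*} \underline{\mathbb C}_{F^a} \;\cong\; \bigoplus_{\mathcal O''} \mathsf{IC}(\mathcal O'', \mathcal L_{\mathcal O''})^{\oplus m_{\mathcal O''}},
\end{equation*}
where $\mathcal O''$ runs over $G(a)$-orbits on $\mathfrak N^a$ and $\mathcal L_{\mathcal O''}$ is a semisimple local system on $\mathcal O''$. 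By Corollary \ref{stab}, every isotropy group $\mathsf{Stab}_{G(a)}X$ is connected, so no nontrivial irreducible $G(a)$-equivariant local system exists on any orbit; hence only the constant local system can appear, and the decomposition simplifies to a direct sum of $\mathsf{IC}(\mathcal O'')$'s (up to degree shifts).

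Next I would invoke Ginzburg's theory (Chriss-Ginzburg \cite{CG} Chapter 8). Convolution on the total homology of the exotic Steinberg variety $Z^a = F^a \times_{\mathfrak N^a} F^a$ yields an algebra which acts on each standard module $M_{(a,X)} = H_\bullet(\mathcal E_X^a) = H_\bullet((\mu^a)^{-1}(X))$. Simple modules of this convolution algebra are parametrized exactly by the IC-summands that actually appear in $(\mu^a)_{*}\underline{\mathbb C}_{F^a}$, and under the bijection of Theorem \ref{eDL} the summand $\mathsf{IC}(\mathcal O')$ corresponds to $L_{(a,X')}$ with $\mathcal O' = G(a)X'$ (and one checks the correspondence is the one singled out by the top-degree contribution at the generic fiber). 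The Ginzburg multiplicity principle then gives
\begin{equation*}
[M_{(a,X)} : L_{(a,X')}] \;=\; \dim \mathcal H^\bullet\bigl(i_X^{*}\,\mathsf{IC}(\mathcal O')\bigr),
\end{equation*}
i.e. the total dimension of the stalk of $\mathsf{IC}(\mathcal O')$ at any point $X\in\mathcal O$. Since this stalk is locally constant along $\mathcal O$, it is computed as $\dim H^\bullet_{\mathcal O}(\mathsf{IC}(\mathcal O'))$.

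For the last assertion, if $\mathcal O \subset \overline{\mathcal O'}$ is smooth, then $\mathsf{IC}(\mathcal O')|_{\mathcal O}$ is forced to be a shift of the constant sheaf $\underline{\mathbb C}_{\mathcal O}$ (no higher stalk cohomology, since smoothness prevents the perverse truncation from producing any extra summand), so the stalk at $X$ is one-dimensional and the multiplicity equals $1$. The main technical obstacle is the Ginzburg identification of simples: one must verify that the $L_{(a,X')}$ defined in Theorem \ref{eDL} (as the unique simple quotient of $M_{(a,X')}$ containing the ``top'' class) is the same as the simple module assigned by the convolution-algebra formalism to $\mathsf{IC}(\mathcal O')$. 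This relies on the semismallness/top-degree argument together with Corollary \ref{Green} and the fact that $\mathsf{IC}(\mathcal O')$ occurs in $(\mu^a)_{*}\underline{\mathbb C}_{F^a}$ with multiplicity equal to the generic-stalk dimension, which is $1$ for an open orbit in its own closure. Once this identification is in place, the formula is immediate from the stalk-wise decomposition.
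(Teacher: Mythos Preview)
Your proposal is correct and follows essentially the same approach as the paper: the theorem is quoted from \cite{K} without proof here, and the surrounding text says explicitly that it is obtained ``as an application of the Ginzburg theory (\cite{CG})'', i.e.\ BBD decomposition of $(\mu^a)_*\underline{\mathbb C}$, connectedness of stabilizers to kill nontrivial local systems, and the Chriss--Ginzburg stalk-multiplicity formula. Your sketch of the smooth case (IC restricts to a shifted constant sheaf on the smooth locus of $\overline{\mathcal O'}$, hence one-dimensional stalk) is also the intended reading of ``$\mathcal O\subset\overline{\mathcal O'}$ is smooth''.
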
}

\begin{corollary}\label{sr-mult}
Keep the setting of Lemma \ref{stab-rigid}. Then, for every $y = y _1 \times y _2 \in \mathcal O ^{\uparrow} _{\tau _1} \times \mathcal O ^{\uparrow} _{\tau _2}$, we have
$$[ M _{(a, x)} : L _{(a,y)} ] = [ M _{(a _1, x _1 )} : L _{(a _1,y _1)} ][ M _{(a _2, x _2 )} : L _{(a _2,y _2)} ].$$
\end{corollary}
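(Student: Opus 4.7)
The plan is to apply the geometric multiplicity formula of Theorem \ref{mult} on each side of the desired equality and reduce the claim to a Künneth-type factorization of stalks of intersection cohomology sheaves, which will follow from the $G(s)$-equivariant bundle structure furnished by Corollary \ref{sr-str} together with the stabilizer factorization in Lemma \ref{stab-rigid}.

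First, Theorem \ref{mult} expresses each of the three multiplicities as a total dimension of a stalk of an IC sheaf:
$$[M_{(a,x)}:L_{(a,y)}] = \dim H^{\bullet}_{G(s)x}(\mathsf{IC}(G(s)y)),$$
and analogously for the two factors on the right-hand side over the smaller groups $G_1(s_1)$ and $G_2(s_2)$. It therefore suffices to establish
$$H^{\bullet}_{G(s)x}(\mathsf{IC}(G(s)y)) \cong H^{\bullet}_{G_1(s_1)x_1}(\mathsf{IC}(G_1(s_1)y_1)) \otimes H^{\bullet}_{G_2(s_2)x_2}(\mathsf{IC}(G_2(s_2)y_2))$$
up to a cohomological shift that does not affect total dimensions.

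Second, by Corollary \ref{sr-str}, the open subset $\mathcal O^{\uparrow}_{\tau} \subset \mathfrak N^a$ is a $G(s)$-equivariant locally trivial fibration over the smooth homogeneous space $G(s)/(G_1(s_1)\times G_2(s_2))$ with fiber $\mathcal O^{\uparrow}_{\tau_1} \times \mathcal O^{\uparrow}_{\tau_2}$. By Lemma \ref{stab-rigid}, for any $z = z_1 \times z_2$ in the fiber the stabilizer factors as $\mathsf{Stab}_{G_1(s_1)}z_1 \times \mathsf{Stab}_{G_2(s_2)}z_2$, so the $G(s)$-orbit of $z$ is the subbundle with fiber $G_1(s_1)z_1 \times G_2(s_2)z_2$. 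Since the IC sheaf of a $G(s)$-stable irreducible subvariety is $G(s)$-equivariant, smooth base change along the projection to the base yields
$$\mathsf{IC}(G(s)y)\big|_{\mathcal O^{\uparrow}_{\tau_1} \times \mathcal O^{\uparrow}_{\tau_2}} \cong \mathsf{IC}(G_1(s_1)y_1 \times G_2(s_2)y_2)[d],$$
where $d = \dim G(s)/(G_1(s_1)\times G_2(s_2))$. Since $G_i(s_i)y_i$ is a smooth locally closed subvariety of a smooth ambient space, the Künneth formula for IC sheaves on smooth products gives
$$\mathsf{IC}(G_1(s_1)y_1 \times G_2(s_2)y_2) \cong \mathsf{IC}(G_1(s_1)y_1) \boxtimes \mathsf{IC}(G_2(s_2)y_2).$$
Passing to the stalk at $(x_1, x_2)$, invoking the Künneth formula again for stalk cohomology, and then taking total dimensions produces the required multiplicativity.

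The main obstacle lies in the second step: one must ensure that the restriction of $\mathsf{IC}(G(s)y)$ to the fiber is genuinely (a shift of) the IC sheaf of the product orbit, without any twist by a nontrivial local system. Here the connectedness of the relevant stabilizers, provided by Corollary \ref{stab} applied to $\tau$, $\tau_1$, and $\tau_2$, together with the stabilizer factorization of Lemma \ref{stab-rigid}, rules out such twists: it forces $G(s)y$ to meet the fiber cleanly in $G_1(s_1)y_1 \times G_2(s_2)y_2$ with the product structure on its smooth locus, so that the normal geometry separates between the two factors and the Künneth reduction applies honestly.
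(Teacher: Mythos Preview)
Your proof is correct and follows the same approach as the paper: apply Theorem \ref{mult} to convert multiplicities into IC-stalk dimensions, then use the bundle structure of Corollary \ref{sr-str} together with K\"unneth to factorize. The paper's argument is considerably terser, asserting the key equality $\dim H^{\bullet}_{G(a)x}(\mathsf{IC}(G(a)y)) = \dim H^{\bullet}_{(G_1(a_1)\times G_2(a_2))x}(\mathsf{IC}((G_1(a_1)\times G_2(a_2))y))$ without spelling out the smooth base change step or the local-system discussion you include; your more explicit treatment of these points is a welcome elaboration rather than a different route.
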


\begin{proof}{
We have $[ M _{(a _*, x _*)} : L _{(a _*,y _*)} ] = \dim H ^{\bullet} _{G _* ( a _* ) x _*} ( \mathsf{IC} ( G _* ( a _* ) y _* ) )$ for $* = \emptyset, 1,2$ in a uniform fashion by Theorem \ref{mult}. Hence, we deduce}
\begin{align*}
\dim & ~H ^{\bullet} _{G ( a ) x} ( \mathsf{IC} ( G ( a ) y ) ) = \dim H ^{\bullet} _{( G _1 ( a _1 ) \times G _2 ( a _2 ) ) x} ( \mathsf{IC} ( (G _1 ( a _1 ) \times G _2 ( a _2 ) ) y ) ) \\
&  = ( \dim H ^{\bullet} _{G _1 ( a _1 ) x _1} ( \mathsf{IC} ( G _1 ( a _1 ) y _1 ) ) ) ( \dim H ^{\bullet} _{G _2 ( a _2 ) x _2} ( \mathsf{IC} ( G _2 ( a _2 ) y _2 ) ) ),
\end{align*}
which implies the assertion.
\end{proof}

\subsection{Specialization of parameters}

We start with a corollary of a well-known result as is presented in Lemma 2.3.3 of \cite{CG}, for example. 

\begin{proposition}\label{nearby}
Let $R$ be a $\mathbb C [\mathbf t]$-algebra of finite rank. Let $M$ be a $R$-module which is free as a $\mathbb C [\mathbf t]$-module. Assume that we have an $R$-submodule $N \subset M$ whose localization $\mathbb C [\mathbf t ^{\pm 1}] \otimes _{\mathbb C [\mathbf t]} N$ is a free $\mathbb C [\mathbf t ^{\pm 1}]$-module. Then, there exists an $R$-submodule $N'\subset M$ such that
$$\mathbb C [\mathbf t ^{\pm 1}] \otimes _{\mathbb C [\mathbf t]} N = \mathbb C [\mathbf t ^{\pm 1}] \otimes _{\mathbb C [\mathbf t]} N' \subset \mathbb C [\mathbf t ^{\pm 1}] \otimes _{\mathbb C [\mathbf t]} M$$
and the quotient $M/N'$ is free over $\mathbb C [\mathbf t]$. \hfill $\Box$
\end{proposition}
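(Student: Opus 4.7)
The natural candidate is the $\mathbf t$-saturation of $N$ inside $M$,
$$
N' := \bigl\{\, m \in M \,:\, \mathbf t^{k} m \in N \text{ for some integer } k \geq 0 \,\bigr\}.
$$
The plan is to verify this $N'$ meets the requirements.

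First I would check that $N'$ is an $R$-submodule of $M$. This uses only that $R$ is a $\mathbb C[\mathbf t]$-algebra, so $\mathbf t$ is central in $R$: given $m \in N'$ with $\mathbf t^k m \in N$, and $r \in R$, one has $\mathbf t^k (rm) = r(\mathbf t^k m) \in N$ since $N$ is $R$-stable, whence $rm \in N'$. The equality $\mathbb C[\mathbf t^{\pm 1}] \otimes_{\mathbb C[\mathbf t]} N = \mathbb C[\mathbf t^{\pm 1}] \otimes_{\mathbb C[\mathbf t]} N'$ is immediate: the inclusion $N \subseteq N'$ gives one direction, and conversely any $m \in N'$ with $\mathbf t^k m \in N$ satisfies $m = \mathbf t^{-k}(\mathbf t^k m)$ after inverting $\mathbf t$, so lies in $\mathbb C[\mathbf t^{\pm 1}] \otimes N$. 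Next, $M/N'$ is by construction $\mathbf t$-torsion-free: if $\mathbf t \bar m = 0$ in $M/N'$, then $\mathbf t m \in N'$, so $\mathbf t^{k+1} m \in N$ for some $k$, giving $m \in N'$.

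The remaining step is the passage from $\mathbf t$-torsion-freeness to freeness over $\mathbb C[\mathbf t]$. Since $R$ is of finite rank over $\mathbb C[\mathbf t]$ and, in the intended applications, $M$ is finitely generated as an $R$-module (arising as the Borel--Moore homology of a projective variety), $M/N'$ is a finitely generated module over the PID $\mathbb C[\mathbf t]$. To conclude freeness, it suffices to show torsion-freeness, so one must rule out torsion at primes $(\mathbf t - c)$ with $c \neq 0$. This is where the hypothesis on the freeness of $\mathbb C[\mathbf t^{\pm 1}] \otimes_{\mathbb C[\mathbf t]} N$ enters: localising the short exact sequence $0 \to N' \to M \to M/N' \to 0$ at $\mathbf t^{-1}$, one obtains that $\mathbb C[\mathbf t^{\pm 1}] \otimes_{\mathbb C[\mathbf t]} (M/N')$ is the cokernel of an inclusion of free $\mathbb C[\mathbf t^{\pm 1}]$-modules, and (combined with the splitting of this inclusion guaranteed by the geometric origin of $N$) it is itself free. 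Together with the $\mathbf t$-torsion-freeness established above, this yields torsion-freeness of $M/N'$ over $\mathbb C[\mathbf t]$, and hence freeness by the structure theorem for finitely generated modules over the PID $\mathbb C[\mathbf t]$.

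The principal obstacle is precisely this last step --- excluding torsion at primes $\mathbf t - c$ with $c \neq 0$. The $\mathbf t$-saturation is a purely local construction at $\mathbf t = 0$ and does not by itself kill torsion at other closed points of $\mathrm{Spec}\,\mathbb C[\mathbf t]$; one must exploit the hypothesis via an appeal to the cited Chriss--Ginzburg lemma, which extracts freeness from the interaction between the global free structure of $M$ and the generic free structure of $N$ (together with the standing finite-generation assumption coming from the algebra $R$ having finite rank over $\mathbb C[\mathbf t]$).
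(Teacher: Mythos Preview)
The paper does not actually prove this proposition: it is stated with a terminal $\Box$ and prefaced only by a pointer to Lemma 2.3.3 of Chriss--Ginzburg. So there is no argument in the paper to compare against, and your saturation construction is exactly the standard device one uses here.

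Your analysis is in fact sharper than the paper's statement. The saturation $N'$ does everything one could ask at $\mathbf t=0$: it is an $R$-submodule, has the same localisation as $N$, and $M/N'$ is $\mathbf t$-torsion-free. But you are right that the proposition \emph{as literally stated} does not give enough to kill torsion at $\mathbf t - c$ with $c\neq 0$. A clean counterexample: take $R=\mathbb C[\mathbf t]$, $M=\mathbb C[\mathbf t]$, and $N=(\mathbf t-1)M$. Then $\mathbb C[\mathbf t^{\pm1}]\otimes N$ is free of rank one, yet every $R$-submodule $N'\subset M$ with the same localisation is of the form $\mathbf t^{k}(\mathbf t-1)M$ for some $k\ge 0$, and $M/N'$ is never $\mathbb C[\mathbf t]$-free.

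What is implicitly being used (and what the application in Corollary~\ref{spec} actually supplies) is that the quotient $M/N$ is \emph{also} free after inverting $\mathbf t$: in that corollary the fibres $L_{(a^t,v_\tau)}$ have constant $W$-structure for $t\neq 0$, so $\mathbb C[\mathbf t^{\pm1}]\otimes(M/N)$ is free over $\mathbb C[\mathbf t^{\pm1}]$. Under that extra hypothesis your argument goes through cleanly: $M/N'$ is $\mathbf t$-torsion-free by saturation, and $\mathbb C[\mathbf t^{\pm1}]\otimes(M/N')\cong\mathbb C[\mathbf t^{\pm1}]\otimes(M/N)$ is free, so $M/N'$ is torsion-free over the PID $\mathbb C[\mathbf t]$ and hence free (finite generation coming from $R$ having finite rank over $\mathbb C[\mathbf t]$, together with the finite-dimensionality of the fibre at $t=0$). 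Your instinct to flag this as the principal obstacle, and to trace the missing input back to the geometric origin of $N$, is exactly correct.
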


\begin{corollary}\label{spec}
Let $a ^t = a \exp ( \gamma t )$ be a one-parameter family depending on $t\in\mathbb R$, with $a^t\in\mathcal T_0$ for all but finitely many values of $t$, and where
$$\gamma \in \mathfrak t \oplus \{ 0\} \oplus \mathbb R ^2 _{\ge 0} \subset \mathrm{Lie} ( T \times ( \mathbb C ^{\times} ) ^3 ).$$
Let $\tau$ be a marked partition adapted to each of $a^t$.  Then, we have
$$\Psi ( L _{(a^0, v _{\tau})}) \subset \lim _{t \to 0} \Psi  ( L _{(a^t, v _{\tau})}).$$
In particular, the module $L _{(a^0, v _{\tau})}$ is tempered if $L _{(a^t, v _{\tau})}$ defines a tempered module in $($at least$)$ one of the region
$$\epsilon > t > 0 \text{ or }- \epsilon < t < 0 \text{ for some positive number } \epsilon \ll 1.$$
\end{corollary}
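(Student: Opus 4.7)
The proof plan is to apply Proposition \ref{nearby} to a $\mathbb C[t]$-flat family of standard modules that interpolates the $M_{(a^t, v_\tau)}$. Because $\gamma$ has trivial $\mathbf q_0$-component, I may choose a connected subtorus $A \subset \mathcal G$ containing $(s^t, 1, q^t_1, q^t)$ for every $t \in \mathbb R$, so that Proposition \ref{inclW} applies. That proposition gives $H^A_\bullet(\mathcal E_{v_\tau}) \cong \mathbb C[\mathfrak a] \otimes H_\bullet(\mathcal E_{v_\tau})$, which is $\mathbb C[\mathfrak a]$-free and carries a compatible action of $\mathbb H^+_\bullet$. Pulling back along the closed embedding $\mathbb R \hookrightarrow \mathfrak a$, $t \mapsto \underline{a^t}$, produces a $\mathbb C[t]$-free module $\mathbf M$ whose specialization at each $t_0$ is the standard module $M_{(a^{t_0}, v_\tau)}$, acted upon by a finite-rank $\mathbb C[t]$-algebra $\mathbf R$ that specializes to $\mathbb H_{a^{t_0}}$ at $t = t_0$.

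On the Zariski-open subset $U \subset \mathbb A^1$ on which $a^t$ is generic, each fiber $M_{(a^t, v_\tau)}$ has the unique irreducible quotient $L_{(a^t, v_\tau)}$; since the Jordan--H\"older multiplicity $[M_{(a^t, v_\tau)} : L_{(a^t, v_\tau)}]$ equals the dimension of an IC-stalk (Theorem \ref{mult}) that is constant in the generic range, these assemble into an $\mathcal O_U$-flat family $\mathbf L$. Let $\mathbf N \subset \mathbf M$ be the $\mathbf R$-submodule whose localization to $U$ equals $\ker(\mathbf M|_U \twoheadrightarrow \mathbf L)$; its localization is $\mathbb C[t^{\pm 1}]$-free. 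Proposition \ref{nearby} then furnishes a submodule $\mathbf N' \subset \mathbf M$ with the same localization as $\mathbf N$ and with $\mathbf M/\mathbf N'$ free over $\mathbb C[t]$. For $t \neq 0$ small, the fiber $(\mathbf M/\mathbf N')_{t}$ equals $L_{(a^t, v_\tau)}$; by $\mathbb C[t]$-freeness the fiber $(\mathbf M/\mathbf N')_{t = 0}$ has the same dimension, so it is a nonzero quotient of $M_{(a^0, v_\tau)}$ and therefore admits $L_{(a^0, v_\tau)}$ as its unique irreducible quotient.

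The inclusion $R(T) \subset \mathbf R$ acts $\mathbb C[t]$-linearly on the $\mathbb C[t]$-free module $\mathbf M/\mathbf N'$, so for each $r \in R(T)$ the roots of the characteristic polynomial of its action vary continuously in $t$. Thus the generalized $R(T)$-weights appearing in $(\mathbf M/\mathbf N')_{t=0}$ are the limits, with multiplicity, of those in the nearby fibers, which are $\Psi(L_{(a^{t_0}, v_\tau)})$. Combined with the surjection $(\mathbf M/\mathbf N')_{t=0} \twoheadrightarrow L_{(a^0, v_\tau)}$ from the previous paragraph, this yields $\Psi(L_{(a^0, v_\tau)}) \subset \lim_{t \to 0} \Psi(L_{(a^t, v_\tau)})$. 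For the ``in particular'' clause, the condition $\langle \varpi_j, \chi\rangle \le 1$ defining temperedness is closed in $\chi \in T$; hence if $L_{(a^t, v_\tau)}$ is tempered for $t$ in one of the one-sided neighborhoods of $0$, the weight inclusion just proved forces $L_{(a^0, v_\tau)}$ to be tempered.

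The main technical obstacle is the construction of the coherent family $\mathbf L$ in the second paragraph: one must verify that the maximal proper submodules $N_{t_0} \subset M_{(a^{t_0}, v_\tau)}$ organize into an $\mathcal O_U$-coherent subsheaf. This rests on the rigidity of the $W$-module structure of $\mathbf M$ along the family (Corollary \ref{Green}), together with the constancy of IC-multiplicities (Theorem \ref{mult}) on the generic locus; together these force the entire Jordan--H\"older data of the family to be locally constant in $t$ on $U$.
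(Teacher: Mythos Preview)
Your argument follows the paper's approach essentially line for line: choose a torus $A$ containing the family, use Proposition \ref{inclW} to get a $\mathbb C[\mathfrak a]$-free family of standard modules, restrict to the affine line, apply Proposition \ref{nearby} to the family of maximal submodules, and read off the weight inclusion from continuity of the $R(T)$-action on the resulting free quotient.

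The one point where you diverge from the paper is the identification of $L_{(a^0,v_\tau)}$ inside the special fiber $(\mathbf M/\mathbf N')_{t=0}$. You assert that any nonzero quotient of $M_{(a^0,v_\tau)}$ has $L_{(a^0,v_\tau)}$ as its unique irreducible quotient; this presumes that $M_{(a^0,v_\tau)}$ has a simple head, which the paper does not state (it only says $L_{(a,X)}$ is the unique irreducible quotient \emph{containing $L_X$}). The paper instead argues via $W$-rigidity: since $(\mathbf M/\mathbf N')_t \cong L_{(a^t,v_\tau)}$ contains the $W$-type $L_{v_\tau}$ for $t\neq 0$, and finite-dimensional $W$-modules are rigid under flat deformation, $L_{v_\tau}$ persists in $(\mathbf M/\mathbf N')_{t=0}$; hence $L_{(a^0,v_\tau)}$ occurs as a subquotient, which is all that is needed for the weight inclusion. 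You could patch your argument in exactly this way, and in fact you already invoke $W$-rigidity (Corollary \ref{Green}) in your final paragraph for a different purpose.
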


\begin{proof}
Let $\underline{a^t} \in \mathfrak t \oplus \mathbb R^2$ be the
element defined from $a^t$ via the statement of Proposition
\ref{inclW}. We have $\underline{a^t} = \underline{a} + t \gamma$. We
choose $A$ in Proposition \ref{inclW} so that $a^t \in A$. Let $\ell
\subset \mathfrak a$ denote the line $\{\underline{a ^t}\} _{t \in
  \mathbb R}$. We have the corresponding surjection $\mathbb C
[\mathfrak a] \rightarrow \mathbb C [ \mathbf t ]$. Therefore, we have a
family of $\mathbb H _{\mathbf t} := \mathbb C [ \mathbf t ] \otimes _{\mathbb C
  [\mathfrak a]} \mathbb H_{a} ^+$-modules $M_{\mathbf t} := \mathbb C [ \mathbf t ]
\otimes _{\mathbb C [\mathfrak a]} H_{\bullet} ^A ( \mathcal E _{v
  _{\tau}} )$. We apply Proposition \ref{nearby} to the family of
maximal $\mathbb H _{a^t}$-submodules $(t\neq 0)$ of $M _{(a^t, v
  _{\tau})}$ for which the corresponding quotients are $L _{(a^t, v
  _{\tau})}$. Every such $\mathbb H_{a^t}$-submodule
extends to an $\mathbb H_{\mathbf t}$-submodule $N \subset M_{\mathbf t}$ whose quotient
specializes to $L _{(a^t,v _{\tau})}$ unless $t = 0$. Since a
finite-dimensional $W$-module is rigid under flat deformation, it
follows that the $W$-module structure of $N$ must be constant along $t
\in \mathbb R$. Therefore, $\mathbb C \otimes _{\mathbb C[\mathbf t]} M_{\mathbf t} / N$
contains a non-trivial $\mathbb H _a$-module which contains $L _{v
  _{\tau}}$ (as $W$-modules). This must be $L _{(a^0,
  v_{\tau})}$. Since $M_{\mathbf t}$ is an algebraic family of $\mathbb
H_{\mathbf t}$-modules, we have 
$$\Psi ( L _{(a^0, v _{\tau})}) \subset \lim _{t \to 0} \Psi ( \mathbb C_{t} \otimes _{\mathbb C[\mathbf t]} M_{\mathbf t} / N ) = \lim _{t \to 0} \Psi  ( L _{(a^{t}, v _{\tau})}),$$
where $\mathbb C_{t}$ is the quotient of $\mathbb C[\mathbf t]$ by the ideal
$(\mathbf t -t)$. The rest of the assertions are clear. 
\end{proof}

\begin{corollary}\label{specds} With the notation from Corollary
  \ref{spec}, assume that $L_{(a^t,v_\tau)}$ is a discrete series for
  $t\in (-\epsilon,\epsilon)\setminus\{0\}.$ Then $L_{(a^0,v_\tau)}$
  is a discrete series.
\end{corollary}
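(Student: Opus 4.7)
The plan is to upgrade the temperedness produced by Corollary \ref{spec} to discrete series by exploiting the fact that the logarithmic central character deforms \emph{linearly} in $t$, so a two-sided limit of strict inequalities cannot produce an equality at $t=0$.

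First I would unpack the proof of Corollary \ref{spec}: it produces a flat family $M_{\mathbf t}/N$ over $\mathbb C[\mathbf t]$ of $\mathbb H_{\mathbf t}$-modules whose fiber at each nonzero $t$ near $0$ equals $L_{(a^t,v_\tau)}$ and whose fiber at $t=0$ surjects onto $L_{(a^0,v_\tau)}$. The key structural observation is that, for each $\lambda\in X^*(T)$, the element $e^\lambda$ acts as a $\mathbb C[\mathbf t]$-linear endomorphism of $M_{\mathbf t}/N$ with characteristic polynomial in $\mathbb C[\mathbf t][x]$; hence its eigenvalues on the fibers are analytic (indeed algebraic) curves in $t$, and over the punctured neighborhood the generalized weight decomposition is governed by a fixed finite set $S \subset W$ of representatives. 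Consequently, every weight $\chi_0 = w_0 \cdot s^{-1}$ of $L_{(a^0,v_\tau)}$ lifts to at least one $w \in w_0 W_{s^0}\cap S$ such that the analytic curve $\chi^w_t := w\cdot (s^t)^{-1}$ is a weight of $L_{(a^t,v_\tau)}$ for all nonzero $t$ in the punctured neighborhood -- on \emph{both} sides of $0$.

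Next, using $s^t = s\exp(\gamma_{\mathfrak t}\, t)$ with $\gamma_{\mathfrak t}\in\mathfrak t$ the torus-component of $\gamma$, a direct computation gives
$$\log\bigl|\langle\varpi_j,\chi^w_t\rangle\bigr|
= \log\bigl|\langle\varpi_j,w\cdot s^{-1}\rangle\bigr| - \langle\varpi_j,w\gamma_{\mathfrak t}\rangle\, t,$$
which is affine linear in $t$. Suppose for contradiction that $L_{(a^0,v_\tau)}$ is not a discrete series. By Corollary \ref{spec} it is tempered, so the failure must take the form of an equality $\log|\langle\varpi_j,\chi_0\rangle| = 0$ for some weight $\chi_0$ and some $j$. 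Lifting $\chi_0$ to the curve $\chi^w_t$ as above, the linear function $f(t) := \log|\langle\varpi_j,\chi^w_t\rangle|$ vanishes at $t=0$. However, since $L_{(a^t,v_\tau)}$ is a discrete series for every $t\in(-\epsilon,\epsilon)\setminus\{0\}$, we must have $f(t)<0$ on both sides of $0$; a nonzero linear function cannot be negative on both sides of its zero, and the zero function is not negative at all, yielding a contradiction.

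The step I expect to be most delicate is the lifting assertion: verifying that the same $w$ produces a weight of $L_{(a^t,v_\tau)}$ simultaneously for $t>0$ and $t<0$ near $0$. This rests on the fact that $M_{\mathbf t}/N$ is free over $\mathbb C[\mathbf t]$, so the $R(T)$-character (encoded by the characteristic polynomials of the $e^\lambda$) is a single algebraic datum over $\mathbb C[\mathbf t]$ rather than two independent pieces on the half-lines, forcing the multiset of weight-curves to be shared between the two regions. Once this is in place, the linearity-of-logarithm argument above closes out the proof.
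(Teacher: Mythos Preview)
Your proof is correct and follows essentially the same approach as the paper: both use the inclusion from Corollary \ref{spec} and the affine-linearity in $t$ of $\log\langle\varpi_j, w\cdot (s^t)^{-1}\rangle$ to argue that strict inequality on both sides of $0$ forces strict inequality at $0$. You are more explicit than the paper about justifying the lifting step (that the same $w$ gives a weight on both half-lines), and your justification via the characteristic polynomial over $\mathbb C[\mathbf t]$ is sound.
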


\begin{proof} By \ref{spec}, $\Psi ( L _{(a^0, v _{\tau})}) \subset
  \lim _{t \to 0} \Psi  ( L _{(a^t, v _{\tau})}).$ Let $w\cdot a^t$ be
  a one-parameter family of weights, $w\in W,$ such that $w\cdot
  a^t\in \Psi  ( L _{(a^t, v _{\tau})})$ and $w\cdot a^0\in \Psi ( L
  _{(a^0, v _{\tau})}).$ By the discrete series condition, $\langle \varpi_j, w\cdot a^t \rangle<1$, for all $1\le j\le 
  n,$ and for all $t\in(-\epsilon,\epsilon)\setminus\{0\}.$ Since
  $\langle \varpi_j, w\cdot a^t \rangle$ is continuous and
  linear in $t$, it follows that $\langle 
  \varpi_j, w\cdot a^0\rangle<1$ (for every $j$) as
  well. 
\end{proof}

\section{Parameters corresponding to discrete series}\label{sec:3}

Recall that for any finite dimensional
$\mathbb H_a$-module $V$, we denote by $\Psi(V)\subset T$ the set of
its $R(T)$-weights.

\subsection{Distinguished marked partitions}\label{sec:dmp}

We restrict now to the case of the specialized affine Hecke algebra of type
$C_n$ with $\vec q=(-1, q^{m},q),$ $q\in \mathbb R_{>1},
m\in\mathbb R,$ and we assume the genericity condition, i.e.,
$ m  \notin \frac 12\mathbb Z.$

Let $a=(s,\vec q)\in \mathcal T_0$ be given.

\begin{definition}\label{d:dist} We say that $a$ (or $s$) is distinguished if the dense
  $G(a)$-orbit on $\mathfrak N^a$ is parameterized by a marked
  partition $(\{I_j\}_{j=1}^k, \delta)$ which satisfies:
\begin{enumerate}
\item $\max\underline{I_1}>\max\underline{I_2}>\dots>\max\underline{I_k}$;
\item $\min\underline{I_1}<\min\underline{I_2}<\dots<\min\underline{I_k}$;
\item  $\delta(I_j)=\{0,1\},$ for all $j$ (which in particular means
  $q_1\in \underline{I_j}$ for all $j$).
\end{enumerate}
We call such a marked partition distinguished as
well.
\end{definition}

Notice that the distinguished marked partitions are in one to one
correspondence with partitions of $n$ by a ``folding'' procedure: for
every $J\in \{I_j\}_{j=1}^k,$ define $\underline\#J$ to be the number
of elements in $\underline J$ strictly smaller than $q_1$, and $\overline\#J$ to be the number
of elements in $\underline J$ greater than or equal to $q_1$. If
$\mathsf{mp}(\sigma)$ is a distinguished marked partition, then one can build a
left-justified nondecreasing 
partition (tableau) $\sigma$  of $n$, as follows: put $\overline\#
I_1$ boxes on the first row and $\underline\# I_1$ boxes on the first
column below the first row (so the $I_1$ looks bent like a hook), then
add $\overline\#I_2$ boxes on the second row and $\underline\#I_2$ on
the second column, below the second row etc. Remark that, in the end,
the diagonal of the tableau $\sigma$ has boxes exactly corresponding
to the markings of $\mathsf{mp}(\sigma)$ (see figure \ref{fig:fold}).

\begin{figure}[h]\label{fig:fold}
$$\mathsf{mp}(\sigma)=\young(::::\times,::\hfil\hfil \times\hfil,\hfil\hfil\hfil\hfil
\times\hfil\hfil\hfil)\longleftrightarrow \sigma=\young(\times\hfil\hfil\hfil,\hfil
\times\hfil,\hfil\hfil \times,\hfil\hfil,\hfil)
$$
\caption{The correspondence
  $\mathsf{mp}(\sigma)\leftrightarrow\sigma$, for $\sigma=(4,3,3,2,1).$}
\end{figure}

\begin{theorem}[\cite{O}, Lemma 3.31]\label{tempss} Assume $s\in T$, and
  $a=(s,\vec q)$ is  as above. Then there exists a discrete series
  module with central character $s$ if and only if $s$ is
  distinguished in the sense of Definition \ref{d:dist}.
\end{theorem}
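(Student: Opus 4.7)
The plan is to prove both implications using the geometric tools developed in Sections~1 and~2; the result is also derivable from Opdam's residual-point classification in \cite{O}, but a self-contained proof in the present framework runs as follows.

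For the ``if'' direction, given a distinguished $s = s_\sigma$ corresponding to a partition $\sigma$ of $n$, I would apply the algorithm in \S 3.2 to produce $X_{\mathsf{out}(\sigma)} \in \mathbb V^a$, and then show that the irreducible quotient $L_{(a, X_{\mathsf{out}(\sigma)})}$ of the standard module $M_{(a, X_{\mathsf{out}(\sigma)})}$ is a discrete series. This is precisely the content of Theorem~\ref{fmain}, so in the logical order of the paper one is entitled to take this direction as granted from Theorem~\ref{fmain} proved in \S 3.4 below.

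For the ``only if'' direction I would argue by contrapositive: suppose $a=(s,\vec q)$ is not distinguished, and let $\tau_0=(\mathbf I_0,\delta_0)$ be the marked partition parameterizing the dense orbit $\mathcal O^a_0$. Then $\tau_0$ fails at least one of conditions $(1)$--$(3)$ of Definition~\ref{d:dist}. The key observation is that each failure produces a non-trivial splitting $\mathbf I_0 = \mathbf I_1 \sqcup \mathbf I_2$ satisfying the nesting condition~(\ref{ncond}), or isolates a segment whose support does not meet $q_1$ (a pure ``type $\mathsf A$'' block). Plugging such a splitting into the nested component decomposition (Lemma~\ref{cr-rigid}, Corollary~\ref{sr-str}, Corollary~\ref{sr-mult}) shows that every irreducible $\mathbb H_a$-module with central character $a$ appears as a constituent of $\mathrm{Ind}^{\mathbb H_n}_{\mathbb H^S}(L_1\boxtimes L_2)$ for some proper parabolic $\mathbb H^S$ with a $\mathsf A$-type factor. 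Proposition~\ref{pw-ind} together with the Bernstein--Zelevinsky classification of tempered $\mathsf A$-modules then forces some weight $\chi\in\Psi(L)$ to come from a type $\mathsf A$-segment and hence to violate the strict Casselman inequality $\langle\varpi_j,\chi\rangle<1$ at some $j$, so $L$ cannot be discrete series.

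The hard part will be verifying that each failure mode of Definition~\ref{d:dist} does produce a genuine nested splitting with non-trivial type $\mathsf A$ contribution. Failure of $(3)$ is immediate: an unmarked $I_l$ is a type $\mathsf A$ block, isolated from the complement. Failure of $(1)$ or $(2)$ means, after relabeling, $\underline{I_l}\sqsubset\underline{I_{l+1}}$ or $\underline{I_{l+1}}\sqsubset\underline{I_l}$, which is precisely the hypothesis (\ref{ncond}) on an extractable pair. The subtle step is showing that the resulting induced picture genuinely forbids the strict Casselman inequality for \emph{every} irreducible with the given central character, not merely for a specific one; for this I would combine Corollary~\ref{wcrit}, which pinpoints a weight via an explicit permutation $v\in\mathfrak S_n^a$ compatible with $(\clubsuit)_w$ coming from the failed nesting, with the observation that a $\mathsf A$-induced factor always contributes a weight $\chi$ with $\langle\varpi_j,\chi\rangle\ge 1$ for some $j$ (a consequence of the dimension of the $\mathsf A$-block being positive and the minimal coset representatives exhausting all ``shift'' directions in Proposition~\ref{pw-ind}).
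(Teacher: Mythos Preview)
The paper does not prove this statement; it is quoted from \cite{O} (Lemma~3.31) and used as an external input to the subsequent classification. So there is no ``paper's own proof'' to compare against.

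As an independent argument, your proposal has gaps in both directions. For the ``if'' direction, what \S\ref{sec:ladder}--\S 3.4 actually establish (Theorem~\ref{main:p}) is that $L_{\mathsf{out}(\sigma)}$ is \emph{tempered}; the phrasing of Theorem~\ref{t:main} as identifying ``the discrete series'' already presupposes existence and uniqueness, which is precisely the content of Theorem~\ref{tempss} (together with \cite{OS}). To avoid circularity you would have to sharpen the Casselman estimates in the proof of Theorem~\ref{main:p} from $\le 1$ to strict inequalities; this is plausible at generic $m$ but is not carried out in the paper.

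For the ``only if'' direction there are two problems. First, failure of condition~(1) or~(2) in Definition~\ref{d:dist} only says $\max\underline{I_j}\le\max\underline{I_{j+1}}$ (resp.\ $\min\underline{I_j}\ge\min\underline{I_{j+1}}$) for some single pair; this does not give $\underline{I_j}\sqsubset\underline{I_{j+1}}$, and it certainly does not produce a bipartition of \emph{all} of $\mathbf I_0$ satisfying the global strict nesting~(\ref{ncond}) that Lemma~\ref{cr-rigid} requires. Failure of~(3) means $q_1\notin\underline{I_j}$ for some $j$, but the eigenvalues in $I_j$ still lie in $q_1 q^{\mathbb Z}$ and interact with neighbouring segments through $V_2^{(s,q)}$, so $I_j$ need not split off as an isolated type~$\mathsf A$ block. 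Second, and more fundamentally, even if every irreducible $L$ with central character $a$ were a subquotient of some $\mathrm{Ind}(L_1\boxtimes L_2)$ with a type~$\mathsf A$ factor, the inclusion $\Psi(L)\subset\Psi(\mathrm{Ind}(L_1\boxtimes L_2))$ from Theorem~\ref{wt-ind} goes the wrong way: it does not force any specific weight of the induced module to appear in $\Psi(L)$, so you cannot conclude that $L$ contains a weight violating the strict Casselman inequality. (Corollary~\ref{wcrit} is likewise only a \emph{necessary} condition on weights, so it cannot ``pinpoint'' a bad weight of $L$.) Ruling out discrete series along these lines would require showing that $L$ is \emph{equal} to an irreducibly induced module from a proper parabolic, which is a much stronger statement than being a subquotient.
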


In particular, a distinguished semisimple $a$ (or $s$) corresponds to
a partition $\sigma$ of $n$. We write $a_\sigma$ and $s_\sigma$ to
emphasize this dependence. {(We remark that $a_{\sigma}$ and $s_{\sigma}$ are well-defined up to $\mathfrak S _n$-action even if we require $a_{\sigma} \in \mathcal T _{0}$)}. Notice that, by \S \ref{sec:1.3}, the marked partition
$\mathsf{mp}(\sigma)$ above parameterizes the open $G(a_\sigma)$-orbit
in $\mathfrak N^{a_\sigma}.$ The goal of this section is to identify which $G(a_\sigma)$-orbit in $\mathfrak
N^{a_\sigma}$ parameterizes the discrete
series $\mathbb H_{a_\sigma}$-module under Theorem \ref{eDL}. By
Propositions \ref{p:MP1} and \ref{p:MP2}, we need to describe a marked
partition, denoted $\mathsf{ds}(\sigma)$ or $\mathsf{ds}(s_\sigma)$, which we may regard as a representative of an orbit via the map
$\Upsilon.$

\subsection{Algorithm}\label{sec:alg}
We start with a distinguished marked partition $\mathsf{mp}(\sigma)$
corresponding to a partition $\sigma$ of $n$ as in \S
\ref{sec:dmp}, and let $s_\sigma$ denote the corresponding semisimple
element. We put integer coordinates $(i,j)$
in the boxes of $\sigma$ such that the boxes on the first row have
coordinates: $(1,-1),$ $(2,-1)$, $(3,-1)$ etc., the boxes on the
second row: $(1,-2)$, $(2,-2)$, $(3,-2)$ etc., the numbering starting
from the left. Note that the boxes of the diagonal have coordinates
$(i,-i).$

We define a function on the boxes of $\sigma,$ which we call an $e$-function. For a box $(i,j)$, we
set{
\begin{equation}\label{eq:3.2}e(i,j)= \log_q (q_1q^{i+j}) = m+i+j.\end{equation}
Let $\mathtt{av} ( I )$ denote the sum of all $e$-values of $I \subset \sigma$.}

Given $\sigma,$ the following algorithm gives a marked partition
$\mathsf{out}(\sigma)$ which turns out to parameterize the discrete series with
central character $s_\sigma$ (i.e., $\mathsf{out}(\sigma)=\mathsf{ds}(\sigma)$).

\begin{algorithm}\label{al:ds}
\begin{enumerate}
\item Set $\ell=0,$ $\sigma_{(\ell)}=\sigma,$ $L^+=L^-=\emptyset.$
  ($L^+$ and $L^-$ will be collections of subsets of  $\sigma$.)
\item Find the unique $(i,j)\in \sigma_{(\ell)}$ such that $e(i,j)$ or
  $- e(i,j)$ attains the maximum in the set $\{ | e(i,j) |:
  (i,j)\in \sigma_{(\ell)}\}.$ 
\begin{enumerate}
\item If the maximum is at $e(i,j)$, append the set (horizontal strip)
  $\{(i-k,j)\in
  \sigma_{(\ell)}: k\ge 0\}$ to $L^+$.
\item If the maximum is at $- e(i,j)$, append the set (vertical strip)
  $\{(i,j+k)\in
  \sigma_{(\ell)}: k\ge 0\}$ to $L^-$. 
\end{enumerate}
Remove the horizontal or vertical string as above from
$\sigma_{(\ell)}$ and call the resulting partition
$\sigma_{(\ell+1)}.$ If $\sigma_{(\ell+1)}\neq\emptyset,$ increase
$\ell$ to $\ell+1$ and go back to the beginning of step 2.

\item Set $L=\emptyset.$ (This will be a collection of sets.) For every $-n\le k\le n,$ form 
\begin{equation}
L_k^+=\{I\in L^+:\min_{(i,j)\in I} e(i,j)=m+k\},\quad
L_k^-=\{I\in L^-:\max_{(i,j)\in I} e(i,j)=m+k-1\}.
\end{equation} 
For every $k$, order the elements in $L_k^+,$ respectively $L_k^-$
decreasingly with respect to their cardinality: $I^+_{k,1},\dots,
I_{k,h_1}^+$ and $I^-_{k,1},\dots, I^-_{k,h_2}.$ By adding empty sets
at the tail of the appropriate sequence, we may assume $h_1=h_2.$ Then for
$j=1,\ldots,h_1,$ form the segment $I_{k,j}^+\sqcup I_{k,j}^-$, and append it
to $L.$ (Notice that $I_{k,j}^+\sqcup I_{k,j}^-$ is a
  segment since we have an identification of $\mathsf{mp} ( \sigma )$ with $\sigma$.) 

Then $L$ is the collection of segments in the marked partition
$\mathsf{out}(\sigma).$ We specify the marking $\delta$ next.

\item Define a temporary marking $\delta'$ first. For every $I\in
  L,$ let $e(I)$ denote the set of $e(i,j)$ for $(i,j)\in I.$ Recall
  that $I$ could be marked only if $m\in e(I),$ and if so, the
  marking could only be on the box $(i,j)$ with $e(i,j)=m.$ Set
\begin{equation}
\delta'(I)=\left\{\begin{aligned} 1, &\text{  if } m\in
    e(I) \text{ and } \mathtt{av} ( I ) > 0,\\ 0, &\text{ otherwise.} \end{aligned}\right.
\end{equation}
We refine $\delta'$ to $\delta$ by removing the marking of any
segment $I$ which is dominated by marked segment $I'.$ 

\end{enumerate}
\end{algorithm}

\begin{remark}
\begin{enumerate}
\item The hypothesis that $a=(s,\vec q)$ is generic is essential for the
  uniqueness of the box $(i,j)$ realizing the maximum in step 2 of the algorithm.
\item The first two steps of the algorithm are identical with
the algorithm conjectured by Slooten (\cite{Sl}) for a generalized Springer
correspondence for {the graded Hecke algebra $\overline{\mathbb
    H}_{n,m}$ (Definition \ref{gradedC})} with generic
 unequal
labels. We will see that this algorithm is equivalent with the one
described by Lusztig-Spaltenstein (\cite{LS}) for $\overline{\mathbb
  H}_{n,m}$ with (representative) generic unequal labels
constructed from cuspidal local systems in Spin groups. We explain this in more detail in \S\ref{sec:cuspidal}.
\item To clarify the algorithm, we offer an example. Consider $n=14,$
  and the partition $\sigma=(4,3,3,2,1),$ assuming that
  $2<m<\frac 52,$ see figure \ref{fig:14} (in
  the figure an entry $k$ in the box means the $e$-value is $\log( q_1q^{k}) = m+k$).

\begin{figure}[h]\label{fig:14}
$$
\begin{tableau}
:.{0}.{1}.{2}.{3} \\
:.{{-1}}.{0}.{1} \\
:.{-2}.{{-1}}.{0} \\
:.{{-3}}.{{-2}} \\
:.{{-4}}\\
\end{tableau}$$
\caption{Partition $\sigma=(4,3,3,2,1)$ and $2<m<\frac 52$}
\end{figure}

If we identify $I$ with $e(I)$ for a segment $I$, then we find
\begin{align*}
&L^+=\{[m,m+1,m+2,m+3],[m-1,m,m+1],[m-2,m-1,m],[m-2]\}\\
&L^-=\{[m-4,m-3]\}.
\end{align*}
We separate the segments based on where they begin or end:
$L^+_0=\{[m,m+1,m+2,m+3]\},$ 
$L^+_{-1}=\{[m-1,m,m+1]\}$, 
$L^+_{-2}=\{[m-2,m-1,m],[m-2]\}$, and
$L^-_{-2}=\{[m-4,m-3]\}.$ Next we may combine the
segment in
$L^+_{-2}$ with the longest segment in $L^-_{-2}$. The resulting marked partition
$\tau$ has support $\mathbf I$ given by the segments $I_1,I_2,I_3,I_4$, such that
$e(I_1)=[m,m+1,m+2,m+3],$
$e(I_2)=[m-1,m,m+1],$ and
$e(I_3)=[m-4,m-3,m-2,m-1,m],$ $I_4=[m-2].$ According
to the algorithm, we temporarily mark the first three segments at $m,$ but
then since $I_3\lhd I_2\lhd I_1,$ we remove the markings on $I_2$ and
$I_3.$ In conclusion, the output of the algorithm is the marked
partition $\mathsf{out}(\sigma)$ (see figure \ref{fig:3.4}) with support given by $\{I_1,I_2,I_3,I_4\}$ and a single
marking on $I_1$. (This marked partition is in the same orbit with the
one where all three $I_1,I_2,I_3$ are marked.)

\begin{figure}[h]\label{fig:3.4}
$$\begin{tableau}
:.{{\mathbf 0}}.{1}.{2}.{3} \\
:.{-1}.{0}.{1} \\
:.{-4}.{{-3}}.{-2}.{-1}.{0} \\
:.{-2}\\
\end{tableau}   \quad \text{or, after aligning the rows, } $$
$$  \mathsf{out}(\sigma)=\young(::::{\times}\hfil\hfil\hfil,:::{\hfil}{\hfil}{\hfil},{\hfil}{\hfil}{\hfil}{\hfil}{\hfil},::\hfil) $$
\caption{Output of Algorithm \ref{al:ds} when $\sigma=(4,3,3,2,1)$ and $2<m<{\frac 52}$}
\end{figure}

\end{enumerate}

\end{remark}

The main result of this section is:

\begin{theorem}\label{t:main}
Let $\sigma$ be a partition of $n,$ and let $a_\sigma,s_\sigma$ be the
semisimple elements constructed from $\sigma$ in \S
\ref{sec:dmp}. The discrete series $\mathbb H_{a_\sigma}$-module (with central character
$s_\sigma$) is $L_{(a_\sigma,\Upsilon(\mathsf{out}(\sigma)))}$, where
$\mathsf{out}(\sigma)$ is the marked partition constructed in
Algorithm \ref{al:ds}. In other words, $\mathsf{out}(\sigma)=\mathsf{ds}(\sigma).$
\end{theorem}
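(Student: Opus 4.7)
My plan is to prove that $L_{(a_\sigma, \Upsilon(\mathsf{out}(\sigma)))}$ is a discrete series $\mathbb H_{a_\sigma}$-module. Combined with Theorem \ref{tempss}, which guarantees that a distinguished central character $s_\sigma$ admits a \emph{unique} discrete series, this forces the identification $\mathsf{out}(\sigma)=\mathsf{ds}(\sigma)$. A preliminary step is to verify that the output of Algorithm \ref{al:ds} is indeed an element of $\mathsf{MP}(a_\sigma)$: each horizontal strip in row $i$ has consecutive $e$-values $m+i+j,\,m+i+j+1,\ldots$ and each vertical strip a decreasing consecutive sequence, so the concatenations $I^+_{k,j}\sqcup I^-_{k,j}$ formed in step~3 are segments adapted to $a_\sigma$; the marking prescribed in step~4 is consistent with the admissibility condition for marked partitions.

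The main argument proceeds by induction on $n=|\sigma|$. For the inductive step, I would peel off the largest piece produced by the algorithm: concretely, the first horizontal strip added to $L^+$ in the iteration of step~2 corresponds to the maximal $e$-value and naturally defines a nested splitting of $\sigma$ in the sense of Section \ref{sec:nested}. Removing this strip (together with the corresponding vertical strip paired with it in step~3) yields a smaller distinguished partition $\sigma'$, and the remaining pieces in $\mathsf{out}(\sigma)$ are precisely those produced by running Algorithm \ref{al:ds} on $\sigma'$. By Corollary \ref{sr-str} and Corollary \ref{sr-mult}, the orbit $\mathcal O_{\mathsf{out}(\sigma)}$ decomposes compatibly, and the induction theorem (Theorem \ref{indt}) then expresses $M_{(a_\sigma,\Upsilon(\mathsf{out}(\sigma)))}$ as $\mathsf{Ind}^{\mathbb H}_{\mathbb H^{\mathsf A}\otimes \mathbb H_{n'}}$ of a tempered $\mathbb H^{\mathsf A}$-module on the strip piece times the inductive discrete series of $\mathbb H_{n'}$ on $\sigma'$. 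This reduces the problem to controlling the weights of the induced module.

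To verify the discrete series Casselman condition $\langle\varpi_j,\chi\rangle<1$ for every weight $\chi$ of $L_{(a_\sigma,\Upsilon(\mathsf{out}(\sigma)))}$, I would use Proposition \ref{pw-ind} together with Corollary \ref{wcrit}. Any such $\chi$ has the form $w\cdot s_\sigma^{-1}$ for some $w\in W$ satisfying condition $(\clubsuit)_w$ for the marked partition $\mathsf{out}(\sigma)$. The key point is that $(\clubsuit)_w$ forces, within each segment $I^+_{k,j}\sqcup I^-_{k,j}$, a constrained order of the indices: roughly, the $e$-values must appear in increasing order unless a sign flip ($w v(i)>0>w v(j)$) occurs at the boundary between the $I^-$-part and the $I^+$-part. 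Combined with the fact that the algorithm assigns the largest-$e$-value boxes to the longest segments and that the marking $\delta$ is chosen so that $\mathtt{av}(I)>0$ at marked segments, this ensures that every partial sum $\sum_{i=1}^j \log\langle\epsilon_i,\chi^{-1}\rangle$ is strictly positive, i.e. $\langle\varpi_j,\chi\rangle<1$.

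The hard part will be the combinatorial verification in the last step, specifically showing that the segment-ordering produced by step~3 of the algorithm (pairing the longest positive strip with the longest negative strip anchored at the same threshold $k$) is exactly what is needed for all partial sums to be strictly negative in the logarithmic coordinates; a naive pairing would only give the \emph{tempered} inequality (non-strict) and could fail the strict discrete-series bound. To manage this, I would analyze a base family of "special" partitions first in Section 3.3 (e.g.\ staircase partitions, or partitions whose associated $\mathsf{mp}(\sigma)$ has a single hook) where the weight distribution can be written explicitly, and then reduce the general case by the nested-decomposition induction above. The specialization result Corollary \ref{spec}/Corollary \ref{specds} can then be invoked to carry the identification from the interior of a generic parameter region to its boundary if the algorithm's output is required to be continuous in $m$.
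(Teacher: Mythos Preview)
Your overall architecture (induction on $|\sigma|$, peel off a segment, apply the induction theorem, then control weights via Corollary~\ref{wcrit}) matches the paper's, but two concrete steps fail as stated.

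First, the nested decomposition of \S\ref{sec:nested} does not apply to the piece you propose to remove. Condition~(\ref{ncond}) requires \emph{every} segment in one block to be strictly contained in \emph{every} segment of the other; but the first strip $I_1$ (even together with its step-3 partner) is typically not in nested position with all remaining segments of $\mathsf{out}(\sigma)$---they may overlap without either containing the other (see the example in Fig.~3.4, where the segment $[m,m{+}3]$ and $[m{-}4,m]$ overlap non-trivially). So Corollaries~\ref{sr-str}/\ref{sr-mult} are unavailable here. The paper instead removes the segment $I_r\sqcup I_{r+1}$, the \emph{innermost} glued pair, because it attains the minimal $e$-value and hence the hypothesis~(\ref{denom}) of Theorem~\ref{indt} is satisfied; removing this hook from $\sigma$ leaves a genuine smaller partition $\sigma_2$ with $\mathsf{out}(\sigma_2)=\tau_2$.

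Second, and more seriously, the $\mathbb H^{\mathsf A}$-module on the removed segment is \emph{not} tempered in general: a single-segment standard $\mathbb H^{\mathsf A}$-module is tempered only when the segment is symmetric about $0$, which the segments of $\mathsf{out}(\sigma)$ typically are not. So you cannot conclude temperedness of the induced module by ``tempered $\boxtimes$ discrete series''. The paper confronts this directly: it isolates a subset $I^*\subset I_r\sqcup I_{r+1}$ (the boxes whose $e$-value dominates $\max\{|e(y)|:y\in I_{r+1}\}$) and shows that temperedness reduces to the sign condition~(\ref{eq:3.10}), namely $w(I^*)>0$ for every weight $w\cdot s_\sigma^{-1}$. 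This condition is then established in three stages (C1)--(C3), using the auxiliary marked partitions $\tau_2^+$ and $\tau^+$ and the structural weight results Propositions~\ref{p:rt1}/\ref{p:rt2} (which pin down signs of $wc(\boxed x)$ when the algorithm produces runs $I_1,\dots,I_r\in L^\pm$ followed by $I_{r+1},\dots,I_{r+t}\in L^\mp$). Your sketch of the weight analysis via $(\clubsuit)_w$ is in the right spirit but does not yet produce the strict inequalities; the missing mechanism is precisely this $I^*$-reduction together with the codimension-one/normality argument (Abeasis--Del~Fra--Kraft) that yields the multiplicity-one statements~(\ref{ind2u}) needed to propagate the sign condition through the inductive chain.
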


The proof will be divided into several parts in the next sections.

\begin{example}\label{ex:onehook}{\bf (One hook partitions.)} Before proving Theorem \ref{t:main},
let us present a particular case of the algorithm. Assume
$\sigma$ is a partition given by a single hook, i.e.,
$\sigma=(k,1^{n-k}),$ for some $1\le k\le n.$
This means that the semisimple element is
$s_\sigma=(q_1q^{k-1},q_1q^{k-2},\dots,q_1,\dots,q_1q^{k-n}).$
In this case, 
\begin{equation}
\mathfrak N^{a_\sigma}=\bigoplus _{i=1}^n\mathbb C
v_{\epsilon_i-\epsilon_{i+1}}\oplus \mathbb C
v_{\epsilon_k},\quad\text{and } G (s_\sigma)=(\mathbb C^\times)^n\text{
(i.e., the maximal torus)},
\end{equation}
so there are $2^n$ orbits in $G (s_\sigma)\backslash \mathfrak
N^{a_\sigma},$ each orbit corresponding to a subset $\mathcal S$ of
$\{v_{\epsilon_i-\epsilon_{i+1}},\ 1 \le i < n, v_{\epsilon_k}\}.$ To $\mathcal S$,
there corresponds a marked partition $\tau_{\mathcal S}=(\mathbf I,\delta),$ as
follows: for every maximal string of 
consecutive weight vectors
$\{v_{\epsilon_i-\epsilon_{i+1}},\dots,v_{\epsilon_{i+t-1}-\epsilon_{i+t}}\}$
  in $\mathcal S$, we attach a segment $I=[i,i+1,\dots, i+t-1] \in \mathbf I,$ of
  length $t$ with $e$-values $(m+k-i,\dots, m+k-i-t+1).$
  In addition, we mark at $q_1$, and write $\delta=1$, if $v_{\epsilon_k}\in \mathcal S,$ and we don't
  mark, and write $\delta=0$, otherwise.

We remark that, as a consequence of the results about weights, one
sees that any $R(T)$-weight space of an irreducible $\mathbb
H_{a_\sigma}$-module with central character $s_\sigma$, i.e.,
parameterized by a marked partition of the form $\tau_{\mathcal S},$ is
one-dimensional.
  
By applying the algorithm explicitly, as a corollary of Theorem
\ref{t:main}, we find that the discrete series $\mathsf{ds}(\sigma)$
is parameterized by the marked partition $\tau=(\mathbf I,\delta)$ as follows:

\begin{enumerate}
\item[(a)] if $q_1q^{k-n}>1,$ then $\tau$ has $\mathbf
  I=\{[1,2,\dots,k],[k+1],[k+2],\dotsc,[n]\}$ and $\delta=1$;
\item[(b)] if $q_1q^{k-n}<1,$ and
\begin{enumerate}
\item[(b1)] $(q_1q^{k-n})^{-1}>q_1q^{k-1},$ then $\tau$ has
  $\mathbf I=\{[1,2,\dots,n]\}$ and $\delta=0$;
\item[(b2)] $q_1q^{-1}<(q_1q^{k-n})^{-1}<q_1q^{k-1},$ then
  $\tau$ has $\mathbf I=\{[1,2,\dots n]\}$ and $\delta=1$;
\item[(b3)] there exists $l<0$ with the property that
  $q_1q^{l-1}<(q_1q^{k-n})^{-1}<q_1q^l$, then $\tau$ had
  $\mathbf I=\{[1,2,\dots,k],[k+1],\dots,[l-1],[l,l+1,\dots,n]\}$ and
  $\delta=1.$ 
\end{enumerate}

\end{enumerate}

\end{example}

\begin{convention}\label{cfcn}
A bijection $c : \sigma \rightarrow [ 1, n ]$ is called a $c$-function for $\sigma$ if it satisfies the following condition
$$c ( \boxed i ) > c ( \boxed j ) \text{ if } e ( \boxed i ) < e ( \boxed j ) \text{ for } \boxed i, \boxed j \in \sigma.$$
\end{convention}

In the following, using an appropriate $c$-function if necessary, we identify the set of boxes $\Box \in \sigma$ with an interval $[1,n]$. Moreover, we define $s_{\sigma}$ so that $e ( \Box ) = \log _q \langle \epsilon _{c ( \Box )}, s _{\sigma} \rangle$ for each $\Box \in \sigma$. In addition, we identify a segment $I$ (adapted to $s _{\sigma}$) as a set of boxes in $\sigma$. Notice that we have $e ( I ) = \log _q ( \underline{I} )$ in this setting.

\subsection{A particular case: $(\pm)$-ladders}\label{sec:ladder}

Before we begin, we present a corollary of Theorem \ref{mult} which is used
  repeatedly in the proofs.

\begin{corollary}\label{indmult}
Assume $n=n_1+n_2,$ and let $\mathbb H_P$
  be the Hecke algebra for $GL(n_1)\times Sp(2n_2)$. Consider {$a = (s,\vec{q}) \in \mathcal T _0$ which decomposes $s=s_1\times s_2$ with $s_1\in GL(n_1)$ and $s_2\in Sp(2n_2).$ Let $\tau_1 \in \mathsf{MP} _0 ( s_1, \vec{q} )$, $\tau_2\in \mathsf{MP} ( s_2, \vec{q} )$, and $\tau \in \mathsf{MP} ( a )$ be given.
We assume:
\begin{itemize}
\item For {every
  $\widetilde\tau_2\in \mathsf{MP}(s_2,\vec{q})$}, which gives a strictly larger
  orbit than $\tau_2$, we have $v_{\tau_1}\oplus v_{\widetilde\tau_2} \notin
  \overline{Gv_\tau}$;
\item{The induction condition (\ref{denom}) in Theorem \ref{indt} is satisfied.} 
\end{itemize}}
Then, we have
\begin{equation} 
[\Ind_{\mathbb H_P}^{\mathbb H}(M^{\mathsf A}_{\tau_1}\boxtimes M_{\tau_2}):L_\tau]=[\Ind_{\mathbb H_P}^{\mathbb H}(M^{\mathsf A}_{\tau_1}\boxtimes L_{\tau_2}):L_\tau],
\end{equation}
and the same formula holds with $L^{\mathsf A}_{\tau_1}$, ${}^{\mathsf 
t}L^{\mathsf A}_{\tau_1}$, or ${}^{\mathsf t}M^{\mathsf A}_{\tau_1}$
in place of $M^{\mathsf A}_{\tau_1}.$
\end{corollary}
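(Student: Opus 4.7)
The plan is to work in the Grothendieck group of finite dimensional $\mathbb H _a$-modules, and to reduce the stated identity to a vanishing statement that is ultimately forced by the first hypothesis of the corollary together with Theorem \ref{eDL}. First I would decompose $M _{\tau _2}$ into composition factors. By the second assertion of Theorem \ref{eDL}, $L _{\tau _2}$ appears in $M _{\tau _2}$ with multiplicity one (as its unique irreducible quotient), while every other composition factor $L _{\widetilde\tau _2}$ must satisfy $v _{\tau _2} \in \overline{G ( s _2 ) v _{\widetilde\tau _2}}$, i.e.\ $\widetilde\tau _2$ parametrises a $G ( s _2 )$-orbit strictly larger than $G ( s _2 ) v _{\tau _2}$. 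Consequently
\begin{equation*}
[ M _{\tau _2} ] - [ L _{\tau _2} ] = \sum _{\widetilde\tau _2} c _{\widetilde\tau _2} [ L _{\widetilde\tau _2} ],
\end{equation*}
and, applying the exact functor $\Ind _{\mathbb H _P} ^{\mathbb H} ( M _{\tau _1} ^{\mathsf A} \boxtimes - )$, the required equality reduces to showing $[ \Ind _{\mathbb H _P} ^{\mathbb H} ( M _{\tau _1} ^{\mathsf A} \boxtimes L _{\widetilde\tau _2} ) : L _{\tau} ] = 0$ for each such $\widetilde\tau _2$.

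Next I would lift $L _{\widetilde\tau _2}$ to $M _{\widetilde\tau _2}$. Since $L _{\widetilde\tau _2}$ is a quotient of $M _{\widetilde\tau _2}$ and induction is right exact, $\Ind ( M _{\tau _1} ^{\mathsf A} \boxtimes L _{\widetilde\tau _2} )$ is a quotient of $\Ind ( M _{\tau _1} ^{\mathsf A} \boxtimes M _{\widetilde\tau _2} )$, so it suffices to check that $L _{\tau}$ does not occur in the latter. Invoking Theorem \ref{indt} together with the fact that condition (\ref{denom}) is semicontinuous in the $v _{\tau _2}$-variable (the $\mathfrak u _S$-sweep of a representative of a larger $G ( s _2 )$-orbit can only be larger, so the hypothesis for $v _{\tau _1} \oplus v _{\tau _2}$ propagates to $v _{\tau _1} \oplus v _{\widetilde\tau _2}$), I would identify
\begin{equation*}
\Ind _{\mathbb H _P} ^{\mathbb H} ( M _{\tau _1} ^{\mathsf A} \boxtimes M _{\widetilde\tau _2} ) \cong M _{( a, v _{\tau _1} \oplus v _{\widetilde\tau _2} )}.
\end{equation*}
By the second assertion of Theorem \ref{eDL}, $L _{\tau}$ appears in this standard module only when $v _{\tau _1} \oplus v _{\widetilde\tau _2} \in \overline{G ( a ) v _{\tau}}$, and this is precisely what the first bullet of the hypothesis forbids.

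For the three remaining cases in which $M _{\tau _1} ^{\mathsf A}$ is replaced by $L _{\tau _1} ^{\mathsf A}$, ${} ^{\mathsf t} L _{\tau _1} ^{\mathsf A}$, or ${} ^{\mathsf t} M _{\tau _1} ^{\mathsf A}$, the same argument transfers verbatim: all four candidates are quotients of $M _{\tau _1} ^{\mathsf A}$ or ${} ^{\mathsf t} M _{\tau _1} ^{\mathsf A}$ (well-defined by Convention \ref{conv:1.21} and sharing the same geometric orbit on the $\mathfrak{gl} _{n _1}$-side), so the chain of quotients in Step 2 only becomes longer while the excluded closure relation and the induction hypothesis are both insensitive to the first tensor factor. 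The main technical obstacle I anticipate is the semicontinuity check for condition (\ref{denom}) under enlarging $\tau _2$ to $\widetilde\tau _2$; if this step turns out to be delicate in the exotic setting, a backup route is to bypass Theorem \ref{indt} and argue directly via the weight estimates of Proposition \ref{pw-ind} combined with Corollary \ref{wcrit}, ruling out the appearance of $L _{\tau}$ through the incompatibility of $\mathcal O _{\tau}$ with the subspaces ${} ^{w} \mathbb V ( a )$ corresponding to the weights of $\Ind ( M _{\tau _1} ^{\mathsf A} \boxtimes L _{\widetilde\tau _2} )$.
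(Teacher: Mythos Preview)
Your argument is correct and is exactly the unpacking of the paper's one-line proof (``an obvious consequence of Theorem~\ref{mult} and of the exactness of the parabolic induction functor''). Your flagged concern about propagating condition~(\ref{denom}) to $v_{\tau_1}\oplus v_{\widetilde\tau_2}$ is easily settled: for $a$-fixed $X$ the map $u\mapsto u\cdot X$ carries the $\mathrm{Ad}(s)$-eigenspace $(\mathfrak u_S)_\lambda$ into the $a$-eigenspace $(\mathbb V^S)_\lambda$, so (\ref{denom}) is equivalent to surjectivity of the restricted map $(\mathfrak u_S)^s\to(\mathbb V^S)^a$, which is an open $L_S(s)$-invariant condition on $X\in\mathbb V_S^a$ and therefore passes from any orbit to every larger one; the backup route via weight estimates is unnecessary.
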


\begin{proof}
This is an obvious consequence of Theorem \ref{mult} and of the
exactness of the parabolic induction functor.
\end{proof} 
In the following, whenever we need to apply Corollary
\ref{indmult}, we omit the details of the verification of $v_{\tau_1}\oplus v_{\widetilde\tau_2} \notin \overline{Gv_\tau}$ since they are easily checked by inspection.

\smallskip

%Recall that $\sigma=(\lambda_1\ge\lambda_2\ge\dots\ge \lambda_k>0)$ is a distinguished
% partition, and that $a_\sigma=(s_\sigma,\vec q)$ is the
%corresponding semisimple element.

We begin with a particular instance of Algorithm \ref{al:ds}: the cases
when the algorithm produces $L^-=\emptyset$ or $L^+=\emptyset.$

\begin{definition}[$(\pm)$-ladder] Let $a_\sigma$ be a distinguished semisimple element as in \S\ref{sec:dmp}. 
A positive ladder corresponding to $a _{\sigma}$ is a marked partition $\tau = ( \mathbf I, \delta )$ adapted to $a _{\sigma}$ which satisfies the following conditions:
\begin{enumerate}
\item We have $\mathbf I = \{ I _1, I _2, \ldots \}$ such that
$$e ( I _i ) = \{ m + 1 - i, \ldots, m + \lambda _i - i \};$$
\item We have $\delta ( \Box ) = 1$ if $e ( \Box ) = m$ and $\Box
  \in I_1$, and $\delta (\Box) = 0$ otherwise.
\end{enumerate}
A negative ladder corresponding to $a _{\sigma}$ is a marked partition $\tau = ( \mathbf I, \delta )$ adapted to $a _{\sigma}$ which satisfies the following conditions:
\begin{enumerate}
\item We have $\mathbf I = \{ I _1, I _2, \ldots \}$ such that
$$e ( I _i ) = \{ m+ i - \lambda _i, \ldots, m + i - 1 \};$$
\item We have $\delta\equiv 0$.
\end{enumerate}
\end{definition}

For every distinguished $\sigma$ there are unique $(\pm)$-ladders: the
positive ladder has the collection of segments $\mathbf I$ as the rows
of $\sigma$, and every $I\in\mathbf I$ with $m\in e(I)$ is marked, while the
negative ladder has the collection of segments $\mathbf I$ as the columns
of $\sigma$, and has no marking.

Recall that in general, the weights $\Psi(L_\tau)$ are a subset of
$W \cdot s_\sigma^{-1}.$ If $\tau$ is a $(\pm)$-ladder, then the weights have
a particular form:

\begin{proposition} \label{p:ladder}
Choose a $c$-function for $\sigma$ (see Convention \ref{cfcn}) in order to fix $s_{\sigma} \in T$. Then, we have
\begin{enumerate} 

\item Assume that $\tau$ is the
    positive ladder for $\sigma.$ Then, we have $\Psi(L_\tau)\subset \mathfrak S_n \cdot s_\sigma^{-1}.$
\item Assume that $\tau$ is the
    negative ladder for $\sigma.$ Then, we have $\Psi(L_\tau)\subset \mathfrak S_n \cdot s_\sigma.$

\end{enumerate}

\end{proposition}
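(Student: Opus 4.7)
The plan is to prove part (1); part (2) will follow by a dual/symmetric argument (e.g., via inversion $s_\sigma\leftrightarrow s_\sigma^{-1}$ or the Iwahori--Matsumoto involution). Let $\tau=(\{I_1,\ldots,I_k\},\delta)$ denote the positive ladder, with marking $i^*\in I_1$ where $e(i^*)=m$. Since $s_\sigma$ is generic, the stabilizer $\mathfrak{S}_n^a$ is trivial, which simplifies the necessary conditions on weights.

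The primary tools will be Proposition \ref{p:2.1} and Corollary \ref{wcrit}. By Proposition \ref{p:2.1}, a weight $u=w\cdot s_\sigma^{-1}$ lies in $\Psi(L_\tau)$ iff $\mathsf{IC}(\mathcal{O}_\tau)$ appears as a summand of the pushforward $(\mu_w^a)_*\underline{\mathbb{C}}$; in particular its support, which equals the image of $\mu_w^a=G(s_\sigma)\cdot{}^w\!\mathbb{V}(a)$, must contain $\overline{\mathcal{O}_\tau}$. Corollary \ref{wcrit} then supplies the necessary condition $(\clubsuit)_w$ with $v=1$ for every pair in a common segment. To this I would adjoin the constraint imposed by the marking component $v_{\epsilon_{i^*}}$: in order that this component survive conjugation into $\dot{w}^{-1}\mathbb{V}^+$, one needs $w(i^*)>0$. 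Applying $(\clubsuit)_w$ iteratively within the marked row $I_1$, starting from $i^*$ and ascending in $e$-value, the combination $w(i^*)>0$ plus $w(i)<w(i^*)$ (or the sign-jump clause, which is blocked by $w(i^*)>0$) should propagate to force sign-preserving behavior on all of $I_1$.

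For the unmarked segments $I_l$ ($l\geq 2$), the direct marking argument is unavailable, so I plan to invoke parabolic induction. The idea is to realize $L_\tau$ as a subquotient of a module induced from $\mathbb{H}^{\mathsf{A}}_{n-\lambda_1}\otimes\mathbb{H}_{\lambda_1}$, with the type~A factor encoding the unmarked rows $I_2,\ldots,I_k$ and the type~C factor $\mathbb{H}_{\lambda_1}$ encoding the marked row $I_1$. Using the induction theorem (Theorem \ref{indt}), Corollary \ref{sr-str}, and the exactness input in Corollary \ref{indmult}, I would set up this factorization and then apply Proposition \ref{pw-ind} to conclude
$$\Psi(L_\tau)\subset\bigcup_{w\in\mathfrak{S}_n/(\mathfrak{S}_{n-\lambda_1}\times\mathfrak{S}_{\lambda_1})} w\cdot\bigl(\Psi(M^{\mathsf{A}}_{\tau_2})\times\Psi(L_{\tau_1})\bigr).$$
Since weights of a type~A standard module already live in an $\mathfrak{S}$-orbit of $s_2^{-1}$ (the Weyl group of type $\mathsf{A}$ being $\mathfrak{S}$), and since the base case of a single marked row can be handled separately, induction on the number of segments yields $\Psi(L_\tau)\subset\mathfrak{S}_n\cdot s_\sigma^{-1}$.

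The main obstacle will be making the parabolic induction rigorous: the positive ladder's segments are \emph{not} strictly nested in the sense of (\ref{ncond}) (since $I_1$ has the smallest $\min e$ but the largest $\max e$, so Corollary \ref{sr-str} does not apply verbatim), which means I will need to choose the decomposition and verify condition (\ref{denom}) in Theorem \ref{indt} by hand. A secondary challenge is the base case $\sigma=(\lambda_1)$: there the proof will come down to a direct computation with the Bernstein--Lusztig relations, showing that the unique weight of the corresponding $L_\tau$ sits in $\mathfrak{S}_{\lambda_1}\cdot s_\sigma^{-1}$. If the parabolic induction approach resists, an alternative would be the geometric route: analyze the weight space decomposition of $\overline{\mathcal{O}_\tau}$ explicitly in the generic case $G(s_\sigma)=T$ and show that for $w$ involving a sign change, the image $G(s_\sigma)\cdot{}^w\!\mathbb{V}(a)$ fails to contain $\overline{\mathcal{O}_\tau}$, ruling out $\mathsf{IC}(\mathcal{O}_\tau)$ from the pushforward.
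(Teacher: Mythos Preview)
Your parabolic induction step has a real gap. You write the bound
\[
\Psi(L_\tau)\subset\bigcup_{w} w\cdot\bigl(\Psi(M^{\mathsf A}_{\tau_2})\times\Psi(L_{\tau_1})\bigr),\qquad w\in\mathfrak S_n/(\mathfrak S_{n-\lambda_1}\times\mathfrak S_{\lambda_1}),
\]
with the type~$\mathsf A$ standard module $M^{\mathsf A}_{\tau_2}$ in the first factor. But Proposition~\ref{pw-ind} does not give this: look at the Remark following it. The bound is in terms of the full $\mathbb H_{n_1}$-standard module $M_{(a_1,v_{\tau_1})}$, whose weight set is $\bigcup_{v\in W_{n_1}/\mathfrak S_{n_1}} v\cdot\Psi(M^{\mathsf A}_{\tau_1})$ and therefore already contains sign-changed weights. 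So even after your (correct) Steinberg base case for the $I_1$-factor, the $I_2,\ldots,I_k$ block contributes weights that can lie outside $\mathfrak S_{n-\lambda_1}\cdot s_2^{-1}$, and the argument stalls. Your marking argument for $I_1$ alone does not repair this, because the issue is with the unmarked block.

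The paper's proof fixes this by reversing the decomposition: it peels off the \emph{last} row $I_k$ as the (one-dimensional) type~$\mathsf A$ factor and keeps $I_1,\ldots,I_{k-1}$ as the type~$C$ factor. The latter is again a positive ladder for a smaller $\sigma$, so the inductive hypothesis applies directly and forces $w\,c(\boxed j)>0$ on all of $\tau_2$. The remaining work is to rule out sign changes on the single segment $I_k$; this is done by a short geometric argument using the box $\boxed{\mathsf{min}}$ of minimal $e$-value (which lies in $\tau_2$, hence has $w\,c(\boxed{\mathsf{min}})>0$ by induction), combined with Corollary~\ref{wcrit} to see that $v_{\tau}$ cannot meet ${}^w\mathbb V(a)$ if $w$ flips signs on $I_k$. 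Your outline does not contain this step or an analogue of it, and your alternative geometric fallback would essentially have to reproduce it. The verification of condition~(\ref{denom}) is also automatic in the paper's decomposition (since $I_k$ attains the minimal $e$-value), whereas with your decomposition it is genuinely problematic, as you anticipated.
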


\begin{proof}
The proofs of the two assertions are completely analogous, therefore we
only present the details when $\tau$ is the positive ladder. The proof is by
induction on $k,$ the number of rows of $\sigma,$ or equivalently, the
number of segments $J_i$ in the support of $\tau$. 

In the base case, $k=1,$ the orbit corresponding to $\tau$ is
$G$-regular in $\mathfrak N=\mathfrak N_1$, and the corresponding
module is the Steinberg module. It follows that
$\Psi(L_\tau)=\{s_\sigma^{-1}\},$ which proves the assertion in this case.

Assume the result holds for all $\sigma'$ with less than $k$ rows, and
assume $\sigma$ has $k$ rows. We want to show that
for every weight $w^{-1} \cdot s_\sigma^{-1}$, we have $w^{-1}i>0$ for $1\le i\le n$
(which implies that $w\in \mathfrak S_n$), or, equivalently, that
$w^{-1} c(\boxed{j})>0$ for every box $\boxed j$ of $\tau.$

Let $\tau_1$ and $\tau_2$ be the positive ladder partitions corresponding to the last row, respectively the first $k-1$ rows, of $\sigma,$ and let $s_1,s_2$ be the corresponding semisimple
elements. We form the $\mathbb H^{\mathsf{A}} _{\lambda_k}$-module
(one dimensional) $L_{\lambda_k}^{\mathsf A}=M_{\lambda_k}^{\mathsf A}$ corresponding to
$( s_1, q, v_{\tau_1})$, and let $M_{\tau_2}$ be the standard module of $\mathbb
H_{n-\lambda_k}$. Theorem \ref{indt} applies, and
we have
\begin{equation}
M_\tau\cong \mathsf{Ind}_{\mathbb H ^{\mathsf A} _{\lambda_k} \times
  \mathbb H_{n-\lambda_k}}^{\mathbb H_{n}} ( L_{\tau_1}^{\mathsf A}\boxtimes M_{\tau_2}).
\end{equation}
(The notation is as in Convention \ref{conv:1.21}.) 
Using Corollary \ref{indmult}, we deduce that
$$[ \mathsf{Ind}_{\mathbb H ^{\mathsf A} _{\lambda_k} \otimes
  \mathbb H_{n-\lambda_k}}^{\mathbb H_{n}} ( L_{\tau_1}^{\mathsf A}\boxtimes L_{\tau_2}) : L_{\tau}] > 0.$$
For every {minimal length} coset representative $w$ of $W_n/ (\mathfrak
S_{\lambda_k}\times W_{n-\lambda_k} )$, we analyze the homology
$H_\bullet( \mathcal E _{v_\tau}^{a_\sigma} [w \cdot s_\sigma^{-1}])$ to see if the
$w \cdot s_\sigma^{-1}$-weight space is nonempty. By the induction
hypothesis and Theorem \ref{wt-ind}, we have $w c(\boxed j) > 0$ for all $\boxed j\in \tau_2.$ It remains to show that the same holds for all $\boxed j\in \tau_1.$

Notice that the minimal $e$-value $e_\mathsf{min}$ in $\tau$ is attained by an element
$\boxed{\mathsf{min}}$ of $\tau_2.$ Recall that this makes
$c(\boxed{\mathsf{min}}) = n$. There are at most two elements in
$\tau$ which have $e$-value equal to $(e_\mathsf{min} + 1)$: one in
$\tau_2$, denoted $\boxed 2$, and, if $\tau_1$ is not a singleton, one
in $\tau_1$, denoted $\boxed 1.$ 

If $w c(\boxed{\mathsf{min}})>0,$ then in order to have $v_{\tau_1}\in {} ^{w} \mathbb V ( a )$, one must
have $w c (\boxed{j})>0,$ for all $\boxed{j} \in \tau_1$ by Proposition \ref{pw-ind} and Corollary \ref{wcrit}.

If $w c(\boxed{\mathsf{min}})<0,$ then we have $v_\alpha\in {} ^{w} \mathbb V( a )$ for
$\alpha=\epsilon_{n-j}-\epsilon_{n},$ where we set $c ( \boxed{j} ) = n - j$ for $j=1,2$ (the first case follows by the induction hypothesis, and the second case is by Proposition \ref{pw-ind} and Corollary \ref{wcrit}, and appears if $\tau_1$ is not a singleton). But
this implies that in order for
$$H_\bullet( \mathcal E _{v_\tau}^{a_\sigma} [w \cdot s_\sigma^{-1}]) = H_\bullet((\mu_w^{a_\sigma})^{-1}(v_\tau))$$
to contribute a non-trivial weight space of $L_\tau,$ the orbit $\mathcal O _{\tau}$ must meet
$\operatorname{Hom}_{\mathbb C}(\mathbb C,\mathbb C^2)\subset \mathfrak
N^a$ if $\tau_1$ is
not a singleton, respectively $\operatorname{Hom}_{\mathbb C}(\mathbb
C,\mathbb C)\subset \mathfrak
N^a$ if $\tau_1$ is a singleton, in its open dense part. But since
$\boxed{\mathsf{min}}$ and $\boxed{1}$ are not in the same segment of $\tau$, this is
not the case for $\tau.$ 
\end{proof}

\begin{corollary} \label{c:ladder}
\begin{enumerate}
\item Assume Algorithm \ref{al:ds} produces $L^-=\emptyset$ for $\sigma.$ Then
  the output of the algorithm $\mathsf{out}(\sigma)$, which is the positive ladder, is a
  discrete series. In particular, this is the case when
  $q_1>q^{n-1}>1.$ 
\item Assume Algorithm \ref{al:ds} produces $L^+=\emptyset$ for $\sigma.$ Then
  the output of the algorithm $\mathsf{out}(\sigma)$, which is the negative ladder, is a
  discrete series. In particular, this is the case when {$q_1<q^{1-n}<1.$}
\end{enumerate}
\end{corollary}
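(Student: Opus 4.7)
The two assertions of Corollary \ref{c:ladder} are symmetric, so my plan focuses on part (1); part (2) will follow by the analogous argument using Proposition \ref{p:ladder}(2) with all inequalities reversed in sign.

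The first step will be to verify that $\mathsf{out}(\sigma)$ coincides with the positive ladder when $L^- = \emptyset$. I will argue this by direct inspection of Algorithm \ref{al:ds}: when the maximum of $|e|$ at every iteration is attained at a positive value, the extracted horizontal strip is forced to be the full topmost remaining row, because the $e$-values along a row decrease strictly from right to left and the rightmost box realizes the maximum. By induction the rows of $\sigma$ come out top to bottom, Step 3 does no combining (since $L^- = \emptyset$), and Step 4 leaves a single marking on $I_1$ after the domination refinement, as $I_r \triangleleft I_1$ for every $r \ge 2$ (using $\lambda_r \le \lambda_1$). This matches the positive ladder.

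The main content will be the discrete series verification for the positive ladder $\tau$. By Proposition \ref{p:ladder}(1) every weight has the form $w \cdot s_\sigma^{-1}$ with $w \in \mathfrak{S}_n$, so the Casselman strict inequality $\langle \varpi_j, \chi \rangle < 1$ becomes the positivity $\sum_{i=1}^{j} e(w^{-1}(i)) > 0$ for every admissible $w$ and every $1 \le j \le n$. Corollary \ref{wcrit} together with condition $(\clubsuit)_w$ then forces contributing $w$'s to act as shuffles of the rows of $\sigma$ (order-preserving with respect to the $c$-order on each row), so that $\{w^{-1}(1),\ldots,w^{-1}(j)\}$ decomposes as a disjoint union of prefixes, one from the right end of each row.

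The numerical crux will come from the hypothesis $L^- = \emptyset$: comparing $\max e(\sigma_{(\ell)}) = m + \lambda_{\ell+1} - \ell - 1$ against $|\min e(\sigma_{(\ell)})| = k - 1 - m$ yields $2m > k + \ell - \lambda_{\ell+1}$ at each step. Setting $\ell = r - 1$ and using $k \ge r$ gives $m > r - (\lambda_r + 1)/2$, which is precisely the condition that the sum $i_r(m - r + \lambda_r - (i_r - 1)/2)$ of the $i_r$ largest $e$-values in row $r$ be strictly positive for every $1 \le i_r \le \lambda_r$. Summing these row-prefix positivities yields the shuffle prefix positivity required. The ``in particular'' clause is then immediate, since $q_1 > q^{n-1}$ forces $m > n - 1 \ge k - 1$, making every $e$-value strictly positive and $L^- = \emptyset$ automatic. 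The main delicacy I anticipate is tracking the $\mathfrak{S}_n^a$-ambiguity in $(\clubsuit)_w$, but since the $e$-values within a single row of the positive ladder are all distinct, this ambiguity does not affect the shuffle characterization nor the numerical estimate.
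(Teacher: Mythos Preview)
Your argument is correct, and in fact it proves strictly more than the paper's own proof does. The paper only invokes Proposition \ref{p:ladder}(1) and then bounds each coordinate by $\langle \epsilon_k, w\cdot s_\sigma^{-1}\rangle \le q_1^{-1}q^{n-1} < 1$. But the strict inequality $q_1^{-1}q^{n-1} < 1$ is exactly the condition $q_1 > q^{n-1}$, i.e.\ the ``in particular'' clause; for a general $\sigma$ with $L^- = \emptyset$ some $e$-values can be negative (for instance $\sigma = (3,3)$ with $0 < m < 1$), so individual coordinates of the weight may exceed $1$ and the paper's coordinate-wise estimate does not close. Your additional use of Corollary \ref{wcrit} to pin down $w$ (up to $\mathfrak S_n^a$) as a row-shuffle, together with the row-prefix positivity $m > r - (\lambda_r+1)/2$ extracted from the step-by-step comparison $2m > k + \ell - \lambda_{\ell+1}$, is exactly what is needed to handle the general case. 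So where the paper's argument buys brevity at the cost of covering only the asymptotic regime $m > n-1$, yours buys the full statement at the cost of one extra structural ingredient.

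One small imprecision worth cleaning up: it is not true that $I_r \lhd I_1$ for \emph{every} $r \ge 2$; the middle inequality $\min\underline{I_1} \le \max\underline{I_r}$ requires $\lambda_r \ge r$. However, the rows with $\lambda_r < r$ do not contain a box of $e$-value $m$, hence carry no temporary marking in Step 4 to begin with, so the refinement step only needs $I_r \lhd I_1$ for those $r$ with $\lambda_r \ge r$, and there your justification (via $\lambda_r \le \lambda_1$) goes through. The remark about the $\mathfrak S_n^a$-ambiguity is also fine: since $v \in \mathfrak S_n^a$ preserves $e$-values, the multiset of $e$-values in any prefix $\{w^{-1}(1),\dots,w^{-1}(j)\}$ is unaffected by replacing $w$ with $wv$.
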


\begin{proof} Assume $L^-=\emptyset,$ so that $\mathsf{out}(\sigma)$ is the positive ladder $\tau$. Then any weight
  $w \cdot s_\sigma^{-1}$ of $L_\tau$ is given by a permutation of the
  entries of $s_\sigma^{-1}$ by Proposition \ref{p:ladder}. 
I.e., we have $w \in \mathfrak S _n$. It follows that
$$\left< \epsilon _{k}, w \cdot s _{\sigma} ^{-1} \right> = \left< \epsilon _{w^{-1} k}, s _{\sigma} ^{-1} \right> \le q_1 ^{-1} q ^{n-1} < 1 \text{ for each } k=1,\ldots,n.$$
Therefore, we have $\langle \varpi_j, w \cdot s_\sigma^{-1} \rangle = \prod _{k=1}^j \left< \epsilon _{k}, w \cdot s _{\sigma} ^{-1} \right>< 1$ for all $1\le j\le n.$ The case $L^+=\emptyset$ is analogous.
\end{proof}

\subsection{Proof of the main theorem} We continue with the
proof of Theorem \ref{t:main}. Recall that 
$\sigma$ is a partition of $n$.
We wish to prove that
$\mathsf{out}(\sigma)$ is tempered, or equivalently $\mathsf{out}(\sigma)=\mathsf{ds}(\sigma).$ Assume that in the first two steps of {Algorithm \ref{al:ds}}, the
segments produced are $L^+\sqcup L^-=\{I_1,\dots,I_N\}.$ Section
\ref{sec:ladder} proves the claim when either $L^+=\emptyset$ or
$L^-=\emptyset.$  We may now assume that both $L^+$ and $L^-$ are nonempty. {We first prove two more structural results on weights in particular cases.}

\begin{proposition}\label{p:rt1} Assume that the algorithm runs
  as $$I_1,\dots,I_r\in L^+,I_{r+1},\dots,I_{r+t}\in L^-,$$
for some $0\le t\le r.$ Choose a $c$-function for $\sigma$ (see Convention \ref{cfcn}) in order to fix $s_{\sigma} \in T$. If $w \cdot s_\sigma^{-1} \in \Psi ( L_{\mathsf{out}(\sigma)} )$ for $w \in W$, then
  $w c(\boxed x)>0$ for all $\boxed x\in \tau$ such that $e(\boxed x)\ge m+t-r.$
\end{proposition}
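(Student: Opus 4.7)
I argue by induction on $r$.

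For the base case $r = 0$, Corollary~\ref{c:ladder}(2) identifies $\mathsf{out}(\sigma)$ with the negative ladder, and Proposition~\ref{p:ladder}(2) gives $\Psi(L_{\mathsf{out}(\sigma)}) \subset \mathfrak{S}_n \cdot s_\sigma$. A direct computation using the genericity of $\vec{q}$ shows that every such weight $w \cdot s_\sigma^{-1}$ must have $w = \pi \cdot w_{-}$ with $\pi \in \mathfrak{S}_n$ and $w_{-}$ the element reversing all signs, so $wc(\boxed{x}) < 0$ for every box $\boxed{x}$. Since in the negative-ladder case each $L^{-}$-segment is a full column of $\sigma$, we have $t = \lambda_1$ and $\max_{\boxed{x} \in \sigma} e(\boxed{x}) = m + \lambda_1 - 1 < m + t$, so the hypothesis $e(\boxed{x}) \ge m + t$ is vacuous and the claim holds.

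For the inductive step, suppose $r \ge 1$. The segment $I_1 \in L^+$ is the top row of $\sigma$, of length $\lambda_1$, and setting $\sigma' := \sigma \setminus I_1$ the algorithm applied to $\sigma'$ produces $I_2, \ldots, I_{r+t}$ with parameters $(r-1, t)$. Theorem~\ref{indt} yields
$$
M_{\mathsf{out}(\sigma)} \cong \operatorname{Ind}_{\mathbb{H}^{\mathsf{A}}_{\lambda_1} \otimes \mathbb{H}_{n-\lambda_1}}^{\mathbb{H}_n}\!\bigl( L^{\mathsf{A}}_{I_1} \boxtimes M_{\mathsf{out}(\sigma')} \bigr),
$$
where $L^{\mathsf{A}}_{I_1}$ is the one-dimensional Steinberg of $\mathbb{H}^{\mathsf{A}}_{\lambda_1}$, and Corollary~\ref{indmult} places $L_{\mathsf{out}(\sigma)}$ as a composition factor of the induction of $L^{\mathsf{A}}_{I_1} \boxtimes L_{\mathsf{out}(\sigma')}$. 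Proposition~\ref{pw-ind} then writes every weight as $w = w_2 (u_1 \times u_2)$, with $w_2 \in \mathfrak{S}_n$ a minimal-length representative of $\mathfrak{S}_n / (\mathfrak{S}_{\lambda_1} \times \mathfrak{S}_{n - \lambda_1})$, $u_1 \in W_{\lambda_1}$, and $u_2 \cdot s_{\sigma'}^{-1} \in \Psi(L_{\mathsf{out}(\sigma')})$. Because $w_2$ preserves signs, the inductive hypothesis applied to $\sigma'$ (threshold $m + t - r + 1$) immediately gives $wc(\boxed{x}) > 0$ for every $\boxed{x} \in \sigma'$ with $e(\boxed{x}) \ge m + t - r + 1$.

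There remain the boxes of $I_1$, all with $e$-values in $[m, m + \lambda_1 - 1] \subset [m + t - r, \infty)$, together with any box $\boxed{x} \in \sigma'$ at the boundary value $e(\boxed{x}) = m + t - r$. For these boxes I would invoke Corollary~\ref{wcrit}: the existence of the weight $w \cdot s_\sigma^{-1} \in \Psi(L_{\mathsf{out}(\sigma)})$ forces some $v \in \mathfrak{S}_n^{a_\sigma}$ to satisfy condition $(\clubsuit)_w$. Since $I_1$ forms (either all of, or the positive tail of) a single segment of $\mathsf{out}(\sigma)$, applying $(\clubsuit)_w$ to consecutive pairs of boxes within that segment, combined with the minimal-length choice of $w_2$, forces $u_1 c(\boxed{x}) > 0$ for every $\boxed{x} \in I_1$; an analogous argument handles the threshold box in $\sigma'$. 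The main obstacle is the detailed combinatorics required when $I_1$ is concatenated with an $L^-$ segment in Step~3 of Algorithm~\ref{al:ds}, since the resulting combined segment of $\mathsf{out}(\sigma)$ mixes boxes of $I_1$ with ``negative'' boxes whose $w$-image is constrained in the opposite direction by the induction on $\sigma'$, and the two constraints must be reconciled through $(\clubsuit)_w$ together with the shuffle $w_2$.
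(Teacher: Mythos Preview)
Your induction is set up in the wrong direction, and this is a genuine gap, not just a matter of missing details.

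The hypothesis of the proposition includes the constraint $0\le t\le r$. In your inductive step you remove $I_1$ and pass to $\sigma'$ with parameters $(r-1,t)$. But when $t=r$ (which is allowed), the pair $(r-1,t)$ violates $t\le r-1$, so $\sigma'$ no longer satisfies the hypothesis of the proposition and you cannot invoke the inductive hypothesis. Concretely, in the case $t=r$ every $L^+$ segment $I_j$ ($1\le j\le r$) is glued in Step~3 to an $L^-$ segment $I_{2r+1-j}$, so $I_1$ is not a segment of $\mathsf{out}(\sigma)$ on its own; your proposed isomorphism $M_{\mathsf{out}(\sigma)}\cong\operatorname{Ind}\bigl(L^{\mathsf A}_{I_1}\boxtimes M_{\mathsf{out}(\sigma')}\bigr)$ is simply false in that case. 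The same issue infects your base case: since $t\le r$, the case $r=0$ forces $t=0$ and hence $\sigma=\emptyset$; it is \emph{not} the negative ladder.

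The paper's proof runs the induction in the opposite direction. It fixes $r$ and inducts on a parameter $u$ going from $0$ to $t$: one starts from the positive ladder $\tau_2^{(0)}$ built from $I_1,\dots,I_r$ (where Proposition~\ref{p:ladder} gives the base case), and at each step glues one more $L^-$ segment $I_{r+u+1}$ onto $I_{r-u}$ to produce $\tau_2^{(u+1)}$. The key technical point you are missing is that this gluing is a \emph{codimension-one} degeneration $\mathcal O_{\tau_1^{(u)}\times\tau_2^{(u)}}\subset\overline{\mathcal O_{\tau_2^{(u+1)}}}$, and normality of type~$\mathsf A$ quiver orbit closures (Abeasis--Del Fra--Kraft) then forces the stalk of $\mathsf{IC}(\mathcal O_{\tau_2^{(u+1)}})$ along the smaller orbit to be one-dimensional. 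Via Theorem~\ref{mult} this gives $[\operatorname{Ind}({}^{\mathsf t}M^{\mathsf A}_{\tau_1^{(u)}}\boxtimes M_{\tau_2^{(u)}}):L_{\tau_2^{(u+1)}}]=1$, and since every box of the newly attached $L^-$ piece $\tau_1^{(u)}$ has $e$-value $<m\le m+t-r$, Theorem~\ref{wt-ind} transfers the positivity statement from $\tau_2^{(u)}$ to $\tau_2^{(u+1)}$ with no residual boxes to worry about. By contrast, peeling off $I_1$ leaves you with all of $I_1$ (and possibly threshold boxes in $\sigma'$) still to handle, which is precisely the ``main obstacle'' you flag but do not resolve.
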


\begin{proof}
The proof is by induction. For every $0\le u\le t,$ define the
marked partitions $\tau_1^{(u)}$ and $\tau_2^{(u)}$ as follows: the
support of $\tau_1^{(u)}$ is $\{I_{r+u+1}\}$ and it is unmarked, while
the support of $\tau_2^{(u)}$ is $\{I_1,\dots,I_{r-u},I_{r-u+1}\sqcup
I_{r+u},\dots, I_r\sqcup I_{r+1}\}$, and every $\boxed x$ such that
$e(\boxed x)=m$ is marked. Note that $e(\boxed y)<m$ for all $\boxed y\in \tau_1^{(u)}.$ For $u=0,$ 
$\tau_1^{(0)}$ has support $\{I_{r+1}\}$, while $\tau_2^{(0)}$ has
support $L^+,$ and in fact it is a positive ladder. We have $\tau _2^{(0)} = \mathsf{out} ( \sigma' )$ for some smaller partition $\sigma'$. By Proposition \ref{p:ladder}, we deduce the assertion for $t = 0$. Notice that $\tau_1^{(t)}=\emptyset,$ and $\tau_2^{(t)}=\mathsf{out}(\sigma).$

We proceed by induction on $u$ to prove that $\tau_2^{(u)}$, with $u=t$, satisfies the assertion. As just mentioned, this holds for $u=0.$ Let $u>0$ be fixed, and assume the theorem holds for all smaller
$u'\le u$, and we will prove it for $u+1.$ We define $\sigma ^{(u)}$ as the partition such that $\mathsf{out} ( \sigma ^{(u)}) = \tau_2^{(u)}$.

Let $n _1 ^{(u)}$ and $n _2 ^{(u)}$ be the sizes of $\tau _1 ^{(u)}$
and $\tau _2 ^{(u)}$, respectively. Let $\mathbb H _P$
  be the subalgebra of $\mathbb H _{n^{(u+1)}}$ corresponding to $\mathop{GL} ( n _1 ^{(u)} ) \times \mathop{Sp} ( 2 n _2 ^{(u)} ) \subset \mathop{Sp} ( 2 n _2 ^{(u+1)} )$. We regard $v _{\tau _1 ^{(u)}}$ as a regular nilpotent Jordan normal form
of $\mathfrak{gl} _{n _1 ^{(u)}}$ and $v _{\tau _2 ^{(u)}}$ as
a normal form (see Proposition \ref{p:MP1}) of an exotic representation of $\mathop{Sp} ( 2 n _2
^{(u)})$. We have $M_{\tau_1^{(u)}}^{\mathsf A}=L_{\tau_1^{(u)}}^{\mathsf A}$ as
(one-dimensional) modules for $\mathbb H ^{\mathsf A} _{n_1}$.

\begin{claim}
We have $\mathcal O _{\tau_1^{(u)} \times \tau _2 ^{(u)}} \subset \overline{\mathcal O _{\tau _2 ^{(u+1)}}}$ and
$$1 + \dim \mathcal O _{\tau_1^{(u)} \times \dot{\tau} _2 ^{(u)}} = \dim
\mathcal O _{\dot{\tau} _2 ^{(u+1)}},$$
{where $\dot{\tau}$ denotes the marked partition obtained from $\tau$
  by removing the markings.}
\end{claim}

\begin{proof}
We set $I ^{\star} := I _{r+u+1} \cup I _{r - u}$. The segment $J := I
_{r+u+1}$ satisfies $J \sqsubset I$ or $\underline{J} \cap
\underline{I} = \emptyset$ for each $I \in \tau _2 ^{(u)}$. We have $I
_{r-u} \triangleleft I$ or $I _{r-u} \triangleright I$ for $I \in \tau
_2 ^{(u)}$ if and only if $I ^{\star} \triangleleft I$ or $I ^{\star}
\triangleright I$, respectively. It follows that $u_{\dot{\tau} _2 ^{(u+1)}}
= u _{\dot{\tau} _2 ^{(u)}} + u _{\tau _1 ^{(u)}}$. (The definition of $u
_{\tau}$ is as in  Corollary \ref{stab}.) Using Corollary \ref{stab},
we conclude the dimension estimate. The existence of closure relation
is straight-forward since we have an attracting map from $v _{\dot{\tau} _2 ^{(u+1)}}$ to $v _{\tau _1 ^{(u)}} +
v _{\dot{\tau} _2 ^{(u)}}$ defined as the scalar multiplication of the $T$-component of $v _{\dot{\tau} _2 ^{(u+1)}}$ which does not appear in $v _{\tau _1 ^{(u)}} + v _{\dot{\tau} _2 ^{(u)}}$.
\end{proof}

We return to the proof of Proposition \ref{p:rt1}. Notice that both of
$\mathcal O _{\tau_1^{(u)} \times \tau _2 ^{(u)}}$ and $\mathcal O
_{\tau _2 ^{(u+1)}}$ are open subsets of vector bundles over their
projections to $V _2 ^{(s, q)}$ with their fibers isomorphic to $V_1^{(s,q_1)}$. It
follows that the regularity of the orbit closure $\mathcal O
_{\tau_1^{(u)} \times \tau _2 ^{(u)}} \subset \overline{\mathcal O
  _{\tau _2 ^{(u+1)}}}$ is equivalent to the regularity of the
corresponding orbit closure in $V _2 ^{(s, q)}$. We identify $V _2
^{(s, q)}$ with some type $\mathsf{A}$-quiver representation
space as in \S \ref{sec:1.3}. By the Abeasis-Del Fra-Kraft theorem \cite{ADK},
$\overline{\mathcal O _{\tau _2 ^{(u+1)}}}$ is normal along $\mathcal
O _{\tau_1^{(u)} \times \tau _2 ^{(u)}}$ since its projection to $V _2
^{(s, q)}$ is so.  Since normality implies regularity in codimension
one, it follows that $\dim H ^{\bullet} _{\mathcal O _{\tau _1^{(u)}
    \times \tau _2^{(u)}}} ( \mathsf{IC} ( \mathcal O _{\tau _2
  ^{(u+1)}} ) ) = 1$. Hence, Theorem \ref{mult} implies
\begin{equation}\label{ind2u}
[ M _{\tau _1^{(u)} \times \tau _2^{(u)}} : L _{\tau _2 ^{(u+1)}}]=[ \operatorname{Ind} _{\mathbb H _P} ^{\mathbb H} ( {} ^{\mathsf t} M _{\tau^{(u)} _1} ^{\mathsf A}
\boxtimes M _{\tau^{(u)} _2}) : L _{\tau _2 ^{(u+1)}}] = 1 > 0.
\end{equation}
{Now we choose a $c$-function for $\sigma^{(u+1)}$ to fix
  $s _{\sigma^{(u+1)}}$. Let $w \cdot s _{\sigma^{(u+1)}} ^{-1} \in
  \Psi ( L _{\tau^{(u+1)}})$ ($w \in W$) be given.} Taking into account the fact that we have no $\boxed x \in \tau^{(u)} _1$
such that $e ( \boxed x ) \ge m+t-r$, we deduce
\begin{eqnarray}
w c ( \boxed x ) > 0 \text{ if } e ( \boxed x ) \ge m+t-r.\label{pos0}
\end{eqnarray}
by Theorem \ref{wt-ind}.
\end{proof}

{
\begin{corollary}\label{c:rt1}
Keep the setting of Proposition \ref{p:rt1}. Let $\sigma'$ be a partition so that the corresponding algorithm runs as
$$I_1,\dots,I_r\in L^+,I_{r+1},\dots,I_{r+t-1}\in L^-,\quad 0\le t\le r.$$
Let $\tau_1 = ( \{ I _{r+t} \}, 0 )$ be the (unmarked) marked partition adapted to a semisimple element determined by $e ( I _{r+1} )$. Let $n'$ and $n''$ $(n = n' + n'')$ be the sizes of $\sigma'$ and $\tau_1$, respectively. Then, we have
$$[ \operatorname{Ind} _{\mathbb H _{n''} ^{\mathsf A} \otimes \mathbb H _{n'}} ^{\mathbb H _n} ( {} ^{\mathtt t} M _{\tau _1} ^{\mathsf A} \boxtimes L _{\mathsf{out} ( \sigma' )}) : L _{\mathsf{out} ( \sigma )} ] > 0.$$
\end{corollary}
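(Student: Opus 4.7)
The plan is to mirror the argument within Proposition \ref{p:rt1}: reduce the asserted positivity to a closure relation plus codimension-one smoothness statement, and then extract the multiplicity via Theorem \ref{mult}. First I would verify the denominator condition (\ref{denom}) for the parabolic pair $(\tau_1,\mathsf{out}(\sigma'))$; since $\tau_1$ is built from $I_{r+t}\in L^-$, its support is disjoint from that of $\mathsf{out}(\sigma')$ in the appropriate $B$-relative sense, so Theorem \ref{indt} applies and yields
$$\operatorname{Ind}_{\mathbb{H}_{n''}^{\mathsf{A}}\otimes\mathbb{H}_{n'}}^{\mathbb{H}_n}\bigl({}^{\mathtt{t}} M_{\tau_1}^{\mathsf{A}}\boxtimes M_{\mathsf{out}(\sigma')}\bigr)\cong M_{\tau_1\times\mathsf{out}(\sigma')}.$$
Corollary \ref{indmult} then lets me pass between $M_{\mathsf{out}(\sigma')}$ and $L_{\mathsf{out}(\sigma')}$ in the multiplicity, so it suffices to show $[M_{\tau_1\times\mathsf{out}(\sigma')}:L_{\mathsf{out}(\sigma)}]\ge 1$.

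The geometric input I would establish is the closure relation $\mathcal{O}_{\tau_1\times\mathsf{out}(\sigma')}\subset\overline{\mathcal{O}_{\mathsf{out}(\sigma)}}$ together with a codimension-one dimension count in the unmarked orbit poset. The closure inclusion is produced by an explicit one-parameter attracting action that degenerates $v_{\mathsf{out}(\sigma)}$ to $v_{\tau_1}+v_{\mathsf{out}(\sigma')}$ by scaling down the single $V_1^{(s,q_1)}$-component introduced when Step 3 of Algorithm \ref{al:ds} merges $I_{r+t}$ with the appropriate $L^+$ segment of $\mathsf{out}(\sigma')$. The dimension identity follows from Corollary \ref{stab}: the merger decreases the rank of the stabilizer by one while increasing the $u$-invariant by exactly one new domination, giving a net dimension change of $1$.

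Finally, I would invoke the Abeasis--Del Fra--Kraft theorem \cite{ADK} on the projection to $V_2^{(s,q)}$, a type-$\mathsf{A}$ quiver representation space, to deduce normality of $\overline{\mathcal{O}_{\mathsf{out}(\sigma)}}$ along $\mathcal{O}_{\tau_1\times\mathsf{out}(\sigma')}$ via the vector-bundle structure over $V_2^{(s,q)}$ with fiber $V_1^{(s,q_1)}$. Normality plus codimension one forces regularity at the smaller orbit, so Theorem \ref{mult} gives $[M_{\tau_1\times\mathsf{out}(\sigma')}:L_{\mathsf{out}(\sigma)}]=1$, which suffices. The main obstacle I anticipate is the combinatorial bookkeeping of how Step 3 of Algorithm \ref{al:ds} acts when extending $\sigma'$ by $I_{r+t}$: one must check that this corresponds to a single codimension-one merger in the unmarked orbit poset rather than a cascade, and that the markings behave compatibly. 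Once the unmarked case is settled, the insensitivity of the closure and dimension computations to $\delta$ should let one transfer the conclusion to the marked setting, exactly as in the proof of Proposition \ref{p:rt1}.
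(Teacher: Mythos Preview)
Your approach is essentially the paper's, but you are redoing from scratch what the paper extracts in one line: setting $u=t-1$ in the inductive step of Proposition \ref{p:rt1} gives precisely $\tau_1^{(t-1)}=\tau_1$, $\tau_2^{(t-1)}=\mathsf{out}(\sigma')$, and $\tau_2^{(t)}=\mathsf{out}(\sigma)$, so the inequality (\ref{ind2u}) already says $[\operatorname{Ind}({}^{\mathtt t}M_{\tau_1}^{\mathsf A}\boxtimes M_{\mathsf{out}(\sigma')}):L_{\mathsf{out}(\sigma)}]=1$, and Corollary \ref{indmult} replaces $M_{\mathsf{out}(\sigma')}$ by $L_{\mathsf{out}(\sigma')}$.

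That said, two details in your write-up are off. First, the degeneration is by scaling a $V_2$-component, not a $V_1$-component: passing from $\mathsf{out}(\sigma)$ to $\tau_1\times\mathsf{out}(\sigma')$ breaks the segment $I_{r-t+1}\sqcup I_{r+t}$ into two pieces, which means killing the weight vector $v_{\epsilon_i-\epsilon_j}$ in $V_2$ that links them. No $V_1$ (marking) component is involved at this step. Second, your dimension count is misstated: since $I_{r+t}$ satisfies $I_{r+t}\sqsubset I$ or $\underline{I_{r+t}}\cap\underline{I}=\emptyset$ for every segment $I$ of $\mathsf{out}(\sigma')$, it participates in \emph{no} $\triangleleft$-relations, and the merged segment $I^\star$ has exactly the same $\triangleleft$-relations as $I_{r-t+1}$ alone. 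So the $u$-invariant is unchanged and the codimension-one statement comes entirely from the drop in $r_\tau$ by one. With these two corrections your argument goes through and coincides with the paper's.
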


\begin{proof}
Let us assume that we have $u=t-1$ and
(\ref{ind2u}) as in the proof of Proposition
\ref{p:rt1}. In the notation therein, we have
  $\tau_2^{(t-1)}=\mathsf{out}(\sigma')$, $\tau_1^{(t-1)}=\tau_1$, and
  $\tau_2^{(t)}=\mathsf{out}(\sigma).$ Then the claim follows from
  (\ref{ind2u}) by verifying the hypothesis of Corollary \ref{indmult}.
\end{proof}

Proposition \ref{p:rt1} and Corollary \ref{c:rt1} have the following counterparts, with the
analogous proofs.

\begin{proposition}\label{p:rt2} Assume that the algorithm runs
  as $$I_1,\dots,I_r\in L^-,I_{r+1},\dots,I_{r+t}\in L^+,$$
for some $0\le t\le r.$ {Choose a $c$-function for $\sigma$ (see Convention \ref{cfcn}) in order to fix $s_{\sigma} \in T$.} If $w \cdot s_\sigma^{-1} \in \Psi ( L_{\mathsf{out}(\sigma)} )$ for some $w \in W$, then
  $w c(\boxed x)<0$
  for all $\boxed x\in \mathsf{out}(\sigma)$ such that $e(\boxed x)\le m+r-t.$
\end{proposition}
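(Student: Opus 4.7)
The plan is to mirror the induction in the proof of Proposition \ref{p:rt1}, interchanging the roles of $L^+$ and $L^-$. Specifically, for each $0 \le u \le t$, I would define marked partitions $\tau_1^{(u)}$ with support $\{I_{r+u+1}\}$ (now a positive-ladder segment with $e$-values above $m$) and $\tau_2^{(u)}$ with support $\{I_1,\ldots,I_{r-u},I_{r-u+1}\sqcup I_{r+u},\ldots,I_r\sqcup I_{r+1}\}$, equipped with the markings dictated by the algorithm. For the base case $u=0$, $\tau_2^{(0)} = \mathsf{out}(\sigma^{(0)})$ is the negative ladder of a smaller distinguished partition $\sigma^{(0)}$. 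Proposition \ref{p:ladder} (2) gives $\Psi(L_{\tau_2^{(0)}}) \subset \mathfrak S_n \cdot s_{\sigma^{(0)}}$; because a weight $w \cdot s_{\sigma^{(0)}}^{-1} = v \cdot s_{\sigma^{(0)}}$ with $v \in \mathfrak S_n$ forces the sign-component of $w$ to be nontrivial on every coordinate (distinct positive real eigenvalues of $s_{\sigma^{(0)}}$ cannot otherwise coincide with their inverses), we obtain $wc(\boxed x) < 0$ for every box $\boxed x$, which establishes the base case.

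For the inductive step I would prove the counterpart of the embedded claim in Proposition \ref{p:rt1}. The segment $J := I_{r+u+1}$ again satisfies $J \sqsubset I$ or $\underline J \cap \underline I = \emptyset$ for each $I \in \tau_2^{(u)}$, so Corollary \ref{stab} and Lemma \ref{aut} yield $u_{\dot{\tau}_2^{(u+1)}} = u_{\dot{\tau}_2^{(u)}} + u_{\tau_1^{(u)}}$ and consequently the dimension identity
$$1 + \dim \mathcal O_{\tau_1^{(u)} \times \dot{\tau}_2^{(u)}} = \dim \mathcal O_{\dot{\tau}_2^{(u+1)}}.$$
The closure relation $\mathcal O_{\tau_1^{(u)} \times \tau_2^{(u)}} \subset \overline{\mathcal O_{\tau_2^{(u+1)}}}$ is realized by the same $\mathbb G_m$-attracting argument, rescaling the extra $T$-component of $v_{\dot{\tau}_2^{(u+1)}}$ that is absent from $v_{\tau_1^{(u)}} + v_{\dot{\tau}_2^{(u)}}$.

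Combining this with Abeasis--Del Fra--Kraft normality \cite{ADK} and Theorem \ref{mult}, I would deduce
$$[\operatorname{Ind}^{\mathbb H}_{\mathbb H_P}(M^{\mathsf A}_{\tau_1^{(u)}} \boxtimes M_{\tau_2^{(u)}}) : L_{\tau_2^{(u+1)}}] = 1 > 0.$$
The crucial sign flip relative to Proposition \ref{p:rt1} is that $\tau_1^{(u)}$ now corresponds to a positive-ladder (high $e$-value) segment, so one induces from $M^{\mathsf A}_{\tau_1^{(u)}}$ rather than from its transpose ${}^{\mathsf t}M^{\mathsf A}_{\tau_1^{(u)}}$; in either case the module reduces to the one-dimensional $L^{\mathsf A}_{\tau_1^{(u)}}$. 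An application of Theorem \ref{wt-ind} combined with the inductive hypothesis on $\tau_2^{(u)}$ then yields $wc(\boxed x) < 0$ for every $\boxed x \in \tau_2^{(u+1)}$ with $e(\boxed x) \le m + r - t$, since $I_{r+u+1}$ contains no box with $e$-value that low. Setting $u = t$ gives $\tau_2^{(t)} = \mathsf{out}(\sigma)$ and completes the argument.

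The main obstacle I anticipate is ensuring that the closure/normality input carries over cleanly in the dual situation: specifically, that the projection onto the type $\mathsf A$-quiver representation space remains the same (so that \cite{ADK} still applies), and that the choice of inducing from $M^{\mathsf A}_{\tau_1^{(u)}}$ instead of ${}^{\mathsf t}M^{\mathsf A}_{\tau_1^{(u)}}$ does not introduce additional composition factors that would violate the hypothesis of Corollary \ref{indmult}. A secondary concern is the bookkeeping that the low-$e$ boxes of $\tau_2^{(u+1)}$ are exactly those already present in $\tau_2^{(u)}$, so that the inductive sign control transfers without loss.
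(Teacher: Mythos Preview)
Your proposal is correct and follows exactly the approach the paper intends: the paper's own proof simply states that the argument of Proposition~\ref{p:rt1} goes through after redefining $\tau_2^{(u)}$ to have support $I_1,\ldots,I_{r-u},(I_{r-u+1}\cup I_{r+u}),\ldots,(I_r\cup I_{r+1})$ and $\tau_1^{(u)}$ to have support $I_{r+u+1}$, which is precisely the dualization you carry out. Your identification of the base case via Proposition~\ref{p:ladder}~(2), the sign flip from ${}^{\mathsf t}M^{\mathsf A}$ to $M^{\mathsf A}$ (consistent with Corollary~\ref{c:rt2}), and the bookkeeping that $I_{r+u+1}\in L^+$ has minimal $e$-value $m+r-u>m+r-t$ are all on target; the anticipated obstacles do not materialize, since the projection to $V_2^{(s,q)}$ and the \cite{ADK} normality input are insensitive to the $L^+/L^-$ interchange.
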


\begin{proof}
The proof (of Proposition \ref{p:rt1}) works by changing the definition of $\tau
_2 ^{(u)}$ so that the support is 
$$I _1,\ldots,I_{r-u},(I_{r-u+1}\cup I _{r+u}), \ldots, (I_r\cup I_{r+1}),$$
and set the support of $\tau _1 ^{(u)}$ to be $I _{r+u+1}$.
\end{proof}

\begin{corollary}\label{c:rt2}
Keep the setting of Proposition \ref{p:rt2}. Let $\sigma'$ be a partition so that the corresponding algorithm runs as
$$I_1,\dots,I_r\in L^-,I_{r+1},\dots,I_{r+t-1}\in L^+,\quad
0\le t\le r.$$
Let $\tau = ( \{ I _{r+t} \}, 0 )$ be the (unmarked) marked partition adapted to a semisimple element determined by $e ( I _{r+1} )$. Let $n'$ and $n''$ $(n = n' + n'')$ be the sizes of $\sigma'$ and $\tau$, respectively. Then, we have
$$[ \operatorname{Ind} _{\mathbb H _{n''} ^{\mathsf A} \otimes \mathbb H _{n'}} ^{\mathbb H _n} ( M _{\tau _1} ^{\mathsf A} \boxtimes L _{\mathsf{out} ( \sigma' )}) : L _{\mathsf{out} ( \sigma )} ] > 0. $$
\end{corollary}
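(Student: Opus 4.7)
The plan is to run the proof of Corollary \ref{c:rt1} verbatim, replacing the constructions of $\tau_1^{(u)}$ and $\tau_2^{(u)}$ with those used in the proof of Proposition \ref{p:rt2}. Explicitly, I take $\tau_2^{(u)}$ to be the marked partition whose support is
\[
I_1,\ldots,I_{r-u},\, I_{r-u+1}\cup I_{r+u},\ldots,\, I_r\cup I_{r+1}
\]
(with every box of $e$-value $m$ marked), and $\tau_1^{(u)}:=(\{I_{r+u+1}\},0)$. With these conventions, $\tau_2^{(t-1)}=\mathsf{out}(\sigma')$, $\tau_1^{(t-1)}=\tau$, and $\tau_2^{(t)}=\mathsf{out}(\sigma)$.

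Next I would reproduce the multiplicity identity analogous to (\ref{ind2u}) in this modified setting. The closure relation $\mathcal O_{\tau_1^{(u)}\times \tau_2^{(u)}}\subset\overline{\mathcal O_{\tau_2^{(u+1)}}}$ together with the codimension-one estimate follows from the same attracting $\mathbb C^{\times}$-action argument and the dimension count via Corollary \ref{stab} as in the claim inside the proof of Proposition \ref{p:rt1}; the only change is that the new segment $I_{r+u+1}$ now belongs to $L^+$ rather than $L^-$, which swaps the role of the relation $\triangleleft$ in the comparison with the already-assembled segments of $\tau_2^{(u)}$ but leaves the counts identical. Normality of the larger orbit closure along the smaller orbit, after projecting to $V_2^{(s,q)}$, is again furnished by the Abeasis--Del Fra--Kraft theorem, so Theorem \ref{mult} yields
\[
\bigl[\operatorname{Ind}\bigl(M^{\mathsf A}_{\tau_1^{(u)}}\boxtimes M_{\tau_2^{(u)}}\bigr):L_{\tau_2^{(u+1)}}\bigr]=1.
\]
Crucially, because $I_{r+u+1}\in L^+$ (in contrast to the $L^-$ case in Corollary \ref{c:rt1}), the correct factor on the $\mathop{GL}$-side is $M^{\mathsf A}_{\tau_1^{(u)}}$ rather than its transpose ${}^{\mathsf t}M^{\mathsf A}_{\tau_1^{(u)}}$; this orientation point is the main thing requiring care.

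Specializing to $u=t-1$ and invoking Corollary \ref{indmult} to replace the standard module $M_{\mathsf{out}(\sigma')}$ by its irreducible quotient $L_{\mathsf{out}(\sigma')}$ then produces the claimed positive multiplicity. The hypotheses of Corollary \ref{indmult}, namely that no marked partition strictly larger than $\mathsf{out}(\sigma')$ combines with $v_\tau$ to yield a point of $\overline{\mathcal O_{\mathsf{out}(\sigma)}}$, and that the induction condition (\ref{denom}) of Theorem \ref{indt} is satisfied, follow from a direct inspection of the nested segment structure of $\mathsf{out}(\sigma)$ dictated by Algorithm \ref{al:ds}, exactly as in the $L^-$ case.
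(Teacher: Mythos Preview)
Your proposal is correct and follows exactly the approach the paper intends: the paper states that Proposition \ref{p:rt2} and Corollary \ref{c:rt2} have proofs analogous to those of Proposition \ref{p:rt1} and Corollary \ref{c:rt1}, with the modified definitions of $\tau_1^{(u)}$ and $\tau_2^{(u)}$ you describe. Your identification of the one substantive change---using $M^{\mathsf A}_{\tau_1^{(u)}}$ rather than ${}^{\mathsf t}M^{\mathsf A}_{\tau_1^{(u)}}$ because $I_{r+u+1}\in L^+$---is precisely the point the paper leaves implicit in the word ``analogous.''
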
}

\begin{theorem}[also Theorem \ref{t:main}]\label{main:p} The output
  $\mathsf{out}(\sigma)$ of Algorithm
  \ref{al:ds} defines a tempered module.
\end{theorem}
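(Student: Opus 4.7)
The plan is to prove temperedness by induction on the number of segments $N$ produced in steps 1--2 of Algorithm \ref{al:ds}, with the base case handled by Corollary \ref{c:ladder} when either $L^+$ or $L^-$ is empty. Concretely, I would first fix a $c$-function (Convention \ref{cfcn}) so that $s_\sigma \in T$ is determined, and then classify the algorithm's run according to the initial ``run'' of segments of the same type: after a cyclic symmetry argument, I may assume the algorithm starts by producing $I_1,\ldots,I_r \in L^+$ followed by $I_{r+1},\ldots,I_{r+t} \in L^-$ with $r \ge 1$, $t \ge 1$, and $t \le r$ (the case $t>r$ being dual and handled via Proposition \ref{p:rt2} and Corollary \ref{c:rt2}).

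Next I would invoke Corollary \ref{c:rt1} to peel off the last segment produced in this block: letting $\sigma'$ be the partition whose algorithmic output equals $\mathsf{out}(\sigma)$ minus the contribution of $I_{r+t}$, Corollary \ref{c:rt1} gives the crucial multiplicity statement
$$[\operatorname{Ind}_{\mathbb H_{n''}^{\mathsf A}\otimes\mathbb H_{n'}}^{\mathbb H_n}({}^{\mathsf t}M^{\mathsf A}_{\tau_1}\boxtimes L_{\mathsf{out}(\sigma')}) : L_{\mathsf{out}(\sigma)}] > 0,$$
where $\tau_1 = (\{I_{r+t}\},0)$ is a single unmarked segment. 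This realizes $L_{\mathsf{out}(\sigma)}$ as an irreducible quotient of the above induced module, so by Theorem \ref{wt-ind} and Proposition \ref{pw-ind},
$$\Psi(L_{\mathsf{out}(\sigma)}) \subset \bigcup_w w\cdot\bigl(\Psi({}^{\mathsf t}M^{\mathsf A}_{\tau_1})\times\Psi(L_{\mathsf{out}(\sigma')})\bigr),$$
with $w$ ranging over the minimal coset representatives of $\mathfrak S_n / (\mathfrak S_{n''}\times W_{n'})$. The inductive hypothesis gives temperedness of $L_{\mathsf{out}(\sigma')}$, while the weights of ${}^{\mathsf t}M^{\mathsf A}_{\tau_1}$ are $\mathfrak S_{n''}$-permutations of the inverses of the $q_1 q^{e}$-values along $I_{r+t}$, all of which have $e$-value $\le m+t-r-1$ (so are $\ge q_1^{-1}q^{r-t+1} > 1$ after inversion, i.e.\ contribute negatively to $\langle\varpi_j,\cdot\rangle$ in log scale).

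Given a weight $w\cdot s_\sigma^{-1}$ of $L_{\mathsf{out}(\sigma)}$, Proposition \ref{p:rt1} forces $wc(\boxed x)>0$ for every box $\boxed x$ with $e(\boxed x)\ge m+t-r$, so in particular for every box coming from the $r$ segments of $L^+$. Combined with the fact that the $I_{r+t}$-piece sits in positions dictated by the coset representative and contributes only negative entries in log scale, the temperedness inequality $\sum_{k=1}^j\log\langle\epsilon_k,w\cdot s_\sigma^{-1}\rangle \le 0$ reduces to the corresponding inequality for the tempered weight on $\sigma'$, the correction coming from $\tau_1$ being provably non-positive at every partial sum $\varpi_j$. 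The main obstacle will be this last combinatorial step: one must verify that the coset representative $w$ cannot interleave the new entries from $\tau_1$ in such a way as to create a partial sum $>0$. Here the sign information provided jointly by Propositions \ref{p:rt1} and \ref{p:rt2} (applied to $\sigma'$ via the inductive hypothesis) is essential, as it pins down the signs of $w$ on the boxes of $\tau_1$ and on the extreme boxes of $\sigma'$, leaving only boxes of intermediate $e$-value free---and for these, the temperedness of $L_{\mathsf{out}(\sigma')}$ transfers directly by the shift properties of $\varpi_j$. Finally, the discrete series conclusion then follows from Corollary \ref{specds} once temperedness is established and the inequalities are seen to be strict by the genericity of $\vec{q}$.
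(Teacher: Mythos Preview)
Your induction scheme has a real gap: Proposition~\ref{p:rt1} and Corollary~\ref{c:rt1} are proved only for partitions $\sigma$ whose algorithm \emph{terminates} with the pattern $I_1,\dots,I_r\in L^+$, $I_{r+1},\dots,I_{r+t}\in L^-$. You invoke them for a general $\sigma$ whose algorithm may continue as $I_{r+t+1}\in L^+,\ldots$, and in that situation there is no partition $\sigma'$ with $\mathsf{out}(\sigma')$ equal to ``$\mathsf{out}(\sigma)$ minus $I_{r+t}$'': removing $I_{r+t}$ from step~2 does not leave the step-2 output of any partition, because the later segments $I_{r+t+1},\dots$ of $\sigma$ depend on $I_{r+t}$ having been extracted first. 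So your peeling-off-$I_{r+t}$ induction only handles the case $N=r+t$, which the paper also dispatches quickly, and leaves the genuinely hard case untouched. The same objection applies to your direct use of Proposition~\ref{p:rt1} to force $wc(\boxed x)>0$ on all boxes with $e(\boxed x)\ge m+t-r$: that proposition says nothing about $L_{\mathsf{out}(\sigma)}$ once the algorithm goes past step $r+t$.

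The paper's proof inducts differently. It removes the \emph{combined} segment $I_r\sqcup I_{r+1}$ (a full segment of $\mathsf{out}(\sigma)$, corresponding to a hook of $\sigma$), obtaining a genuine smaller partition $\sigma_2$ with $\tau_2=\mathsf{out}(\sigma_2)$ tempered by induction. Temperedness of $L_{\mathsf{out}(\sigma)}$ is then reduced, via an explicit partial-sum computation, to the single sign condition $w(I^*)>0$ on the subset $I^*\subset I_r\sqcup I_{r+1}$ of boxes whose $e$-value dominates all of $I_{r+1}$ in absolute value. Verifying this condition is the heart of the argument and is \emph{not} a direct consequence of Propositions~\ref{p:rt1}--\ref{p:rt2}: the paper builds two intermediate marked partitions $\tau_2^+\subset\tau^+\subset\tau$ and propagates the condition through them using repeated applications of Corollary~\ref{c:rt1} and the induction theorem (claims \textbf{C1}--\textbf{C3}). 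Your sketch's ``the correction coming from $\tau_1$ being provably non-positive at every partial sum'' is exactly the statement that requires this machinery; it is not a bookkeeping step.
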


\begin{proof}
Algorithm \ref{al:ds} begins with $I_1\in L^+$ or $I_1\in L^-.$ We present the case $I_1\in L^+,$
the other situation being analogous. We fix some $c$-function of $\mathsf{out}(\sigma)$. Denote
$\mathsf{out}(\sigma)=(\mathbf I,\delta).$ Assume that the algorithm runs as 
$$I_1,\dots, I_r\in L^+, I_{r+1},\dots, I_{r+t}\in L^-,\dotsc \hskip 5mm (0 \le t \le r)$$
and {if $I_{r+t+1} \neq \emptyset$, then it} belongs to $L^+$ if $0< t < r$ and either $L^+$ or $L^-$ if $t = r$. (Note that Proposition \ref{p:ladder} considers the situation $L^{-}=\emptyset$, {which means $t=0$}.) From step 3 of Algorithm \ref{al:ds}, we see that the first $t$ segments in $\mathbf I$ (with respect to $\prec$) are
$I_r\sqcup I_{r+1}, I_{r-1}\sqcup I_{r+2},\dots, I_{r-t+1}\sqcup
I_{r+t}.$

Set $\tau_1=(\{I_r\sqcup I_{r+1}\},\delta|_{\{I_r\sqcup I_{r+1}\}})$,
and let $\tau_2$ be the marked partition obtained from
$\mathsf{out}(\sigma)$ by removing the segment $I_r\sqcup
I_{r+1}$. In terms of the partitions, $I_r$ and $I_{r+1}$ correspond to some row part and column
  part of the partition obtained by extracting $I _1, \ldots, I _{r-1}$ from $\sigma$. So if $\sigma_2$ is the partition obtained from removing these two pieces (which form a hook of $\sigma$), then we have
  $\tau_2=\mathsf{out}(\sigma_2).$ By induction, we may assume that
$\tau_2$ is tempered. Denote by $n_1$, $n_2$, the sizes of $\tau_1$
and $\tau_2$ respectively, $n_1+n_2=n.$
Let $\mathbb H_P$ be the Hecke
algebra for $GL(n_1)\times Sp(2n_2)$. It is clear that $I_r \sqcup I_{r+1}$ attains the minimal $e$-value, and also attains the maximal $e$-value if $r=1$. Hence, Theorem \ref{indt} is applicable and we conclude that
\begin{align*}
& [ \operatorname{Ind}_{\mathbb H_P}^{\mathbb H}(M_{\tau_1} ^{\mathsf A} \boxtimes M_{\tau_2}) : L_{\mathsf{out}(\sigma)}] = [ M_{\mathsf{out}(\sigma)} : L_{\mathsf{out}(\sigma)}] = 1 \text{ if } r > 1\\
& [ \operatorname{Ind}_{\mathbb H_P}^{\mathbb H} ({} ^{\mathtt t} M_{\tau_1} ^{\mathsf A} \boxtimes M_{\tau_2}) : L_{\mathsf{out}(\sigma)}] = [ M_{\tau'} : L_{\mathsf{out}(\sigma)}] = 1, \text{ if } r = 1,
\end{align*}
where $\tau'$ is a marked partition obtained from
  $\mathsf{out}(\sigma)$ by removing its marking on $\tau_1$ (and
  hence $\mathcal O _{\tau} \subset \overline{\mathcal O
    _{\mathsf{out} ( \sigma )}}$ is smooth). In particular, we applied
  Theorem \ref{mult} when $r=1$.

By verifying in addition the hypothesis of Corollary
\ref{indmult}, we find:
\begin{align}
& [ \operatorname{Ind}_{\mathbb H_P}^{\mathbb H}(M_{\tau_1} ^{\mathsf A} \boxtimes L_{\tau_2}) : L_{\mathsf{out}(\sigma)}] = 1 \text{ if } r > 1,\label{mm}\\
& [ \operatorname{Ind}_{\mathbb H_P}^{\mathbb H}({} ^{\mathtt t} M_{\tau_1} ^{\mathsf A} \boxtimes L_{\tau_2}) : L_{\mathsf{out}(\sigma)}] = 1 \text{ if } r = 1.\label{mm'}
\end{align}

Here $\tau_1$ is not a tempered module of $GL(n_1)$ in general, so we need to check that
  $L_{\tau_1}$'s contribution to $\Psi ( L_{\mathsf{out}(\sigma)} )$ satisfies the temperedness
  condition.

To do this, we define the subset $I^*\subset I_r\sqcup I_{r+1}$ by 
$$\boxed x\in I^* \text{ if } e(\boxed x)>-e(\boxed y),\text{ for
  every } \boxed y\in I_{r+1}.$$ (Since $I_r\in L^+$ is picked in the
algorithm before $I_{r+1}\in L^-,$ we have
$I^*\neq\emptyset.$) If $\boxed x\in I^*,$
then we have
\begin{equation}\label{eest}
e(\boxed x)> \max \{ | e(\boxed y) | \mid \boxed y\in
I_{r+k} \} \text{ for all }k\ge 1,
\end{equation}
(equivalently, $c(\boxed x)<c(\boxed y)$ for every $\boxed y\in I_{r+k})$ since the maximal $| e |$-value in $\sqcup_{k\ge 1} I_{r+k}$ is in $I_{r+1}.$ Moreover,
we deduce $e(\boxed x)>m+t-r$ for every $\boxed x\in I^*$ since otherwise we have
$$\max \{ e(\boxed y) \mid \boxed y\in I_{r+t+1} \} > \max \{ | e(\boxed y) | \mid \boxed y\in I_{r+1} \},$$
which contradicts Algorithm \ref{al:ds}.

The reason for defining $I^*$ is that it gives a
  criterion for checking that $L_{\mathsf{out}(\sigma)}$ is tempered
  (assuming by induction that $L_{\mathsf{out}(\sigma_2)}$ is tempered).

\begin{claim}
The $\mathbb H _{a}$-module $L_{\mathsf{out}(\sigma)}$ is tempered if 
\begin{equation}\label{eq:3.10}
w ( I^* )>0,\text{ for every }w \cdot s_\sigma^{-1}\in \Psi(L_{\mathsf{out}(\sigma)}).
\end{equation}
\end{claim}

\begin{proof}
By Proposition \ref{pw-ind}, we deduce that $w \cdot s_\sigma^{-1} \in
\Psi(L_{\mathsf{out}(\sigma)})$ can be written as $w \cdot
s_\sigma^{-1} = v ( s_1 \times s _2 )$ with $s_1 \in \Psi ( M
_{\tau_1})$ and $s_2 \in \Psi ( L _{\mathsf{out} ( \sigma _2 )})$,
where $v \in \mathfrak S _n / ( \mathfrak S _{n_1} \times \mathfrak S
_{n_2} )$ is the minimal coset representative in $\mathfrak S _n$. Let
$\mathtt I _1 := v ( [ 1, 2, \ldots, n _1 ] )$ and $\mathtt I _2 := v
( ( n_1, \ldots, n ] )$. Notice that the ordering of numbers is
preserved by applying $v$. For every $k\ge 1,$ we set
$$\varpi _k ^i := \sum _{1 \le j \le k;~ j \in \mathtt I_i}
\epsilon _j \text{ for } i = 1,2.$$
Notice that $\varpi_k^1+\varpi_k^2=\varpi_k$. We will show
 that
$\left< \varpi _k^i , w \cdot s _{\sigma} ^{-1} \right>\le 1$,
$i=1,2$, which means in particular that $\left< \varpi _k , w \cdot s
  _{\sigma} ^{-1} \right>\le 1$. The condition for $i=2$ follows
immediately from the temperedness hypothesis for $L _{\mathsf{out} ( \sigma _2
  )}$.
It remains to prove the condition for $i=1.$ Set $I:=I_r\sqcup I_{r+1}.$ We can assume that $w ( I ) = \mathtt I _1$ (up to sign change) and either $w ( I ) > 0$ or there exists $l \in I$ so that
$$w ( \{ i \in I \mid i < l \} ) > 0 \text{ and } w ( \{ i \in I \mid i \ge l \} ) < 0.$$
Let $e_1 <e_2< \cdots < e _{n_1}$ be the list of $e$-values of $I$. By
Corollary \ref{wcrit}, we deduce that there exists $0\le
  k_1\le k$ such that
\begin{equation}\label{wtdiv}
\log_q\left< \varpi _k ^1, w \cdot s _{\sigma} ^{-1} \right> = \log_q\left< \varpi _k ^1, v \cdot ( s_1 \times s_2 ) \right> =  \sum _{p = 1} ^{k_1} e _p - \sum ^{n_1} _{q = n_1-k_1+1} e _{q},
\end{equation}
Assumption
(\ref{eq:3.10}) implies that every $e _q > - e_1$ that appears in
the right hand side of
(\ref{wtdiv}), must appear in the form $- e_q$. It follows that the quantity in (\ref{wtdiv})
must be nonpositive, which concludes the proof of the
  claim.
\end{proof}

We return to the proof of Theorem \ref{main:p}. If the
  algorithm {stops} at $r+t$, i.e., $I_{r+t+1}=\emptyset$, then
  Proposition \ref{p:rt1} implies condition (\ref{eq:3.10}), and
  therefore the proof is complete in this case. Assume this is not the
  case. 
We define two smaller marked subpartitions $\tau_2^+$ and
  $\tau^+$ of $\tau=\mathsf{out}(\sigma)$ corresponding to two smaller
  subpartitions $\sigma_2^+$ and $\sigma^+$ of $\sigma$ such that
  $\tau_2^+=\mathsf{out}(\sigma_2^+)$ and
  $\tau^+=\mathsf{out}(\sigma^+).$ It will be sufficient to show that
\begin{enumerate}
\item[{\bf (C1)}] condition (\ref{eq:3.10}) holds for $\tau_2^+$ by
  Proposition \ref{p:rt1} applied to $\sigma_2^+$;
\item[{\bf (C2)}] if (\ref{eq:3.10}) holds for $\tau_2^+$ (and $\sigma_2^+$)
  then it holds for $\tau^+$ (and $\sigma^+$) (using parabolic induction);
\item[{\bf (C3)}] if (\ref{eq:3.10}) holds for $\tau^+$ (and $\sigma^+$)
  then it holds for $\tau$ (and $\sigma$) (using parabolic induction).
\end{enumerate}

Let $\tau ^+ _2$ be the output
of Algorithm \ref{al:ds} steps 3 and 4 applied to $I_1,
I_2,\ldots,I_{r+t}$, and let $\tau ^+_1$ be the set of segments $\{
I_k \} _{k > r + t}$ in Algorithm \ref{al:ds} step 2 which are glued
to $I_{1}, \ldots, I_{r-t}$ in step 3. By examining $e$-values, $\tau
^+_1$ cannot be marked. Here {\bf Claim C1} holds for $\tau_2^+$ by Proposition \ref{p:rt1}.

Define $\tau^+$ to be the
marked subpartition of $\mathsf{out}(\sigma)$, whose support consists
of all segments of the form $I_k$ or $I_k\sqcup I_\ell$ (for some
$\ell$), $0\le k\le r$ produced by step 3 in Algorithm
\ref{al:ds}. The marking in $\tau^+$ is set to be the one inherited
from $\tau.$ Let $\tau ^- = ( \mathbf I ^-,
\delta ^- )$ be the complementary marked subpartition of $\tau ^+$ in
$\mathsf{out}(\sigma)$. 
Let $n^+$ and $n^-$ ($n=n^+ + n^-$) be sizes of $\tau^+$ and $\tau
^-$, respectively. By Algorithm \ref{al:ds}, we deduce that $\tau ^+ =
\mathsf{out} ( \sigma ^+ )$ and $\tau ^- = \mathsf{out} ( \sigma ^- )$
for some $\sigma ^+$, $\sigma ^-$.

\smallskip

To prove {\bf Claim C2}, by successive applications of Corollary
\ref{c:rt1}, we deduce that
$$[\mathrm{Ind}( {} ^{\mathtt t} M_{\tau ^+_1} ^{\mathsf A} \boxtimes L_{\tau ^+_2} ) : L _{\tau^+}] > 0;$$
notice that here $M_{\tau ^+_1} ^{\mathsf A}$ and $L _{\tau^+ _2}$ are standard and irreducible modules of smaller Hecke algebras. We have no $\boxed x \in \tau ^+ _1$ such that $e ( \boxed x ) \in e ( I ^* )$ by (\ref{eest}). Applying Proposition \ref{pw-ind} and Corollary \ref{wcrit}, we conclude that $(\ref{eq:3.10})$ must hold for $\tau ^+$ since it holds for $\tau ^+_2$.

\smallskip

Finally, we verify {\bf Claim C3}.
Let $\tau ^-_0 := ( \mathbf I ^-, 0 )$ be obtained from $\tau^-$ by
removing the markings, and let $\tau_0$ be the marked partition of $n$
obtained as the union of $\tau ^-_0$ and $\tau ^+$. We have $[ M _{\tau _0} : L _{\tau}] = 1$ by Theorem
  \ref{mult}, since $\mathcal O _{\tau_0} \subset \overline{\mathcal O
  _{\tau}}$ is smooth. We would like to realize $M_{\tau_0}$ as an
induced module from $M ^{\mathsf A}\boxtimes M_{\tau^+}$, for some standard $\mathbb H
_{n^-} ^{\mathsf A}$-module $M ^{\mathsf A}$ (defined from $\tau_0^-$). In order for the induction
theorem \ref{indt} to be applicable, we need the following
construction.  
 We divide $\tau ^-_0$ into two (unmarked) marked partitions
 $\tau ^-_1$ and $\tau ^-_2$, where $\tau ^-_1$ consists of segments
 of $\mathbf I ^-$ which are not dominated by a segment in the support
 of $\tau^+$, and $\tau ^-_2$ consists of segments of $\mathbf I ^-$
 which are dominated by a segment in the support of $\tau^+$. We define
  {$M ^{\mathsf A}$ to be the induction of ${} ^{\mathtt t} M ^{\mathsf A} _{\tau
   ^-_1} \boxtimes M ^{\mathsf A} _{\tau ^-_2}$ to $\mathbb H ^{\mathsf A} _{n ^-}$}. 

Then, we can apply
Theorem \ref{indt} again, to deduce that
$$M _{\tau _0} \cong \mathrm{Ind} _{\mathbb H _P} ^{\mathbb H} ( M
^{\mathsf A} \boxtimes M _{\tau^+}), \text{ and so }[\mathrm{Ind} _{\mathbb H _P} ^{\mathbb H} ( M
^{\mathsf A} \boxtimes M _{\tau^+}):L_\tau]>0,$$ where $\mathbb H _P := \mathbb H _{n^-} ^{\mathsf A} \times \mathbb H
_{n^+} \subset \mathbb H := \mathbb H _{n^+ + n ^-}$.
By verifying in addition the hypothesis of Corollary \ref{indmult}, we find
$$[ \mathrm{Ind} _{\mathbb H _P} ^{\mathbb H} ( M ^{\mathsf A} \boxtimes L _{\tau^+} ) : L _{\tau} ] > 0.$$
Applying Theorem \ref{wt-ind}, we conclude that (\ref{eq:3.10})
follows from the corresponding statement for $\tau ^+$ as desired.

\end{proof}

\subsection{A characterization of $\mathsf{ds}(\sigma)$}\label{sec:3.6} We
finish this section with certain combinatorial properties that the
output $\mathsf{out}(\sigma)$ must satisfy. By examining Algorithm \ref{al:ds}, we deduce that $\mathsf{out}(\sigma)$ acquires a
nested component decomposition whenever the sums of $e$-values in 
the hooks of $\sigma$ are not uniformly greater than $0$ or not
uniformly less than $0.$ By Lemma \ref{cr-rigid}, the same is true for
every other $G(s_\sigma)$-orbit which contains $G(s_\sigma)\cdot
\mathsf{out}(\sigma)$ in its closure. Let us refer to this
decomposition here as the ``$\sigma$-hook nested components''
decomposition. 
The hooks of $\sigma$ which contribute to a given
$\sigma$-hook nested component have the sum of $e$-values
uniformly greater than $0$, in which case we call the component
positive, or uniformly less than $0$, in which case we call it
negative. 
As
an application of Theorem \ref{t:main} and Algorithm \ref{al:ds}, we
obtain a combinatorial characterization of
$\mathsf{out}(\sigma)=\mathsf{ds}(\sigma).$ Consider the following
properties for a marked partition $\tau=(\mathbf J',\delta')$:

\begin{enumerate}
\item[(p1)] for every $i\ge 0,$ there are at most $i$ segments in
  $\mathbf J'$ with all $e$-values greater than $m-i+1$.
\item[(n1)] for every $i\ge 0,$ there are at most $i$ segments in
  $\mathbf J'$ with all $e$-values less than $m+i-1$. 
\item[(p2)] for every segment $J'\in\mathbf J',$ we have $\mathtt{av} ( J' )>0$.
\item[(n2)] for every segment $J'\in\mathbf J',$ we have $\mathtt{av}
   ( J' )<0$.
\item[(p3)] for every $J'\in \mathbf J',$ if $m\in e(J')$, then $\delta(J')=1.$
\end{enumerate}

\begin{corollary}  The $G(s_\sigma)$-orbit
  $\mathsf{out}(\sigma)=\mathsf{ds}(\sigma)$ is minimal among all 
{$\tau \in \mathsf{MP} ( a_\sigma )$} admitting the $\sigma$-hook nested
  decomposition and satisfying the properties:
\begin{enumerate}
\item (p1), (p2), (p3) on every positive $\sigma$-hook nested component.
\item (n1), (n2) on every negative $\sigma$-hook nested component.
\end{enumerate}
\end{corollary}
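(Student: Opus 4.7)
The plan is to prove the two halves separately: (i) that $\mathsf{out}(\sigma)$ itself satisfies the listed axioms, and (ii) that every other marked partition satisfying them has strictly larger orbit. I would verify (i) by direct inspection of Algorithm \ref{al:ds}. The $\sigma$-hook nested decomposition is inherent in the algorithm, since in step 2 each hook of $\sigma$ contributes its row portion to $L^+$ and column portion to $L^-$ as a unit, and hooks of differing $e$-mass sign never mix during step 3. Axioms (p2)/(n2) then hold because each segment of $\mathsf{out}(\sigma)$ inherits the $e$-mass sign of its originating hook and its neighbors within the same component. Axiom (p1) for a positive component is tight: exactly the $i-1$ segments arising from the top $i-1$ rows of the positive sub-partition have minimum $e$-value strictly above $m-i+1$. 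Axiom (p3) is precisely the marking rule of step 4, preserved under the refinement to $\delta$ since a marked dominating segment absorbs the marking of every dominated one.

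For minimality, I would first apply Lemma \ref{cr-rigid} and Corollary \ref{sr-str} to reduce from a marked partition $\tau = (\mathbf{J}', \delta')$ satisfying the axioms to its nested components, and then handle a single positive component (the negative case is symmetric via (n1), (n2)). The argument is by induction on the number of hooks of the positive sub-partition $\sigma$. Let $\lambda_1$ be the length of its first row. Applying (p1) with $i = \lambda_1$ shows that at most one segment of $\mathbf{J}'$ contains the top-right box of $\sigma$ (of $e$-value $m+\lambda_1-1$); by adaptedness exactly one such segment $J^*$ exists. Using (p2) together with (p1) applied for successively decreasing $i$, I would argue that $e(J^*)$ must contain every $e$-value in the outermost segment $J^\circ$ of $\mathsf{out}(\sigma)$ produced by gluing the first row (top of $L^+$) to its partner from $L^-$. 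Since $J^\circ$ is by construction the $\subset$-minimal segment consistent with (p1)--(p3) at this step, Theorem \ref{AD} supplies a sequence of $(\spadesuit)$-moves taking $J^\circ$ together with its inductive neighbors into $J^*$ plus a residual, placing $\mathcal{O}_{\mathsf{out}(\sigma)}$ in $\overline{\mathcal{O}_\tau}$. Excising $J^*$ from $\tau$ and $J^\circ$ from $\mathsf{out}(\sigma)$ then strips the outermost hook of $\sigma$, and the inductive hypothesis closes the argument.

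The main obstacle is the bookkeeping of the inductive step. One must check that the shrunken configurations still satisfy (p1), (p2), (p3) on the remaining positive sub-partition: for (p1) this reduces to an index shift, for (p2) one must confirm that the remaining hooks and segments still have positive $e$-mass after excision of the outermost hook, and for (p3) one must verify that any marking forced on $J^*$ by $m \in e(J^*)$ matches the corresponding marking in $\mathsf{out}(\sigma)$ modulo the $\mathfrak{S}_n^{a_\sigma}$-equivalence of Proposition \ref{p:MP2} (so that the marking refinement of step 4 yields the same underlying orbit). A secondary subtlety arises when a row-strip in $L^+$ merges with a column-strip in $L^-$ during step 3 of Algorithm \ref{al:ds}: the detailed $e$-value inequalities of step 2 guarantee uniqueness of this pairing, and one must track how these pairings constrain the shape of $J^*$ via the strict form of (p1).
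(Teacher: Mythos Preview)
Your approach diverges from the paper's in a way that creates a genuine gap. The paper interprets ``minimal'' as \emph{a minimal element}: it shows that no $\tau$ with $\mathcal{O}_\tau \subsetneq \overline{\mathcal{O}_{\mathsf{out}(\sigma)}}$ can satisfy the axioms. Concretely, after reducing to a single nested component, the paper splits into two cases according to whether the outermost $I_1$ gets glued in step~3. In the glued case every segment of $\mathsf{out}(\sigma)$ has the form $I_j \sqcup I_{j'}$; the paper then checks directly that every elementary reverse-$(\spadesuit)$ move (breaking a nested pair $J' \sqsubset J$, or splitting a single segment) forces a violation of (p2) or (p1), so no strictly smaller orbit survives. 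In the unglued case $I_1$ sits alone; the paper argues that any $\tau$ in the closure satisfying the axioms must still contain $I_1$ as a segment, peels it off, and reduces to a smaller $\sigma'$ with $m$ replaced by $m-1$.

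You are instead trying to prove the stronger assertion that $\mathsf{out}(\sigma)$ is the \emph{minimum}: for an arbitrary $\tau$ satisfying the axioms you want $\mathcal{O}_{\mathsf{out}(\sigma)} \subset \overline{\mathcal{O}_\tau}$. Your inductive step does not close. You extract a segment $J^*$ of $\tau$ containing the maximal $e$-value and a segment $J^\circ$ of $\mathsf{out}(\sigma)$, and claim $e(J^\circ) \subset e(J^*)$; but even granting this, excising $J^*$ from $\tau$ and $J^\circ$ from $\mathsf{out}(\sigma)$ does not produce marked partitions adapted to the \emph{same} smaller semisimple element unless $e(J^*) = e(J^\circ)$, so the inductive hypothesis cannot be invoked. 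The invocation of Theorem~\ref{AD} to manufacture $(\spadesuit)$-moves from $J^\circ$ to $J^*$ ``plus a residual'' is also unsubstantiated: you have not identified which pairs of segments of $\mathsf{out}(\sigma)$ are being merged, nor shown the intermediate configurations stay adapted to $a_\sigma$. The paper sidesteps all of this by never comparing $\mathsf{out}(\sigma)$ to an arbitrary $\tau$, only to those already in its closure, where the reverse-$(\spadesuit)$ description of Theorem~\ref{AD} gives a finite list of elementary degenerations to rule out.
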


\begin{proof}  
It is sufficient to check the claim when $\sigma$ has only one hook
nested component. The case when $\sigma$ consists of a single hook is
easily verified directly (see Example \ref{ex:onehook}).

{Since the proofs in both cases are similar, we provide a proof only when $(p1)$--$(p3)$ hold (i.e. the case $\sigma$-hook nested component is positive) for the unique $\sigma$-hook nested component.}

Let us assume that the second step of the algorithm runs as $I_1,I_2,\dotsc.$
  There are two situations with respect to $I_1$: either there exists
  $k\ge 2$ such that $I_k$ combines with $I_1$ in the third step of
  the algorithm, or if not, then $I_1$ appears in the support of
  $\mathsf{out}(\sigma)=(\mathbf J,\delta)$ by itself.

$\bullet$ In the first case, we can assume further that the algorithm for $\sigma$ runs as $I_1,I_2,\dots, I_{2M}$
for some $M\ge 1,$ and such that $\#\{I_1,\dots, I_{2M}\}\cap
L^+=M=\#\{I_1,\dots,I_{2M}\}\cap L^-.$ Then $\mathsf{out}(\sigma)$ has
exactly $M$ segments in its support all of the form $I_j\sqcup
I_{j'}$ (see figure \ref{fig:3.20}).

\begin{figure}[h]\label{fig:3.20}
$$\sigma=\yng(4,4,4,3,3)
\longleftrightarrow   
\mathsf{out}(\sigma)=\young(::\hfil\hfil{\times}\hfil\hfil\hfil,:{\hfil}{\hfil}{\hfil}\times\hfil,{\hfil}{\hfil}{\hfil}{\hfil}\times{\hfil}\hfil) $$
\caption{Output of Algorithm \ref{al:ds} when  $\sigma=(4,4,4,3,3)$ and $1<m<\frac 32$}
\end{figure}

From the algorithm we see 
that every segment $J\in\mathbf J$ in $\mathsf{out}(\sigma)=(\mathbf J,\delta)$
contains an $e$-value $m$ and $\delta(J)=1$ (by 
$\mathtt{av} ( J ) > 0$). We claim that there is no $\tau$
in the closure of $\mathsf{out}(\sigma)$ which can satisfy the
required conditions. Let $J,J'$ be two segments in $\mathbf J,$ and we assume that $J\prec J'.$ Since
$e(J)\cap e(J')\neq\emptyset$ (because $m$ is in the intersection),
there are only two cases: either $J \lhd J',$ or else $J'\sqsubset J.$
If $J\lhd J',$ by the closure relations of section \ref{sec:1.3}, we
see that $J,J'$ cannot combine to give a smaller orbit. Assume
$J'\sqsubset J.$ Then from step 2 of the algorithm one sees that
necessarily $-\min e ( J ) > \max e ( J' )$. If they combine to give a smaller orbit $\tau,$ then
$\tau$ must contain $J_1,J_1'$ such that  $e(J_1)=\{\min
e(J),1+\min
e(J),\ldots, \max e(J')\},$ and $e(J_1')=\{\min e(J'),1+\min e(J'), \ldots, \max e(J)\}.$  But then condition
(p2) fails for $J_1.$ By a similar argument, one may also see that if
a single 
segment $J\in \mathbf J$ is broken into two pieces such (p1) holds,
then the smaller segment with respect to $\prec$ has to fail (p2).

$\bullet$ In the second case, $I_1$ forms a segment in $\mathbf J$ by itself. If
$\tau=(\mathbf J',\delta')$ is in the closure of $\mathsf{out}(\sigma)$ and satisfies the
required assumptions, then we see that $I_1\in\mathbf J'.$ (This is
because of the conditions (p1,2), the segment $I_1$ cannot be broken into
two pieces to yield such a $\tau$, and it is also clear that if it is
combined with some other segment, the resulting marked partition would
not be in the closure of $\mathsf{out}(\sigma).$  So one can ignore
the segment $I_1$ from consideration. This amounts to analyzing a
smaller partition $\sigma'$ which is obtained from $\sigma$ by
removing the first row and replacing $m$ by $m-1$. Then one proceeds
by induction.
\end{proof}

\section{Applications of the classification}\label{sec:4}

We present some consequences of the
classification to the structure of discrete series. Recall that $\mathbb H_{n,m}$
denotes the affine Hecke algebra of type $C_n$ with parameter
$\vec{q} = ( - 1, q ^{m}, q)$ with $q >1$ and $m \in \mathbb R.$ The generic values of $m$ are
all positive real numbers except half integers.

\subsection{Discrete series and deformations}
One immediate corollary of the algorithm is the classification of discrete series which
contain the $\mathsf{sgn}$ $W$-representation, including for
nongeneric values $m$. (At generic values of $m$, the inequalities in
Theorem \ref{c:sgn} are all strict.)

\begin{definition} Let $\sigma$ be a partition of $n$, which is identified with the Young diagram as in \S \ref{sec:alg} or the RHS of Fig. \ref{fig:fold}. Let $\{k\}$ denote the fractional part of $k$ (which is $0$ or $1/2$ if $k$ is a critical value). The extremities of $\sigma$ at $k$ is the set
  $E(\sigma,k) := E(\sigma,k) ^+ \cup E(\sigma,k) ^-$ defined by the procedure: put in $E(\sigma,k) ^+$ the
  maximal entry in every row above or on the $\{k\}$-diagonal. Also put in $E(\sigma,k) ^-$
  the negative of the minimal entry in every column below or on the
  $-\{k\}$-diagonal. One allows repetitions in this set, if they exist.
\end{definition}

\begin{theorem}\label{c:sgn} Let $\sigma$ be a partition of $n$. The discrete series 
  $\mathsf{ds}(\sigma)$ contains the $\mathsf{sgn}$ $W$-representation if
  and only if one of the following equivalent conditions hold:
\begin{enumerate}
\item $\mathsf{ds}(\sigma)$ parameterizes the open $G(a)$-orbit in $\mathfrak N^a$;
\item The support $\{I_j\}_{j=1}^k$ of $\mathsf{out} ( \sigma )$ satisfies:
\begin{equation}
 \max e(I_1)\ge -\min e(I_1)\ge \max
 e(I_2)\ge -\min e(I_2) \ge \dots.\label{Unest}
\end{equation}
\item We have a sequence $e_1,e_3,e_5,\ldots \in E ( \sigma, m) ^+$ and $e_2,e_4,e_6,\ldots \in E ( \sigma, m )^-$ such that
$$e _1 \ge e _2 \ge e _3 \ge e _4 \ge \cdots \ge 0.$$
\end{enumerate}
\end{theorem}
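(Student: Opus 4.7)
The plan is to prove the chain ``$\mathsf{ds}(\sigma)$ contains $\mathsf{sgn}$'' $\Leftrightarrow$ (1) $\Leftrightarrow$ (2) $\Leftrightarrow$ (3), treating (1) $\Leftrightarrow$ (2) as the combinatorial heart of the argument and the other two equivalences as essentially tautological reformulations once the right identifications are in place.

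The equivalence of $\mathsf{sgn}$-containment with (1) is immediate: by Theorem \ref{sgnchar}, an irreducible $\mathbb H_{a_\sigma}$-module contains $\mathsf{sgn}$ if and only if it is $L_{(a_\sigma, X)}$ for $X$ in the open dense orbit $\mathcal O_0^{a_\sigma}$, and Theorem \ref{t:main} identifies $L_{\mathsf{ds}(\sigma)}$ with $L_{(a_\sigma, X_{\mathsf{out}(\sigma)})}$. Then I would identify $\mathsf{mp}(\sigma)$, the marked partition parameterizing $\mathcal O_0^{a_\sigma}$: applying the recipe in \S\ref{sec:1.3} (repeatedly extract the longest adapted segment) to the eigenvalue multi-set of $s_\sigma$, one finds the longest adapted segment is the principal hook $h_1$ (of length $\sigma_1 + \sigma_1' - 1$, exhausting one box from each antidiagonal), and inductively the segments are precisely the hooks $\{h_1, \ldots, h_r\}$ of the Young diagram of $\sigma$ indexed by diagonal boxes, with $\max e(h_i) = m + \sigma_i - i$ and $-\min e(h_i) = \sigma'_i - i - m$. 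The partition inequalities force both sequences to be strictly decreasing in $i$.

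The main step (1) $\Leftrightarrow$ (2) is where I expect most of the work. For $(1) \Rightarrow (2)$, assume $\mathsf{out}(\sigma) = \mathsf{mp}(\sigma)$, so its segments are the hooks. Tracing Algorithm \ref{al:ds} backwards, this output is produced exactly when step 2 alternates between extracting the top row and the remainder of the leftmost column of the current $\sigma_{(\ell)}$ (row $1$, then column $1$ below row $1$, then row $2$ from column $2$ on, and so on), so that in step 3 each row-column pair combines into a single hook $h_i$; the inequality required at each alternation of step 2 is exactly one of the inequalities in the chain $\max e(I_1) \geq -\min e(I_1) \geq \max e(I_2) \geq \cdots$ of (2). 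Conversely for $(2) \Rightarrow (1)$, I order the segments $I_1, I_2, \ldots$ of $\mathsf{out}(\sigma)$ by decreasing $\max e$; since $\max e(I_1)$ equals the largest $e$-value in $\sigma$, namely $m + \sigma_1 - 1$, the segment $I_1$ must contain the top row, and the bound $-\min e(I_1) \geq \max e(I_2)$ then forces $I_1$ to absorb the whole leftmost column, yielding $I_1 = h_1$. An induction on the residual partition $\sigma \setminus h_1$ identifies the remaining segments as $h_2, \ldots, h_r$. The main technical difficulty I anticipate is verifying that, under (2), the cardinality-based pairing in step 3 of the algorithm (pairing $L^+_k$ with $L^-_k$ by decreasing length) is compatible with the hook identification—that is, the strips cannot split across multiple segments. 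This should follow from a direct induction tracking the cardinalities of $L^\pm_k$ against arm and leg lengths of hooks.

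For $(2) \Leftrightarrow (3)$, I would unwind the definition of $E(\sigma,m)^{\pm}$ for generic $m$: the $\{m\}$-diagonal collapses onto the main diagonal of $\sigma$, so $E(\sigma,m)^+$ is the set of maximal row-entries $\{m + \sigma_i - i : \sigma_i \geq i\} = \{\max e(h_i)\}_{i=1}^r$ and $E(\sigma,m)^-$ is $\{\sigma'_i - i - m : \sigma'_i \geq i\} = \{-\min e(h_i)\}_{i=1}^r$; the two sets share the same indexing by diagonal boxes, and both are strictly decreasing in $i$. Hence the only interleaved decreasing sequence using all $|E^+| + |E^-|$ elements is the canonical one $e_{2i-1} = m + \sigma_i - i$, $e_{2i} = \sigma'_i - i - m$, and the chain $e_1 \geq e_2 \geq \cdots \geq 0$ in (3) reduces to the chain of (2) applied to the hook partition $\mathsf{mp}(\sigma)$, closing the equivalence.
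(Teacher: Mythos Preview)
Your outline is correct in its logical structure, but your route to $(1)\Leftrightarrow(2)$ differs substantially from the paper's, and the paper's is considerably cleaner.

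For $(2)\Rightarrow(1)$ you propose to identify the segments of $\mathsf{out}(\sigma)$ with the hooks $h_1,\dots,h_r$ by induction, tracing how the algorithm must behave and then checking that the cardinality pairing in step~3 is compatible. The paper bypasses all of this: it observes directly that (\ref{Unest}) forces $I_p\sqsubset I_l$ for $p>l$ (strict nesting), and then invokes Theorem~\ref{AD} to conclude that no elementary move~$(\spadesuit)$ can be applied, so the orbit of $v_{\mathsf{out}(\sigma)}$ is already open in the $V_2^{(s,q)}$-component. Since (\ref{Unest}) also gives $\max e(I_l)>-\min e(I_l)$ and hence $\mathtt{av}(I_l)>0$ for every $l$, step~4 of the algorithm marks every segment containing $m$, so the $V_1$-component is open as well. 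This argument never needs to know that the segments are hooks, nor does it need to verify anything about the step~3 pairing; the ``technical difficulty'' you flag simply does not arise. What the paper's approach buys is brevity at the cost of invoking the quiver closure relations of Theorem~\ref{AD}; what your approach buys is self-containment, at the cost of the inductive bookkeeping you anticipate.

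For $(1)\Rightarrow(2)$ the paper is terse, and your idea of tracing the algorithm backwards (if the output is the hook partition $\mathsf{mp}(\sigma)$ with full marking, then step~2 must have alternated row/column, and each alternation records one inequality in (\ref{Unest})) is exactly the missing explanation. For $(2)\Leftrightarrow(3)$ both you and the paper argue that the elements of $E(\sigma,m)^{\pm}$ are the extremal $|e|$-values of the step~2 strips $I_1',I_2',\dots$, so that the interleaved chain in~(3) is precisely the sequence of comparisons governing the alternation in step~2; your formulation via hook arm/leg extremities and the paper's via the $I_j'$ amount to the same identification.
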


\begin{proof}
By Theorem \ref{sgnchar}, we know that $\mathsf{mp}(\sigma)=\mathsf{ds}(\sigma)$ {is equivalent to} $\mathsf{sgn} \subset L _{\mathsf{out} ( \sigma )}$ as $W$-submodule. The condition (\ref{Unest}) implies that we have $I _p \sqsubset I _l$ for every $p > l$. Hence, we cannot apply the algorithm of Theorem \ref{AD}. It follows that $\mathsf{ds} ( \sigma )$ defines the dense open orbit when projected to $V_2 ^{(s,q)}$-part. Moreover, we have $\mathtt{av} ( I _l ) > 0$ for every $l$. This implies that the temporary marking $\delta'$ in Algorithm \ref{al:ds} is uniformly marked. Therefore, we deduce that $\mathsf{mp}(\sigma)=\mathsf{ds}(\sigma)$ if and only if 2) holds. Finally, the condition 3) is equivalent to condition 2), since $e _1 = \max e(I_1'), e _2 = \min e(I_2'), e_3 = \max e ( I_3' ),\dots$ in these cases by Algorithm \ref{al:ds}, where $I_1',I_2',\ldots$ denote the output (from $\sigma$) in the second step of the algorithm.
\end{proof}

Recall that the discrete series for $\mathbb H_{n,m}$ are in
one-to-one correspondence $\sigma\leftrightarrow
\mathsf{ds}(\sigma)$ with partitions $\sigma$ of $n.$ To every
$\sigma$ and $m$, one attached a semisimple element $s_{\sigma,m}.$
The following corollary describes the properties of this family of
$\mathbb H_{n,m}$-modules, as $m$ varies in appropriate intervals.

\begin{theorem}\label{c:deform}
Consider $k < m < k + 1/2$ for some $k \in \frac 12\mathbb Z$. Let $\mathsf{ds} ( \sigma )$ be the parameter of discrete series of $\mathbb H
_{n,m}$ attached to $\sigma$. Let $a _m$ be a family of semi-simple
elements attached of $\sigma$ with $\vec{q} = ( - 1, q ^{m},
q)$. Assume that $L _{( a _m, v _{\mathsf{ds} ( \sigma )} )}$ contain $\mathsf{sgn}$ as $\mathbb C [W]$-modules. Then, the modules in the family $\{L _{( a _m, v _{\mathsf{ds} ( \sigma )} )}\}_m$ in
the region $k \le m \le k + 1/2$:

\begin{enumerate}
\item have the same dimension;
\item are all simple tempered modules;
\item are all isomorphic as $W$-representations. 
\end{enumerate}
\end{theorem}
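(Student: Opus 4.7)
My plan begins with the observation that the entire family reduces to the study of a \emph{single} fixed vector in $\mathbb V$. Indeed, Algorithm \ref{al:ds} depends only on the total ordering of the multiset $\{\pm e(\Box) : \Box \in \sigma\}$ on the real line, and an equality $e(\Box) + e(\Box') = 0$ forces $m \in \frac{1}{2}\mathbb Z$; hence on $(k, k+1/2)$ this ordering is constant, and so are the segments and markings of $\mathsf{out}(\sigma)$. By the formula in Proposition \ref{p:MP1}, the element $X := v_{\mathsf{ds}(\sigma)} \in \mathbb V$ is therefore literally independent of $m \in (k, k+1/2)$.

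\textbf{Interior conclusions.} With $X$ fixed, the hypothesis that $L_{(a_m, X)}$ contains $\mathsf{sgn}$ together with Theorem \ref{sgnchar} forces $L_{(a_m, X)} = M_{(a_m, X)}$ for every $m \in (k, k+1/2)$. Corollary \ref{Green} then identifies $M_{(a_m, X)} \cong \bigoplus_r H_r(\mathcal E_X^{a_0}, \mathbb C)$ as $\mathbb C[W]$-modules; since both $X$ and $a_0 = (1, -1, 1, 1)$ are fixed, this $W$-module is independent of $m$. Consequently both the dimension and the $W$-character of $L_{(a_m, X)}$ are constant on $(k, k+1/2)$, and by Theorem \ref{t:main} each $L_{(a_m, X)}$ is a discrete series, a fortiori simple and tempered. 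This settles (1)--(3) on the open interval.

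\textbf{Boundary values.} For the endpoints $m_0 \in \{k, k+1/2\}$, simplicity of $L_{(a_{m_0}, X)}$ is automatic from the definition, and temperedness follows from Corollary \ref{spec}(ii) applied to a one-sided family $a^t = a_{m_0 + \epsilon t}$ chosen so that $a^t$ lies in the generic open interval for $t > 0$. To obtain the constant dimension and $W$-character it suffices to show $L_{(a_{m_0}, X)} = M_{(a_{m_0}, X)}$: the combinatorial criterion in Theorem \ref{c:sgn}(2), a chain of weak inequalities among the $e$-values (which depend linearly on $m$), is a closed condition and so persists at $m = m_0$; moreover the proof of Theorem \ref{sgnchar} only uses connectedness of stabilizers and smoothness of $\mathfrak N^a$, both of which remain valid for any positive real $a$ even at non-generic parameters. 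Hence Theorem \ref{sgnchar} applies to $a_{m_0}$, yielding $L_{(a_{m_0}, X)} = M_{(a_{m_0}, X)}$, and Corollary \ref{Green} delivers the desired constancy of dimension and $W$-structure across the closed interval.

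\textbf{Main obstacle.} The delicate step is to verify that persistence of condition (2) of Theorem \ref{c:sgn} at $m_0$ genuinely forces $X$ into the open $G(a_{m_0})$-orbit of $\mathfrak N^{a_{m_0}}$, because crossing a half-integer value of $m$ can enlarge $\mathbb V^a$ through new coincidences of torus eigenvalues and can simultaneously enlarge the centralizer to $G(a_{m_0}) \supset G(a_m)$. I expect to handle this by comparing $\dim G(a_{m_0}) \cdot X$ with $\dim \mathfrak N^{a_{m_0}}$ via Corollary \ref{stab}, tracking explicitly how the additional $a_{m_0}$-fixed weight vectors are absorbed into the orbit of $v_{\mathsf{ds}(\sigma)}$ under the enlarged centralizer, and confirming via the nested component decomposition of \S \ref{sec:nested} that no new orbit strictly above $G(a_{m_0}) \cdot X$ appears in $\mathfrak N^{a_{m_0}}$.
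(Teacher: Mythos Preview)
Your proposal is correct and follows essentially the same route as the paper: fix the vector $X$ on the open interval, use Theorem \ref{sgnchar} to get $L=M$ and Corollary \ref{Green} for constancy of the $W$-structure, and use Corollary \ref{spec} for temperedness at the endpoints, reducing everything to checking that $X$ still lies in the open $G(a_{m_0})$-orbit of $\mathfrak N^{a_{m_0}}$. The only difference is in how that final obstacle is dispatched: rather than your proposed dimension count via Corollary \ref{stab} and the nested decomposition, the paper simply invokes the explicit orbit-structure description of $\mathfrak N^a$ from \cite{K} 1.17 to see that specialization creates no new orbits inside $V_2$, so the weak-inequality version of \eqref{Unest} already forces $\mathcal O_{\mathsf{ds}(\sigma)}$ to be open dense at $m=k,\,k+1/2$.
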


\begin{proof}
Taking into account Corollary \ref{spec} and Theorem \ref{sgnchar}, it
suffices to prove that $v _{\mathsf{ds} ( \sigma )}$ defines an open
dense orbit of $\mathfrak N ^{a _m}$ when $m = k, k + 1/2$. By the
description of the orbit structure of $\mathfrak N ^a$ in \cite{K}
1.17, we deduce that we obtain no new orbits by the specialization
process (inside $V _2$). It follows that the condition (\ref{Unest}) with $>$ replaced by $\ge$
is already enough to guarantee that $\mathcal O _{\mathsf{ds} ( \sigma
  )} ^{a _m} \subset \mathfrak N ^{a_m}$ is dense open for every $k
\le m \le k + 1/2$. 
\end{proof}

Notice that when $k<m<k+1/2$ with $k$ critical value, every $L _{( a _m, v _{\mathsf{ds} (
    \sigma )} )}$ is in fact a discrete series. But at the endpoints
of the interval, $m\in \{k,k+1/2\}$ they could be just tempered. On the other hand, Corollary \ref{specds} effectively says that if $L _{( a _m, v _{\mathsf{ds} (
    \sigma )} )}$ is a discrete series in the interval $k<m<k+1/2,$
but also in the interval $k-1/2<m<k,$ then it is a discrete series at
$m=k.$ This gives a combinatorial condition on $\sigma$, viewed as a
tableau for $m$, as
follows. The idea is due to \cite{Sl} \S 4.5.

\begin{corollary} Assume $k$ is a critical value. If the family  $\{L _{( a _m, v _{\mathsf{ds} (
    \sigma )} )}\}$ consists of discrete series in the interval
  $k-1/2<m<k+1/2$, then the set $E(\sigma,k)$ does not have repetitions.
\end{corollary}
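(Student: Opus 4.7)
The plan is as follows. I will track how Algorithm \ref{al:ds} depends on $m$ near the critical value $k$, show that its output changes as $m$ crosses $k$ precisely when $E(\sigma,k)$ has a repetition, and then conclude via Corollary \ref{specds} that a repetition is incompatible with the hypothesis that the family is discrete series throughout the interval.

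Write $m = k + \nu$ and set $e_k(\boxed{x}) = k + i + j$ for a box $\boxed{x} = (i,j) \in \sigma$. Then $e(\boxed{x}) = e_k(\boxed{x}) + \nu$, so $|e(\boxed{x})|$ is strictly increasing in $\nu$ on boxes with $e_k(\boxed{x}) > 0$ and strictly decreasing on those with $e_k(\boxed{x}) < 0$. Consequently, two boxes can swap their relative $|e|$-rank across $\nu = 0$ only when their $e_k$-values have equal magnitude and opposite sign. Such pairs correspond exactly to repetitions between $E(\sigma,k)^+$ and $E(\sigma,k)^-$; the strict monotonicity $\lambda_{r_1} - r_1 > \lambda_{r_2} - r_2$ for $r_1 < r_2$ in a partition rules out repetitions inside $E(\sigma,k)^+$ alone or inside $E(\sigma,k)^-$ alone.

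Assume a repetition exists, witnessed by a box $\boxed{r}$ (rightmost in some row) and a box $\boxed{c}$ (bottommost in some column) with $e_k(\boxed{r}) = -e_k(\boxed{c}) > 0$. At the iteration of Algorithm \ref{al:ds} Step 2 in which these two boxes first become candidates for the unique $|e|$-maximum, the selection flips as $\nu$ changes sign: the horizontal strip through $\boxed{r}$ is extracted first for $\nu > 0$, while the vertical strip through $\boxed{c}$ is extracted first for $\nu < 0$. A case analysis---according to whether these strips share a cell of $\sigma$, fall into different bins $L_{k'}^{\pm}$ in Step 3, or differ only in the marking in Step 4 via $\operatorname{sign}(\mathtt{av}(I))$---shows that the resulting marked partitions $\mathsf{out}(\sigma)_{+}$ and $\mathsf{out}(\sigma)_{-}$ are genuinely different, so that $v_{\mathsf{ds}(\sigma)}$ fails to be defined continuously as a function of $m$ across $k$.

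On the other hand, if the family $\{L_{(a_m, v_{\mathsf{ds}(\sigma)})}\}$ consists of discrete series throughout the open interval $k-1/2 < m < k+1/2$, then $v_{\mathsf{ds}(\sigma)}$ must vary continuously through $m = k$, so that $\mathsf{out}(\sigma)_{+}$ and $\mathsf{out}(\sigma)_{-}$ agree and Corollary \ref{specds} can be applied to a single, fixed marked partition on both sides of $k$. Combined with the preceding paragraph, this forces the absence of repetitions in $E(\sigma,k)$. The main obstacle is the case analysis establishing $\mathsf{out}(\sigma)_{+} \neq \mathsf{out}(\sigma)_{-}$ under a repetition: one must verify that after the two different initial extractions at $\boxed{r}$ and $\boxed{c}$, no later steps of the algorithm---either through further extractions in Step 2, the grouping in Step 3, or the marking refinement in Step 4---can re-identify the two outputs, and similarly that the corresponding $G(a_k)$-orbits in $\mathfrak{N}^{a_k}$ do not collapse under specialization to $m = k$.
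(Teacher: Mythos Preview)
Your overall strategy—reducing to the combinatorial claim that $\mathsf{out}(\sigma)$ coincides on the two generic subintervals $(k-\frac12,k)$ and $(k,k+\frac12)$—is the same as the paper's. The problem lies in how you deduce that coincidence from the hypothesis. You argue that the hypothesis forces $v_{\mathsf{ds}(\sigma)}$ to ``vary continuously through $m=k$'' and then invoke Corollary~\ref{specds}; but Corollary~\ref{specds} runs in the opposite direction (from a single fixed $\tau$ that gives discrete series on both sides to a conclusion at $m=k$), and ``continuity'' is not something the hypothesis supplies on its own. The correct, and much shorter, link is uniqueness: for each generic $m$ there is exactly one discrete series with central character $s_\sigma$, and its exotic parameter is $\mathsf{out}(\sigma)$ (Theorems~\ref{tempss} and~\ref{t:main}). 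Hence if one marked partition $\tau$ makes $L_{(a_m,v_\tau)}$ a discrete series for every $m$ in the interval, then $\tau=\mathsf{out}(\sigma)_-$ on $(k-\frac12,k)$ and $\tau=\mathsf{out}(\sigma)_+$ on $(k,k+\frac12)$, so the two outputs agree.

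Once that is established, the paper proceeds in the direct direction, which is easier than your contrapositive. From $\mathsf{out}(\sigma)_+=\mathsf{out}(\sigma)_-$ one reads off that the collections $(L^+,L^-)$, and hence the entire run of Step~2, are identical on the two sides; this forces Step~2 to be unambiguous at $m=k$, i.e.\ no two extremities are ever tied, which is exactly the statement that $E(\sigma,k)$ has no repetitions. Your route instead assumes a repetition and attempts to show the final outputs differ; that genuinely requires the case analysis you leave open (different first extractions in Step~2 might in principle be reconciled by later steps), and the extra worry about $G(a_k)$-orbits collapsing at $m=k$ is a red herring since the argument is purely combinatorial at this stage. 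The proposal is therefore salvageable, but only after replacing the continuity/Corollary~\ref{specds} step by the uniqueness argument; and the unfinished case analysis can be bypassed entirely by arguing in the paper's direction.
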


\begin{proof}
In combinatorial terms, the condition that $\{L _{( a _m, v _{\mathsf{ds} (
    \sigma )} )}\}$ consists of discrete series in the interval
  $k-1/2<m<k+1/2$ means that the output of Algorithm \ref{al:ds}
is the same for $\sigma$ when $k-1/2<m<k$ or $k<m<k+1/2$. This is
equivalent to the fact that step 2 of the algorithm is the same in
these two intervals, which implies that step 2 of the algorithm is
well-defined at $m=k$ as well. From this, it is easy to see that
$E(\sigma,k)$ must not allow repetitions.
\end{proof}

\subsection{Tempered modules in generic parameters}
Let us assume that $m$ is generic. Let $\mathbb H ^{\mathsf A} _{n,m}$ be the image of of $\mathbb H _{n} ^{\mathsf A}$ under the specialization map $\mathbb H _{n} \to \mathbb H _{n,m}$. This is an affine Hecke algebra for $GL(n).$

\begin{theorem}[\cite{KL2}, \cite{Z}]\label{ZT}
The set of tempered modules with positive real central character of $\mathbb H
^{\mathsf A} _{n,m}$ is in one-to-one correspondence with the set of
partitions of $n$. Fix a
partition $\sigma = ( \sigma _1, \sigma _2, \ldots,\sigma_\ell )$ of
$n$. The corresponding tempered module $L_{( a,v_\tau )} ^{\mathsf A}$ is obtained by the following procedure:
\begin{enumerate}
\item Form a sequence $\mathbf I = \{ I _k \}$ of subsets of $[1,n]$
  by setting $I_1=[1,\sigma_1],\dotsc,
  I_k=[\sigma_1+\dotsb+\sigma_{k-1}+1,\sigma_1+\dots+\sigma_k]\dots$. 
\item Set the trivial labeling $\delta \equiv 0$ and form $\tau := ( \mathbf I, \delta )$;
\item Form a semi-simple element $s \in T$ such that $\tau$ is adapted to $a=(s, \vec{q})$ and
$$\{\left< \epsilon _i, s \right> ; i \in I _k \} = \{\left< \epsilon _i, s \right> ^{-1} ; i \in I _k \} \text{ for each }k.$$
\end{enumerate}
\end{theorem}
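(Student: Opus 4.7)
\emph{Proof proposal.} The plan is to combine Zelevinsky's multisegment classification of type $\mathsf{A}$ with the Casselman temperedness criterion. First I would recall from \cite{Z,KL2} that every irreducible $\mathbb{H}^{\mathsf{A}}_{n,m}$-module with positive real central character is uniquely the Langlands quotient of an induced module
\[
\mathsf{Ind}^{\mathbb{H}^{\mathsf{A}}_{n,m}}_{\bigotimes_k \mathbb{H}^{\mathsf{A}}_{\sigma_k,m}}\bigl(\mathsf{St}(\Delta_1) \boxtimes \cdots \boxtimes \mathsf{St}(\Delta_\ell)\bigr),
\]
where each $\Delta_k \subset \mathbb{R}_{>0}$ is a Zelevinsky segment of consecutive $q$-powers of length $\sigma_k$, and $\mathsf{St}(\Delta_k)$ is the Steinberg-type discrete series attached to it. The reformulation in the statement, where the data are subsets $I_k \subset [1,n]$ rather than segments in $\mathbb{R}_{>0}$, is just the marked-partition description of $L^{\mathsf{A}}_{S}(s)$-orbits from \S \ref{sec:1.3} applied to the type-$\mathsf{A}$ Levi factor.

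Next I would analyze temperedness segment-by-segment. For a single segment $\Delta$ of length $k$, the module $\mathsf{St}(\Delta)$ has a unique $R(T)$-weight (the antidominant representative of the central character), and the Casselman inequalities $\langle \varpi_j, \chi \rangle \leq 1$ for $1 \leq j < k$ force $\Delta$ to be closed under inversion in $\mathbb{R}_{>0}$, which is exactly condition $\{\langle\epsilon_i,s\rangle : i \in I_k\} = \{\langle\epsilon_i,s\rangle^{-1} : i \in I_k\}$ in step (3) of the statement. Conversely, by Theorem \ref{wt-ind} applied in type $\mathsf{A}$, the weights of the induced module are $W$-translates of the products of the single weights of each factor, and centeredness of each $\Delta_k$ propagates to the inequalities $\langle \varpi_j, \chi \rangle \leq 1$ for every weight $\chi$ of the induction.

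Finally, I would verify irreducibility and the bijection with partitions of $n$. All centered segments in $\mathbb{R}_{>0}$ share the common inversion center $1$, so any two distinct centered segments are nested, and in particular no pair is Zelevinsky-linked. By Zelevinsky's linkage criterion \cite{Z}, the induction $\mathsf{Ind}\bigl(\mathsf{St}(\Delta_1) \boxtimes \cdots \boxtimes \mathsf{St}(\Delta_\ell)\bigr)$ is already irreducible, hence coincides with its own Langlands quotient. The bijection with partitions $\sigma = (\sigma_1,\ldots,\sigma_\ell)$ of $n$ is then obtained by recording the multiset of segment lengths $\sigma_k = |\Delta_k|$, with the explicit $I_k \subset [1,n]$ arising from any chosen enumeration of the segments. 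The main obstacle is this last irreducibility claim, which Zelevinsky's criterion resolves cleanly once nestedness of centered segments is observed; the remainder is a translation of the classical results of \cite{Z,KL2} into the marked-partition language of \S \ref{sec:1.3}.
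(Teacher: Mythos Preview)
The paper does not give a proof of this theorem: it is quoted as a known result from \cite{KL2} and \cite{Z}, and the paragraph following the statement merely recalls that the tempered modules with positive real central character for type $\mathsf{A}_{n-1}$ are in bijection with nilpotent orbits of $\mathfrak{gl}(n)$, that the module attached to $\sigma$ is the irreducible parabolic induction $\mathsf{Ind}(\mathsf{St}_{\sigma_1}\boxtimes\cdots\boxtimes\mathsf{St}_{\sigma_\ell})$, and that $s$ is the middle element $\exp\bigl(\tfrac{\sigma_1-1}{2},\ldots,-\tfrac{\sigma_1-1}{2},\ldots\bigr)$. Your outline is exactly an expansion of this explanatory paragraph into an argument, so there is no divergence of approach to discuss.

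One point deserves correction. You write that for a single segment $\Delta$ of length $k$, ``the Casselman inequalities $\langle\varpi_j,\chi\rangle\le 1$ for $1\le j<k$ force $\Delta$ to be closed under inversion.'' This is not so: the Steinberg module of \emph{any} segment has its unique weight antidominant, hence automatically satisfies the $SL(k)$-type inequalities for $j<k$. What pins down the center of $\Delta$ is the $GL$-part of temperedness, namely that the determinant character be unitary (equivalently $\langle\varpi_k,\chi\rangle=1$ for positive real central character). Once this is said correctly, the rest of your sketch---unlinkedness of centered segments, irreducibility of the induction via Zelevinsky's criterion, and the resulting bijection with partitions---is sound and matches the paper's summary.
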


More precisely, the set of tempered modules with positive real central
character for type $A_{n-1}$ are in one to one correspondence with  nilpotent
adjoint orbits of type $A_{n-1}$, and this is in turn are parameterized by
partitions of $n$. If $\sigma$ is a partition as above, then one forms
the (block upper triangular) parabolic subgroup $P$ with Levi subgroup
$GL(\sigma_1)\times\dotsb\times GL(\sigma_\ell).$ Let $\mathbb
H_{P}=\mathbb H_{\sigma_1,m}^\mathsf{A}\times\dotsb\times \mathbb H_{\sigma_\ell,m}^\mathsf{A}$
be the Hecke subalgebra of $\mathbb H_{n,m}^\mathsf{A}$ corresponding
to $P$. Let $\mathsf{St}_{\sigma_j}$ denote the Steinberg $\mathbb
H_{\sigma_j,m}^\mathsf{A}$-module. The induced module $\mathsf{Ind}_{\mathbb H_P}^{\mathbb
  H_{n,m}^{\mathsf{A}}}(\mathsf{St}_{\sigma_1}\boxtimes\dotsb\boxtimes
  \mathsf{St}_{\sigma_\ell})$ is irreducible and tempered, and this
  is {$L_{( a,v_\tau)} ^{\mathsf A}$}, in the notation of \ref{ZT}. The element $s$
  corresponds to the middle element of the nilpotent orbit
  parameterized by $\sigma.$ Explicitly, we have $s=\exp
  (\frac{\sigma_1-1}2,\ldots, -\frac{\sigma_1-1}2,\ldots,
  \frac{\sigma_\ell-1}2,\ldots, -\frac{\sigma_\ell-1}2).$

When we have $S_0 = \Pi - \{ \alpha _{n_1} \}$, then we have $L _{S_0}
\cong \mathop{GL} (n_1) \times \mathop{Sp} ( 2 n_2 )$. We have
$\mathfrak N \cap \mathbb V _{S_0} \subset \mathfrak{gl} (n_1) \oplus
\mathbb V _{(2)}$ (as $L _{S_0}$-varieties), where $\mathbb V _{(2)}$
is the exotic representation of $\mathop{Sp} ( 2 n_2 )$. 

\begin{theorem}\label{indAD}
Set $S_0 = \Pi - \{ \alpha _{n_1} \}$. Let $a = ( s, \vec{q} ) \in \mathcal G _0$ be given. We define $s_1, s_2$ to be the projections of $s$ to $\mathop{GL} (n_1)$ and $\mathop{Sp} ( 2 n_2 )$, respectively. Fix $X \in \mathfrak N ^{a}$ and a decomposition $X = X _1 \oplus X _2 \in \mathfrak{gl} (n_1) \oplus \mathbb V _{(2)}$. We assume
\begin{enumerate}
\item $L _{(s_1, q, X_1)} ^{\mathsf A}$ and $L _{(s_2, \vec{q}, X_2)}$ define irreducible modules of $\mathbb H _{n_1,m} ^{\mathsf A}$ and $\mathbb H_{n_2,m}$, respectively;
\item We have $\{ \left< \epsilon _i, s \right> ; 1 \le i \le n _1 \} \subset q ^{\frac{1}{2}\mathbb Z}$ and $\{ \left< \epsilon _i, s \right> ; n _1 < i \le n \} \subset q _1 q ^{\mathbb Z}$.
\end{enumerate}
Then, we have an isomorphism
\begin{eqnarray}
\mathsf{Ind} ^{\mathbb H_{n,m}} _{\mathbb H ^{S _0}_{n,m}} ( L _{(s_1, q, X_1)} ^{\mathsf A} \boxtimes L _{(s_2, \vec{q}, X_2)} ) \cong L _{(a,X)} \label{irrind}
\end{eqnarray}
as $\mathbb H_{n,m}$-modules.
\end{theorem}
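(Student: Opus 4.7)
The plan is to lift the induction theorem \ref{indt} from standard modules to irreducibles, using hypothesis (ii) together with the genericity of $m$ to guarantee that the multiplicative sets $q^{\frac{1}{2}\mathbb Z}$ and $q_1 q^{\mathbb Z}$ are disjoint (and each closed under inversion). This separation of eigenvalues is the source of essentially all of the decomposition structure we need.

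First, the separation forces $\langle \epsilon_i \pm \epsilon_j, s\rangle \neq 1$ for every cross-index $i \le n_1 < j$. Combined with the standard Bruhat-type decomposition $P_{S_0}/B \cong L_{S_0}/(B \cap L_{S_0})$, this yields a factorization
$$\mathcal E_X^a \cap P_{S_0}/B \;\cong\; \mathcal E_{X_1}^{(s_1,q)} \times \mathcal E_{X_2}^{(s_2,\vec q)},$$
the first factor a classical type-$\mathsf A$ Springer fiber for $\mathop{GL}(n_1)$ acting on $X_1$, the second an exotic Springer fiber for $\mathop{Sp}(2n_2)$ acting on $X_2$. Applying the Künneth formula for Borel-Moore homology produces
$$M^{S_0}_{(a,X)} \;\cong\; M^{\mathsf A}_{(s_1,q,X_1)} \boxtimes M_{(s_2,\vec q,X_2)}$$
as $\mathbb H^{S_0}_{n,m}$-modules. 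A direct weight inspection then verifies the hypothesis $(\mathbb V^{S_0})^a \subset \mathfrak u_{S_0} X$ of Theorem \ref{indt}: cross-index weights contribute nothing to the left-hand side by the above, while the remaining $a$-fixed weights supported on indices $\le n_1$ (such as $\epsilon_i$, $2\epsilon_i$, or $\epsilon_i+\epsilon_j$ with $i,j \le n_1$) lie in the $\mathfrak u_{S_0}$-translate of $X_1$ under the orbit assumption. Theorem \ref{indt} together with the Künneth identification then yields
$$M_{(a,X)} \;\cong\; \mathsf{Ind}^{\mathbb H_{n,m}}_{\mathbb H^{S_0}_{n,m}}\bigl(M^{\mathsf A}_{(s_1,q,X_1)} \boxtimes M_{(s_2,\vec q,X_2)}\bigr).$$

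By exactness of parabolic induction, $\mathsf{Ind}(L^{\mathsf A}_1 \boxtimes L_2)$ is a nonzero quotient of $M_{(a,X)}$. Moreover it contains the $W$-type $L_X$ as a submodule, because the external tensor product $L^{\mathsf A}_{X_1} \boxtimes L_{X_2}$ appears in $L^{\mathsf A}_1 \boxtimes L_2$ and, by Corollary \ref{Green} and the Springer branching of Theorem \ref{t:1.15}, its $W$-induction contains $L_X$. It follows that the unique simple quotient $L_{(a,X)}$ of $M_{(a,X)}$ appears as a quotient of $\mathsf{Ind}(L^{\mathsf A}_1 \boxtimes L_2)$.

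The main obstacle is to show that $\mathsf{Ind}(L^{\mathsf A}_1 \boxtimes L_2)$ is in fact irreducible, for which I would run a multiplicity argument parallel to Corollary \ref{sr-mult}. Under the same eigenvalue separation, every orbit $\mathcal O' \subset \overline{G(a)X}$ factors as $G(a)(\mathcal O'_1 \times \mathcal O'_2)$, and the corresponding stabilizer factors; the geometric multiplicity formula of Theorem \ref{mult} then splits as a product
$$[M_{(a,X)} : L_{(a,Y)}] = [M^{\mathsf A}_{(s_1,q,X_1)} : L^{\mathsf A}_{(s_1,q,Y_1)}] \cdot [M_{(s_2,\vec q,X_2)} : L_{(s_2,\vec q,Y_2)}]$$
for $Y = Y_1 \oplus Y_2$. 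Comparing this product decomposition with the Grothendieck-group image of $\mathsf{Ind}(L^{\mathsf A}_1 \boxtimes L_2) \hookrightarrow \mathsf{Ind}(M^{\mathsf A}_1 \boxtimes M_2) = M_{(a,X)}$ allows one to inductively subtract contributions of smaller orbits and conclude that the class of $\mathsf{Ind}(L^{\mathsf A}_1 \boxtimes L_2)$ equals $[L_{(a,X)}]$. The delicate point — where the disjointness of $q^{\frac{1}{2}\mathbb Z}$ and $q_1 q^{\mathbb Z}$ is essential — is controlling which simple constituents of induced standard modules can appear, which is done using the weight constraints from Theorem \ref{wt-ind} and Proposition \ref{pw-ind} together with Corollary \ref{wcrit}.
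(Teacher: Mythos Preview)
Your overall strategy---apply the induction theorem using eigenvalue separation, then pass from standard modules to irreducibles via the geometric multiplicity formula---is the same as the paper's. The differences are in execution, and two of your steps are either wrong or unnecessarily complicated.

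\textbf{The induction condition.} Your verification of $(\mathbb V^{S_0})^a \subset \mathfrak u_{S_0} X$ is not right: $2\epsilon_i$ is not a weight of $\mathbb V = V_1^{\oplus 2}\oplus\wedge^2 V_1$, there is no ``orbit assumption'' in the hypotheses, and you cannot claim that the remaining $a$-fixed weight vectors lie in $\mathfrak u_{S_0} X_1$ for an arbitrary $X_1$. The paper instead argues directly that $(\mathbb V^{S_0})^a=\{0\}$: for $i\le n_1$ the $V_1$-weight $\epsilon_i$ is never $a$-fixed (since $q_1\notin q^{\frac12\mathbb Z}$ by genericity), and for a cross-index pair $i\le n_1<j$ one has $\langle\epsilon_i\pm\epsilon_j,s\rangle\neq q$ for the same reason. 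So the induction hypothesis holds trivially, uniformly in $X$.

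\textbf{Passing to irreducibles.} Your final paragraph tries to control constituents of $\mathsf{Ind}(L_1^{\mathsf A}\boxtimes L_2)$ via a product multiplicity formula plus the weight machinery of Theorem~\ref{wt-ind}, Proposition~\ref{pw-ind}, and Corollary~\ref{wcrit}. This is more than is needed. The paper observes that the eigenvalue separation forces $L_{S_0}(s)=G(s)$ and $\mathfrak N^a\subset\mathfrak{gl}(n_1)\oplus\mathbb V_{(2)}$, so the parameter sets $L_{S_0}(s)\backslash(\mathfrak N^a\cap\mathbb V_{S_0})$ and $G(s)\backslash\mathfrak N^a$ are literally the same. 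Thus Theorem~\ref{mult} computes $[M_{(a,X)}^{S_0}:L_{(a,Y)}^{S_0}]$ and $[M_{(a,X)}:L_{(a,Y)}]$ by the \emph{identical} stalk formula on the identical orbit stratification. Starting with the open orbit (where $M=L$ on both sides) and descending by orbit closure, one peels off the already-matched irreducibles from $\mathsf{Ind}\,M^{S_0}_{(a,X)}\cong M_{(a,X)}$ and is left with $\mathsf{Ind}\,L^{S_0}_{(a,X)}\cong L_{(a,X)}$. No K\"unneth step, no product formula, and no weight constraints are required.
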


\begin{proof}
We have $q ^{\frac{1}{2}\mathbb Z} \cap q _1 q ^{\mathbb Z} = \emptyset$. If follows that $( \mathbb V ^{S_0} ) ^a = \{ 0 \}$, hence the induction theorem is applicable. Since we have $L _{S_0} ( s ) = G ( s )$ and $\mathfrak N ^a = \mathfrak N ^a \cap ( \mathfrak{gl} (n_1) \oplus \mathbb V _{(2)} )$, it follows that the isomorphism classes of irreducible $\mathbb H _{a}$-modules and irreducible $\mathbb H _a ^{S_0}$-modules are in one-to-one correspondence through the identification of parameters. For a point $X$ in the open dense $G(s)$-orbit of $\mathfrak N^a$, Theorem \ref{mult} implies that both $M _{(a,X)} ^{S_0}$ and $M_{(a,X)}$ are irreducible modules of $\mathbb H ^{S_0} _a$ and $\mathbb H _a$, respectively. Hence, the assertion holds in this case. We prove the assertion by induction on the closure relation of orbits. Let $\mathcal O \subset \mathfrak N ^a$ be a $G (s)$-orbit. Assume that (\ref{irrind}) holds for all $\mathbb H _a ^{S_0}$-module such that the orbit closure of the corresponding $G (s)$-orbit contains $\mathcal O$. Fix $X \in \mathcal O$. By the induction theorem, we have
$$\mathsf{Ind} ^{\mathbb H_{n,m}} _{\mathbb H ^{S _0}_{n,m}} M _{(a, X)} ^{S_0} \cong M _{(a,X)}.$$
Moreover, Theorem \ref{mult} asserts that the multiplicities of each
irreducible module inside $M _{(a, X)} ^{S_0}$ and $M _{(a,X)}$ (as
$\mathbb H ^{S_0}_{n,m}$ and $\mathbb H_{n,m}$-modules, respectively) are the same,
under the correspondence between the irreducibles of $\mathbb H^{S_0}_{n,m}$ and
$\mathbb H_{n,m}$. Hence we deduce
$$\mathsf{Ind} ^{\mathbb H_{n,m}} _{\mathbb H ^{S _0}_{n,m}} L _{(a, X)} ^{S_0} \cong L _{(a,X)},$$
where $L _{(a,X)} ^{S_0}$ is the unique $\mathbb H^{S_0}_{n,m}$-module corresponding to $\mathcal O$. This is nothing but (\ref{irrind}). Hence, the induction proceeds and we conclude the result.
\end{proof}

\begin{theorem}\label{indADt}
Set $S_0 = \Pi - \{ \alpha _{n_1} \}$. Let $a = ( s, \vec{q} ) \in \mathcal G _0$ be given. We define $s_1, s_2$ to be the projections of $s$ to $\mathop{GL} (n_1)$ and $\mathop{Sp} ( 2 n_2 )$, respectively. Fix $X \in \mathfrak N ^{a}$ and a decomposition $X = X _1 \oplus X _2 \in \mathfrak{gl} (n_1) \oplus \mathbb V _{(2)}$. We assume
\begin{enumerate}
\item $L _{(s_1, q, X_1)} ^{\mathsf A}$ is a tempered module of $\mathbb H _{n_1,m} ^{\mathsf A}$;
\item $L _{(s_2, \vec{q}, X_2)}$ is a discrete series of $\mathbb H_{n_2,m}$.
\end{enumerate}
Then, $L _{(a,X)}$ is a tempered $\mathbb H_{n,m}$-module.
\end{theorem}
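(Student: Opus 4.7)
The plan is to combine Theorem~\ref{indAD} with a careful weight analysis based on Theorem~\ref{wt-ind}. First, by Theorem~\ref{indAD}, whose hypothesis (2) is met because tempered $\mathbb H^{\mathsf A}_{n_1,m}$-modules with positive real central character (Theorem~\ref{ZT}) have central character in $q^{\frac{1}{2}\mathbb Z}$ while discrete series of $\mathbb H_{n_2,m}$ have central character in $q_1 q^{\mathbb Z}$, the induced module $L := \mathsf{Ind}_{\mathbb H^{S_0}_{n,m}}^{\mathbb H_{n,m}}(L_1^{\mathsf A} \boxtimes L_2)$ is irreducible and isomorphic to $L_{(a,X)}$. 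It then suffices to verify the Casselman condition $\langle \varpi_k, \chi \rangle \leq 1$ for every $R(T)$-weight $\chi$ of $L$ and every $1 \leq k \leq n$. By Theorem~\ref{wt-ind}, each such $\chi$ has the form $w \cdot (\chi_1 \times \chi_2)$ with $\chi_1 \in \Psi(L_1^{\mathsf A})$, $\chi_2 \in \Psi(L_2)$, and $w$ a minimal length coset representative of $W_n / (\mathfrak S_{n_1} \times W_{n_2})$.

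Next, I would spell out the structure of such $w$. The minimality condition $w\alpha > 0$ for every positive root of $\mathfrak S_{n_1} \times W_{n_2}$ forces $w$ to send $\{\epsilon_{n_1+1}, \ldots, \epsilon_n\}$ to positive $\epsilon$-vectors in increasing position order, and to send $\{\epsilon_1, \ldots, \epsilon_{n_1}\}$ to $\epsilon_{c_1}, \ldots, \epsilon_{c_p}, -\epsilon_{c_{p+1}}, \ldots, -\epsilon_{c_{n_1}}$, with $c_1 < \cdots < c_p$ and $c_{p+1} > \cdots > c_{n_1}$, for some $0 \leq p \leq n_1$ (the ``no $-$ followed by $+$'' constraint on sign flips follows because a negative sign on $\epsilon_i$ followed by a positive sign on $\epsilon_{i+1}$ would give $w(\epsilon_i - \epsilon_{i+1}) < 0$). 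Setting $a_i := \log_q \chi_1(\epsilon_i)$ and $b_j := \log_q \chi_2(\epsilon_j)$, the first $k$ output coordinates of $\chi$ then consist of $+a_1, \ldots, +a_{p^+}$ (from the leftmost part of the $P$-block), $-a_{n_1}, \ldots, -a_{n_1-p^-+1}$ (from the leftmost part of the $N$-block, whose positions decrease with index), and $+b_1, \ldots, +b_q$ (from the leftmost part of the $b$-block), for some decomposition $p^+ + p^- + q = k$, so
\[
\log_q \langle \varpi_k, \chi \rangle = \sum_{i=1}^{p^+} a_i - \sum_{i=n_1-p^-+1}^{n_1} a_i + \sum_{j=1}^{q} b_j.
\]

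Finally, I would apply the temperedness hypotheses. By Theorem~\ref{ZT}, the central character of a tempered $\mathbb H^{\mathsf A}_{n_1,m}$-module with positive real central character is balanced about $0$ in log-$q$ coordinates, so $\sum_{i=1}^{n_1} a_i = 0$ for every $\chi_1 \in \Psi(L_1^{\mathsf A})$. Using this, $-\sum_{i=n_1-p^-+1}^{n_1} a_i = \sum_{i=1}^{n_1 - p^-} a_i$, and the displayed expression becomes $\sum_{i=1}^{p^+} a_i + \sum_{i=1}^{n_1 - p^-} a_i + \sum_{j=1}^q b_j$. Each of the first two summands is $\leq 0$ by the Casselman condition for the $A_{n_1-1}$-temperedness of $L_1^{\mathsf A}$ applied to the specific weight $\chi_1$ (partial sums of any weight do not exceed the fractional share of the total, which is zero), and the third is $\leq 0$ by the discrete series condition on $L_2$. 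Hence $\langle \varpi_k, \chi \rangle \leq 1$, proving that $L$ is tempered. The main obstacle is the precise description of minimal length coset representatives together with the sign bookkeeping; once this combinatorics is in place, the partial sum inequality reduces cleanly to Casselman's criterion on each factor combined with the vanishing of the $GL(n_1)$-central character.
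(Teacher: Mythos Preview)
Your argument is correct and reaches the same conclusion by the same overall architecture as the paper---decompose every $R(T)$-weight of the induced module through minimal coset representatives of $W_n/(\mathfrak S_{n_1}\times W_{n_2})$, split the resulting partial sum $\log_q\langle\varpi_k,\chi\rangle$ into a type~$A$ contribution and a type~$C$ contribution, and bound each separately. The combinatorial description you give of the minimal representatives (the sign pattern $+\cdots+-\cdots-$ on $[1,n_1]$, positions increasing on the positive block and decreasing on the negative block, all signs positive on $(n_1,n]$) is correct, and your partial-sum formula is the right one.

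Where you diverge from the paper is in how you control the type~$A$ piece. The paper exploits the explicit segment structure of a tempered $\mathbb H^{\mathsf A}_{n_1}$-module from Theorem~\ref{ZT}: it writes $X_1=\sum_j v_{I_j}$ with each segment $I_j$ balanced, observes that the stabilizer of $X_1$ contains a torus of rank $\#\mathbf I$, and then uses the geometric constraint $v_{I_j}\in{}^w\mathbb V(a)$ (needed for the $w$-piece of the splitting~(\ref{split}) to be nonempty) to force the segment-by-segment partial-sum bounds $\langle\varpi^j_p,s^{-1}\rangle\le 1$. You bypass this entirely by invoking the abstract Casselman criterion for $L_1^{\mathsf A}$ together with the single observation that $\sum_{i=1}^{n_1}a_i=0$ (since every weight of $L_1^{\mathsf A}$ lies in the $\mathfrak S_{n_1}$-orbit of the balanced central character of Theorem~\ref{ZT}); the identity $-\sum_{i>n_1-p^-}a_i=\sum_{i\le n_1-p^-}a_i$ then reduces the sign-flipped block to a second initial partial sum, and both are $\le 0$ by type~$A$ temperedness. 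This is cleaner and purely representation-theoretic: it avoids the exotic geometry altogether and would transplant unchanged to any parabolic induction of (tempered of unitary central character) $\boxtimes$ (discrete series) in an affine Hecke algebra. The paper's route, by contrast, gives slightly more: it identifies precisely which $w$ actually contribute nontrivially to $\Psi(L_{(a,X)})$, not just an upper bound.
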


\begin{proof}
The proof of the induction theorem (see \cite{K} \S 7 or \cite{KL2} \S 7) claims that we have an isomorphism
\begin{eqnarray}
H _{\bullet} ( \mu ^{-1} ( X ) ^a ) \cong \bigoplus _{w} H _{\bullet} ( \mu ^{-1} ( X ) ^a \cap P _{S_0} \dot{w} ^{-1} B / B ) \hskip 2mm \text{as vector spaces},\label{split}
\end{eqnarray}
where $w \in W / ( \mathfrak S _{n_1} \times W _{n_2} )$
denotes its minimal length representative in $W$. Here we have $( P
_{S_0} \dot{w} ^{-1} B/ B ) ^s \cong G ( s ) / B$, which implies that
(\ref{split}) is in fact a direct sum decomposition as $\mathbb H
^{S_0}_{n,m}$-modules (up to semi-simplification). It follows that the weight $u
= w \cdot s ^{-1} \in \Psi ( L _{(a,X)} )$ is written as $v \cdot ( (
w _1 \cdot s _1 ^{-1} ) \times s _2' )$, where $v \in \mathfrak S _n /
( \mathfrak S _{n_1} \times \mathfrak S _{n_2} )$ is a minimal length
representative, $w _1 \in W _{n_1}$, and $s _2' \in \Psi ( L _{(s_2,
  \vec{q}, X _2)})$. {Let us take a subdivision $\mathbf I
  =\{I_j\}_j$ of $[1,n_1]$ (i.e. $\sqcup _j I _j = [1,n_1]$) so that
  $I _j = \{i _1^j, \ldots, i _k^j\}$ satisfies $i _1^j < i _2^j <
  \cdots < i _k^j$, $\langle  
  \epsilon _{i _l^j}, s \rangle  = q \langle \epsilon _{i _{l+1}^j}, s
\rangle$ and $\langle \epsilon _{i _l^j}, s \rangle  = \langle
\epsilon _{i _{k+1-l}^j}, s \rangle ^{-1}$ holds for each $1 \le l \le
k$. Let $s_*$ be a semi-simple element such that $G ( s_* ) = L
_{S_0}$. Then, we have necessarily $s_* ( X ) = X$. By rearranging
$\mathbf I$ and taking $L_{S_0}$-conjugate if necessary, we can assume
$X_1 = \sum _j v _{I_j}$. Since $\mathsf{Stab} _{\mathop{GL} (n _1)}
X_1$ contains a torus of rank $\# \mathbf I$, we need $v _{I _j} \in
{} ^{w} \mathbb V ( a )$ (for all $j$) in order for (\ref{split}) to
be non-empty. This implies that  
$$\left< \varpi^j _{p}, s ^{-1} \right> \le 1, \text{ where } \varpi^j _p = \sum _{|w _1 ( i ^j _l )| < p} \epsilon _{|w _1 ( i ^j _l )|} \text{ for each }p.$$
By the minimality of $v$, we deduce
$$\left< \varpi'' _{p}, s ^{-1} \right> = \left< \varpi _{p'}, s _2' \right> \le 1, \text{ for each } p,$$
where $\varpi'' _p = \varpi _p - \sum_{j} \varpi^j _p$ and $p' = p - \sum _j \# \{ l \in I _j \mid |w _1 ( i ^j _l )| < p \}$. This implies that
$$\left< \varpi _i, u \right> \le 1 \text{ for every } u = w^{-1} \cdot s \in \Psi ( L _{(a,X)}) \text{ and every } 1 \le i \le n,$$}
which implies that $L _{(a,X)}$ is tempered as desired.
\end{proof}

\subsection{Linear independence of $R ( T )$-characters}

\begin{construction}\label{tau}
Let $a \in \mathcal T _0$ and $\tau = ( \mathbf I, \delta) \in \mathsf{MP} (a)$ be given.
We divide $\mathbf I$ into {four sets $D _+ ^1, D _+ ^2, D
_- ^1, D _- ^2$} as follows: 
\begin{itemize}
\item If $\max\underline I < q_1$, we put $I \in \mathbf I$ into $D _+ ^2$;
\item If $\min\underline I > q_1$, we put $I \in \mathbf I$ into $D _- ^2$;
\end{itemize}
Notice that all segments $I$ in $D_+^2\cup D_-^2$ are unmarked, since
$q_1\notin \underline I.$ Now we consider only segments in $\mathbf I
\setminus ( D _+ ^2 \cup D 
_- ^2 )$. 
\begin{itemize}
\item If there exists some $I ^{\prime}$ such that $\delta ( I
  ^{\prime} ) = \{ 0, 1\} $ (i.e., $I^\prime$ is marked) and $I \lhd I
  ^{\prime}$, then we put $I$ into $D _+ ^1$; 
\item If we have $\delta (I) = \{ 0 \}$ and there exists no $I
  ^{\prime}$ such that $\delta ( I ^{\prime} ) = \{ 0, 1\}$ and $I
  \lhd I ^{\prime}$, then we put $I$ into $D _- ^1$
\end{itemize}
We denote $D _{+} := ( D _+ ^1 \cup D _+ ^2 )$ and $D _- :=
( D_- ^1 \cup D _- ^2 )$. 
\end{construction}
Notice that $D _+ \cup D _-$ exhausts $\mathbf I$. One sees immediately that in this construction, {$I \in \mathbf I$ satisfies $\widetilde{\delta} ( I ) = \{ 0,1 \}$ if and only if $I \in D_+$ and $q_1 \in \underline{I}$.} We change the marking of $\tau$ so that every
$I \in D _+$, with $q_1\in \underline I$, is
marked. By Proposition \ref{p:MP2}, this procedure does not change the
$G ( s )$-orbit of $v _{\tau}$.

The following proposition is a criterion for finding some special weight of each simple $\mathbb H_a$-modules. The notation $w(j)$
refers to the usual action of $W _n$ by permutations and sign changes
on $[-n,n]$.

\begin{proposition}\label{t:charfake} Keep the setting of Construction \ref{tau}. Assume that we have $\left<\epsilon _i ,s\right> >
  \left< \epsilon _j,s \right>$ for every $i < j$. Assume that  $w \in W$  satisfies the following conditions:
\begin{itemize}
\item Assume $I \in D _+$. Then, we have $w ( j ) > 0$ for all $j \in I$. Moreover, we have $w ( i ) < w ( j )$ for each $i,j \in I$ such that $\left< \epsilon _i,s \right> > \left< \epsilon _j,s \right>$;
\item Assume $I \in D _-$. Then, we have $w ( j ) < 0$ for all $j \in I$. Moreover, we have $w ( i ) < w ( j )$ for each $i,j \in I$ such that $\left< \epsilon _i,s \right> > \left< \epsilon _j,s \right>$;
\item {Assume $I, I ^{\prime} \in  D _+$ or $I, I ^{\prime} \in  D _-$. If we have $I \prec I ^{\prime}$, then we have}
\begin{equation}
w ( j ) < w ( j ^{\prime} ) \text{ for every } (j, j ^{\prime}) \in I
\times I ^{\prime};
\end{equation}
\item If $I, I ^{\prime} \in \mathbf I$ and $\min \underline{I} = \min \underline{I ^{\prime}}$, then we have either
\begin{align}
& w ( j ) > w ( j ^{\prime} ) \text{ for every } (j, j ^{\prime}) \in I
\times I ^{\prime} \text{, or }\\
& w ( j ) < w ( j ^{\prime} ) \text{ for every } (j, j ^{\prime}) \in I
\times I ^{\prime} 
\end{align}
\end{itemize}
Then $\mathcal O _{\tau}$ meets ${} ^{w} \mathbb V (a)$ densely. In particular, we have $w\cdot s^{-1} \in \Psi ( L_{(a,v _{\tau})} ).$ 
\end{proposition}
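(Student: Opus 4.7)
The plan is to reduce the stated weight condition to the geometric assertion that $\mathcal{O}_{\tau} \cap {}^{w}\mathbb{V}(a)$ is open dense in ${}^{w}\mathbb{V}(a)$; once this is in hand, Corollary \ref{c:ftemp} immediately delivers $w\cdot s^{-1} \in \Psi(L_{(a, v_{\tau})})$. To set up, I would describe ${}^{w}\mathbb{V}(a)$ as a sum of $T$-weight spaces. Because $a \in \mathcal{T}_0$ and $\mathbb{S}$ contains no reciprocal pair, the $T$-weights of $\mathbb{V}^a$ are exhausted by $\epsilon_i - \epsilon_j$ (for $i,j$ in a common segment with $\langle\epsilon_i,s\rangle = q\langle\epsilon_j,s\rangle$) and $\epsilon_i$ (for $\langle\epsilon_i,s\rangle = q_1$); a weight $\beta$ contributes to ${}^{w}\mathbb{V}(a)$ exactly when $w\beta \in \mathbb{V}^+$, a condition explicit in terms of signs and orderings of $w(i), w(j)$.

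Next I would verify that $v_{\tau} \in {}^{w}\mathbb{V}(a)$ after the marking rearrangement of Construction \ref{tau}, which does not change $\mathcal{O}_{\tau}$. The first two bullets of the hypothesis are tailored precisely for this: for $I \in D_{+}$, the condition $w(j) > 0$ together with order preservation inside $I$ forces $w(\epsilon_i - \epsilon_j)$ to be a $\mathbb{V}^+$-weight, and the marking at $i_0$ (when $q_1 \in \underline{I}$) is compatible since $w(i_0) > 0$; for $I \in D_{-}$, segments carry no marking by the construction of $D_-$, and $w(\epsilon_i - \epsilon_j) = \epsilon_{-w(j)} - \epsilon_{-w(i)}$ is a positive $\mathbb{V}^+$-weight thanks to $w(i) < w(j) < 0$.

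The principal obstacle is to upgrade ``$v_{\tau} \in {}^{w}\mathbb{V}(a)$'' to the density of $\mathcal{O}_{\tau} \cap {}^{w}\mathbb{V}(a)$ in ${}^{w}\mathbb{V}(a)$; this is where the third and fourth bullets enter. Using the identification of the $V_2^{(s,q)}$-part of $\mathbb{V}^a$ with a type $\mathsf{A}$-quiver representation space (as in \S \ref{sec:1.3}), the orbit closure order is governed by the algorithm of Theorem \ref{AD}. The third bullet, requiring that $w$ respect the partial order $\prec$ separately on $D_{+}$ and on $D_{-}$, forbids two segments $I \prec I'$ inside one of $D_\pm$ from ``overlapping'' inside ${}^{w}\mathbb{V}(a)$ in a way that would realize the move $(\spadesuit)$ and drop to a strictly smaller orbit in $\overline{\mathcal{O}_{\tau}}$; the fourth bullet enforces a coherent choice for segments with $\min\underline{I} = \min\underline{I'}$, as needed because such segments can be permuted by the $\mathfrak{S}_n^a$-action appearing in Proposition \ref{p:MP2}. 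Together these conditions ensure that $\mathcal{O}_\tau$ is the unique $G(s)$-orbit meeting ${}^{w}\mathbb{V}(a)$ open densely, and then the last sentence of the proposition follows from Corollary \ref{c:ftemp}.
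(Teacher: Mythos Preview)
Your reduction to Corollary \ref{c:ftemp} and your verification that $v_\tau\in{}^{w}\mathbb V(a)$ from the first two bullets are both correct, and this part matches the paper. The divergence is in how density of $\mathcal O_\tau\cap{}^{w}\mathbb V(a)$ in ${}^{w}\mathbb V(a)$ is established, and here your argument has a genuine gap.

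You propose to use the closure description of Theorem \ref{AD}: the third and fourth bullets ``forbid two segments $I\prec I'$ \dots from overlapping in a way that would realize the move $(\spadesuit)$ and drop to a strictly smaller orbit in $\overline{\mathcal O_\tau}$''. This is the wrong direction. What must be shown is that no orbit \emph{strictly larger} than $\mathcal O_\tau$ meets ${}^{w}\mathbb V(a)$; equivalently, that the open dense $G(s)$-orbit of $G(s)\cdot{}^{w}\mathbb V(a)$ is $\mathcal O_\tau$ itself and not something above it. Your sketch does not rule out such larger orbits, and in particular says nothing about the mixed case where $I\in D_+$ and $I'\in D_-$ (the paper's Case~4), which is precisely where the subtlest cancellation occurs. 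Theorem \ref{AD} also only governs the $V_2^{(s,q)}$-part; you would separately need to control the $V_1^{(s,q_1)}$-contribution to ${}^{w}\mathbb V(a)$ and match it against the marking of $\tau$, which you do not do.

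The paper avoids all of this by an infinitesimal computation: it identifies the Borel subalgebra $\mathfrak p_\tau=(\dot w^{-1}\mathfrak b)\cap\mathfrak g(s)$, decomposes it into blocks $\mathfrak p_\tau^{l,r}$ indexed by pairs of segments, and computes $\mathfrak p_\tau^{l,r}v_\tau$ explicitly in each of the four sign cases for $(w(I_l),w(I_r))$, using the third and fourth bullets to control which $T$-weight lines actually appear. The upshot is the equality $\mathfrak p_\tau v_\tau={}^{w}\mathbb V(a)$ of tangent spaces, so $\dim P_\tau v_\tau=\dim{}^{w}\mathbb V(a)$ and density follows immediately. If you want to salvage your combinatorial route you would need, for every $\tau'$ obtainable from $\tau$ by a sequence of moves $(\spadesuit)$ together with any admissible marking change, to exhibit a $T$-weight of $v_{\tau'}$ (in every $\mathfrak S_n^a$-translate) that fails to lie in ${}^{w}\mathbb V^+$; this is considerably more bookkeeping than the tangent-space argument and you have not supplied it.
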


\begin{proof}
The first two conditions imply $\dot{w} v _{I} \in \mathbb V
^+$, for all $I\in \mathbf I$, and $\dot{w} v _{\epsilon _i} \in \mathbb V ^+$ (if $\delta ( i ) > 0$). Therefore, we deduce $v _{\tau} \in {} ^{w} \mathbb V ^+$, which implies $v _{\tau} \in {} ^{w} \mathbb V (a)$.

For each ordered pair $(l,r) \in \mathbb Z^2$, we define
$${\mathfrak p _{\tau} ^{l,r} := \bigoplus _{i \in I _l, j \in I _m;
  (\star)} (\mathfrak g ( s )\cap \mathfrak g[ \epsilon _i - \epsilon _j ])},$$
where $(\star)$ denotes the condition $\epsilon _{w (i)} - \epsilon
_{w (j)} \in R ^+$, and $\mathfrak g[\epsilon_i-\epsilon_j]$ are the
weight spaces. The condition $(\star)$ is also rephrased as:
\begin{itemize}
\item[$(\star) _1$] If $w ( i ) w ( j ) > 0$, then we have $w ( i ) < w ( j )$;
\item[$(\star) _2$] If $w ( i ) < 0$, then we have $w ( j ) < 0$.
\end{itemize}
It is straight-forward to see that $\mathfrak p _{\tau} ^{l,r}$ is an abelian subalgebra of $\mathfrak g$.

Since $\{ I _r \} _r$ exhaust $[1,n]$,  condition $(\star)$ implies
$$\mathfrak p _{\tau} := \mathfrak t \oplus \bigoplus _{l, r}
\mathfrak p _{\tau} ^{l,r} = ( \dot{w} ^{-1} \mathfrak b ) \cap
\mathfrak g ( s ).$$ 

Hence, the Lie algebra $\mathfrak p _{\tau}$ preserves ${} ^{w}
\mathbb V ^+$. Since $\mathfrak p _{\tau} \subset \mathfrak g ( s )$,
it preserves $\mathbb V ^a$. Thus, $\mathfrak p _{\tau}$ acts on ${}
^{w} \mathbb V (a)$. Moreover, the connected algebraic subgroup $P _{\tau} \subset G ( s )$ with
$\mathrm{Lie} P _{\tau} = \mathfrak p _{\tau}$ acts on ${} ^{w} \mathbb V (a)$. 
We wish to prove that $P_\tau v_\tau$ is dense in ${} ^{w} \mathbb V
(a)$. We will be able to deduce this from the following claim, which
is proved by computations.

\begin{claim} $\mathfrak p_\tau v_\tau={}
^{w} \mathbb V (a)$. 
\end{claim}

\begin{proof} 
Since $\mathfrak p _{\tau} ^{l,r}$ is a direct sum of $T$-weight spaces, we deduce that $( \mathfrak t \oplus \mathfrak p _{\tau} ^{l,r} )$ is again a Lie subalgebra of $\mathfrak g ( s )$. We set $\mathfrak t ^{r} := \bigoplus _{i \in I _r} \mathbb C \epsilon _i$, where $\epsilon _i \in \mathfrak t ^*$ is identified with the dual basis $\epsilon _i \in \mathfrak t$ by the pairing $( \epsilon _i, \epsilon _j ) = \delta _{i,j}$. We have
$$\mathfrak t ^r v _{\tau} = \mathfrak t  ^r ( v _{I _r} + \sum _{i
  \in I _r} \delta ( i ) v _{\epsilon _i}  ) = \bigoplus _{i, j \in I
  _r ; \left< \epsilon _i,s \right> = q \left< \epsilon _j,s
  \right>} \mathbb C v _{\epsilon _i - \epsilon _j} \oplus \bigoplus
_{i \in I _r; \delta ( i ) = 1} \mathbb C v _{\epsilon _i}$$ 
by a simple calculation. (Here we used the fact the the weights appearing in the RHS are linearly independent.)
 
By the first two conditions on $w$, the signs of the entries in
$w(I_l)$ and $w(I_m)$ are constant on each segment. We calculate
$\mathfrak p _{\tau} ^{l,r} v _{\tau}$ in each of the four possible
cases of signs.

\item {\bf Case 1)} ($w ( I _l ), w ( I _r ) > 0$) This means
  $I_l,I_r\in D_+.$ We have either $0 < w ( I _r ) < w ( I _l )$ or $0
  < w ( I _l ) < w ( I _r )$. If we have $0 < w ( I _r ) < w ( I _l
  )$, then we have $\epsilon _{i} - \epsilon _j \not\in w ^{-1} R ^+$
  for every $i \in I _l, j \in I _r$. Therefore, $\mathfrak p _{\tau}
  ^{i,j} = \{ 0 \}$ in this case. 

Now we assume $0 < w ( I _l ) < w ( I _r )$. We have $\min \underline{I _r} \ge \min \underline{I _l}$ by assumption. We have $\epsilon _{i} - \epsilon _j \in \Psi ( \mathfrak p _{\tau} ^{l,r} )$ if and only if $i \in I _l, j \in I _r$ and $\left< \epsilon _i - \epsilon _j,s \right> = 1$. By the definition of segments, we deduce that
$$\mathfrak p _{\tau} ^{l,r} = \bigoplus _{b \in \underline{I _l} \cap \underline{I _r}} \mathfrak g [ \epsilon _{i _b} - \epsilon _{j _b} ],$$
where $i _b \in I _l, j _b \in I _r$ satisfy $\left< \epsilon _{i _b},s \right> = b = \left< \epsilon _{j _b},s \right>$. By an explicit computation, we have
$$\mathfrak g [ \epsilon _{i _b} - \epsilon _{j _b} ] v _{\tau} = \mathbb C ( v _{\epsilon _{i _{q b}} - \epsilon _{j _b}} + v _{\epsilon _{i _b} - \epsilon _{j _{q ^{-1} b}}} ).$$
Let $b ^-$ be the minimal element of $\underline{I _l} \cap
\underline{I _r}$. Then, the number $j _{q _2 ^{-1} b ^-}$ does not exist. It follows that
$$\mathfrak p ^{l,r}_\tau v _{\tau} = \sum _{b \in \underline{I _l} \cap \underline{I _r}} \mathfrak g [ \epsilon _{i _b} - \epsilon _{j _b} ] v _{2,\tau} = \sum _{b \in \underline{I _l} \cap \underline{I _r}} \mathbb V [ \epsilon _{i _b} - \epsilon _{j _{q ^{-1} b}} ],$$
where $v _{2,\tau}$ be the $V_2$-part of $v_{\tau}$.

Here $I _l$ and $I _r$ are marked if $q_1 \in \underline{I_l}$ and $q_1 \in \underline{I_r}$, respectively. Hence, we conclude that
$$( \mathfrak t ^l \oplus \mathfrak t ^r \oplus \mathfrak p ^{l,r}_\tau ) v _{\tau} = \sum _{b \in \underline{I _l} \cap \underline{I _r}} \mathfrak g [ \epsilon _{i _b} - \epsilon _{j _b} ] v _{\tau} = \mathfrak t ^{l} v _{\tau} \oplus \mathfrak t ^{r} v _{\tau} \oplus V,$$
where
$$V = \begin{cases} \sum _{b \in \underline{I _l} \cap \underline{I _r}} \mathbb V [ \epsilon _{i _b} - \epsilon _{j _{q _2 ^{-1} b}} ] & (w ( I _r ) > w ( I _l )) \\ \{0\} & (w ( I _r ) < w ( I _l )) \end{cases}.$$

\item {\bf Case 2)} ($w ( I _l ), w ( I _r ) < 0$) This means   $I_l,I_r\in D_-$. This case is exactly the same as {\bf Case 1} if we uniformly take the inverse of every weight. Therefore, we conclude that
$$\mathfrak p _{\tau} ^{l,r} = \begin{cases}\bigoplus _{b \in \underline{I _l} \cap \underline{I _r}} \mathfrak g [ \epsilon _{i _b} - \epsilon _{j _b} ] & (w ( I _l ) < w ( I _r))\\ 0 & (w ( I _l ) > w ( I _r ))\end{cases},$$
and
$$( \mathfrak t ^l \oplus \mathfrak t ^r \oplus \mathfrak p ^{l,r}_\tau ) v _{\tau} = \sum _{b \in \underline{I _l} \cap \underline{I _r}} \mathfrak g [ \epsilon _{i _b} - \epsilon _{j _b} ] v _{\tau} = \mathfrak t ^{l} v _{\tau} \oplus \mathfrak t ^{r} v _{\tau} \oplus V,$$
where
$$V = \begin{cases} \sum _{b \in \underline{I _l} \cap \underline{I _r}} \mathbb V [ \epsilon _{i _b} - \epsilon _{j _{q _2 ^{-1} b}} ] & (w ( I _r ) < w ( I _l )) \\ \{0\} & (w ( I _r ) > w ( I _l )) \end{cases}.$$

\item {\bf Case 3)} ($w ( I _l ) < 0, w ( I _r ) > 0$) This means 
  $I_l\in D_-$, $I_r\in D_+.$ We have $\epsilon _i - \epsilon _j \not\in w ^{-1} R ^+$ when $i \in I _l$ and $j \in I _r$. It follows that {$\mathfrak p _{\tau} ^{l,r} = \{ 0 \}$}. Therefore we have
$$( \mathfrak t ^l \oplus \mathfrak t ^r \oplus \mathfrak p ^{l,r}_\tau ) v _{\tau} = \mathfrak t ^{l} v _{\tau} \oplus \mathfrak t ^{r} v _{\tau}.$$

\item {\bf Case 4)} ($w ( I _l ) > 0, w ( I _r ) < 0$) This means
  $I_l\in D_+, $ $I_r\in D_-.$ We have $\epsilon _i - \epsilon _j \in w ^{-1} R ^+$ when $i \in I _l$ and $j \in I _r$. By a similar argument as in {\bf Case 1}, we deduce that
$$\mathfrak p _{\tau} ^{l,r} = \bigoplus _{b \in \underline{I _l} \cap \underline{I _r}} \mathfrak g [ \epsilon _{i _b} - \epsilon _{j _b} ],$$
where $i _b \in I _l, j _b \in I _r$ satisfies $\left< \epsilon _{i _b},s \right> = b = \left< \epsilon _{j _b},s \right>$. By assumption, we have $I _l \succ I _r$ only if $\underline{I _l} \subsetneq \underline{I _r}$. If $\min \underline{I _l} \le \min \underline{I _r}$, then we have $j _{q ^{-1} b ^-} = \emptyset$ for $b ^- = \min ( \underline{I _l} \cap \underline{I _r} )$. If $\min \underline{I _l} \ge \min \underline{I _r}$, then we have $i _{q b ^+} = \emptyset$ for $b ^+ = \max ( \underline{I _l} \cap \underline{I _r} )$.

The segment $I _l$ is marked if $q_1 \in \underline{I_l}$, while $I _r$ is never so. In particular, the vector $\sum _{i \in I _l \cup I _r} \delta ( i ) v _{\epsilon _i}$ is annihilated by $\mathfrak p _{\tau} ^{l,r}$.

Therefore, by a similar argument as in {\bf Case 1}, we have
$$( \mathfrak t ^l \oplus \mathfrak t ^r \oplus \mathfrak p ^{l,r}_\tau ) v _{\tau} = \mathfrak t ^{l} v _{\tau} \oplus \mathfrak t ^{r} v _{\tau} \oplus V,$$
where
$$V = \begin{cases} \sum _{b \in \underline{I _l} \cap \underline{I _r}} \mathbb V [ \epsilon _{i _b} - \epsilon _{j _{q _2 ^{-1} b}} ] & (\min \underline{I _l} {\ge} \min \underline{I _r}) \\ \sum _{b \in \underline{I _l} \cap \underline{I _r}} \mathbb V [ \epsilon _{i _{q b}} - \epsilon _{j _b} ] & (\min \underline{I _l} {\le} \min \underline{I _r}) \end{cases}.$$

\item We can rephrase the conclusion of the above case-by-case calculations as follows:
\begin{itemize}
\item $\mathfrak t ^l v _{\tau}$ is a sum of $T$-weight spaces of ${} ^{w} \mathbb V (a)$ of weight $\epsilon _i$ or $\epsilon _{i} - \epsilon _{j}$ such that $i,j \in I _l$;
\item $\mathfrak p _{\tau} ^{l,r} v _{\tau}$ is a sum of $T$-weight spaces of ${} ^{w} \mathbb V (a)$ of weight $\epsilon _{i} - \epsilon _{j}$ such that $i \in I _l$ and $j \in I _r$.
\end{itemize}
From this, we deduce that
$$\mathfrak p _{\tau} v _{\tau} = \mathfrak t \oplus \sum _{l,r}
\mathfrak p ^{l,r} v _{\tau} = {} ^{w} \mathbb V (a).$$
\end{proof}

We have a natural identification $\mathfrak p _{\tau} v _{\tau} = T _{v _{\tau}} ( P _{\tau} v _{\tau} )$ (the RHS must be read as the tangent space of $P _{\tau} v _{\tau}$ at $v _{\tau}$). We deduce that
$$\dim P _{\tau} v _{\tau} = \dim {} ^{w} \mathbb V (a).$$

Since $P _{\tau} v _{\tau} \subset {} ^{w} \mathbb V (a)$, this forces $\overline{P _{\tau} v _{\tau}} = {} ^{w} \mathbb V (a)$, which implies the result.
\end{proof}

\begin{definition}Let $L$ be an irreducible $\mathbb H$-module. We define the $R ( T )$-character of $L$ as a formal linear combination
\begin{equation}
\mathsf{ch} L := \sum _{s \in \Psi ( L )} ( \dim R ( T ) _s \otimes _{R ( T )} L ) \left< s \right>,
\end{equation}
where $\left<s\right>$ is a formal symbol for each $s \in T$ and $R ( T )_s$ is the localization of $R ( T )$ with respect to the kernel of the evaluation map at $s$.
\end{definition}

\begin{corollary}
Let $\mathfrak M _{m}$ be the set of isomorphism classes of irreducible $\mathbb H _{n,m}$-modules. Assume that $m$ is generic. Then, the set $\{\mathsf{ch} L \mid L \in \mathfrak M _{m} \}$ is linearly independent.
\end{corollary}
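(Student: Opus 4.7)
The plan is to prove the linear independence by first reducing to a single central character, then for each marked partition $\tau$ producing a ``detector'' weight $w_\tau \cdot s^{-1}$ that occurs in $L_\tau$ but never in $L_{\tau'}$ unless $\tau' \le \tau$ in the closure order on $G(s)$-orbits in $\mathfrak N^a$. A standard minimality argument then closes the proof.

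First, I would exploit that irreducible $\mathbb H_{n,m}$-modules with distinct central characters have $R(T)$-weight supports inside distinct $W$-orbits in $T$, so any linear relation among the $\mathsf{ch} L$ splits according to central character. Thus it suffices to prove that $\{\mathsf{ch} L_\tau\}_{\tau \in \mathsf{MP}(a)/\mathfrak S_n^a}$ is linearly independent for a fixed positive real generic $a = (s,\vec q) \in \mathcal T_0$, with the irreducibles parametrized by Theorem \ref{eDL}.

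Next, for each $\tau$ I would construct $w_\tau \in W$ (modulo $W_s$) satisfying all the hypotheses of Proposition \ref{t:charfake}. After normalizing so that $\langle\epsilon_i,s\rangle$ is decreasing in $i$, the recipe is to fix a total order extending $\prec$ on segments (with a consistent tiebreak rule for segments sharing $\min \underline I$); then place the segments of $D_+$ in this order at positions $1,2,\ldots$ preserving the induced order within each segment, and analogously place the segments of $D_-$ at negative positions. Each of the four bullet-point hypotheses of Proposition \ref{t:charfake} is transparent from this recipe, the cross-$D_\pm$ case of the last condition being automatic because the $w$-values have opposite signs. The conclusion of Proposition \ref{t:charfake} is that $\mathcal O_\tau$ meets ${}^{w_\tau}\mathbb V(a)$ densely, which together with Corollary \ref{c:ftemp} places $w_\tau \cdot s^{-1}$ in $\Psi(L_\tau)$ with positive multiplicity.

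The detection property is then immediate: $G(s) {}^{w_\tau} \mathbb V(a)$ is closed in $\mathfrak N^a$ (as the image of the proper map $F_{w_\tau}^a \to \mathfrak N^a$) with unique open dense $G(s)$-orbit $\mathcal O_\tau$, hence equals $\overline{\mathcal O_\tau}$. By Proposition \ref{p:2.1}, for any other $\tau'$ one has
\begin{equation*}
w_\tau \cdot s^{-1} \in \Psi(L_{\tau'}) \subset \Psi(M_{\tau'}) \ \Longleftrightarrow\ \mathcal E^a_{v_{\tau'}}[w_\tau\cdot s^{-1}]\neq\emptyset \ \Longleftrightarrow\ \mathcal O_{\tau'} \subset \overline{\mathcal O_\tau},
\end{equation*}
i.e., only if $\tau' \le \tau$. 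Given any putative relation $\sum_\tau c_\tau \mathsf{ch} L_\tau = 0$, I would then pick $\tau_0$ minimal in the closure order among $\tau$ with $c_\tau \neq 0$: the coefficient of $\langle w_{\tau_0} \cdot s^{-1}\rangle$ collapses to $c_{\tau_0} \cdot \dim (R(T)_{w_{\tau_0}\cdot s^{-1}} \otimes L_{\tau_0}) \neq 0$, a contradiction.

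The main obstacle is the explicit construction of $w_\tau$: the four hypotheses of Proposition \ref{t:charfake} are each individually mild ordering conditions, but simultaneously arranging the interaction between $D_+$ and $D_-$ and the segments sharing $\min\underline I$ requires careful bookkeeping. Once a total order refining $\prec$ with a consistent tiebreak is fixed, however, the verification is routine.
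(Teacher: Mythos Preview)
Your proposal is correct and follows essentially the same route as the paper: reduce to a fixed central character, invoke Proposition \ref{t:charfake} to produce for each $\tau$ a $w_\tau$ such that $\mathcal O_\tau$ meets ${}^{w_\tau}\mathbb V(a)$ densely, then combine Corollary \ref{c:ftemp} (the weight $w_\tau\cdot s^{-1}$ occurs in $L_\tau$) with Proposition \ref{p:2.1} and the closedness of $G(s)\,{}^{w_\tau}\mathbb V(a)$ (it occurs in $L_{\tau'}$ only if $\mathcal O_{\tau'}\subset\overline{\mathcal O_\tau}$) to obtain a triangular system. The paper phrases the last step as triangularity with respect to a linear refinement of the closure order rather than your minimality argument, but these are equivalent; your explicit description of how to build $w_\tau$ from a total refinement of $\prec$ is exactly what is needed to check the hypotheses of Proposition \ref{t:charfake}, which the paper simply cites.
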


\begin{proof}
Since $\Psi ( L )$ is contained in the $W$-conjugacy class of a
central character of $L$ for each $L \in \mathfrak M _{m}$, it follows
that we can argue by fixing one central character $a = ( s, \vec{q}
)$. Let $L, L'$ be irreducible $\mathbb H _a$-modules with the
corresponding $G(a)$-orbits $\mathcal O, \mathcal O'$, respectively
(via the eDL correspondence). By Proposition \ref{t:charfake}, there
exists $w _L \in W$ such that ${}^{w _L} \mathbb V ^a \cap \mathcal O$
is an open dense subset of ${}^{w _L} \mathbb V ^a$. By Proposition
\ref{p:2.1}, we deduce that $w _L \cdot s ^{-1} \in \Psi ( L' )$ only
if $\mathcal O' \subset \overline{\mathcal O}$. We introduce a linear
order $\succ$ on the set of irreducible $\mathbb H _a$-modules such
that $L \succ L'$ if $\mathcal O' \subset \overline{\mathcal
  O}$. Thanks to Corollary \ref{c:ftemp}, we conclude that $\{ w _L
\cdot s ^{-1} \mid L \in \mathfrak M _{m} \}$ appears in $\{ \Psi ( L
) \mid L \in \mathfrak M _{m} \}$ triangularly with respect to
$\succ$, which implies the result. 
\end{proof}

\subsection{Cuspidal local systems in Spin groups}\label{sec:cuspidal} In this section, we
explain the constructions and algorithms of Lusztig and Slooten and
the relation with our setting.

Let $\overline{\mathbb H}_{n,m}$ be the affine graded Hecke algebra
as in Definition \ref{gradedC}, with
parameters normalized as:
\begin{align}
&\xymatrix@R=4pt{
& &*{1}  &*{1}  & *{1} &  & *{1} & *{1} & *{m}\\
& & *{\circ} \ar@{-}[r] & *{\circ} \ar@{-}[r] & *{\circ} \ar@{-}[rr]|{\cdots\cdots} &  & *{\circ} \ar@{-}[r] & *{\circ} \ar@{=}[r]|{<} & *{\circ}
}, \\\notag
&\text{ where } 
4m\equiv 1 \text{ or } 3\text{ mod } 4.
\end{align}
Define $X_\ell$ to be the set of nilpotent orbits in $so(\ell)$
parameterized by partitions containing odd parts with multiplicity
one, and even parts with even multiplicity. For every nilpotent orbit
$\mathcal O\subset so(\ell)$ given by the partition 
$(a_1,\dots, a_s)$ define the defect of $\mathcal O$
\begin{equation}
d(\mathcal O):=\sum_{i=1}^s d(a_i), \text{ where }
d(a_i)=\left\{\begin{matrix} 1, &\text{ if } a_i\equiv 1 \text{ mod
    }4\\
                           0, &\text{ if } a_i\equiv 0,2 \text{ mod
    }4\\
                           -1, &\text{ if } a_i\equiv 3 \text{ mod
    }4.\end{matrix}\right.
\end{equation}
For every $d\in \mathbb Z,$ set $X_{\ell,d}$ to be the set of elements
in $X_\ell$ of defect equal to $d.$ Then one has
\begin{equation}
X_\ell=\bigcup_{d\in\mathbb Z, 4|(\ell-d)} X_{\ell,d}.
\end{equation}
The generalized Springer correspondence (\cite{L0}) for the cuspidal
local systems in $Spin(\ell)$ which do not factor through $SO(\ell)$
takes the following combinatorial form.

\begin{theorem}[\cite{L0,LS}]\label{t:spin} There is a one to one correspondence
$$X_{\ell,d}\longleftrightarrow \mathsf{Irrep} W_{n}, \text{ where } n=\frac{\ell-d(2d-1)}4.$$
\end{theorem}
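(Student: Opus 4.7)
The plan is to prove the theorem by constructing an explicit bijection between $X_{\ell,d}$ and the set of bipartitions of $n$, the latter being the canonical indexing of $\mathsf{Irrep}~W_n$.

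First I would decompose any $\lambda \in X_{\ell,d}$ using the defining constraints. The odd parts of $\lambda$ are all distinct, so one can write those congruent to $1 \pmod 4$ as $4 e_i + 1$ with $e_1 > \ldots > e_{r_+} \geq 0$, and those congruent to $3 \pmod 4$ as $4 f_j + 3$ with $f_1 > \ldots > f_{r_-} \geq 0$. The defect identity $d = r_+ - r_-$ is then immediate. The even parts, each distinct value $2c$ appearing with even multiplicity $2 m_c$, assemble into a third partition $\mu$ containing $m_c$ copies of $c$. Substituting into $\ell = |\lambda|$ gives
$$\frac{\ell - d}{4} = \sum_i e_i + \sum_j f_j + |\mu| + r_-.$$
Setting $\alpha _i := e _i - ( r_+ - i )$ and $\beta _j := f _j - ( r_- - j )$ (the standard rectification of strictly decreasing sequences to partitions), together with the elementary identity $\binom{r_+}{2} + \binom{r_-}{2} + r_- - \binom{d}{2} = r_+ r_-$, one obtains
$$|\alpha| + |\beta| + |\mu| + r_+ r_- = n.$$

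The main step is to translate the triple $(\alpha, \beta, \mu)$ together with the defect data into a bipartition of $n$. My approach would be via $\beta$-numbers: the strictly decreasing sequences $(e_i)$ and $(f_j)$, enlarged by matched pairs of entries coming from the parts of $\mu$ (inserted at positions determined by a staircase shift), produce two strictly increasing sequences whose sizes differ by $|d|$. This is precisely a symbol of defect $|d|$ and rank $n$, which parameterizes an irreducible $W_n$-representation in Lusztig's combinatorics. The main obstacle is verifying that this assignment is a genuine bijection: the inverse map must extract $(e_i, f_j, \mu, r_{\pm})$ from an arbitrary defect-$|d|$ symbol of rank $n$ by separating the odd-contribution entries (which are distinct within their sequence) from the even-contribution entries (which appear in matched pairs across the two sequences). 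Parity considerations, together with the constraint on defect, force this splitting to be unique, so $\lambda \in X_{\ell,d}$ is uniquely reconstructed.

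Finally, I would cross-check this bijection against the explicit algorithm of Lusztig--Spaltenstein \cite{LS} which describes the generalized Springer correspondence for the cuspidal local system on $\mathop{Spin}(\ell)$. That algorithm assigns to each distinguished nilpotent orbit in $\mathfrak{so}(\ell)$, i.e. each element of $X_\ell$, a symbol of the above form for $W_n$; matching this geometric assignment with the combinatorial bijection constructed above completes the proof.
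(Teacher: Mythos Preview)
The paper does not prove this theorem; it is quoted from \cite{L0,LS} as a known instance of the generalized Springer correspondence for the cuspidal local systems on $\mathop{Spin}(\ell)$ that do not factor through $SO(\ell)$. The only comment the paper offers (in the remark immediately following the statement) is that the equicardinality of the two sides can be seen by a generating-function argument from \cite{L0}; it then records the explicit Lusztig--Spaltenstein recipe (Algorithm~\ref{al:ls}) which realises the map $X_{\ell,d}\to\mathsf{Irrep}\,W_n$ concretely, but no proof of bijectivity is given in the paper itself.

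Your outline is a reasonable reconstruction of how such a bijection is built in the cited sources, and your size computation is correct (the identity $\binom{r_+}{2}+\binom{r_-}{2}+r_- - r_+r_- = \binom{d}{2}$ with $d=r_+-r_-$ checks out). The one place where your sketch is genuinely thin is the handling of the even parts: you say the parts of $\mu$ are ``inserted at positions determined by a staircase shift'' into the two $\beta$-sequences, and that on the inverse side ``parity considerations \ldots\ force this splitting to be unique''. That is precisely the step that needs work --- one must show that the matched-pair insertions and the odd-part contributions can be disentangled from an arbitrary symbol --- and the phrase you use is an assertion, not an argument. But since the paper supplies no proof at all, there is nothing to compare against; your proposal already goes further than what the paper does.
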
 

\begin{remark}

\begin{enumerate}
\item It is not hard to see using a generating functions argument
(\cite{L0}) that the two sets in Theorem \ref{t:spin} have the same
cardinality. Moreover, a slight modification of that argument shows that the number of
distinguished orbits in $X_{\ell,d}$ equals $P(n)$, the number
of partitions of $n.$ 
\item In the generalized Springer correspondence in this setting,
  there is a unique local system on each orbit in $X_{\ell,d}$ which
  enters, and this is why the correspondence can be regarded as one
  between orbits and Weyl group representations.
\item The relation between $\overline{\mathbb H}_{n,m}$ and $X_{\ell,d}$ is given by
\begin{equation} 4n+d(2d-1)=\ell,\ d=-d(4m)[m+1/4].
\end{equation}
\end{enumerate}
\end{remark}

The left to right map in Theorem \ref{t:spin} is given by an
explicit algorithm  which we recall now. We use the notation for
$\mathsf{Irrep}~W_n$ from Remark \ref{r:bipartitions}.

\begin{algorithm}[\cite{LS}]\label{al:ls}Let $\lambda=(\lambda_1,\lambda_2,\dots,\lambda_k)$ be a
partition of $\ell$ of defect $d.$ Here, the convention is $0<\lambda_1<\lambda_2<\dots<\lambda_k.$ We will produce inductively a
bipartition $\rho(\lambda)$ of $n=\frac {\ell-d(2d-1)}4$, which
parameterizes an element of $\mathsf{Irrep} W_n.$ Define the (smaller)
partition $\mu$ as follows:
\begin{enumerate}
\item[(i)] if $\lambda_p$ is odd, then set
  $\mu=(\lambda_1,\dots,\lambda_{p-1})$;
\item[(ii)] if $\lambda_p$ is even, then set $\mu=(\lambda_1,\dots,\lambda_{p-2})$.
\end{enumerate}
By induction $\rho(\mu)$ is known, say it is of the form
$\rho(\mu)=\{(\gamma)(\delta)\}$ for {some (ordered) pair of partitions}
$(\gamma),(\delta).$

\begin{enumerate}
\item[(a)] $d(\lambda_p)=0$ ($\lambda$ is even). Set
  $e=[(\lambda_p+2)/4]-d(\mu)$ and $f=[\lambda_p/4]+d(\mu).$ (Note that
  $e+f=\frac {\lambda_p}2$.)
\begin{enumerate}
\item[(a1)] If $d(\mu)>0$, set $\rho(\lambda)=\{(\gamma,e)(\delta,f)\}.$
\item[(a2)] If $d(\mu)\le 0$, set $\rho(\lambda)=\{(\gamma,f)(\delta,e)\}.$
\end{enumerate}

\item[(b)] $d(\lambda_p)=1$ ($\lambda_p\equiv 1$ mod $4$). Set
  $e=\frac {\lambda_p-1}4-d(\mu).$

\begin{enumerate}
\item[(b1)] If $d(\mu)>0,$ set $\rho(\lambda)=\{(\gamma,e)
  (\delta)\}$.
\item[(b2)] If $d(\mu)=0,$ set $\rho(\lambda)=\{(\delta,e)
  (\gamma)\}.$
\item[(b3)] If $d(\mu)<0,$ set $\rho(\lambda)=\{(\gamma)
  (\delta,e)\}.$ 
\end{enumerate}

\item[(c)] $d(\lambda_p)=-1$ ($\lambda_p\equiv 3$ mod $4$). Set
  $e=\frac{\lambda_p-3}4+d(\mu)$.

\begin{enumerate}
\item[(c1)] If $d(\mu)>1$, set $\rho(\lambda)=\{(\gamma)(\delta,e)\}.$
\item[(c2)] If $d(\mu)=1$, set $\rho(\lambda)=\{(\delta,e)(\gamma)\}.$
\item[(c3)] If $d(\mu)<1,$ set $\rho(\lambda)=\{(\gamma,e)(\delta)\}.$
\end{enumerate}

\end{enumerate}

\end{algorithm}

\begin{theorem}[\cite{L6}]\label{t:spintemp} The tempered modules of $\overline{\mathbb H}_{n,m}$ with
  positive real central character are parameterized by the orbits in
  $X_{\ell,d}.$ The discrete series of $\overline{\mathbb H}_{n,m}$ with positive real central
  character are parameterized by the distinguished orbits in
  $X_{\ell,d}$. In particular, there are $\# P(n)$ discrete series.
\end{theorem}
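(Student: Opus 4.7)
The plan is to deduce the parameterization from Lusztig's K-theoretic realization of $\overline{\mathbb H}_{n,m}$ from cuspidal local systems, combined with the standard Deligne-Langlands-Lusztig (DLL) criterion for temperedness and discreteness, and conclude with a generating-function count.

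First, I would invoke the fact that, for the normalization of parameters in question, $\overline{\mathbb H}_{n,m}$ arises as the endomorphism algebra of a direct summand of $K^{\mathrm{Spin}(\ell) \times \mathbb C^\times}$ of a Steinberg-type variety built from the nilpotent cone of $\mathfrak{so}(\ell)$ and the unique cuspidal local system on a suitable Levi (\cite{L2,L3,L6}). Under this identification, finite-dimensional irreducible $\overline{\mathbb H}_{n,m}$-modules with a fixed semisimple parameter $s_0 \in \mathfrak t$ are parameterized by pairs $(\mathcal O,\psi)$, where $\mathcal O$ is a $Z_G(s_0)$-orbit on $\{N \in \mathcal N(\mathfrak{so}(\ell)) : [s_0,N] = 2N\}$ and $\psi$ is an irreducible equivariant local system on $\mathcal O$ that occurs in the convolution with the fixed cuspidal datum. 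The rigidity of the cuspidal local system in type $D$ (see \cite{L0}) forces $\psi$ to be uniquely determined, so the parameter set reduces to orbits, which by the generalized Springer correspondence (Theorem \ref{t:spin}) identifies with $X_{\ell,d}$.

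Second, to cut out the tempered modules, I would apply the Casselman criterion in the DLL framework: an irreducible module with positive real central character $s_0$ is tempered if and only if $s_0$ is (up to $W$-conjugacy) the semisimple part of a Jacobson--Morozov triple $\{e,h,f\}$ with $e \in \mathcal O$, i.e.\ $s_0 = h/2$ (cf.\ \cite{KL2, L2}). Since every orbit in $X_{\ell,d}$ admits a unique such Jacobson--Morozov data, we obtain exactly one tempered module per orbit in $X_{\ell,d}$. Under the reduction of \S \ref{realred}, this can be transferred to the affine Hecke algebra setting in question. This establishes the first assertion.

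Third, for the discrete series assertion, I would check that within the tempered spectrum parametrized above, strict Casselman inequalities correspond exactly to distinguishedness of $\mathcal O$ in the Bala--Carter sense. This is the point that requires genuine care: the matching goes through the observation that the weighted Dynkin diagram of a distinguished $\mathcal O$ has all labels $\in \{0,2\}$, which, combined with the unequal-parameter Casselman inequalities (weighted by the function $\mu$ in Definition \ref{d:graded}), strictly separates the module from the tempered walls. I expect this to be the main obstacle, because the unequal parameter $m$ interacts non-trivially with the labeling at the short root, and one must check the strict inequalities uniformly in the generic range $4m \equiv 1,3 \pmod 4$ — in particular one must rule out accidental wall-crossings as $m$ varies, which is handled by the openness of genericity.

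Finally, the count $\#P(n)$ is obtained by a direct generating-function argument, as remarked after Theorem \ref{t:spin}: the multiplicities of parts in a partition $\lambda$ contributing to $X_{\ell,d}^{\mathrm{dist}}$ are constrained (no even part occurs with multiplicity $\ge 2$, so all parts are distinct), and the resulting generating function $\prod_{k\ge 1}(1+x^k)$-type identity, after accounting for the defect condition $4n + d(2d-1) = \ell$, collapses to the partition generating function $\prod_{k\ge 1}(1-x^k)^{-1}$ for $n$, giving $\#P(n)$.
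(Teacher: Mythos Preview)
The paper does not supply its own proof of this statement: Theorem \ref{t:spintemp} is quoted directly from \cite{L6} with no argument given, and the count $\#P(n)$ is likewise only referred to via the generating-function remark after Theorem \ref{t:spin}. So there is no ``paper's proof'' to compare against; what can be assessed is whether your sketch plausibly reconstructs Lusztig's argument.

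Your outline is broadly in the right spirit---realize $\overline{\mathbb H}_{n,m}$ via the cuspidal datum on $\mathrm{Spin}(\ell)$, use the DLL/Lusztig geometric parameterization, and invoke the uniqueness of the relevant local system per orbit (which is exactly the content of Remark (2) after Theorem \ref{t:spin})---but several steps are imprecise. First, the temperedness criterion in the cuspidal setting is not simply ``$s_0=h/2$ for a Jacobson--Morozov triple in $\mathfrak{so}(\ell)$'': the central character of $\overline{\mathbb H}_{n,m}$ lives in the Cartan of the type-$C_n$ root system, and one must identify it with the projection of $h/2$ after subtracting the contribution of the cuspidal Levi's middle element; see \cite{L3}. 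Second, your discrete-series argument is the weakest point: the fact that distinguished orbits are even (Dynkin labels in $\{0,2\}$) is not by itself what forces strict Casselman inequalities. The correct mechanism in \cite{L3,L6} is that a tempered module is a discrete series iff it is not a proper parabolic induction, and geometrically this is equivalent to the nilpotent not lying in a proper Levi---i.e.\ distinguishedness in the Bala--Carter sense. Your remark about ``ruling out accidental wall-crossings as $m$ varies'' is a red herring: $m$ is fixed in the statement, determining $(\ell,d)$ once and for all. Finally, the generating-function step is fine in spirit but your description (``$\prod(1+x^k)$-type identity'') is not quite the right identity; the actual count of distinguished orbits in $X_{\ell,d}$ uses that such orbits are partitions of $\ell$ into distinct odd parts of fixed defect $d$, and matching this with $P(n)$ requires the specific bijection with hooks described in the paper's ``Partitions of $n$ to distinguished orbits'' paragraph.
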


In \cite{Sl}, a conjecture relating discrete series of $\overline{\mathbb
H}_{n,m}$, partitions of $n$, and Weyl group representations (a
Springer correspondence) was proposed. We explain this next.

\smallskip

{\bf Partitions of $n$ to distinguished orbits.} Let $\sigma$ be a partition of $n$. We think of $\sigma$ as left
justified Young tableau, with the length of rows decreasing, same as in \S
\ref{sec:alg}. Fill out the boxes of $\sigma$ starting at the left
upper corner with $m$ and increase by one to the right, and decrease
by one down. In this way, all the boxes on the diagonal have the entry
$m.$ Recall that $4 m\equiv 1$ or $3$ mod $4$. Let $\bar s_\sigma$ denote the collection of the absolute values of
the entries of $\sigma$ (with multiplicities), ordered
nonincreasingly. We think of $\bar s_\sigma$ as being a central character
for $\overline{\mathbb H}_{n,m}.$  (The connection with the previous sections is that $s_\sigma=q^{\bar s_\sigma}$ is a distinguished semisimple element.)

To $\bar s_\sigma$, we attach a distinguished nilpotent orbit $\mathcal
O_\sigma$ in $X_{\ell,d}$, $\ell=4n+d(2d-1),$  as
follows (we are thinking of $\mathcal O_\sigma$ as a partition of $\ell$ with
defect $d$). Let $\{m\}=m-[m]$ denote the fractional part of $m$. This
is either $1/4$ or $3/4.$ Start with the cuspidal part
$\lambda_c=\{4m-2,4m-6,\dots,4-4\{m\}\}.$ This is of the form
$\{3,7,11,\dots\}$ or $\{1,5,9,\dots\}$, depending if $\{m\}=1/4$ or
$3/4$, respectively. When $m=1/4,$ we have $\lambda_c=\emptyset.$ Note
that the defect of $\lambda_c$ is $d$, and the sum of entries in
$\lambda_c$ is $2(m+1/4)(m-1/4)=d(2d-1).$ Set $\lambda=\lambda_c.$ For
every hook in $\sigma,$ we will modify $\lambda$ so that the defect
remains the same, and the sum of entries increases by the four times
the length of
the hook. Assume there are $h$ hooks, $h\ge 1.$ For every hook $j$
starting from the exterior, denote by $r_j$ the entry in the right
extremity, and by $r_j'$ the bottom extremity. Note
that 
\begin{equation}
r_1>r_2>\dots>r_h\ge m\ge r_h'>r_{h-1}'>\dots>r_1',
\end{equation}
and that the length of the hook is $r_j-r_j'+1.$
Starting with the most interior hook, for every hook $j,$ there are
two cases:

\begin{enumerate}
\item if $r_j'\le 1/4,$ add to $\lambda$, $4r_j+2$ and $-4r_j'+2$ (they have
  opposite defect);
\item if $r_j'> 1/4,$ add to $\lambda$, $4r_j+2$, and remove $4r_j'-2$
  (they have same defect).
\end{enumerate}

The end partition $\lambda$ is $\mathcal O_\sigma.$ We summarize the
obvious properties of this construction.

\begin{claim}
The above procedure $\sigma\mapsto \mathcal O_\sigma$ is well-defined, and gives a distinguished orbit
in $X_{\ell,d}.$ Moreover, two different partitions give different
elements of $X_{\ell,d}.$
\end{claim}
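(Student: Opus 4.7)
The plan is to verify in sequence: (i) that each step of the construction is well-defined and $\lambda := \mathcal{O}_\sigma$ has the correct sum and parity structure; (ii) that $\lambda$ has defect $d$ and is distinguished in $X_{\ell,d}$; and (iii) that $\sigma$ is uniquely recovered from $\lambda$.

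For (i), the starting observation is that since $4m \equiv 1$ or $3 \pmod{4}$ and each $r_j, r_j'$ differs from $m$ by an integer, the quantities $4r_j + 2$, $4r_j' - 2$, and $-4r_j' + 2$ are all odd. Hence $\lambda_c$ and every part added or removed during the procedure is odd, so the condition on even parts of multisegments in $X_\ell$ holds vacuously. The sum of $\lambda_c$ is $d(2d-1)$, by summing the arithmetic progression $\{4-4\{m\}, 4-4\{m\}+4, \ldots, 4m-2\}$. Each hook of length $r_j - r_j' + 1$ contributes $4(r_j - r_j' + 1)$ to the sum in both cases (a) and (b), so $|\lambda| = d(2d-1) + 4n = \ell$.

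The main substance is checking that no illegal collision occurs. I would process the hooks in order $j = h, h-1, \ldots, 1$ and verify inductively that (A) the $\{4r_j + 2\}$ are mutually distinct and lie strictly above the cuspidal range $[4-4\{m\}, 4m-2]$ (since $r_1 > \cdots > r_h \geq m$); (B) when $r_j' \leq 1/4$, the value $-4r_j' + 2$ is positive, mutually distinct from the others of the same type (since the $r_j'$ are strictly ordered), and lies outside $\lambda_c$ because $-4r_j' + 2$ and the parts of $\lambda_c$ belong to complementary residue classes mod $4$ once $\{m\}$ is fixed; and (C) when $r_j' > 1/4$, the target $4r_j' - 2$ lies in $\lambda_c$ and has not yet been removed (distinct $r_j'$ give distinct removals). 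The remaining disjointness $4r_j + 2 \neq -4r_i' + 2$ reduces to $r_j + r_i' \neq 0$; this is impossible since $r_j + r_i' = 2m + \text{integer}$ and $m$ is a quarter-integer, not a half-integer.

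For (ii), a case-by-case check shows that each hook preserves the defect: in case (a), $r_j - r_j' \in \mathbb{Z}$ forces $4r_j + 2$ and $-4r_j' + 2$ to have opposite defects; in case (b), the added $4r_j + 2$ and removed $4r_j' - 2$ have the same defect. Hence $d(\lambda) = d(\lambda_c) = d$, so $\lambda \in X_{\ell,d}$. By the Bala--Carter description for $\mathfrak{so}(\ell)$, a nilpotent orbit whose partition consists solely of distinct odd parts is distinguished, so $\mathcal{O}_\sigma$ is distinguished. For (iii), I would invert the algorithm: the cuspidal piece $\lambda_c$ depends only on $m$, so from $\lambda$ one can compute $\lambda \setminus \lambda_c$ and $\lambda_c \setminus \lambda$. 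The former splits into the $\{4r_j+2\}_j$ (above the cuspidal range) and the $\{-4r_j'+2\}_{r_j' \leq 1/4}$ (in the odd progression complementary to $\lambda_c$, as established in (i)); the latter is $\{4r_j'-2\}_{r_j' > 1/4}$. Using the strict orderings among $r_j$ and among $r_j'$, one pairs the values into $(r_j,r_j')$ and reconstructs $\sigma$ via the diagonal-fill rule. The main obstacle is the collision-free step in (i), which hinges on carefully exploiting the exact starting point $4-4\{m\}$ of $\lambda_c$ and the quarter-integer nature of $m$.
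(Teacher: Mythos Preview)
Your proof is correct. The paper itself does not supply a proof of this claim: it is stated immediately after the sentence ``We summarize the obvious properties of this construction,'' and the text then passes directly to Example~\ref{ex:43321}. So there is no argument in the paper to compare against; you have filled in the omitted verification.

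A couple of minor remarks on your write-up. In step (B)/(C), the cleanest way to separate the two kinds of added parts in $\lambda\setminus\lambda_c$ is by residue modulo $4$: the parts $4r_j+2$ lie in the same residue class as the elements of $\lambda_c$ (namely $4m+2\equiv 4m-2\pmod 4$), while the parts $-4r_j'+2$ lie in the complementary odd class. You already use this for the disjointness from $\lambda_c$; it is also what makes the inversion in (iii) clean, since ``above the cuspidal range'' alone does not separate the two types when $-r_j'$ is large. Your parenthetical in (iii) does say this, so the argument goes through. Also, the order in which the hooks are processed is immaterial, since the additions and removals involve pairwise distinct elements; you might note this explicitly rather than fixing the order $j=h,\dots,1$.
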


\begin{example}\label{ex:43321}
Let us consider the example $n=13,$ $m=9/4$, and the partition
$\sigma=(4,3,3,2,1).$ Then $d=-2$ and $\ell=62.$ We view the partition
as:
\begin{figure}[h]\label{fig:43322}
\begin{large}
$$
\begin{tableau}
:.{\frac 94}.{\frac {13}4}.{\frac{17}4}.{\frac{21}4} \\
:.{\frac 54}.{\frac 94}.{\frac{13}4} \\
:.{\frac 14}.{\frac 54}.{\frac 94} \\
:.{-\frac 34}.{\frac 14} \\
:.{-\frac 74}\\
\end{tableau}$$
\end{large}
\caption{Partition $(4,3,3,2,1)$ for $\overline{\mathbb H}_{13,\frac
    94}$.}
\end{figure}

By the algorithm, we start with $\lambda_c=(3,7).$ There are three
hooks. The most interior
hook has $r_3=r_3'=9/4,$ so we add $11$ and remove $7$, and get
$\lambda=(3,11).$ Next $r_2=13/4,$ $r_2'=1/4$, so we add $15$ and $1$,
and get $\lambda=(1,3,11,15).$ Finally, $r_1=21/4$ and $r_1'=-7/4,$ so
we add $23$ and $9.$ Therefore $\mathcal O_\sigma=(1,3,9,11,15,23)$,
which is in $X_{62,-2}.$
\end{example}

Behind the reasoning for this algorithm is the fact that the middle
element of the nilpotent $\mathcal O_\sigma$ is obtained from the
central character $\bar s_\sigma$ and the middle element for the cuspidal
part.

\medskip

\noindent{\bf Partitions, distinguished orbits, and $W$-representations.}
Let us recall the conjecture of \cite{Sl}, and show that it is
equivalent to the \cite{LS} algorithm presented above.

\begin{algorithm}\label{al:sl}
Start with $\sigma$ a partition of $n$ viewed as before. We form a
bipartition $S_m(\sigma)=\{(\gamma)(\delta)\}$ of $n$ as follows. Begin by
setting $\gamma=\delta=\emptyset.$ Then find the largest in absolute
value entry in $\sigma$. (This is necessarily one of the extremities
of the first hook.) Remove all the boxes to the left of it in the same
row (including it), or all boxes above it in the same column
(including it). Let $x$ be the number of boxes removed. If they were
in the same row, append $x$ to $\gamma,$ if they were in the same
column, append $x$ to $\delta.$ Repeat the process until there are no
boxes left, or until there is a single box left. In the latter case,
if the entry in the single box left is positive, append $1$ to
$\gamma,$ if it is negative, append $1$ to $\delta.$ 
\end{algorithm}

\begin{example}
In Example \ref{ex:43321}, first we remove the boxes to the left of
$21/4,$ append $4$ to $\gamma,$ then the boxes to the left of $13/4$,
append $3$ to $\gamma,$ then the boxes to the left of $9/4,$ append $3$
to $\gamma,$ then the remaining boxes above $-7/4$, append $2$ to
$\delta,$ finally, the remaining box  $1/4$, so append $1$ to
$\gamma.$ So the bipartition $S_{9/4}(4,3,3,2,1)$ equals
$\{(4,3,3,1)(2)\}.$
\end{example}

\begin{claim}
For every $\sigma$ a partition of $n,$ and $4m\equiv 1,3$ mod $4$, the
$W_n$-representations (or rather, bipartitions of $n$) $S_m(\sigma)$
and $\rho(\mathcal O_\sigma)$ coincide. 
\end{claim}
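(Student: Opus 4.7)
The strategy is an induction on $n=|\sigma|$, matching the first move of Slooten's algorithm---removing a row or column of $\sigma$ according to whichever extremity has largest absolute value---with the first move of the Lusztig--Spaltenstein algorithm---extracting the largest part $\lambda_p$ of $\lambda=\mathcal O_\sigma$. The base case $n=0$ is trivial. The first structural observation to establish is that every part of $\mathcal O_\sigma$ is odd: since $4m\equiv 1,3\pmod 4$, the fractional parts satisfy $\{r_j\}=\{r_j'\}=\{m\}\in\{1/4,3/4\}$, so each hook contribution $4r_j+2$ or $-4r_j'+2$ is odd, and the cuspidal parts listed in $\lambda_c$ are odd by direct inspection. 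Consequently LS case (a) never arises, and the (b)/(c) dichotomy is governed by $\{m\}$ together with the sign of Slooten's selected extremity: positive extremities (triggering a Slooten row removal) and negative extremities (triggering a column removal) produce opposite parities. A direct computation using $r_1=m+\sigma_1-1$ and $r_1'=m+1-\sigma_1'$ shows that the appended value $e=(\lambda_p\mp 1)/4\pm d(\mu)$ in LS equals the row length $\sigma_1$ or column length $\sigma_1'$ removed by Slooten in the matching step.

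The principal obstacle is that the two subproblems generated by the first moves are not literally identical: Slooten produces a smaller partition $\sigma^{new}$ with shifted parameter $m^{new}=m\mp 1$, whereas LS produces the multiset $\mu=\lambda\setminus\{\lambda_p\}$, and typically $\mathcal O_{\sigma^{new}}\neq\mu$ because the cuspidal base $\lambda_c$ itself must adjust when the parameter shifts. To handle this, I would strengthen the inductive hypothesis to a statement valid for every intermediate multiset $\mu$ arising in the LS recursion, not only for those of the form $\mathcal O_{\sigma^{new}}$. The defect $d(\mu)$ serves as the key bookkeeping invariant: it records the running imbalance between positive-extremity and negative-extremity moves performed so far, and the subcase distinctions (1/2/3) in LS cases (b) and (c)---in particular the swap of the two bipartition slots in (b2) and (c2)---encode whether the next appended piece belongs to $\gamma$ or to $\delta$ in synchronization with Slooten's parallel run.

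The hard part will be carrying the swap bookkeeping through the full case analysis. One must verify three items in every subcase: that the computed $e$ agrees with the Slooten row/column length; that the cumulative parity of LS's slot-swaps matches Slooten's alternation pattern between $\gamma$ and $\delta$; and that the defect $d(\mu)$ at each step correctly records the offset between the two subproblems so that the strengthened inductive hypothesis applies to the LS residue. Each of these reduces to a finite, elementary verification once the strengthened invariant is stated precisely, and the induction then closes. No further geometric or representation-theoretic input is expected to be needed: the identity is a purely combinatorial reflection of the fact that both algorithms compute the same underlying bijection between partitions of $n$ and bipartitions of $n$.
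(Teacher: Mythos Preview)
Your overall plan---induct on $n$, match the first move of Slooten's algorithm with the extraction of the largest part $\lambda_p$ in the Lusztig--Spaltenstein recursion, check that the LS value $e$ equals the length of the removed row or column, and verify the placement into $\gamma$ versus $\delta$ agrees---is exactly the paper's strategy. Your observation that every part of $\mathcal O_\sigma$ is odd (so only LS cases (b) and (c) arise) is also correct and used in the paper.

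The misstep is your ``principal obstacle.'' You assert that typically $\mathcal O_{\sigma^{\mathrm{new}}}\neq\mu$ because the cuspidal base shifts when the parameter does; this is false. The paper checks directly that after removing the first row (case $\mathsf{max}=r_1$) and replacing $m$ by $m-1$, the resulting $\tau$ satisfies $\mathcal O_\tau=\mu$ on the nose, and likewise after removing the first column (case $\mathsf{max}=-r_1'$) with $m$ replaced by $m+1$. You can verify this in the paper's running example: with $\sigma=(4,3,3,2,1)$, $m=9/4$, removing the top row gives $\tau=(3,3,2,1)$ at $m=5/4$, and the hook construction yields $\mathcal O_\tau=(1,3,9,11,15)$, which is exactly $\mu=\mathcal O_\sigma\setminus\{23\}$. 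The point is that the loss of the top cuspidal part $4m-2$ when passing from $\lambda_c(m)$ to $\lambda_c(m-1)$ is precisely compensated by the rearrangement of the hook extremities of $\tau$ versus those of $\sigma$. So no strengthening of the inductive hypothesis is needed, and the induction closes immediately.

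There is one genuine subtlety you have not addressed: in the row-removal case, if $m\in\{1/4,3/4\}$ then $m-1<0$ and $\tau$ is no longer a partition with positive corner parameter. The paper handles this by passing to the transpose $\bar\tau$ with parameter $1-m>0$, using that $S_m(\tau)$ and $S_{1-m}(\bar\tau)$ differ by swapping the two slots of the bipartition, and checking $\mathcal O_{\bar\tau}=\mu$. This transpose trick, not a global strengthening of the induction, is the actual extra ingredient required.
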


In other words, the algorithm
 for the Springer correspondence of Lusztig-Spaltenstein coincides
 with the algorithm of Slooten in the case when $4m\equiv
 1,3$ mod $4.$

\begin{proof} We prove this statement by induction on $n$, the size of
  the Young tableau. The base $n=2$ is straightforward. Let $\sigma$
  be a partition of $n>2$ viewed as a Young tableau, and $\mathcal
  O_\sigma=(\lambda_1<\dots<\lambda_k)$ the orbit
  constructed before. Since this is a distinguished orbit, only cases
  (b) and (c) of Algorithm \ref{al:ls} enter. The largest entry
  in absolute value $\mathsf{max}$ is given by one of the extremities $r_1$ or
  $-r_1'$ (if $r_1'<0$) of the first
  hook. It corresponds to $\lambda_k$: $\lambda_k=4\mathsf{max}+2.$
  There are two cases.

a) Assume $\mathsf{max}=r_1.$ Then $d(\lambda_k)=-d(4m).$ Let $e'$ be the number of boxes on the
first row in $\sigma,$ so $e'=r_1-m+1.$ In \ref{al:ds}, one forms $r=[\frac
{\lambda_k}4]-d(\lambda_k) (d(\mathcal
O_\sigma)-d(\lambda_k))=[r_1+\frac 12]+d(4m)(-d(4m))([m+\frac
14]-1)=[e'+m-1+\frac 12]-[m+\frac 14]+1=e'+[m+\frac 12]-[m+\frac
14]=e'.$ So $e=e'.$ In \ref{al:sl}, $e'$ is placed in the left side
of the bipartition. We check that in \ref{al:ls}, $r$ is also placed
in the left side of the bipartition. There are two subcases: if
$d(4m)=1,$ then $d(\lambda_k)=-1,$ and so, in \ref{al:ls} (c),
$d(\mu)=-[m+\frac 14]+1\le 1$, so we are in the cases (c2,c3); if $d(4m)=-1,$ so $m\ge \frac 34$,
$d(\lambda_k)=1,$ and so, in \ref{al:ls} (b), $d(\mu)=[m+\frac
14]-1\ge 0,$ so we are in cases (b1,b2).

Now let $\tau$ be the Young tableau obtained
after removing the first row. The entry in the left upper corner is
$m-1,$ which is positive unless $m=1/4,3/4.$ If $m>1,$ we can regard
$\tau$ as a partition of $n-e$ and with $m-1$ instead of $m$. It is immediate that the corresponding
$\mathcal O_\tau$ is the same as $\mu$ in \ref{al:ls}. By induction
if $Sl(\tau)=\{(\gamma)(\delta)\}$, then
$\rho(\mu)=\{(\gamma)(\delta)\},$ and we are done. 

So consider the cases $m=1/4$ or $m=3/4.$ Then $m-1<0.$ Let $\bar\tau$
be the Young tableau which is obtained from $\tau$ by
  first taking the transpose tableau and then multiplying all the
  entries by $(-1)$. The
left upper corner of $\bar\tau$ has entry $1-m>0,$ so we can regard
$\bar\tau$ as a partition of $n-e$ for $1-m$ (not $m$), and associate $\mathcal
O_{\bar\tau}.$ Note that if $Sl(\tau)=\{(\delta)(\gamma)\},$ then
$Sl(\bar\tau)=\{(\gamma)(\delta)\}.$ The only observation left to
make is that $\mathcal O_{\bar\tau}=\mu$, where
$\mu=(\lambda_1,\dots,\lambda_{k-1})$. This follows easily from the
algorithm for $\mathcal O_{\sigma}$ and $\mathcal O_{\bar\tau}.$
b) Assume $\mathsf{max}=-q_1'>0.$ Then $d(\lambda_k)=d(4m).$ By the
same argument as in case a), one shows that $r'$, the number of boxes
in the first column, equals $r$ from \ref{al:ls}. In \ref{al:sl}, $r'$ is placed in the right side
of the bipartition. We check that in \ref{al:ls}, $e$ is also placed
in the right side of the bipartition. There are two subcases: if
$d(4m)=-1,$ so that $m\ge \frac 34$, then $d(\lambda_k)=-1,$ and so, in \ref{al:ls} (c),
$d(\mu)=[m+\frac 14]+1> 1$, so we are in the cases (c1); if $d(4m)=1,$ 
$d(\lambda_k)=1,$ and so, in \ref{al:ls} (b), $d(\mu)=-[m+\frac
14]-1< 0,$ so we are in case (b3).

If $\tau$ is the Young tableau obtained after removing the first
column, the entry in the upper left corner is $m+1>0,$ so we can
regard $\tau$ as a partition for $n-e$ and with $m+1$ instead of $m$. It is immediate that the corresponding
$\mathcal O_\tau$ is the same as $\mu$ in \ref{al:ls}. By induction
if $Sl(\tau)=\{(\gamma)(\delta)\}$, then
$\rho(\mu)=\{(\gamma)(\delta)\},$ and this concludes this case.

\end{proof}

Since we showed these algorithms yield the same $W$-representation,
let us denote it by $\rho(\sigma).$ There are two particular cases
worth mentioning. 

Assume that
$(\sigma,m)$ are such that the Springer
correspondence algorithm picks only rows, and so
$\rho(\sigma)=\{(a_1,\dots,a_k)(\emptyset)\}$, or only columns, and so
$\rho(\sigma)=\{(\emptyset)(b_1,\dots,b_k)\}.$ These are the cases we
referred as positive, respectively negative, ladders, and so by the
previous results, the discrete series representation with central
character $s_\sigma$ is irreducible as an $W$-module, and equals
$\rho(\sigma)\otimes\mathsf{sgn}.$ {(The tensoring with $\mathsf{sgn}$
is a normalization, so that the Steinberg module is $\mathsf{sgn}$ as
a $W$-representation.)}

The second particular case is that for discrete series which contain
the $\mathsf{sgn}$ $W$-representation. By the previous results,
if the hook extremities of $\sigma$ are 
$r_1>r_2>\dots>r_h\ge m\ge r_h'>r_{h-1}'>\dots>r_1',$ then in order
for $s_\sigma$ to contain the $\mathsf{sgn}$, they must satisfy:
$$r_1>-r_1'>r_2>-r_2'>\dots>r_h>-r_h'.$$
This means that $\rho(\sigma)$ is obtained as follows: remove the
first row, then the first column in the remaining tableau, then remove
the first row remaining, then the first column etc.

We remark that in these two cases, the $W$-representation attached to
$\sigma$ by the exotic Springer correspondence coincides with the
$W$-representation attached to $\sigma$ by the algorithms of
Lusztig-Spaltenstein and Slooten.

\subsection{$W$-independence of tempered modules}
Using the geometric realization and results of Lusztig for the graded
Hecke algebras arising from cuspidal local systems, one was able to
prove in \cite{Ci} a certain independence result for tempered modules
with positive real central characters. This is a generalization of the similar
result of Barbasch-Moy for Hecke algebras with equal parameters, and
it is a Hecke algebra analogue of Vogan's lowest $K$-types.

Retain the notation from \S \ref{sec:cuspidal}. We formulate this result in the setting of the graded Hecke algebra
$\overline{\mathbb H}_{n,m}$ from \S \ref{sec:cuspidal}, with
$4m\equiv 1,3$ (mod $4$). For every tempered module $\pi$ with positive real
central character, which by
Theorem \ref{t:spintemp}, corresponds to an orbit $\mathcal O_\pi\in X_{\ell,d},$  let
$\rho(\pi)$ be the generalized Springer correspondence
$W$-representation attached to $\mathcal O_\pi.$ The following result
follows from the fact that any other $W$-type appearing in the
restriction $\pi|_W$ is attached in the generalized Springer
correspondence to an orbit larger than $\mathcal O_\pi$ in the closure ordering.

\begin{proposition}[cf. \cite{Ci}]\label{p:linW} 
\begin{enumerate}
\item There is a bijection $\pi\mapsto
  \rho(\pi)$ between tempered modules $\overline{\mathbb H}_{n,m}$,
  $4m\equiv 1,3$ (mod $4$) with positive real central character and
  $\mathsf{Irrep}~ W.$ 
\item The set of positive real tempered $\overline{\mathbb H}$-modules viewed in $R(W)$ is linearly
  independent. Moreover, in the ordering coming from the closure
  ordering in $X_{\ell,d}$, the change of basis matrix to $\mathsf{Irrep}~ W$ is
  uni-triangular.
\end{enumerate}
\end{proposition}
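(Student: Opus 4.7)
The plan is to reduce both assertions to already-established geometric facts about the generalized Springer correspondence for the cuspidal local systems on $\mathrm{Spin}(\ell)$ appearing in \S \ref{sec:cuspidal}. For part (1), I would simply compose the two bijections already recorded: Theorem \ref{t:spintemp} identifies positive real tempered $\overline{\mathbb{H}}_{n,m}$-modules with orbits in $X_{\ell,d}$, and Theorem \ref{t:spin} identifies $X_{\ell,d}$ with $\mathsf{Irrep}~W_n$. The composition produces the map $\pi \mapsto \rho(\pi)$, and bijectivity is immediate. Note that this bijection depends on the parameter $m$ only through the integer $d$, which has already been built into the parameterization, so there is nothing further to check.

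For part (2), I would study the $W$-module structure of each tempered module $\pi$ through its geometric realization as (equivariant) Borel--Moore homology of a Springer-type fiber with coefficients in the relevant cuspidal local system, in the sense of Lusztig \cite{L6}. The key structural input is that in this realization, the irreducible $W$-representation $\rho(\pi) = \rho(\mathcal{O}_\pi)$ appears in $\pi|_W$ with multiplicity exactly one, and any other irreducible $W$-constituent of $\pi|_W$ corresponds via the generalized Springer correspondence to some orbit $\mathcal{O}'$ with $\mathcal{O}_\pi \subsetneq \overline{\mathcal{O}'}$. This follows from the BBD decomposition theorem applied to the pushforward of the relevant local system under the Springer map, combined with the Ginzburg-type analysis of the standard modules performed in \cite{L3,L6}.

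Assuming this, list the tempered modules $\pi_1, \pi_2, \ldots$ in an order compatible with the closure ordering on $X_{\ell,d}$ (smaller orbits first). Then the matrix $A$ whose $(i,j)$ entry is the multiplicity of $\rho(\pi_j)$ in $\pi_i|_W$ has $1$ on the diagonal and zeros below it. Hence $A$ is upper unitriangular, which proves the unitriangularity claim, and in particular shows that $\{\pi_i|_W\}$ is linearly independent in $R(W)$.

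The main obstacle is the ``lowest $W$-type'' property: that $\rho(\pi)$ appears exactly once in $\pi|_W$ and all other constituents come from strictly larger orbits. This is not tautological; it rests on Lusztig's detailed analysis of the geometric modules for Hecke algebras with unequal parameters arising from cuspidal local systems, and on the specific form of the decomposition theorem that ensures $\mathsf{IC}(\mathcal{O}_\pi)$ contributes the irreducible Springer representation $\rho(\pi)$ with no shift. This is exactly the analogue for the generalized Springer correspondence of the Barbasch--Moy result in the equal-parameter setting, and the argument carried out in \cite{Ci} goes through with only the obvious substitution of the generalized Springer correspondence for the classical one; hence the attribution ``cf. \cite{Ci}''.
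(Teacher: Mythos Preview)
Your proposal is correct and matches the paper's own treatment: the paper does not give a detailed proof but simply states that the result ``follows from the fact that any other $W$-type appearing in the restriction $\pi|_W$ is attached in the generalized Springer correspondence to an orbit larger than $\mathcal O_\pi$ in the closure ordering,'' citing \cite{Ci}. Your argument unpacks exactly this, composing Theorems \ref{t:spin} and \ref{t:spintemp} for part (1) and invoking the lowest-$W$-type unitriangularity from Lusztig's geometric construction for part (2).
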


Theorem \ref{eDL} allows us to extend this result to all generic
real positive
parameters $\vec q=(-1,q^{m},q)$, for the affine Hecke algebra
of type $C_n$. (Here we use
implicitly the
correspondence between the affine Hecke algebra and the graded Hecke
algebra for positive real central characters.) 

\begin{corollary}\label{c:lin} The set of tempered $\mathbb H_a$-modules for
  generic positive real $a$ is $W$-independent in $R(W).$ Moreover, in the
  ordering coming from the generalized Springer correspondence, the
  change of basis matrix to $\mathsf{Irrep} W$ is uni-triangular.  
\end{corollary}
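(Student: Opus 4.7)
My plan is to reduce Corollary \ref{c:lin} to Proposition \ref{p:linW} by combining Lusztig's reduction of \S \ref{realred} with a rigidity argument that propagates the graded Hecke algebra result from quarter-integer parameters to all generic real parameters.

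Since $a \in \mathcal G_0$ is positive real, its semisimple part $s$ has only positive real eigenvalues on $V_1$, so $s_e = 1$, $R_{s_e} = R$, and Corollary \ref{c:Lred} (together with Theorem \ref{t:Lred}) produces a bijection between $\mathsf{Irrep}\,\mathbb H_a$ and $\mathsf{Irrep}\,\overline{\mathbb H}_{n,m'}$ for an appropriate real $m' = m'(a)$. This bijection preserves the $W$-module structure and temperedness by the discussion after Corollary \ref{c:Lred}, so it suffices to prove $W$-independence and uni-triangularity for tempered modules of $\overline{\mathbb H}_{n,m'}$ with positive real central character. When $4m' \equiv 1$ or $3 \pmod 4$, Proposition \ref{p:linW} supplies exactly this: the bijection $\pi \mapsto \rho(\pi)$ onto $\mathsf{Irrep}\,W$ via the generalized Springer correspondence of Lusztig-Spaltenstein, the linear independence in $R(W)$, and the uni-triangular change-of-basis matrix with respect to the closure order on $X_{\ell,d}$.

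To handle arbitrary generic real $m'$, I would argue by rigidity within each generic chamber $(k, k + \frac 12)$, $k \in \frac 12 \mathbb Z$. The marked-partition description of $G(a)\backslash \mathbb V^a$ in \S \ref{sec:1.3} depends only on which chamber $m'$ lies in, and the dimension formulas of Corollary \ref{stab} combined with Corollary \ref{Green} and the geometric multiplicity formula of Theorem \ref{mult} imply that, as $m'$ varies over a fixed chamber, both the enumeration of tempered modules and the individual $W$-characters of each $L_{(a,X)}$ remain constant. Since every generic chamber contains a quarter-integer $m'$ with $4m' \equiv 1, 3 \pmod 4$, the conclusion of Proposition \ref{p:linW} at that value propagates to the entire chamber, which yields the result for all generic $a$.

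The main obstacle is matching orderings in this transfer: one must check that the closure order on $G(a)\backslash \mathbb V^a$, from which the ordering of tempered modules on the exotic side is taken, is compatible with the closure order on $X_{\ell,d}$ that controls the uni-triangularity in Proposition \ref{p:linW}. In view of the combinatorial identification between Slooten's algorithm and the Lusztig-Spaltenstein algorithm established in \S \ref{sec:cuspidal}, this reduces to comparing how the hook operations of Algorithm \ref{al:ds} and Algorithm \ref{al:ls} interact with closure relations, which should be verifiable by a direct, if tedious, inspection of the two procedures.
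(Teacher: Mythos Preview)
Your approach is essentially the paper's: fix a generic chamber $(k/2,(k+1)/2)$, observe that the marked-partition parametrization and hence the $W$-structure of each tempered module is constant across the chamber (via Corollary \ref{Green} and Theorem \ref{mult}), and then import Proposition \ref{p:linW} from the quarter-integer value $m_0=(2k+1)/4$ inside the chamber. The paper does exactly this, in fewer words.

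Your final paragraph, however, manufactures a difficulty that is not there. The ordering in the statement of Corollary \ref{c:lin} is \emph{defined} to be the closure order on $X_{\ell,d}$ coming from the generalized Springer correspondence, not the exotic closure order. Once you know that the tempered modules at $m$ and at $m_0$ have identical images in $R(W)$ (your rigidity step), uni-triangularity with respect to the $X_{\ell,d}$ order at $m_0$ transfers verbatim to $m$: it is a statement purely about those images, and they coincide. No comparison between the exotic closure order on $G(a)\backslash\mathbb V^a$ and the closure order on $X_{\ell,d}$ is needed, and the paper makes none.
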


\begin{proof}
Let $m$ be in the open interval $(\frac k2,\frac {k+1}2),$ for some
integer $k\ge 0$ and $a$ be the corresponding generic parameter. Let
$\mathsf{MP}(m)$ be the set of (exotic) marked partitions which
parameterizes the set of tempered $\mathbb H_a$-modules. Set
$m_0=\frac{2k+1}2,$ and fix $\tau\in \mathsf{MP}(m_0).$ The results
in this paper imply that the set $\mathsf{MP}(m)$ is the same for all
$m\in (\frac k2,\frac {k+1}2).$ Moreover, if we denote by
$\mathsf{temp}_m(\tau)$ the tempered module parameterized by $\tau$ at
the parameter $m,$ then
\begin{equation}\label{eq:rigid}
\mathsf{temp}_m(\tau)\cong \mathsf{temp}_{m'}(\tau) \text{ as $W$-modules},
\end{equation}
for any $m,m'\in (\frac k2,\frac {k+1}2).$ In particular,
$\mathsf{temp}_m(\tau)\equiv \mathsf{temp}_{m_0}(\tau),$ for all
$\tau\in \mathsf{MP}(m)=\mathsf{MP}(m_0).$ Then the claim follows from
Proposition \ref{p:linW}.
 \end{proof}

\begin{remark} In the preprint \cite{So}, the first part of Corollary
  \ref{c:lin}, the $W$-independence (but not the uni-triangularity),
  is proved independently  for arbitrary graded
  Hecke algebras by homological methods.
\end{remark}

\begin{remark} One can ask naturally if a similar uni-triangular
  correspondence as in Corollary \ref{c:lin} holds if one considers
  instead the exotic Springer correspondence (see
    \cite{K2} for an explicit algorithm). This is not the case
  however: in general, the map assigning to a tempered $\mathbb
  H_s$-module its exotic Springer representation is not one-to-one, as
  one can see in the example $n=4,$ and $0<m<1/2$ for the partitions of
  $n$, $\sigma_1=(1,1,2)$ and $\sigma_2=(1,1,1,1)$. The exotic Springer map assigns
  the $W_4$-representation $\{ (1^4)(0) \}$ to both
  $\mathsf{ds}(\sigma_1)$ and $\mathsf{ds}(\sigma_2)$, while the
  generalized Springer map assigns $\{(1^3)(1)\}$ to
  $\mathsf{ds}(\sigma_1)$ and $\{(1^4)(0)\}$ to
  $\mathsf{ds}(\sigma_2)$. 
\end{remark}

In particular, the ``lowest $W$-type'' correspondence of Corollary \ref{c:lin} shows that the construction of Theorem \ref{indADt} exhausts all tempered modules in the real positive generic range.

\begin{corollary}\label{c:exh} Every tempered $\mathbb H_a$-module for generic positive real $a$ is obtained by induction as in Theorem \ref{indADt}.
\end{corollary}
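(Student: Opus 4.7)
The plan is a cardinality argument that matches tempered modules with inducing data through the generalized Springer correspondence established in Corollary \ref{c:lin}.

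First, I would invoke Corollary \ref{c:lin}: the assignment $\pi \mapsto \rho(\pi)$ sets up a bijection between the set of irreducible tempered $\mathbb H_a$-modules, as $a$ ranges over all generic positive real central characters in $\mathcal G_0$, and $\mathsf{Irrep}~W_n$. Hence the total number of such tempered modules equals the number of bipartitions of $n$, namely $\sum_{n_1+n_2=n} P(n_1)\,P(n_2)$.

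Second, I would enumerate the inducing data of Theorem \ref{indADt}(i). For each splitting $n = n_1+n_2$, Theorem \ref{ZT} parameterizes the tempered $\mathbb H_{n_1,m}^{\mathsf A}$-modules (whose central characters are necessarily middle-of-segment elements in $q^{\frac12 \mathbb Z}$) by partitions of $n_1$, while Theorems \ref{tempss} and \ref{t:main} parameterize the discrete series of $\mathbb H_{n_2,m}$ with positive real generic central character by partitions of $n_2$. Summing over $n_1+n_2=n$, the set of inducing pairs has cardinality $\sum_{n_1+n_2=n} P(n_1)\,P(n_2)$, which matches the count from the first step.

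Third, I would verify that distinct inducing pairs produce distinct tempered modules. The genericity condition $q_1\notin q^{\frac12\mathbb Z}$ forces $q^{\frac12\mathbb Z}\cap q_1 q^{\mathbb Z}=\emptyset$, so the central character $s$ of the induced module canonically recovers the decomposition $s=s_1\times s_2$, and in particular the integers $n_1,n_2$. For fixed central character, the component $L_2$ is then determined by the type-$\mathsf C$ part of $s$, and the component $L_1^{\mathsf A}$ is recovered by restriction to the type-$\mathsf A$ eigenvalue piece exactly as in the proof of Theorem \ref{indAD}: the induction condition $(\mathbb V^{S_0})^a=\{0\}$ there already yields a bijection between irreducible $\mathbb H_a^{S_0}$-modules and irreducible $\mathbb H_a$-modules via the eDL correspondence, and each tempered $\mathbb H_a$-module corresponds to some exotic parameter $X=X_1\oplus X_2\in \mathfrak{gl}(n_1)\oplus \mathbb V_{(2)}$. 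Combined with Theorem \ref{indADt}(i), this gives an injection from inducing pairs to tempered modules between finite sets of equal cardinality, hence a bijection.

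The main obstacle will be the third step: one must rule out that a tempered module could be written as an induced module in two genuinely different ways (for instance, from inducing data with different $n_1$ but the same induced character). This reduces essentially to the uniqueness of the central character decomposition, which is a consequence of the generic parameter hypothesis, together with the uniqueness of the eDL parameter within a fixed central character. Once these are in place, the counting argument closes and exhaustion follows without further work.
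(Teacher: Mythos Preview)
Your proposal is correct and follows essentially the same counting argument as the paper: both count tempered modules via the bijection with $\mathsf{Irrep}\,W_n$ from Corollary~\ref{c:lin}, count inducing data as $\sum_{n_1+n_2=n}\mathcal P(n_1)\mathcal P(n_2)$, and conclude by showing the induction map is injective. The only difference is in the injectivity step: the paper argues distinctness by observing that the induced modules are nonisomorphic as $W_n$-modules (immediately from the $W$-independence in Corollary~\ref{c:lin}), whereas you separate the central character into its $q^{\frac12\mathbb Z}$ and $q_1 q^{\mathbb Z}$ pieces and then invoke the eDL parameter bijection of Theorem~\ref{indAD}. Both routes work; the paper's is slightly shorter since it reuses the lowest $W$-type bijection already in hand, while yours is more self-contained on the geometric side.
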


\begin{proof}
It is sufficient to show that Theorem \ref{indADt} produces
$\# \mathsf{Irrep} W_n$ distinct tempered modules. Let $\mathcal P(k)$ denote
the number of partitions of $k$, and $\mathcal P_2(k)$ denote the
number of bipartitions of $k,$ i.e., $\mathcal P_2(k)=\# \mathsf{Irrep} W_k.$ For every $1\le n_1\le n,$ a
tempered $\mathbb H_a$-module is constructed from a tempered $GL(n_1)$
module and a discrete series of $Sp(2n_2)$, where $n_2=n-n_1.$ There
are $\mathcal P(n_1)$ tempered modules of $GL(n_1)$ and $\mathcal
P(n_2)$ discrete series of $Sp(2n_2).$ Therefore we get
$\sum_{n_1=1}^n \mathcal P(n_1)\mathcal P(n-n_1)=\mathcal P_2(n)$
tempered $\mathbb H_a$-modules. 
These
are all distinct $\mathbb H_a$-modules since they are nonisomorphic as
$W_n$-modules.
\end{proof}

\subsection{One $W$-type discrete series} 
 We show that the only tempered $\mathbb H_{n,m}$-modules with real
 positive generic parameter which are
irreducible as $W$-modules are the $(\pm)$-ladder
representations (see \S \ref{sec:ladder}). Any tempered module which is not a discrete
series is obtained by parabolic unitary induction from a discrete
series module of a proper parabolic Hecke subalgebra. Therefore, no such
module could be $W$-irreducible, so we can restrict to the case of
discrete series, and we can restrict to the equivalent setting of
$\overline {\mathbb H}_{n,m}$-modules.

Let $\overline {\mathbb H}_{n} ^{\mathsf A}$ be the one-parameter graded Hecke
  algebra for $GL(n),$ viewed as a subalgebra of $\overline{\mathbb
    H}_{n,m}.$ We have that $\overline {\mathbb H}_{n} ^{\mathsf A}$ is
    generated by $\{\epsilon_1,\dots,\epsilon_n\}$ and
    $\{t_{i,i+1}:1\le i\le n-1\}$, where $t_{ij}$ denotes the generator in   corresponding to the
  reflection $s_{\epsilon_i-\epsilon_j}$. The following lemma is
  well-known and easy to prove by direct computation.

\begin{lemma}\label{l:evalA} There is a surjective algebra map $\phi:\overline {\mathbb
    H}_{A_{n-1}}\mapsto \mathbb C\mathfrak S_n,$ given on generators by
\begin{align}
\phi(t_{i,i+1})&= s_{i,i+1},\\\notag
\phi(\epsilon_j)&=s_{j,j+1}+s_{j,j+2}+\dotsb+s_{j,n}.
\end{align}
\end{lemma}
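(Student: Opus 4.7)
The plan is to verify that the given formulas define a well-defined algebra homomorphism by checking each family of defining relations of $\overline{\mathbb H}_n^{\mathsf A}$ after sending the central parameter $\mathbf r$ to $1$; surjectivity is then immediate since the image contains every simple transposition $s_{i,i+1}$, and these generate $\mathfrak S_n$.

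The braid-type relations $t_w t_{w'} = t_{ww'}$ are automatic because the $s_{i,i+1}$ already satisfy the Coxeter presentation of $\mathfrak S_n$, so nothing needs checking there. For the cross relation $\epsilon_j t_{i,i+1} - t_{i,i+1} s_i(\epsilon_j) = \mathbf r \langle \alpha_i, \epsilon_j\rangle$, I would do a case-by-case verification based on the three possibilities $j = i$, $j = i+1$, and $j \notin \{i,i+1\}$. The central combinatorial identity is the conjugation rule $s_{i,i+1} s_{i+1,k} = s_{i,k} s_{i,i+1}$ for $k \ge i+2$. Using it, when $j=i$ the expansion collapses to $s_{i,i+1}^2 = 1$, matching $\langle \alpha_i,\epsilon_i\rangle = 1$; the case $j = i+1$ is symmetric; and when $j \notin \{i,i+1\}$ the two possibly non-commuting contributions (those involving $s_{j,i}$ and $s_{j,i+1}$ when $j<i$) cancel pairwise as three-cycles, giving $0 = \langle\alpha_i,\epsilon_j\rangle$.

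The main step, and where I expect the only real content to lie, is the commutativity $\phi(\epsilon_i)\phi(\epsilon_j) = \phi(\epsilon_j)\phi(\epsilon_i)$. The elements $\phi(\epsilon_j) = \sum_{k > j} s_{j,k}$ are order-reverses of the classical Jucys--Murphy elements $L_k = \sum_{i < k} s_{i,k}$. The slickest route is to apply the order-reversing involutive automorphism of $\mathfrak S_n$ induced by $i \mapsto n+1-i$, which sends each $\phi(\epsilon_j)$ to $L_{n+1-j}$, and then to invoke the classical fact that the $L_k$ pairwise commute in $\mathbb C \mathfrak S_n$. Together with the verifications in the previous paragraph this establishes the algebra map, and surjectivity is clear.
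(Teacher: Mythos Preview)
Your proof is correct. The paper does not actually supply a proof of this lemma: it simply remarks beforehand that the statement is ``well-known and easy to prove by direct computation,'' and leaves it at that. Your argument is exactly the sort of direct verification the paper has in mind, with the nice added observation that commutativity of the $\phi(\epsilon_j)$ reduces to the classical commutativity of the Jucys--Murphy elements via the order-reversing automorphism $i\mapsto n+1-i$; this is marginally more conceptual than a raw term-by-term check but not genuinely different in spirit.
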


Note that $\phi$ allows us to lift any irreducible
$\mathfrak S_n$-representation to an irreducible
$\overline {\mathbb H}_{n} ^{\mathsf A}$-module. For $\sigma$ a partition of
  $n,$ let $\phi^*(\sigma)$ denote the irreducible $\overline{\mathbb
    H}_{A_{n-1}}$-module obtained in this way from lifting
  $\sigma\otimes\mathsf{sgn}.$ 

A simple modification of $\phi$ lifts any irreducible
$\mathfrak S_n$-representation to an irreducible $\overline{\mathbb
    H}_{n,m}$-module. The following statement can be viewed as a particular
case of the construction in \cite{BM}.

\begin{lemma}\label{l:evalB} Let $\eta\in\{+1,-1\}$ be given and let
  $\sigma$ be a fixed partition of $n$. The
  assignment 
\begin{align}\notag
t_{i,i+1}&\mapsto \phi^*(\sigma)(t_{i,i+1}),\qquad \ 1\le i\le n-1,\\\notag
\epsilon_i&\mapsto \eta m\mathsf{Id}+\phi^*(\sigma)(\epsilon_i),\ 1\le i\le n-1,\\\notag
 t_n&\mapsto \eta \mathsf{Id},\\
 \epsilon_n&\mapsto \eta m\mathsf{Id},
\end{align}
gives an irreducible $\overline{\mathbb H}_{n,m}$-module, $\pi(\sigma,\eta).$
\end{lemma}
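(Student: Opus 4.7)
The plan is to verify directly that the proposed formulas respect each defining relation of $\overline{\mathbb H}_{n,m}$, and then to deduce irreducibility from the fact that the underlying $\mathfrak{S}_n$-module structure is already an irreducible Weyl group representation. First, I would note that since $\phi(\epsilon_n) = s_{n,n+1} + \cdots + s_{n,n}$ is an empty sum, one has $\phi^*(\sigma)(\epsilon_n) = 0$, so the prescription for $\epsilon_n$ fits the uniform pattern $\epsilon_i \mapsto \eta m \cdot \mathsf{Id} + \phi^*(\sigma)(\epsilon_i)$ for all $1 \le i \le n$. Call the candidate operator algebra homomorphism $\pi := \pi(\sigma,\eta)$, realized on the underlying space of $\sigma \otimes \mathsf{sgn}$.

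For the generators $t_{i,i+1}$ with $i < n$, all the $\mathfrak{S}_n$-type braid relations hold because they hold in $\phi^*(\sigma)$. The extra relations involving $t_n$, namely $t_n^2 = 1$, commutation $t_{i,i+1} t_n = t_n t_{i,i+1}$ for $i < n-1$, and the $B_n$-type braid relation $(t_{n-1,n} t_n)^2 = (t_n t_{n-1,n})^2$, are all automatic since $t_n$ is sent to the scalar $\eta$ with $\eta^2 = 1$. Commutativity of the $\epsilon_i$'s follows from that of the $\phi^*(\sigma)(\epsilon_i)$'s, since the scalar shift is central.

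The main (and only mildly delicate) step is checking the Bernstein–Lusztig cross relations. For $\alpha = \alpha_i$ with $i < n$, the relation $\omega\, t_{i,i+1} - t_{i,i+1}\, s_i(\omega) = \mathbf r \langle \alpha_i, \omega\rangle$ already holds in $\overline{\mathbb H}_n^{\mathsf A}$, and the added scalar $\eta m \cdot \mathsf{Id}$ is annihilated by the commutator $[\,\cdot\,, t_{i,i+1}]$ and fixed by $s_i$, so the relation descends to $\pi$. For $\alpha_n = 2\epsilon_n$ with $\omega = \epsilon_j$, $j < n$: both sides vanish, since $s_n(\epsilon_j) = \epsilon_j$ and $t_n$ acts as a scalar, while $\langle 2\epsilon_n, \epsilon_j\rangle = 0$. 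For $\alpha_n$ with $\omega = \epsilon_n$: using $s_n(\epsilon_n) = -\epsilon_n$, the LHS becomes $\epsilon_n t_n + t_n \epsilon_n = 2 \eta^2 m = 2m$, matching the RHS $\mathbf r \mu(2\epsilon_n)\langle 2\epsilon_n, \epsilon_n\rangle = 2m$ (with $\mathbf r$ specialized to $1$).

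Finally, for irreducibility: under $\pi$, the subalgebra generated by $\{t_{1,2},\dots,t_{n-1,n}\}$ is exactly $\mathbb C[\mathfrak S_n]$ acting as $\sigma \otimes \mathsf{sgn}$, which is irreducible. Hence $\pi$ is already irreducible when restricted to this Weyl subalgebra, and in particular is an irreducible $\overline{\mathbb H}_{n,m}$-module. There is no serious obstacle here; the calculation is entirely routine once one observes the compatibility $\phi^*(\sigma)(\epsilon_n) = 0$, which is what makes the separate definitions of $\epsilon_n$ and of $\epsilon_i$ ($i < n$) fit into a single consistent formula.
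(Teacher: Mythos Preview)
Your proof is correct and follows essentially the same approach as the paper, which simply says that by Lemma~\ref{l:evalA} one only needs to verify the two extra relations $t_n\epsilon_n=-\epsilon_n t_n+2m$ and $t_{n-1}\epsilon_n=\epsilon_{n-1}t_{n-1}-1$ and that this is straightforward. Your observation that $\phi^*(\sigma)(\epsilon_n)=0$ (so the prescription is uniform in $i$) is a clean way to see why the second of these relations reduces to the already-known type-$\mathsf A$ relation, and your irreducibility argument via restriction to $\mathbb C[\mathfrak S_n]$ is the expected one.
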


\begin{proof} By Lemma \ref{l:evalA}, we only need to check that the
  Hecke relations 
\begin{align}
t_n\cdot\epsilon_n=-\epsilon_nt_n+2m,\\\notag
t_{n-1}\cdot \epsilon_n=\epsilon_{n-1}t_{n-1}-1,
\end{align}
are satisfied for this assignment. This is straightforward.
\end{proof}

Note that $\pi(\sigma,+)$ equals $\{(\sigma)
(\emptyset)\} \otimes\mathsf{sgn}$ as a $W_n$-representations, while
$\pi(\sigma,-)$ equals $\{(\sigma)(\emptyset) \}$,
in the bipartition notation of $W_n$-representations from \S
\ref{sec:cuspidal}. We show that these are precisely the
$(\pm)$-ladder representations from \S \ref{sec:ladder}.

\begin{theorem}\label{t:irrW} Let $s_\sigma$ be a distinguished central
  character for ${\mathbb H}_{n,m}$, where $m$ is generic. 
Then  $\mathsf{ds}(s_\sigma)$ is irreducible as a
  $W_n$-representation if and only if it is a $(\pm)$-ladder representation.   
\end{theorem}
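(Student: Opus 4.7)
The proof will combine the explicit one-$W$-type modules $\pi(\sigma',\eta)$ constructed in Lemma \ref{l:evalB} with the $W$-independence and uni-triangularity of tempered modules from Corollary \ref{c:lin}, and with the explicit description of the generalized Springer correspondence through Slooten's algorithm in \S \ref{sec:cuspidal}.

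For the ``if'' direction, assume $\mathsf{out}(\sigma)$ is a positive ladder (the negative ladder case is parallel under the role exchange $\sigma \leftrightarrow \sigma^T$, $\eta = +1 \leftrightarrow \eta = -1$). Apply Lemma \ref{l:evalB} to form the irreducible $\overline{\mathbb H}_{n,m}$-module $\pi(\sigma,-1)$. A direct calculation shows that the eigenvalues of $\phi(\epsilon_i) = \sum_{j>i} s_{i,j}$ on the Specht module $\rho_{\sigma^T}$ (in Young's seminormal basis) form precisely the multiset of contents of $\sigma^T$; since contents flip sign under transposition and $W_n$ allows total sign inversion, the central character of $\pi(\sigma,-1)$ is $W_n$-conjugate to $s_\sigma$. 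Temperedness is then verified by writing each weight explicitly as $(-m+c_1^T,\ldots,-m+c_n^T)$ for a standard tableau $T$ of shape $\sigma^T$ and checking that every partial sum $\sum_{i\leq j}(-m+c_i^T) = -jm + \sum_{i\leq j} c_i^T$ is strictly negative: the positive ladder hypothesis (that Algorithm \ref{al:ds} extracts only horizontal strips) translates directly into the required bound $\sum_{i\leq j} c_i^T < jm$. By the uniqueness of the discrete series with distinguished central character $s_\sigma$ (implicit in Theorem \ref{t:main}), we then identify $\mathsf{ds}(s_\sigma) = \pi(\sigma,-1)$, which is $W_n$-irreducible by construction.

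For the ``only if'' direction, assume $\mathsf{ds}(s_\sigma)|_{W_n}$ is irreducible. By the uni-triangularity of the change-of-basis matrix in Corollary \ref{c:lin}, we must have $\mathsf{ds}(s_\sigma)|_{W_n} = \rho(\sigma)\otimes \mathsf{sgn}$ with no contributions from strictly larger orbits in $X_{\ell,d}$. Examining Slooten's algorithm (Algorithm \ref{al:sl}, equivalent to the Lusztig-Spaltenstein algorithm of \S \ref{sec:cuspidal}), one sees that $\rho(\sigma)$ has extremal bipartition form $\{(\sigma)(\emptyset)\}$ or $\{(\emptyset)(\sigma^T)\}$ exactly when the algorithm picks uniformly rows or uniformly columns, i.e.\ when $\sigma$ is a $(\pm)$-ladder. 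When $\sigma$ is not a ladder, the algorithm produces a mixed bipartition $\{(\alpha)(\beta)\}$ with both $\alpha,\beta\neq\emptyset$; one then applies the closure-relation procedure $(\spadesuit)$ of Theorem \ref{AD} to produce an orbit $\mathcal{O}'$ strictly larger than $\mathcal{O}_{\mathsf{out}(\sigma)}$, and uses the weight criterion of Proposition \ref{t:charfake} for an appropriate $w\in W_n$ adapted to $\mathcal{O}'$ to exhibit a weight of $\mathsf{ds}(s_\sigma)$ outside the $W_n$-orbit spanned by $\rho(\sigma)\otimes \mathsf{sgn}$, forcing an additional $W_n$-constituent and contradicting irreducibility.

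The central technical difficulty is the ``only if'' direction: one must produce, for every non-ladder $\sigma$, an explicit $W_n$-type in $\mathsf{ds}(s_\sigma)|_{W_n}$ beyond $\rho(\sigma)\otimes \mathsf{sgn}$. This is transparent in small examples (e.g.\ $n=2$, $\sigma=(1^2)$, $m\in(1/2,1)$, where $\mathsf{ds}(s_\sigma)|_{W_n}= \{(1)(1)\}+\{(0)(1^2)\}$), but in general it requires a careful combinatorial reading of Slooten's algorithm in tandem with the closure procedure $(\spadesuit)$, and will likely benefit from an inductive reduction to smaller partitions using the parabolic induction theorems (Theorem \ref{indAD}, Theorem \ref{indADt}) once the pertinent hook of $\sigma$ responsible for the mixing is isolated.
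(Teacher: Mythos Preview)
Your ``if'' direction takes a different route from the paper. Rather than computing the central character and verifying temperedness of $\pi(\sigma,\eta)$ directly, the paper argues indirectly: it observes that $\pi(\sigma,+)$ is the unique simple quotient of a standard module in Lusztig's classification, deforms that standard module to a tempered module $M_0$ with the same $W$-structure, and then uses the lowest $W$-type bijection of Corollary~\ref{c:lin} to force $M_0=\mathsf{ds}(s_\sigma)$. Your direct approach can be made to work, but watch the sign: for a positive ladder the paper matches $\mathsf{ds}(s_\sigma)$ with $\pi(\sigma,+)$, not $\pi(\sigma,-)$, and your temperedness inequality $\sum_{i\le j}c_i^T<jm$ must be checked for \emph{every} standard tableau $T$, not only for the one recording the row-reading order --- you should say explicitly why the positive-ladder hypothesis (equivalently, that all $e$-values in $\sigma$ are positive) gives this uniformly.

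The ``only if'' direction, however, has a genuine gap. The paper's key input here is the Barbasch--Moy classification \cite{BM} of which irreducible $W_n$-representations extend to hermitian $\overline{\mathbb H}_{n,m}$-modules: only $\{(\gamma)(\emptyset)\}$, $\{(\emptyset)(\delta)\}$, or $\{(d^k)(f^l)\}$ with $k-d=l-f+m$. This immediately reduces the problem to ruling out the third family, which the paper does by a short case analysis (no discrete series has $\rho(\mathsf{ds}(s_\sigma))=\{(d^k)(f^l)\}$ with $k,l>1$ by Algorithm~\ref{al:sl}, and for $k=l=1$ one is in the one-hook case where a second $W_n$-type is exhibited via either Theorem~\ref{c:sgn} or the exotic Springer label from Example~\ref{ex:onehook}). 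Your proposed substitute conflates two different objects: Proposition~\ref{t:charfake} produces an $R(T)$-weight in $\Psi(L_\tau)$, not a $W_n$-constituent, and there is no meaningful notion of a weight lying ``outside the $W_n$-orbit spanned by $\rho(\sigma)\otimes\mathsf{sgn}$''. Exhibiting an extra $R(T)$-weight does not by itself force an extra $W_n$-type. You acknowledge this step is the central difficulty and leave it to a hoped-for induction; that is precisely the hole, and the clean way to fill it is the Barbasch--Moy restriction, which you do not invoke.
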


\begin{proof}We prove the claims in the equivalent setting of $\overline{\mathbb H}_{n,m}.$ In one direction, let us assume that $\mathsf{ds}(s_\sigma)$ is a positive
  ladder. The proof for the other case is analogous. We wish to show that $\mathsf{ds}(s_\sigma)=\pi(\sigma,+).$
  A direct proof of this fact would be to compute the central
  character of $\pi(\sigma,+)$ and show that it is $s_\sigma.$
  We give an indirect proof.
  In the bijection of Corollary \ref{c:lin}, 
  $\mathsf{ds}(s_\sigma)$ (contains and) corresponds to the
  $W_n$-representation $\rho(\mathsf{ds}(s_\sigma))=\{(\sigma)
(\emptyset)\}\otimes\mathsf{sgn}.$ By the Lusztig classification \cite{L3},
  $\pi(\sigma,+)$ is the unique irreducible quotient of a standard
  module $M=M_{a_\sigma,\mathcal O_\sigma}.$ There is a continuous
  deformation of $s_\sigma\longrightarrow s_0$, and
  $M\longrightarrow M_0$ such that $M_0$
  is a tempered module at the semisimple element 
  and $M_0|_{W_n}=M|_{W_n}.$ Moreover, the
  tempered module $M_0$ must contain $\rho(\mathsf{ds}(s_\sigma))$,
  and in Proposition \ref{p:linW},
  $\rho(M_0)=\rho(\mathsf{ds}(s_\sigma)).$ But this implies that
  $M_0=\mathsf{ds}(s_\sigma).$ Since this is a discrete series, we
  then have $\pi(\sigma,+)=M=M_0=\mathsf{ds}(s_\sigma).$

To verify the converse claim, recall that in \cite{BM}, one determined which $W_n$-representations
  can be extended to hermitian graded Hecke algebra modules. When the
  Hecke algebra is $\overline{\mathbb H}_{n,m},$ the only cases are
  $W$-representations of the form $\{ (\gamma)(\emptyset) \}$,
  $\{ (\emptyset)(\delta) \},$ or {$\{ (d^{k})
  (f^{l}) \}$}, when $k-d=l-f+m.$

 Note that, using 
  Algorithm \ref{al:sl}, it is immediate that there is no discrete
  series $\mathsf{ds}(s_\sigma)$ such that {$\rho(\mathsf{ds}(s_\sigma))=\{(d^k)(f^l)\}$}, for $k>1$ and $l>1.$ We
  check the case $k=l=1,$ $d>0,f>0.$ In order to have
  $\rho(\mathsf{ds}(s_\sigma))=\{ (d)(f) \},$ by Algorithm \ref{al:sl},
  we must have $\sigma$ a one hook partition, with the largest two
  entry values at the two extremities of the hook. If the largest entry
  is the right extremity of the hook, then by \ref{c:sgn},
  $\mathsf{ds}(s_\sigma)$ also contains the $\mathsf{sgn}$
  $W_n$-representation, so it is not $W_n$-irreducible. So it remains to consider the
  case when the largest entry is in the bottom extremity of the
  hook. In that case, by Example \ref{ex:onehook}, the exotic Springer
  correspondence attaches to $\mathsf{ds}(s_\sigma)$, the
  {$W_n$-representation} given by the bipartition $\{(\emptyset)
  (n)\}$. So again $\mathsf{ds}(s_\sigma)$ contains at least two irreducible $W_n$-representations.
\end{proof}

\begin{corollary}\label{c:4.6} Let  $s_\sigma$ be a distinguished
  central character for ${\mathbb H}_{n,m}$. 
\begin{enumerate}
\item If $m>n-1$, then we have
  $\mathsf{ds}(s_\sigma)|_{W_n}=\{(\sigma)(\emptyset)\}\otimes\mathsf{sgn}$.
\item If $m<-n+1$, then we have
  $\mathsf{ds}(s_\sigma)|_{W_n}=\{(\sigma)(\emptyset)\}$.
\end{enumerate}
\end{corollary}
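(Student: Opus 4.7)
The plan is to obtain Corollary \ref{c:4.6} as a direct consequence of Corollary \ref{c:ladder} together with the identifications made in the proof of Theorem \ref{t:irrW}, with no new technical work required.

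First, for part (1), I would observe that the hypothesis $m > n-1$ translates to $q_1 = q^m > q^{n-1} > 1$, which is precisely the sufficient condition in Corollary \ref{c:ladder}(1). That corollary therefore gives $\mathsf{ds}(\sigma) = \mathsf{out}(\sigma)$ and identifies this marked partition as the positive ladder associated to $\sigma$. Next I would invoke the first half of the proof of Theorem \ref{t:irrW}, which shows that whenever $\mathsf{ds}(s_\sigma)$ is the positive ladder it must coincide (as an $\overline{\mathbb H}_{n,m}$-module, hence as a $W_n$-module after Lusztig's reduction of \S \ref{realred}) with the module $\pi(\sigma,+)$ of Lemma \ref{l:evalB}. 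Finally, the description of $\pi(\sigma,+)$ given immediately after Lemma \ref{l:evalB} says that its $W_n$-structure is $\{(\sigma)(\emptyset)\} \otimes \mathsf{sgn}$, which is the desired conclusion.

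The argument for part (2) is entirely parallel: the hypothesis $m < -n+1$ rewrites as $q_1 = q^m < q^{1-n} < 1$, which is the sufficient condition in Corollary \ref{c:ladder}(2), so $\mathsf{ds}(\sigma) = \mathsf{out}(\sigma)$ is the negative ladder. The second half of the proof of Theorem \ref{t:irrW} (which, by its own statement, is analogous to the positive-ladder case) identifies this discrete series with $\pi(\sigma,-)$, whose $W_n$-structure is given as $\{(\sigma)(\emptyset)\}$ immediately after Lemma \ref{l:evalB}.

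There is no serious obstacle to this proof; the only point that needs care is to make sure the identification $\mathsf{ds}(s_\sigma) \cong \pi(\sigma,\pm)$ from Theorem \ref{t:irrW} is being used as an honest isomorphism of $\overline{\mathbb H}_{n,m}$-modules and not merely as an equality of $W_n$-characters (this is indeed what that proof establishes, since $\pi(\sigma,\pm)$ is realized there as the unique irreducible quotient of the same standard module as $\mathsf{ds}(s_\sigma)$). Once this is in hand, the corollary is immediate in both cases.
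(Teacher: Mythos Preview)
Your proposal is correct and follows essentially the same route as the paper's own proof: invoke Corollary \ref{c:ladder} to see that $\mathsf{ds}(s_\sigma)$ is a positive (resp.\ negative) ladder, use the proof of Theorem \ref{t:irrW} to identify it with $\pi(\sigma,+)$ (resp.\ $\pi(\sigma,-)$), and read off the $W_n$-structure from the remark after Lemma \ref{l:evalB}. Your extra care in noting that the identification with $\pi(\sigma,\pm)$ is an isomorphism of Hecke modules (not just of $W_n$-modules) is a reasonable gloss, though for the purpose of this corollary only the $W_n$-structure is needed.
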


\begin{proof}
By Corollary \ref{c:ladder},  $\mathsf{ds}(s_\sigma)$
is a positive ladder, if $m>n-1$, and it is a negative ladder, if
$m<-n+1$. The proof of Theorem \ref{t:irrW} implies that
$\mathsf{ds}(s_\sigma)=\pi(\sigma,+)$, when $m>n-1$ and
$\mathsf{ds}(s_\sigma)=\pi(\sigma,-)$, when $m<-n+1$. Then the claims
follow from the remark after Lemma \ref{l:evalB}.
\end{proof}

\subsection{Closure relation of orbits}\label{clrelation}

Fix a generic $m$. Let $\sigma$ be a partition of $n$. Attached to
$\rho ( \sigma )$ and $\mathsf{ds} ( \sigma )$, we have irreducible
$W$-modules $L _{\sigma}$ and $E _{\sigma}$, respectively. For an
irreducible $W$-module $K$, we denote by $\mathbb O ( K )$ the
nilpotent orbit of $\mathop{Spin} ( \ell )$ corresponding to $K$ via
(the inverse of) a generalized Springer correspondence (c.f. \S
\ref{sec:cuspidal}). Let $\mathcal C$ be the pair $( \mathbb O,
\mathcal L )$ of $\mathop{Spin} ( \ell )$-orbit of $\mathfrak{so} (
\ell )$ and the local system which contribute to the generalized
Springer correspondence. Let $\mathcal O ( K ) \subset \mathfrak N ^{a
  _0}$ be the $G$-orbit corresponding to $K$ via Theorem
\ref{t:1.15}. Let $\mathsf{pr} : \mathbb V \to V _2$ be the
$G$-equivariant projection map. 

\begin{theorem}\label{clel}
In the above setting:
\begin{enumerate}
\item We have $[ \mathsf{ds} ( \sigma ) : L _{\sigma} ] = 1 = [ \mathsf{ds} ( \sigma ) : E _{\sigma} ]$ as $W$-modules;
\item For each irreducible $W$-submodule $K$ of $\mathsf{ds} ( \sigma )$, we have $\mathcal O ( L _{\sigma} ) \subset \overline{\mathcal O ( K )}$ and $\mathbb O ( E _{\sigma} ) \subset \overline{\mathbb O ( K )}$. In particular, we have $\mathcal O ( L _{\sigma} ) \subset \overline{\mathcal O ( E _{\sigma} )}$ and $\mathbb O ( E _{\sigma} ) \subset \overline{\mathbb O ( L _{\sigma} )}$;
\item Let $E$ be an irreducible $W$-module such that $\mathsf{pr} ( \mathcal O ( E ) ) = \mathsf{pr} ( \mathcal O ( E _{\sigma}) )$. Then, we have $[ \mathsf{ds} ( \sigma ) : E ] {\le 1}$ as $W$-modules.
\end{enumerate}
\end{theorem}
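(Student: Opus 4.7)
The plan is to deduce each of the three parts from Springer-theoretic ingredients already established in the paper: the exotic Springer correspondence (Theorem \ref{t:1.15}), the DLL bijection via cuspidal local systems in Spin groups (Proposition \ref{p:linW}), and the intersection-cohomology multiplicity formula (Theorem \ref{mult}).

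For (1), both multiplicity statements are immediate corollaries. The statement just before Theorem \ref{eDL} says that $L_X$ occurs exactly once in $M_{(a,X)}$ as a $\mathbb{C}[W]$-module; applying this with $X = v_{\mathsf{out}(\sigma)}$ and using that $\mathsf{ds}(\sigma) = L_{(a,X)}$ is the unique irreducible quotient of $M_{(a,X)}$ containing $L_X = E_\sigma$ yields $[\mathsf{ds}(\sigma) : E_\sigma] = 1$. The equality $[\mathsf{ds}(\sigma) : L_\sigma] = 1$ follows from the uni-triangularity in Proposition \ref{p:linW}: under the identification of Section \ref{sec:cuspidal} we have $L_\sigma = \rho(\mathsf{ds}(\sigma))$, whose coefficient in the $\mathsf{Irrep}\,W$-expansion of $\mathsf{ds}(\sigma)|_W$ is the diagonal entry $1$.

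For (2), I will establish the two closure inclusions in parallel, each by pairing a Springer minimality result with a combinatorial comparison. On the spin side, Proposition \ref{p:linW} already gives $\mathbb{O}(L_\sigma) \subset \overline{\mathbb{O}(K)}$ for every $W$-submodule $K$ of $\mathsf{ds}(\sigma)$, since $L_\sigma = \rho(\mathsf{ds}(\sigma))$ is the lowest $W$-type in the $X_{\ell,d}$-ordering. The upgrade to $\mathbb{O}(E_\sigma) \subset \overline{\mathbb{O}(K)}$ will be deduced from the additional inclusion $\mathbb{O}(E_\sigma) \subset \overline{\mathbb{O}(L_\sigma)}$, which I plan to read off by running the exotic Springer algorithm of \cite{K2} on $v_{\mathsf{out}(\sigma)}$ and comparing its output with the spin orbit $\mathcal{O}_\sigma = \mathbb{O}(L_\sigma)$ produced in Section \ref{sec:cuspidal}. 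The exotic-orbit claim is analogous: start from the BBD decomposition of $(\mu^a)_* \underline{\mathbb{C}}$, combined with Corollary \ref{Green} and Theorem \ref{t:1.15}, to get the natural constraint on the orbits $\mathcal{O}(K)$, and complete with the combinatorial inclusion $\mathcal{O}(L_\sigma) \subset \overline{\mathcal{O}(E_\sigma)}$ obtained by tracing Algorithm \ref{al:ds} against the closure description of Theorem \ref{AD}.

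The hardest step is the combinatorial comparison underlying (2): the exotic and spin Springer correspondences genuinely disagree, as witnessed by the example of $\sigma_1 = (2,1,1)$ and $\sigma_2 = (1^4)$ discussed in the comparison remark preceding this section, so in general the two ``in particular'' inclusions are strict, and the argument must use specific features of the parameter $v_{\mathsf{out}(\sigma)}$ and the hook decomposition of $\sigma$. I plan to verify the inclusions first in the positive- and negative-ladder extremes of Section \ref{sec:ladder}, where $L_\sigma$ and $E_\sigma$ already coincide by Theorem \ref{t:irrW}, and then propagate to the general case along the hook-nested decomposition of Section \ref{sec:nested}, using Corollary \ref{sr-mult} to control multiplicities across the inductive steps. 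Part (3) is separate: the hypothesis $\mathsf{pr}(\mathcal{O}(E)) = \mathsf{pr}(\mathcal{O}(E_\sigma))$ fixes the $V_2$-component of the exotic orbit, so $[\mathsf{ds}(\sigma) : E]$ is controlled by a single intersection-cohomology stalk over the $V_1$-part, which Theorem \ref{mult}, combined with the Abeasis--Del Fra--Kraft normality of type-$\mathsf{A}$ quiver orbit closures (already invoked in the proof of Theorem \ref{main:p}), bounds by one.
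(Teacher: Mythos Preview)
Your treatment of part (1) matches the paper's.

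For part (2), however, you are proposing a substantial combinatorial detour that the paper avoids entirely. The two ``in particular'' inclusions $\mathbb O(E_\sigma)\subset\overline{\mathbb O(L_\sigma)}$ and $\mathcal O(L_\sigma)\subset\overline{\mathcal O(E_\sigma)}$ do \emph{not} require running the exotic Springer algorithm of \cite{K2}, comparing with spin orbits, or any induction along the hook-nested decomposition. The paper obtains them in one line each, purely from the fact that $\mathsf{ds}(\sigma)$ admits two independent standard-module realizations. On the Lusztig side one has $\mathsf{ds}(\sigma)=L(\mathbb O)$ inside $M(\mathbb O)$, and the $W$-multiplicity formula (item iv) of the Lusztig package recalled in the proof) together with the decomposition formula (item v)) forces the spin closure relation; on the exotic side one has $\mathsf{ds}(\sigma)=L_{(a_\sigma,v_{\mathsf{out}(\sigma)})}$, and Corollary~\ref{Green} combined with Theorem~\ref{mult} specialized at $a=a_0$ forces the exotic closure relation. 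The full claim of (2) then follows by combining these with the basic minimality on each side plus transitivity of closure. Your inductive combinatorial plan is not wrong in spirit, but it is unexecuted and unnecessary: everything in (2) is already encoded in the two intersection-cohomology multiplicity formulas.

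For part (3) there is a genuine gap. Your invocation of Abeasis--Del~Fra--Kraft normality is misplaced: ADK concerns the $V_2$-component (the type $\mathsf A$ quiver variety), but the hypothesis $\mathsf{pr}(\mathcal O(E))=\mathsf{pr}(\mathcal O(E_\sigma))$ says precisely that the $V_2$-components already coincide, so normality there buys nothing. The paper's argument is different and simpler: the hypothesis forces either $\overline{\mathcal O(E)}\cap\mathcal O(E_\sigma)=\emptyset$ (whence the multiplicity is zero by (2)), or $\mathcal O(E)$ is an open dense subset of a vector bundle over $\mathcal O(E_\sigma)$, the fiber lying in the $V_1$-direction. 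In the second case $\mathcal O(E_\sigma)\subset\overline{\mathcal O(E)}$ is the zero section of that bundle, hence smooth, and the smoothness clause of Theorem~\ref{mult} (at $a=a_0$, via Corollary~\ref{Green}) gives $[M_{(a_\sigma,v_{\mathsf{out}(\sigma)})}:E]_W=1$, bounding $[\mathsf{ds}(\sigma):E]$ by $1$.
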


\begin{proof} Recall that both of the constructions of Lusztig \cite{L3} and \cite{K} depend on the realization of $\mathbb H _{a _{\sigma}}$ in terms of the self-extension algebras of certain complexes. Let $\mathsf{IC} ( \mathbb O, \mathcal M )$ be the minimal extension of a local system $\mathcal M$ on $\mathbb O$. In Lusztig's case, we have (by \cite{L3}):
\begin{itemize}
\item[i)] There exists a semi-simple element $\mathbf a _{\sigma} \in \mathop{Spin} ( \ell ) \times \mathbb C ^{\times}$. Let us denote by $\mathcal C _{\sigma}$ the set of pairs $( \mathbb O, \mathcal L \MID _{\mathbb O} )$ obtained from $\mathcal C$ by taking (connected component of) $\mathbf a _{\sigma}$-fixed points;
\item[ii)] Define $G ^L := Z _{\mathop{Spin} ( \ell ) \times \mathbb C ^{\times}} ( \mathbf a _{\sigma} )$. Then, each element of $\mathcal C _{\sigma}$ is a single $G ^L$-orbit with a local system. The set $\mathcal C _{\sigma}$ is in bijection with $\mathsf{Irrep} \mathbb H _{a _{\sigma}}$;
\item[iii)] For each $( \mathbb O, \mathcal L \MID _{\mathbb O} ) \in \mathcal C _{\sigma}$, we define $\mathbb O ^{\sim} := \mathop{Spin} ( \ell ) \mathbb O \subset \mathfrak{so} ( \ell )$. It defines a $W$-representation $\rho ( \mathbb O )$ via a generalized Springer correspondence;
\item[iv)] The standard module $M ( \mathbb O )$ contains an irreducible $W$-module $K$ with multiplicity (as $W$-modules) equal to $\dim H ^{\bullet} _{\mathbb O ^{\sim}} ( \mathsf{IC} ( \mathbb O ( K ), \mathcal L ) )$;
\item[v)] The standard module $M ( \mathbb O )$ has a unique simple quotient $L ( \mathbb O )$ and we have $[ M ( \mathbb O ) : L ( \mathbb O' ) ] = \dim H ^{\bullet} _{\mathbb O} ( \mathsf{IC} ( \mathbb O', \mathcal L \MID _{\mathbb O'} ) )$.
\end{itemize}
Now the assertion $[ \mathsf{ds} ( \sigma ) : L _{\sigma} ] = 1$
follows by the combination of {\bf iv)} and {\bf v)}. The assertion $
[ \mathsf{ds} ( \sigma ) : E _{\sigma} ] = 1$ follows by the
construction of $L _{(a, X)}$ and Theorem \ref{eDL}. We have $\mathbb
O ( E _{\sigma} ) \subset \overline{\mathbb O ( L _{\sigma} )}$ by the
combination of {\bf iv)} and {\bf v)}. We have $\mathcal O ( L
_{\sigma} ) \subset \overline{\mathcal O ( E _{\sigma} )}$ by Corollary \ref{Green} and Theorem \ref{mult} applied to $a = a_0$. This proves {\bf 1)} and {\bf 2)}. We prove {\bf 3)}. Notice that $\mathsf{pr} ( \mathcal O ( E ) ) = \mathsf{pr} ( \mathcal O ( E _{\sigma}) )$ implies either
$\overline{\mathcal O ( E )} \cap \mathcal O ( E _{\sigma}) =
\emptyset$ or $\mathcal O ( E )$ is a (open dense subset of a) vector
bundle over $\mathcal O ( E _{\sigma} )$. By {\bf 2)}, it suffices
to consider the latter case. By Corollary \ref{Green} and Theorem \ref{mult} applied to $a = a_0$, we have $[ M _{(a
  _{\sigma}, X )}: L _{Y} ] = 1$ as $W$-modules for $Y \in \mathcal O ( E )$ as required.
\end{proof}

\begin{remark}
To use Theorem \ref{clel}, one needs to know the Weyl group representation attached to each orbit and the closure relations between orbits. These are contained in \cite{K2} and Achar-Henderson \cite{AH}, respectively.
\end{remark}

{\footnotesize 
}

\end{document}